\tikzset{
  notice/.style  = { draw, rectangle callout, callout relative pointer={#1} },
}
\tikzset{->-/.style={decoration={
  markings,
  mark=at position #1 with {\arrow{>}}},postaction={decorate}}}
\newcommand{\calO}{{\mathcal{O}}}
\newcommand{\calC}{\mathcal{C}}
\newcommand{\calU}{\mathcal{U}}
\newcommand{\calH}{\mathcal{H}}
\newcommand{\calX}{\mathcal{X}}
\newcommand{\frakX}{\mathfrak{X}}
\newcommand{\frakY}{\mathfrak{Y}}
\newcommand{\frakU}{\mathfrak{U}}
\newcommand{\Z}{\mathbf{Z}}
\newcommand{\N}{\mathbf{N}}
\newcommand{\C}{\mathbf{C}}
\newcommand{\F}{\mathbf{F}}
\newcommand{\Q}{\mathbf{Q}}
\newcommand{\A}{\mathbf{A}}
\newcommand{\crys}{\mathrm{crys}}
\renewcommand{\inf}{\mathrm{inf}}
\newcommand{\Spec}{{\mathrm{Spec}}}
\newcommand{\Spf}{{\mathrm{Spf}}}
\newcommand{\Alg}{\mathrm{Alg}}
\newcommand{\Shv}{\mathrm{Shv}}
\newcommand{\Hom}{\mathrm{Hom}}
\newcommand{\Ext}{\mathrm{Ext}}
\newcommand{\perf}{\mathrm{perf}}
\newcommand{\Mod}{\mathrm{Mod}}
\newcommand{\et}{\mathrm{\acute{e}t}}
\newcommand{\proet}{{\mathrm{pro}\et}}
\newcommand{\Zar}{\mathrm{Zar}}
\newcommand{\an}{\mathrm{an}}
\newcommand{\id}{\mathrm{id}}
\newcommand{\can}{\mathrm{can}}
\renewcommand{\ker}{\mathrm{ker}}
\newcommand{\Perf}{\mathrm{Perf}}
\newcommand{\fram}{\mathfrak{m}}
\newcommand{\cosimp}[3]{\xymatrix@1{#1 \ar@<.4ex>[r] \ar@<-.4ex>[r] & {\ }#2 \ar@<0.8ex>[r] \ar[r] \ar@<-.8ex>[r] & {\ } #3 \ar@<1.2ex>[r] \ar@<.4ex>[r] \ar@<-.4ex>[r] \ar@<-1.2ex>[r] & \cdots }}
\newcommand{\colim}{\mathop{\mathrm{colim}}}
\newcommand{\adjunction}[4]{\xymatrix@1{#1{\ } \ar@<0.3ex>[r]^{ {\scriptstyle #2}} & {\ } #3 \ar@<0.3ex>[l]^{ {\scriptstyle #4}}}}
\begin{document}

\newtheorem{theorem}{Theorem}[section]
\newtheorem*{theorem*}{Theorem}
\newtheorem*{definition*}{Definition}
\newtheorem{proposition}[theorem]{Proposition}
\newtheorem{lemma}[theorem]{Lemma}
\newtheorem{corollary}[theorem]{Corollary}

\theoremstyle{definition}
\newtheorem{definition}[theorem]{Definition}
\newtheorem{question}[theorem]{Question}
\newtheorem{remark}[theorem]{Remark}
\newtheorem{warning}[theorem]{Warning}
\newtheorem{example}[theorem]{Example}
\newtheorem{notation}[theorem]{Notation}
\newtheorem{convention}[theorem]{Convention}
\newtheorem{construction}[theorem]{Construction}
\newtheorem{claim}[theorem]{Claim}

\title{Specializing varieties and their cohomology \\ from characteristic $0$ to characteristic $p$}
\author{Bhargav Bhatt}
\begin{abstract}
We present a semicontinuity result, proven in recent joint work with Morrow and Scholze, relating the mod $p$ singular cohomology of a smooth projective complex algebraic variety $X$ to the de Rham cohomology of a smooth characteristic $p$ specialization of $X$: the rank of the former is bounded above by that of the latter. The path to this result passes through $p$-adic Hodge theory and perfectoid geometry, so we survey the relevant aspects of those subjects as well.
\end{abstract}
\maketitle

\section{Introduction}
Let $f:X \to S$ be a proper smooth morphism of schemes. If $S$ is a complex algebraic variety, then the singular cohomology $H^i(X_s^{\an},\Z)$ of the fiber $X_s^{\an}$ over a closed point $s \in S(\C)$ is independent of $s$. A similar assertion holds if $S$ is an algebraic variety in positive characteristic $p$ once one replaces the singular cohomology group $H^i(X_s^{\an},\Z)$ with the $\ell$-adic \'etale cohomology group $H^i_{\et}(X_s,\Z_\ell)$ for some prime $\ell \neq p$. In contrast, for $p$-adic cohomology theories such as crystalline cohomology, the analogous result is often false: the ``pathologies'' of characteristic $p$ geometry furnish examples\footnote{We are not aware of an example of this phenomenon in the literature. However, one can obtain an example by a slight modification of \cite[\S 2.2]{BMSMainPaper} as follows: let $S = \Spec(\overline{\F_p}\llbracket t \rrbracket)$ with generic point $\eta$ and closed point $s$, let $G/S$ be a finite flat commutative group scheme whose generic fibre is $\Z/p^2$ and whose special fibre is $\alpha_p \times \alpha_p$, and let $X \to S$ be a smooth projective approximation of the Artin stack $BG \to S$; then one can show that $H^1_{dR}(X_\eta)$ has dimension $1$, while $H^1_{dR}(X_s)$ has dimension $2$.} where the $p$-torsion in $H^i_\crys(X_s)$ (or, essentially equivalently, the rank of the algebraic de Rham cohomology group $H^i_{dR}(X_s)$) can vary with $s$. The best one can say in general is that the ranks of $H^i_\crys(X_s)[\frac{1}{p}]$ are independent of $s$, and the ranks of $H^i_{dR}(X_s)$ are upper semicontinuous in $s$. The recent paper \cite{BMSMainPaper} paints a similar picture when $S$ is the spectrum of a mixed characteristic $(0,p)$ valuation ring: the torsion in the crystalline cohomology $H^i_\crys(X_s)$ of the characteristic $p$ fibre gives an upper bound for the torsion in the \'etale cohomology $H^i_{\et}(X_{\bar{\eta}},\Z_p)$ of the (geometric) characteristic $0$ fibre (and this upper bound can be strict). This extends previous results by many people, notably Faltings \cite{FaltingsRamified} and Caruso \cite{Caruso}. 

The goal of these notes is to give a slightly simpler account of the mod $p$ version of the aforementioned result from \cite{BMSMainPaper}, i.e., in the mixed characteristic setting, we relate the de Rham cohomology $H^i_{dR}(X_s)$ of the special fibre to the $\F_p$-\'etale cohomology $H^i_{\et}(X_{\bar{\eta}},\F_p)$ of the generic fibre. In contrast to the equicharacteristic cases, the cohomology theories attached to the two fibers  in the mixed characteristic case have very different origins; in relating them, we will naturally encounter (and review) some key facets of $p$-adic Hodge theory.

\subsection{Statement of the main theorem}

We now formulate the semicontinuity result more precisely. Fix a prime number $p$, and let $C$ be a complete and algebraically closed nonarchimedean extension of $\Q_p$; write $\calO_C \subset C$ for the valuation ring, and let $k$ be the residue field. The main goal of these notes is to sketch a proof of the following theorem:

\begin{theorem}
\label{thm:MainThm}
Let $\frakX/\calO_C$ be a proper smooth scheme. Then we have the inequality
\[ \dim_{\F_p} H^i_\et(\frakX_C, \F_p) \leq \dim_k H^i_{dR}(\frakX_k/k).\]
More generally, the same holds for proper smooth formal schemes $\frakX/\calO_C$ provided one interprets the generic fibre $\frakX_C$ and its \'etale cohomology in the sense of adic spaces, as in the work of Huber \cite{HuberContVal,HuberDefAdic,Huber}.
\end{theorem}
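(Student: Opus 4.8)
The plan is to route both sides of the inequality through a single cohomology theory valued in perfect complexes over Fontaine's ring $A_{\inf} := W(\calO_C^\flat)$, namely the $A_{\inf}$-cohomology $R\Gamma_{A_{\inf}}(\frakX)$ constructed in \cite{BMSMainPaper} (obtained from a perfectoid cover of the generic fibre together with the Berthelot--Ogus décalage functor). The two properties of it that I will establish in the body, and then use here as black boxes, are its specialization isomorphisms: the \'etale comparison
\[ R\Gamma_{A_{\inf}}(\frakX) \otimes^{L}_{A_{\inf}} W(C^\flat) \;\simeq\; R\Gamma_\et(\frakX_C,\Z_p) \otimes_{\Z_p} W(C^\flat), \]
valid because the element $\mu = [\epsilon]-1 \in A_{\inf}$ becomes invertible in $W(C^\flat)$; and the de Rham comparison
\[ R\Gamma_{A_{\inf}}(\frakX) \otimes^{L}_{A_{\inf}} \calO_C \;\simeq\; R\Gamma_{dR}(\frakX/\calO_C), \]
along one of the two canonical surjections $A_{\inf}\to\calO_C$ (the two differ by the Frobenius $\varphi$), suitably $p$-completed in the formal-scheme case. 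I will also invoke the finiteness of $H^i_\et(\frakX_C,\F_p)$ --- classical for schemes, and for adic spaces a theorem of Scholze building on Huber --- though, as it happens, it can also be read off at the end.

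Second, I would re-express both sides as dimensions of fibres of the perfect complex $M := R\Gamma_{A_{\inf}}(\frakX)$ over $\Spec A_{\inf}$. The ring $A_{\inf}$ is local, with maximal ideal $\fram = (p, [\varpi^\flat])$ and residue field $A_{\inf}/\fram = k$, and it contains a distinguished prime $(p)$ with $A_{\inf}/(p) = \calO_C^\flat$, so that the residue field $\kappa((p))$ equals the tilt $C^\flat$. Reducing the \'etale comparison modulo $p$ (and using $R\Gamma_\et(\frakX_C,\Z_p)\otimes^L_{\Z_p}\F_p \simeq R\Gamma_\et(\frakX_C,\F_p)$) gives $M \otimes^{L}_{A_{\inf}} \kappa((p)) \simeq R\Gamma_\et(\frakX_C,\F_p)\otimes_{\F_p} C^\flat$, whence $\dim_{\F_p} H^i_\et(\frakX_C,\F_p) = \dim_{C^\flat} H^i\big(M \otimes^{L}_{A_{\inf}} \kappa((p))\big)$. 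Reducing the de Rham comparison further modulo the maximal ideal of $\calO_C$ and invoking base change for the perfect complex $R\Gamma(\frakX,\Omega^\bullet_{\frakX/\calO_C})$ along $\calO_C \to k$ gives $M \otimes^{L}_{A_{\inf}} k \simeq R\Gamma_{dR}(\frakX_k/k)$, up to a Frobenius twist of $k$ that is immaterial for dimensions; hence $\dim_k H^i_{dR}(\frakX_k/k) = \dim_k H^i\big(M \otimes^{L}_{A_{\inf}} k\big)$.

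Third, the theorem is now the elementary fact that fibre dimensions of a perfect complex are upper semicontinuous. Since $A_{\inf}$ is local, $M$ is represented by a bounded complex $P^\bullet$ of finite free $A_{\inf}$-modules; for $x\in\Spec A_{\inf}$ one has $\dim_{\kappa(x)} H^i(M\otimes^L\kappa(x)) = \mathrm{rk}\, P^i - \mathrm{rk}(\bar d^{\,i}_x) - \mathrm{rk}(\bar d^{\,i-1}_x)$, and each matrix rank is a lower semicontinuous function of $x$, so the left side is upper semicontinuous. The closed point $\fram$ lies in the closure $V(p)$ of the point $(p)$, so $\dim_{C^\flat} H^i(M\otimes^L\kappa((p))) \leq \dim_k H^i(M\otimes^L k)$; combined with the two identifications above, this is exactly $\dim_{\F_p} H^i_\et(\frakX_C,\F_p) \leq \dim_k H^i_{dR}(\frakX_k/k)$. (In particular $M\otimes^L C^\flat$ is perfect over $C^\flat$, which re-proves the finiteness used above.)

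Finally, the allocation of difficulty: steps two and three are soft --- formal reductions plus a one-line semicontinuity argument --- and this is why the theorem, once the machinery is in place, is short. The entire weight of the argument, and the reason these notes must survey perfectoid spaces and $p$-adic Hodge theory, rests on constructing $R\Gamma_{A_{\inf}}(\frakX)$ and proving the two comparison isomorphisms: this needs perfectoid covers of $\frakX_C$, an analysis of the (logarithmic) cotangent complex and of the behaviour of the $L\eta$-functor, Scholze's primitive comparison theorem over the generic fibre, and --- to even make sense of $\frakX_C$ and $H^i_\et(\frakX_C,\F_p)$ in the formal case --- Huber's theory of adic spaces \cite{HuberContVal,HuberDefAdic,Huber} and the attendant finiteness theorems. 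Within that, I expect the most delicate bookkeeping to be the tracking of the $\mu$- and $\xi$-twists and the precise form of the de Rham comparison map; but, reassuringly, this never enters the proof above beyond the names of two residue fields.
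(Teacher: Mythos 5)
Your proposal is correct and follows essentially the same route as the paper: both sides are realized as fibres of the perfect $A_\inf$-complex $R\Gamma_A(\frakX)$ (via the \'etale comparison at the residue field $C^\flat$ of $(p)$ and the de Rham comparison at the closed point, using base change of de Rham cohomology along $\calO_C\to k$), and the inequality is then semicontinuity of fibre ranks along the specialization from $(p)$ to the closed point — the paper phrases this by first passing to the modular specialization over $\calO_C^\flat$, which is the same argument. The only point you elide is that the construction of $R\Gamma_A(\frakX)$ \emph{in these notes} is carried out only for spherically complete $C$, so the paper's proof includes a final reduction step replacing $C$ by a spherically complete extension; since you invoke the comparison theorems of \cite{BMSMainPaper} as black boxes, this is immaterial to your argument.
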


In particular, the presence of $p$-torsion in \'etale cohomology of the characteristic $0$ fibre forces the de Rham cohomology of the characteristic $p$ fibre to be larger than expected; more evocatively, the obstruction to ``integrating'' over a $p$-torsion class in singular homology is a differential form in characteristic $p$. This provides an explanation for some of the pathologies experimentally observed in de Rham cohomology in characteristic $p$; for example, Enriques surfaces in characteristic $2$ must have non-trivial $H^1_{dR}$ (first observed by direct calculation in \cite[Corollary 7.3.4]{IllusieDRWitt} using the classification in \cite{BombieriMumford3}) since their characteristic $0$ lifts have $\Z/2$ as their fundamental groups. We refer to \cite[\S 1]{BMSMainPaper} for more on the history of previous work relating \'etale and de Rham cohomology in the $p$-adic setting.

\subsection{Strategy of the proof and its relation to \cite{BMSMainPaper}}
\label{ss:IntroRelateBMS}

The inequality above is a consequence of the more refined \cite[Theorem 1.8]{BMSMainPaper}, and the the proof given here follows the same skeleton as the one in {\em loc.\ cit.}: one attaches to $\frakX$ a cohomology theory $R\Gamma_{\calO_C^\flat}(\frakX)$ that is a perfect complex over a characteristic $p$ valuation ring $\calO_C^\flat$ with generic fibre given (essentially) by the $\F_p$-\'etale cohomology of $\frakX_C$ and special fibre given by the de Rham cohomology of $\frakX_k$; this yields the desired inequality by semicontinuity. In fact, in both cases, the cohomology theory $R\Gamma_{\calO_C^\flat}(\frakX)$ is the hypercohomology of (the mod $p$ reduction of) a certain complex $A\Omega_\frakX$ on $\frakX$, and the bulk of the work lies in analysing $A\Omega_\frakX$ locally on $\frakX$ using techniques of ``almost mathematics'' in the sense of Faltings \cite{FaltingsJAMS,FaltingsAlmostEtale,GabberRamero} and related ideas coming from perfectoid spaces \cite{ScholzeThesis,ScholzePAdicHodge}. The main difference is that, in these notes, since we are only after the global inequality above, we implement this strategy in a somewhat simplified fashion. More precisely:
\begin{enumerate}[(a)]

\item The paper \cite{BMSMainPaper} gives a precise local description of $A\Omega_\frakX$. In these notes,  however, we content ourselves with working entirely within the realm of almost mathematics (but see part (e) below); this allows us to bypass some delicate arguments in \cite{BMSMainPaper} involved in showing certain almost zero modules are actually zero. 

\item The paper \cite{BMSMainPaper} identifies a specialization of $A\Omega_\frakX$ and the de Rham-Witt complex of $\frakX_k$, thus bringing the crystalline cohomology of $\frakX_k$ into the fray. On the other hand, in these notes, we completely ignore the connection to de Rham-Witt complexes and crystalline cohomology; this is necessitated by our desire to avoid any technical baggage not relevant to Theorem~\ref{thm:MainThm}, 	and we refer the interested reader to \cite{MorrowNotesIpHT} for more on this connection.

\item The paper \cite{BMSMainPaper} contains a detailed discussion of the category in which the cohomology of the complex $A\Omega_\frakX$ takes values, i.e., Breuil-Kisin-Fargues modules. A good understanding of this category is necessary for applications relating crystalline and \'etale cohomology with their concomitant structure, especially those involving the recovery of one from the other (such as \cite[Theorem 1.4]{BMSMainPaper}). However, such an understanding is not necessary for geometric applications such as Theorem~\ref{thm:MainThm}, so these notes avoid this discussion completely. 

\item Theorem~\ref{thm:MainThm} does not involve any almost mathematics; indeed, working entirely in the almost setting would kill all $k$-vector spaces, so it wouldn't be possible to say anything about $H^i_{dR}(\frakX_k/k)$. Nevertheless, we succeed in deducing Theorem~\ref{thm:MainThm} from an almost description of $A\Omega_\frakX$ by using a trick involving spherical completions. In particular, in these notes, we only construct the cohomology theory $R\Gamma_A(\frakX)$ when $C$ is spherically complete (which suffices for applications such as Theorem~\ref{thm:MainThm}).

\item In these notes, we give an alternate, and somewhat faster and cleaner, approach to the main results of \cite[\S 9]{BMSMainPaper} that involve identifying $A\Omega_\frakX$ in the literal (and not merely almost) sense. This relies ultimately on an observation about the $L\eta$-functor (see Lemma~\ref{lem:LetaRegularSequence}) that was missed in \cite{BMSMainPaper}, and is spelled out in Remarks~\ref{rmk:TildeOmegaNoAlmost}, \ref{rmk:AOmegaNoAlmost}, and \ref{rmk:RGammaANoAlmost}; these remarks are not relevant for the global applications such as Theorem~\ref{thm:MainThm}.

\item In \cite{BMSMainPaper}, the \'etale comparison theorem for $A\Omega_\frakX$ was deduced from the primitive comparison theorem. In these notes, we recall this proof, but also provide a direct proof using the de Rham comparison theorem (which we prove) and standard facts in $p$-linear algebra (see Remark~\ref{rmk:NoPrimitive}).
\end{enumerate}

\subsection{Outline}
We begin in \S \ref{sec:PerfectPerfectoid} by recalling some basic notions on perfect and perfectoid rings; the main aim is to explain the geometry of Fontaine's ring $A_\inf$ (the most fundamental $p$-adic period ring), and use it to formulate a more precise theorem implying Theorem~\ref{thm:MainThm}. In \S \ref{sec:Almost}, we introduce some basic notions from almost mathematics, including a slightly nonstandard version of almost mathematics over Fontaine's ring $A_\inf$, and prove a lemma about how this behaves in the spherically complete case. The relevant background from the perfectoid approach to $p$-adic Hodge theory is summarized in \S \ref{sec:FramedAlgebras}; this includes, in particular, some notation surrounding perfectoid tori over $\calO_C$ and their \'etale covers that will be used repeatedly later. One of the main innovations of \cite{BMSMainPaper} --- the Berthelot-Ogus functor $L\eta_f(-)$ and its ability to kill torsion in derived categories --- is then introduced and studied in \S \ref{sec:Leta}. These tools enable us to define and study the complex $A\Omega_\frakX$ in \S \ref{sec:AOmega}; to make this study flow smoothly, we first discuss a certain specialization $\widetilde{\Omega}_\frakX$ of $A\Omega_\frakX$ in \S \ref{sec:TildeOmega}. Finally, the relevant global consequences for Theorem~\ref{thm:MainThm} are deduced in \S \ref{sec:globalAOmega}.

\subsection{Conventions}
\label{ss:Conventions} We explain the (largely standard) conventions followed in these notes.

\subsubsection*{Derived categories}
All rings will be commutative with $1$. For a ring $R$, write $\Mod_R$ for the category of $R$-modules, and $D(R)$ for the (unbounded) derived category of $R$-modules. We shall use implicitly use the fact that standard functors (such as tensor products, $\mathrm{Hom}$, limits, etc.) admit well-behaved derived functors at the level of unbounded derived categories; this was worked out by Spaltenstein \cite{Spaltenstein}, and a convenient modern reference is \cite[Tag 05QI]{StacksProject}. We shall identify an $R$-module $M$ with the chain complex $M[0]$ obtained by placing $M$ in degree $0$. If $f \in R$ is a nonzerodivisor and $K \in D(R)$, we will often write $K/f$ for $K \otimes^L_R R/f$ if there is no confusion. Also, we use the following fact without comment: if $K[\frac{1}{f}] = 0$ and $K/f = 0$, then $K = 0$. Indeed, the second condition implies $f$ acts invertibly on $K$, whence the first condition implies $K = 0$.

\subsubsection*{Completions}
Given a ring $R$ and a finitely generated ideal $I = (f_1,...,f_r) \subset R$, we will often talk about objects $M \in D(R)$ that are $I$-adically complete; this is always meant in the derived sense. Recall that $M$ is {\em $I$-adically complete} iff the natural maps induce an isomorphism $M \simeq R\lim_n(M \otimes^L_{\Z[x_1,...,x_r]} \Z[x_1,...,x_r]/(x_i^n))$, where $x_i \in \Z[x_1,...,x_r]$ acts via $f_i$ on $M$; this is equivalent to asking that the (derived) inverse limit of the tower $\{ ... \to M \stackrel{f_i}{\to} M \stackrel{f_i}{\to} M\}$ vanishes for each $f_i$. Such complexes form a full triangulated subcategory of $D(R)$ and have the following features: 
\begin{enumerate}
\item A complex $M$ is complete if and only if each $H^i(M)$ is complete.
\item The complete complexes which are discrete (i.e., have cohomology only in degree $0$) form an abelian subcategory of all $R$-modules.
\item Nakayama's lemma: a complete complex $M$ is $0$ if and only if $M \otimes^L_{\Z[x_1,...,x_r]} \Z$ is so. 
\item The inclusion of complete complexes into $D(R)$ has a left-adjoint $M \mapsto \widehat{M}$ called the {\em completion} functor, which is explicitly computed by the formula 
\[ \widehat{M} := R\lim_n(M \otimes^L_{\Z[x_1,...,x_r]} \Z[x_1,...,x_r]/(x_i^n)).\]
Thus, any $M \in D(R)$ admits a canonical map $M \to \widehat{M}$ which is an isomorphism exactly when $M$ is complete. 
\item Fix an $R$-module $M$ that is $I$-adically separated. Then $M$ is $I$-adically complete in the classical sense (i.e., $M \simeq \lim M/I^n M$) if and only if $M$ is complete when regarded as a complex. 
\item Say $I = (f)$ and $M$ is an $R$-module on which $f$ acts injectively. Then the completion $\widehat{M}$ of $M$ as a complex coincides with the classical $I$-adic completion $\lim_n M/I^n M$. In particular, such an $M$ is $I$-adically complete in the classical sense if and only if $M$ is complete as a complex.
\end{enumerate}
We refer to \cite[\S 6.2]{BMSMainPaper}, \cite[\S 3.4]{BhattScholze} and \cite[Tag 091N]{StacksProject} for a more complete discussion. 

\subsubsection*{Koszul complexes}
For an abelian group $M$ equipped with commuting endomorphisms $f_1,...,f_r$, we often consider the Koszul complex
\[ K(M;f_1,...,f_r) := M \to M \otimes_{\Z} \Z^{\oplus r} \to M \otimes_{\Z} \wedge^2 (\Z^{\oplus r}) \to .... \to M \otimes_{\Z}\wedge^r (\Z^{\oplus r}),\]
viewed as a chain complex in cohomological degrees $0,...,r$; this chain complex calculates the object $R\Hom_{\Z[x_1,...,x_r]}(\Z, M)$ where $x_i$ acts by $f_i$ on $M$ and trivially on $\Z$. Now assume that $M$ is an $R$-module for some ring $R$, and the $f_i$'s are elements of $R$. We will often use the following observations: (a) the complex $K(M;f_1,...,f_r)$ admits the structure of a complex over the simplicial commutative ring\footnote{The theory of simplicial commutative rings up to homotopy is developed by Quillen \cite{QuillenHA}, and geometrized to ``derived algebraic geometry'' in the work of Lurie \cite{LurieThesis} (see also \cite{LurieSAG} for more). We do not use any non-formal input from this theory.} $\Z \otimes^L_{\Z[x_1,...,x_r]} R$, (b) each homology group of this complex is an $R$-module annihilated by each $f_i$, and (c) if $g \in (f_1,...,f_r)$ is a nonzerodivisor, then $K(M;f_1,...,f_r)$ admits the structure of a complex over $R/(g) \simeq \Z \otimes^L_{\Z[y]} R$, where $y$ maps to $0$ in $\Z$ and $g \in R$.

\subsubsection*{Others}
The letter $C$ will be reserved to denote a complete and algebraically closed nonarchimedean field; here the valuation is always required to have rank $1$. When working over a ring $A$ equipped with a notion of `almost' isomorphisms, given $A$-modules $M$ and $N$, we write $M \stackrel{a}{\simeq} N$ for an almost isomorphism between $M$ and $N$. 

\subsection*{Acknowledgements} I would like to thank the organizers of the 2015 AMS Algebraic Geometry Symposium for encouraging me to prepare these notes on the joint work \cite{BMSMainPaper}. I'd also like to thank my collaborators Matthew and Peter for many enlightening discussions about this project; the novel ideas presented here, including those summarized in \S \ref{ss:IntroRelateBMS}, were conceived in these exchanges. I'm grateful to Jean-Marc Fontaine for a valuable discussion about $A_\inf$, and to Jacob Lurie for numerous conversations about $A\Omega_\frakX$. Thanks are due to Wei Ho, Jacob Lurie, and Peter Scholze for their many comments that helped significantly improve this writeup. I am also indebted to Kestutis \v{C}esnavi\v{c}us as well as the two anonymous referees for their thorough reading of a previous version of these notes and numerous comments. During the preparation of these notes, I was supported by NSF Grant DMS \#1501461 and a Packard fellowship.

\section{Perfect and perfectoid rings}
\label{sec:PerfectPerfectoid}

The main goal of this section is to introduce Fontaine's period ring $A_\inf$ in \S \ref{ss:PerfectoidField}, explain what it looks like in Figure~\ref{fig:Ainf}, and use it to give a better formulation of the main theorem proven in these notes in \S \ref{ss:BetterMainThm}. The input to the construction of $A_\inf$ is the perfectoid nature of $\calO_C$, so we spend some time in \S \ref{ss:PerfectoidRings} developing some language to study perfectoid rings and Fontaine's $A_\inf$-functor in general. In particular, we give an introduction to Scholze's tilting correspondence \cite{ScholzeThesis} in Remark~\ref{rmk:Tilting}. The main input in the presentation of the tilting correspondence here is the vanishing of the cotangent complex of perfectoid rings. This vanishing is a mixed characteristic analog of the vanishing of the cotangent complex of perfect rings in characteristic $p$, so we spend some time in \S \ref{ss:PerfectRings} studying perfect rings and discussing consequences of the cotangent complex vanishing, such as a well-behaved Witt vector functor.

\subsection{Perfect rings}
\label{ss:PerfectRings}

Fix a prime $p$. We will study the following class of $\F_p$-algebras:

\begin{definition}
An $\F_p$-algebra $R$ is {\em perfect} if the Frobenius map $R \to R$ is an isomorphism. Let $\Perf_{\F_p}$ denote the category of perfect $\F_p$-algebras.
\end{definition}

The following two constructions of perfect rings out of ordinary rings are useful in the sequel:

\begin{example}[Perfections]
Given any $\F_p$-algebra $R$, one obtains two canonically associated perfect rings by the formulas
\[ R_\perf := \colim_{x \mapsto x^p} R \quad \mathrm{and} \quad R^\perf := \lim_{x \mapsto x^p} R. \]
The canonical map $R \to R_\perf$ (resp. $R^\perf \to R$) is the universal map from $R$ to a perfect ring (resp. to $R$ from a perfect ring). Some explicit examples that are relevant for the sequel are given as follows:
\begin{enumerate}
\item Let $R = \F_p[x]$. Then $R_\perf = \F_p[x^{\frac{1}{p^\infty}}]$ is the set of all polynomials over $\F_p$ with exponents in $\N[\frac{1}{p}]$, and $R^\perf \simeq \F_p$ is the set of constant polynomials in $R$.
\item Let $S = \F_p[x^{\frac{1}{p^\infty}}]/(x)$. Then $S_\perf = \F_p$, and $S^\perf$ is the $x$-adic completion of $\F_p[x^{\frac{1}{p^\infty}}]$.
\item Let $R$ be a finite type $k$-algebra with $k$ a perfect field of characteristic $p$. Then $R^\perf \simeq k^{\pi_0(\Spec(R))}$.
\end{enumerate}
In particular, examples (2) and (3) illustrate an important feature: the functor $(-)^\perf$ tends to be quite lossy unless one restricts attention to {\em semiperfect} rings, i.e., rings where Frobenius is surjective; such rings will have many nilpotents (unless they are themselves perfect). In contrast, the functor $(-)_\perf$ completely destroys all nilpotents.
\end{example}

An essential feature of perfect rings is that they admit a canonical (and unique) ``one parameter deformation'' across $\Z_p \to \F_p$. Such a deformation can be provided explicitly by the Witt vector functor (see \cite[\S II.6]{SerreLocalFields} for the standard construction), but we take a slightly more abstract perspective to summarize the properties of this construction that are relevant for later applications: 

\begin{proposition}[Witt vectors of perfect rings]
\label{prop:WittVectorDescription}
Let $\widehat{\Alg}_{\Z_p}$ denote the category of $p$-adically complete and separated $p$-torsionfree $\Z_p$-algebras. The functor $\widehat{\Alg}_{\Z_p} \to \Perf_{\F_p}$ determined by $B \mapsto B^\flat := (B/p)^\perf$ admits a left adjoint $A \mapsto W(A)$.
\end{proposition}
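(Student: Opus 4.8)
The plan is to realize $W(A)$, for a perfect $\F_p$-algebra $A$, as the unique flat $p$-adic deformation of $A$ across $\Z_p \to \F_p$, and to read off the adjunction from the rigidity of such deformations. The one genuinely nonformal ingredient is the vanishing $L_{A/\F_p} \simeq 0$ of the cotangent complex of a perfect ring; I would recall the usual argument for this: the absolute Frobenius $\varphi_A$ is an automorphism of $A$, hence acts invertibly on $L_{A/\F_p}$, but it also factors through the relative Frobenius (identifying the Frobenius twist of $A$ with $A$ itself, since $\F_p$ is perfect), which is known to act by $0$ on cotangent complexes; an endomorphism that is simultaneously invertible and zero forces $L_{A/\F_p} \simeq 0$.

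First I would construct $W(A)$. Because $L_{A/\F_p} \simeq 0$, the obstruction theory of flat deformations along square-zero thickenings (applied stepwise to $\Z/p^m \twoheadrightarrow \Z/p^{m-1} \twoheadrightarrow \cdots \twoheadrightarrow \F_p$) produces a flat $\Z/p^m$-algebra $W_m(A)$ lifting $A$, unique up to unique isomorphism and with no nontrivial automorphisms; uniqueness forces a canonical identification $W_m(A)/p^{m-1} \cong W_{m-1}(A)$, making the $W_m(A)$ into a canonical tower, and I would set $W(A) := \varprojlim_m W_m(A)$. The standard equivalence between compatible towers of flat $\Z/p^m$-modules and $p$-complete, $\Z_p$-flat modules then gives that $W(A)$ is $p$-torsionfree, $p$-adically complete and separated, with $W(A)/p^m W(A) \cong W_m(A)$ and in particular $W(A)/p \cong A$; so $W(A) \in \widehat{\Alg}_{\Z_p}$. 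Next, base change of $L_{A/\F_p} \simeq 0$ along $\Z/p^m \to \F_p$ — legitimate since $\Z/p^m \to W_m(A)$ is flat with fibre $A$ — combined with derived Nakayama (the cotangent complex is connective and $pW_m(A)$ is nilpotent) yields $L_{W_m(A)/(\Z/p^m)} \simeq 0$; equivalently, $W_m(A)$ is formally étale over $\Z/p^m$.

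With this in hand the adjunction is formal: for $B \in \widehat{\Alg}_{\Z_p}$ I would compute
\[ \Hom_{\widehat{\Alg}_{\Z_p}}(W(A),B) \;=\; \varprojlim_m \Hom_{\Z/p^m}(W_m(A),B/p^m) \;\xrightarrow{\ \sim\ }\; \varprojlim_m \Hom_{\Z/p^m}(W_m(A),B/p) \;=\; \varprojlim_m \Hom_{\F_p\text{-alg}}(A,B/p). \]
The first equality uses $B = \varprojlim_m B/p^m$ together with the fact that any ring map out of $W(A)$ into the $p^m$-torsion ring $B/p^m$ kills $p^m$ and hence factors through $W(A)/p^m = W_m(A)$. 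The middle isomorphism is formal étaleness of $W_m(A)$ over $\Z/p^m$ applied to the nilpotent surjection $B/p^m \twoheadrightarrow B/p$. The last equality holds because a ring map into the $\F_p$-algebra $B/p$ factors through $W_m(A)/p = A$. Finally, the transition maps in the right-hand tower are all the identity of $\Hom_{\F_p\text{-alg}}(A,B/p)$ (again by uniqueness of lifts), so the limit collapses to $\Hom_{\F_p\text{-alg}}(A,B/p) = \Hom_{\Perf_{\F_p}}(A,(B/p)^\perf)$, the last equality because $(-)^\perf$ is right adjoint to the inclusion $\Perf_{\F_p} \hookrightarrow \Alg_{\F_p}$. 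All of this is natural in $B$, so $W(A)$ represents the functor $B \mapsto \Hom_{\Perf_{\F_p}}(A,(B/p)^\perf)$; letting $A$ vary exhibits $A \mapsto W(A)$ as the desired left adjoint (and one checks it is the classical ring of Witt vectors).

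The main obstacle is the deformation-theoretic step — converting $L_{A/\F_p} \simeq 0$ into the existence, uniqueness, and formal étaleness of the $W_m(A)$ — together with the care needed to confirm that $W(A)$ really lies in $\widehat{\Alg}_{\Z_p}$ with $W(A)/p^m = W_m(A)$; everything after that is bookkeeping with $p$-adic inverse limits. One could alternatively take the classical Witt vector functor as the definition of $W(A)$ and run the same computation, provided one first records that $W(A)$ is $\Z_p$-flat with $W(A)/p^m = W_m(A)$ and that $L_{W_m(A)/(\Z/p^m)} \simeq 0$; the two approaches differ only in how $W(A)$ is produced.
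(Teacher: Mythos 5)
Your proposal is correct and follows essentially the same route as the paper: deduce $L_{A/\F_p}\simeq 0$ from the Frobenius being simultaneously zero and invertible on it, use deformation theory to produce the unique flat lifts $W_n(A)$ over $\Z/p^n$, pass to the limit, and then derive the adjunction from cotangent-complex vanishing together with the universal property of $(-)^\perf$. The only difference is that you carry out in full the adjunction computation that the paper leaves as an exercise, and your details (formal \'etaleness of $W_m(A)$ over $\Z/p^m$, collapsing the inverse limit of Hom-sets) are all sound.
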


The careful reader shall observe that our conventions imply that a $\mathbf{Z}_p$-module $M$ that is $p$-torsionfree and $p$-adically complete in the derived sense is automatically $p$-adically separated, so the extra hypothesis above is redundant. Nevertheless, we explicitly mention it to avoid any possible confusion.

\begin{proof}
We first construct the functor $W(-)$. For this, observe that the cotangent complex $L_{A/\F_p}$ vanishes: the Frobenius on $A$ induces a map $L_{A/\F_p} \to L_{A/\F_p}$ that is simultaneously $0$ (as this is true for any $\F_p$-algebra) and an isomorphism (as $A$ is perfect). In particular, for any infinitesimal extension $R \to \F_p$, there is a unique (up to unique isomorphism) flat map $R \to A_R$ lifting $\F_p \to A$. Applying this to $R = \Z/p^n$ defines $W_n(A) = A_{\Z/p^n}$. Taking limits gives $W(A) := \lim W_n(A)$, and one readily checks that $W(A) \in \widehat{\Alg}_{\Z_p}$, thus defining $W(-):\Perf_{\F_p} \to \widehat{\Alg}_{\Z_p}$. The adjunction between $W(-)$ and $(-)^\flat$ is an exercise in using the vanishing of the cotangent complex and the defining property of $(-)^\perf$.
\end{proof}

By functoriality, the Frobenius automorphism $\phi$ of a perfect ring $A$ induces an automorphism of $W(A)$ that is abusively also denoted by $\phi$. Note that this construction provides an abundant supply of characteristic $0$ rings (namely, $W(A)[\frac{1}{p}]$) equipped with a map that deserves to be called a ``Frobenius''.

\begin{remark}[Teichmuller maps]
\label{rmk:Teichmuller}
The construction $A \mapsto W(A)$ on $\Perf_{\F_p}$ enjoys the following lifting feature: the identity map $A \to A$ lifts uniquely to a multiplicative map $[-]:A \to W(A)$: send $a \in A$ to $\lim b_n^{p^n}$, where $b_n \in W(A)$ is a lift of $a^{\frac{1}{p^n}}$. Less explicitly, the fiber over $1$ of $W_{n+1}(A) \to W_n(A)$ is $p$-torsion; as multiplication by $p$ is a bijection on the multiplicative monoid underlying $A$, there are no obstructions/choices in lifting $\id:A \to A$ inductively up along each $W_{n+1}(A) \to W_n(A)$ to get a multiplicative map $[-]:A \to W(A)$ in the limit. The existence of this map immediately shows that any $f \in W(A)$ can be uniquely written as a power series $\sum_{i \geq 0} [a_i] \cdot p^i$ with $a_i \in W(A)$: given $f \in W(A)$, we set $a_0 \in A$ to be the image of $f$, and then inductively define $a_n$ by checking that $p^n \mid (f -\sum_{i=0}^{n-1} [a_i] p^i)$ and setting $a_n$ as the image of $\frac{f - \sum_{i=0}^{n-1} [a_i] p^i}{p^n}$ under the reduction map $W(A) \to A$. Thanks to this description, one can work with the Witt vectors very explicitly. In these notes, we have chosen to de-emphasize the explicit presentation in favor of the conceptual description via deformation theory.
\end{remark}

\begin{remark}[The Witt vectors as an equivalence of categories]
\label{rmk:WittVectorPerfectCategory}
Consider the full subcategory $\widehat{\Alg}^\perf_{\Z_p} \subset \widehat{\Alg}_{\Z_p}$ spanned by those rings $B$ for which $B/p$ is perfect. The proof of Proposition \ref{prop:WittVectorDescription} shows that $W(-)$ gives an equivalence $\Perf_{\F_p} \simeq \widehat{\Alg}^\perf_{\Z_p}$. Moreover, for any such $B \in \widehat{\Alg}^{\perf}_{\Z_p}$, we have $L_{B/\Z_p} \otimes^L_{\Z_p} \F_p \simeq L_{(B/p)/\F_p} \simeq 0$, and thus the $p$-adic completion $\widehat{L_{B/\Z_p}}$ (which, we recall, is computed as $R\lim_n (L_{B/\mathbf{Z}_p} \otimes^L_{\mathbf{Z}_p} \mathbf{Z}/p^n)$) vanishes.
\end{remark}

\begin{example}[The perfect polynomial ring]
\label{ex:PerfectWitt}
Let $R = \F_p[x]_\perf = \F_p[x^{\frac{1}{p^\infty}}]$. Then we claim that $W(R) = \widehat{\Z_p[x^{\frac{1}{p^\infty}}]}$, where the completion is $p$-adic. To see this, by Remark~\ref{rmk:WittVectorPerfectCategory}, it suffices to observe that $\widehat{\Z_p[x^{\frac{1}{p^\infty}}]}$ is an object of $\widehat{\Alg}_{\Z_p}^{\perf}$ that reduces mod $p$ to $R$. More generally, if $R = R_{0,\perf}$ for an $\F_p$-algebra $R_0$, and $\widetilde{R}_0$ is a $\Z_p$-flat lift of $R_0$ equipped with a lift $\phi:\widetilde{R}_0 \to \widetilde{R}_0$ of Frobenius, then the same argument shows that $W(R)$ is the $p$-adic completion of $\colim_\phi \widetilde{R}_0$.
\end{example}

\subsection{Perfectoid rings}
\label{ss:PerfectoidRings}

Fix a complete and algebraically closed nonarchimedean extension $C/\Q_p$. To a first approximation, a perfectoid ring can be viewed as an analog of a perfect ring over $\calO_C$. More precisely, one defines:

\begin{definition}[Scholze]
An $\calO_C$-algebra $R$ is said to be {\em perfectoid} if $R$ is $p$-adically complete, $p$-torsionfree, and Frobenius induces an isomorphism $R/p^{\frac{1}{p}} \simeq R/p$. Let $\Perf_{\calO_C}$ be the category of all such $\calO_C$-algebras. 
\end{definition}

\begin{remark}[Perfectoidness as relative perfectness]
\label{rmk:PerfectoidCC}
For a $p$-adically complete $p$-torsionfree $\calO_C$-algebra $R$, being perfectoid is equivalent to requiring that the relative Frobenius for $\calO_C/p \to R/p$ is bijective. In particular, a slight variant of the trick used in Proposition~\ref{prop:WittVectorDescription} shows that $L_{R/\calO_C} \otimes^L_R R/p \simeq L_{(R/p)/(\calO_C/p)}$ vanishes, and thus the $p$-adic completion $\widehat{L_{R/\calO_C}}$ also vanishes by Nakyama's lemma for complete complexes.
\end{remark}

\begin{remark}
In the literature, one finds a plethora of different notions of perfectoid rings, adapted to the problem at hand. In particular, the notion introduced above is sometimes called {\em integral perfectoid} to emphasize that $p$ is not invertible on the ring in question.
\end{remark}

In particular, the ring $\calO_C$ itself is perfectoid. The most imporant examples for us are $p$-adic analogs of solenoids:

\begin{example}[The perfectoid torus]
\label{ex:PerfectoidBasic}
The $\calO_C$-algebra $\calO_C \langle t_1^{\pm \frac{1}{p^\infty}},..., t_d^{\pm \frac{1}{p^\infty}} \rangle$, i.e., the $p$-adic completion of $\calO_C[t_1^{\pm \frac{1}{p^\infty}},....,t_d^{\pm \frac{1}{p^\infty}}]$, is perfectoid. More generally, the $p$-adic completion of any \'etale $\calO_C \langle t_1^{\pm \frac{1}{p^\infty}},..., t_d^{\pm \frac{1}{p^\infty}} \rangle$-algebra is perfectoid; here one uses that Frobenius base changes to Frobenius along an \'etale map.
\end{example}

In analogy with the Witt vector functor on perfect rings, there is an analogous ``one parameter deformation'' of a perfectoid ring given by Fontaine's $A_\inf(-)$ functor:

\begin{definition}[Fontaine]
\label{def:Ainf}
For a perfectoid $\calO_C$-algebra $R$, define 
\[ R^\flat := \lim_{x \mapsto x^p} R/p \quad \mathrm{and} \quad A_\inf(R) := W(R^\flat).\]
The ring $R^\flat$ is called the {\em tilt} of $R$ (following Scholze). We write $\phi$ for the Frobenius automorphism of $R^\flat$ or $A_\inf(R)$. For $R = \calO_C$ itself, we simply write $A_\inf := A_\inf(\calO_C)$.
\end{definition}

\begin{remark}[Explaining the name]
The notation $A_\inf(-)$ is meant to be suggestive: for a perfectoid $\calO_C$-algebra $R$, the ring $A_\inf(R)$ is the universal pro-infinitesimal thickening of $\Spec(R)$ relative to $\Z_p$ (see \cite[\S 1.2]{Fontainepadicperiods}), i.e., for any infinitesimal thickening $R' \to R$, there is a unique map $A_\inf(R) \to R'$ compatible with the map $\theta$ from Lemma~\ref{lem:AinfTheta} below. In particular, there is a canonical isomorphism 
\[ R\Gamma( (\Spec(R)/\Z_p)_\inf, \calO_\inf) \simeq A_\inf(R).\]
Since we do not need this later, we do not prove this assertion here.
\end{remark}

The set $R^\flat$ can be described in a more ``strict'' fashion as follows:

\begin{lemma}
\label{lem:FlatPerfectoidAlternate}
Let $R$ be a perfectoid $\calO_C$-algebra. Define the multiplicative monoid 
\[ R^{\flat,'} := \lim_{x \mapsto x^p} R.\]
The natural map
\[ \alpha:R^{\flat,'} := \lim_{x \mapsto x^p} R \to R^\flat := \lim_{x \mapsto x^p} R/p\]
given by reduction modulo $p$ on terms is a bijection of multiplicative monoids. 
\end{lemma}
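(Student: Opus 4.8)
The statement is that reduction mod $p$ induces a bijection $\alpha \colon \lim_{x \mapsto x^p} R \to \lim_{x\mapsto x^p} R/p$ of multiplicative monoids. The plan is to produce an explicit inverse. Given a compatible system $(\bar{x}_0, \bar{x}_1, \dots)$ with $\bar{x}_{n+1}^p = \bar{x}_n$ in $R/p$, I would lift each $\bar{x}_n$ arbitrarily to some $y_n \in R$ and then set $x_n := \lim_{m \to \infty} y_{n+m}^{p^m}$, the limit taken in the $p$-adically complete ring $R$. The two things to check are: (i) this limit exists, i.e. the sequence $\{y_{n+m}^{p^m}\}_m$ is $p$-adically Cauchy, (ii) the resulting element $x_n$ is independent of the choices of lifts $y_n$, and (iii) the system $(x_n)_n$ satisfies $x_{n+1}^p = x_n$ and reduces to $(\bar x_n)_n$ mod $p$, so that we genuinely land in $R^{\flat,'}$ and $\alpha$ takes it back to the original system.

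The key elementary input, which I would isolate as a sublemma, is: if $a \equiv b \pmod{p^k R}$ in $R$ (for $k \geq 1$), then $a^p \equiv b^p \pmod{p^{k+1} R}$. For $k \geq 1$ and $p$ odd this is immediate from the binomial expansion $a^p - b^p = (b + p^k c)^p - b^p = \sum_{i \geq 1}\binom{p}{i} b^{p-i}(p^k c)^i$, since each term for $i \geq 1$ is divisible by $p^{k+1}$ (the $i=1$ term has the factor $p \cdot p^k$, and higher terms have $p^{ik}$ with $ik \geq k+1$). For $p = 2$ one checks separately that $a^2 - b^2 = (a-b)(a+b)$ with $a - b \in 2^k R$ and $a+b \equiv 2b \pmod{2^k R}$, so $a+b \in 2R$, giving divisibility by $2^{k+1}$; one should be mildly careful that $R$ is $p$-torsionfree so these congruences behave well, though torsionfreeness is not actually needed here. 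Iterating the sublemma: if $y_{n+m}^p \equiv y_{n+m-1} \pmod{pR}$ for all $m$ (which holds because $\bar y_{n+m}^p = \bar y_{n+m-1}$ in $R/p$), then $y_{n+m}^{p^m} \equiv y_{n+m-1}^{p^{m-1}} \pmod{p^m R}$, so the sequence is Cauchy and (i) follows. The same sublemma gives independence of the choice of lift: two lifts of $\bar y_n$ differ by an element of $pR$, so their $p^m$-th powers agree mod $p^{m+1}R$, and passing to the limit the contributions wash out; this gives (ii). Property (iii) is then a short check: $x_{n+1}^p = \lim_m y_{n+1+m}^{p^{m+1}} = \lim_m y_{n+m'}^{p^{m'}} = x_n$ (reindexing $m' = m+1$), and $x_n \equiv y_n \pmod{pR}$ reduces to $\bar y_n = \bar x_n$.

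Finally I would verify that the map $(\bar x_n) \mapsto (x_n)$ is a two-sided inverse of $\alpha$ and is multiplicative. Multiplicativity is clear from $(yz)^{p^m} = y^{p^m} z^{p^m}$ and continuity of multiplication. That it is a right inverse to $\alpha$ is the content of (iii); that it is a left inverse: starting from $(x_n) \in R^{\flat,'}$, reducing mod $p$ gives $(\bar x_n)$, and then applying the construction with the particular lifts $y_n = x_n$ we get $\lim_m x_{n+m}^{p^m} = \lim_m x_n = x_n$ using $x_{n+m}^{p^m} = x_n$, recovering the original system.

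**Main obstacle.** There is no real obstacle; the entire content is the sublemma $a \equiv b \pmod{p^k} \Rightarrow a^p \equiv b^p \pmod{p^{k+1}}$ and the bookkeeping needed to see the limits converge and are well-defined. The only point requiring a moment's care is the prime $p = 2$ case of the sublemma, and keeping track of the indices when reindexing the limit defining $x_{n+1}^p$. Completeness of $R$ (with respect to $p$) is used essentially — it is what makes the limits exist — and is exactly the hypothesis supplied by perfectoidness.
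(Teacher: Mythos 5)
Your proposal is correct and follows essentially the same route as the paper: the paper phrases the argument as separate injectivity and surjectivity checks rather than exhibiting a two-sided inverse, but both rest on the same sublemma ($a \equiv b \bmod p^k$ implies $a^p \equiv b^p \bmod p^{k+1}$) and the same limit construction $b_i = \lim_m \tilde a_{i+m}^{p^m}$. (A minor remark: your binomial estimate already handles $p=2$ uniformly, since for $i \geq 2$ one has $ik \geq 2k \geq k+1$, so the separate case is unnecessary.)
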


As the proof below shows, the conclusion of this lemma is valid for any $p$-adically complete ring $R$.

\begin{proof}
Write elements in $R^\flat$ as sequences $(a_0,a_1,a_2,...)$ with $a_i \in R/p$ and $a_{i+1}^p = a_i$, and similarly for $R^{\flat,'}$. For injectivity of $\alpha$: if $(a_i),(b_i) \in R^{\flat,'}$ with $a_i = b_i \mod p$ for all $i$, then one inductively shows that $a_{i+n}^{p^n} = b_{i+n}^{p^n} \mod p^{n+1}$ for all $i,n$, and thus $a_i = b_i \mod p^n$ for all $i,n$, which proves $a_i = b_i$ for all $i$ by $p$-adic completeness. For surjectivity of $\alpha$: given $(a_i) \in R^\flat$ and arbitrary lifts $\tilde{a}_i \in R$ of $a_i$, one checks that the sequence $n \mapsto \tilde{a}_{i+n}^{p^n}$ converges for all $i$, and thus setting $b_i := \lim_n \tilde{a}_{i+n}^{p^n} \in R$ gives an element $(b_i) \in R^{\flat,'}$ that lifts $(a_i)$. 
\end{proof}

We now justify why $A_\inf(R)$ may be considered as a ``one parameter deformation'' of $R$:

\begin{lemma}[Fontaine's map $\theta$]
\label{lem:AinfTheta}
Fix some $R \in \Perf_{\calO_C}$. The canonical projection $\overline{\theta}:R^\flat \to R/p$ fits into a unique pushout square of commutative rings
\begin{equation}
\label{eq:Ainf}
\xymatrix{ A_\inf(R) \ar[r]^-{\theta} \ar[d]^-{\mathrm{kill\ p}} & R \ar[d]^-{\mathrm{kill\ p}} \\
			R^\flat \ar[r]^-{\overline{\theta}} & R/p.}
\end{equation}
Both $\theta$ and $\overline{\theta}$ are surjective, and the kernel of either is generated by a nonzerodivisor.
\end{lemma}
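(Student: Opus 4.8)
The plan is to verify the universal property/pushout claim first, then deduce the statements about $\theta$, $\overline\theta$ and their kernels, treating the case $R=\calO_C$ and then leveraging it.

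\medskip

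\textit{The pushout square.} First I would recall that $A_\inf(R)=W(R^\flat)$ and that, by Remark~\ref{rmk:Teichmuller}, every element of $W(R^\flat)$ is uniquely a convergent sum $\sum_{i\ge 0}[a_i]p^i$ with $a_i\in R^\flat$. Using Lemma~\ref{lem:FlatPerfectoidAlternate} to identify $R^\flat$ with $\lim_{x\mapsto x^p}R$ (as multiplicative monoids), I would define $\theta:A_\inf(R)\to R$ by $\theta\big(\sum[a_i]p^i\big)=\sum a_i^{(0)}p^i$, where $a^{(0)}\in R$ denotes the first component of $a\in R^\flat\cong\lim R$; this is the standard formula, and the only thing needing comment is that it is a ring homomorphism — which follows because it is the unique continuous lift of $\overline\theta$ to the ($p$-adically complete, $p$-torsionfree) deformation $W(R^\flat)$ of $R^\flat$, exploiting precisely the deformation-theoretic characterization of $W(-)$ from Proposition~\ref{prop:WittVectorDescription}. (Alternatively one cites Fontaine directly.) Reducing mod $p$, $\theta$ visibly induces $\overline\theta:R^\flat\to R/p$, giving the commuting square; that it is a \emph{pushout} amounts to the identity $A_\inf(R)\otimes_{W(R^\flat)}^{\phantom{L}} R^\flat \cong R^\flat$ being matched with $R/p$, i.e.\ reducing $A_\inf(R)\to R$ mod $p$ gives $R^\flat\to R/p$, which we have just seen; uniqueness of such a square is formal.

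\medskip

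\textit{Surjectivity.} Surjectivity of $\overline\theta:R^\flat\to R/p$ is immediate from the description of $R^\flat$ as an inverse limit of copies of $R/p$ along Frobenius: given $\bar x\in R/p$, the perfectoid condition (Frobenius is an isomorphism $R/p^{1/p}\to R/p$, hence surjective $R/p\to R/p$) lets one extract compatible $p$-power roots, producing a preimage. Surjectivity of $\theta$ then follows from surjectivity of $\overline\theta$ together with $p$-adic completeness of $R$ and $A_\inf(R)$: since $\theta$ is surjective mod $p$ and both sides are $p$-complete, a standard successive-approximation/devissage argument (or derived Nakayama, Conventions \S\ref{ss:Conventions}) upgrades this to surjectivity. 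Concretely, given the Teichm\"uller expansion, any $y\in R$ can be written as $\sum a_i^{(0)}p^i$ by repeatedly lifting residues mod $p$.

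\medskip

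\textit{The kernel is generated by a nonzerodivisor.} This is the substantive point, and I expect it to be the main obstacle. I would first do $R=\calO_C$: the kernel of $\overline\theta:\calO_C^\flat\to\calO_C/p$ is the ideal of elements whose first component lies in $(p)$; choosing $\pi\in\calO_C$ with $0<v(\pi)\le v(p)$ and a compatible system of $p$-power roots gives $\varpi\in\calO_C^\flat$ with $\varpi^{(0)}=\pi$ (up to adjusting so that $v(\varpi^{(0)})=v(p)$, possible as $C$ is algebraically closed and the value group is divisible), and one checks $\ker\overline\theta=(\varpi)$ with $\varpi$ a nonzerodivisor since $\calO_C^\flat$ is a domain (it is a valuation ring, being $\lim_{x\mapsto x^p}\calO_C/p$ with the obvious valuation $v(\bar a):=v(a^{(0)})$, valued in the same divisible group). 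For general $R$ one notes $R^\flat$ is an $\calO_C^\flat$-algebra, $\overline\theta$ is $\calO_C^\flat$-linear, and the pushout square shows $\ker\overline\theta$ is the image of $\ker(\calO_{C}^\flat\to\calO_C/p)=(\varpi)$, i.e.\ $\ker\overline\theta=(\varpi)$ in $R^\flat$; that $\varpi$ is a nonzerodivisor on $R^\flat$ follows because $R$ (hence $R/p$, hence $R^\flat$ via the limit) is $\calO_C$-flat / $\varpi$ acts the way $\pi$ does on the flat module $R$. Then lift: pick any $\xi\in A_\inf(R)$ reducing to $\varpi$ mod $p$, e.g.\ $\xi=[\varpi]$ if $\varpi\in\ker\overline\theta$ lifts the chosen generator, noting $\theta(\xi)$ is divisible by... — here I must be slightly careful: $\xi=[\varpi]$ need not lie in $\ker\theta$, so instead I take the standard generator $\xi$ of $\ker\theta$, available for $\calO_C$ from the theory ($\xi=\frac{[\varepsilon]-1}{[\varepsilon^{1/p}]-1}$ type element, or simply: $\ker\theta$ is principal because $A_\inf$ is (after inverting nothing) a regular-enough ring and $\theta$ is a surjection of $p$-complete $p$-torsionfree rings with principal kernel mod $p$). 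The clean argument: $\ker\theta$ is $p$-adically complete, $\ker\theta/p\cong\ker\overline\theta=(\varpi)$, and $\ker\theta$ is $p$-torsionfree (submodule of $A_\inf(R)$ which is $p$-torsionfree); choosing $\xi\in\ker\theta$ lifting $\varpi$, derived Nakayama shows $\xi$ generates $\ker\theta$, and $\xi$ is a nonzerodivisor on $A_\inf(R)=W(R^\flat)$ since it is so mod $p$ (as $\varpi$ is a nonzerodivisor on $R^\flat$) and $A_\inf(R)$ is $p$-torsionfree. This last implication — nonzerodivisor mod $p$ plus $p$-torsionfree target implies nonzerodivisor — is the one clean lemma I would isolate and prove by a short diagram chase.
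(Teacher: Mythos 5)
Your construction of $\theta$, both surjectivity arguments, and the passage from $\ker(\overline{\theta})$ to $\ker(\theta)$ (lift a generator of $\ker(\overline{\theta})$ to $\xi\in\ker(\theta)$, then use $p$-torsionfreeness of $R$ to get $\ker(\theta)/p\simeq\ker(\overline{\theta})$ and completeness to get generation) all match the paper's proof. The genuine gap is the computation of $\ker(\overline{\theta})$ for a general $R\in\Perf_{\calO_C}$. You prove $\ker(\overline{\theta})=(\varpi)$ for $R=\calO_C$ via the valuation on $\calO_C^\flat$, and then assert that for general $R$ ``the pushout square shows'' $\ker(\overline{\theta})$ is generated by the image of $\varpi$. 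No square available at this stage gives that: the square in the statement only encodes $R\otimes_{A_\inf(R)}A_\inf(R)/p\simeq R/p$, which is vacuous, and the assertion $R^\flat/\varpi R^\flat\simeq R/p$ is exactly the content to be proven --- it does not base-change formally from the case $R=\calO_C$. The missing step is the one the paper carries out directly and uniformly in $R$ (with no reduction to $\calO_C$): represent an element of $\ker(\overline{\theta})$ as a compatible system $(a_i)\in\lim_{x\mapsto x^p}R$ with $a_{i+1}^p=a_i$ and $a_0\in pR$; then, computing valuations and using the perfectoid hypothesis and $\calO_C$-flatness of $R$, one gets $a_i\in(p^{1/p^i})$ for every $i$, whence $(a_i)$ is divisible by $\underline{p}=(p,p^{1/p},\dots)$ in $R^\flat$. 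Without this divisibility argument the principality of $\ker(\overline{\theta})$, and hence of $\ker(\theta)$, is not established.

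A secondary problem: the ``clean lemma'' you isolate at the end --- an element that is a nonzerodivisor mod $p$ on a $p$-torsionfree ring is a nonzerodivisor --- is false as stated. Take $A=\Z_p[x]\oplus\Q_p$, the square-zero extension in which $x$ acts by $0$ on $\Q_p$: then $A$ is $p$-torsionfree and $A/p\simeq\F_p[x]$, so $x$ is a nonzerodivisor mod $p$, yet $x$ kills the ideal $\Q_p$. The diagram chase only shows that the annihilator of $\xi$ is contained in $\bigcap_n p^nA$, so one must additionally invoke the $p$-adic separatedness of $A_\inf(R)$ (which does hold, e.g.\ by the Teichmuller expansion of Remark~\ref{rmk:Teichmuller}, and is precisely what the paper uses at this step).
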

\begin{proof}
The existence and uniqueness of the  commutative square with the above maps is immediate from the adjunction in Proposition~\ref{prop:WittVectorDescription}. The square is a pushout square as both vertical maps can be identified with reduction modulo $p$. Now $\overline{\theta}$ is surjective as $R$ is integral perfectoid; the $p$-adic completeness of the top row then implies that $\theta$ is also surjective.

Next, we identify $\ker(\overline{\theta})$ using the multiplicative bijection $\alpha:R^{\flat,'} \simeq R$ from Lemma~\ref{lem:FlatPerfectoidAlternate}. Choose a compatible system of $p$-power roots of $p$ in $\calO_C$, viewed as an element $\underline{p} := (p,p^{\frac{1}{p}},...) \in \calO_C^{\flat,'} \stackrel{\alpha}{\simeq} \calO_C^\flat$. We first observe that (a) $\underline{p}$ is a nonzerodivisor, and (b) it generates the kernel of $\overline{\theta}$. For $(a)$, using the multiplicative structure of $\alpha$, it is enough to observe that $p$ is a nonzerodivisor on $R$ by $\calO_C$-flatness. For (b), if $(a_i) \in R^{\flat,'}$ gives an element of $\ker(\overline{\theta})$ under $\alpha$, then $a_0 \in (p)$; computing valuations and using that $a_{i+1}^p = a_i$, we learn that $a_i \in (p^{\frac{1}{p^i}})$ for all $i$. As $R$ is $\calO_C$-flat, this shows that $(a_i) \in R^\flat$ is divisible by $\underline{p}$, as wanted.

Finally, we identify $\ker(\theta)$. For this, consider the commutative diagram
\[ \xymatrix{ 0 \ar[r] & \ker(\theta) \ar[r] \ar[d]^-{\beta} & A_\inf(R) \ar[r]^-{\theta} \ar[d]^-{\mathrm{kill\ p}} & R \ar[d]^-{\mathrm{kill\ p}} \ar[r] & 0 \\
		   0 \ar[r] & \ker(\overline{\theta}) \ar[r] &	R^\flat \ar[r]^-{\overline{\theta}} & R/p \ar[r] & 0}\]
Here the second row is obtained by applying $(-) \otimes_{A_\inf(R)} R^\flat$ to the first row, and using that $R$ is $p$-torsionfree. Now choose any $\xi \in \ker(\theta)$ that maps to $\underline{p} \in \ker(\overline{\theta})$; for example, we may take $\xi = p - [\underline{p}]$ (by Remark~\ref{rmk:TeichmullerPerfectoid} below, we have $\theta([\underline{p}]) = p$). Then we claim that (a) $\xi$ is a nonzerodivisor, and (b) $\xi$ generates $\ker(\theta)$. For (a), choose some $0 \neq f \in A_\inf(R)$ such that $f \cdot \xi = 0$ in $A_\inf(R)$. Dividing $f$ by a suitable power of $p$, since $A_\inf$ is $p$-adically separated, we may assume that $f \notin (p)$; as $p$ is a nonzerodivisor, we still have $f \cdot \xi = 0$. But then $\xi$ is a zerodivisor modulo $p$, which is a contradiction. For (b), one simply observes that $(\xi) \subset \ker(\theta)$ is an inclusion of $p$-adically complete flat $\Z_p$-modules which is an isomorphism modulo $p$ by the previous paragraph, and thus an isomorphism by completeness.
\end{proof}

\begin{remark}[The map $\tilde{\theta}$]
Fix $R \in \Perf_{\calO_C}$. Then Lemma~\ref{lem:AinfTheta} gives us a map $\theta:A_\inf(R) \to R$. Using the Frobenius automorphism $\phi$ of $A_\inf(R)$, we obtain many more maps $A_\inf(R) \to R$. In particular, the map
\[ \tilde{\theta} := \theta \circ \phi^{-1}:A_\inf(R) \to R\]
will play a crucial role in the sequel.
\end{remark}

Some examples of the $A_\inf(-)$ construction are recorded next:

\begin{example}[The tilt of a point]
\label{ex:PerfectoidField}
Let $R = \calO_C$. Choose the element $\varpi := \underline{p} \in \calO_C^\flat$ as in the proof of Lemma~\ref{lem:AinfTheta}, so we have $\calO_C^\flat/\varpi \simeq \calO_C/p$ by construction. The ring $\calO_C^\flat$ turns out, by a fundamental result of Fontaine-Wintenberger, to be the valuation ring of a complete and algebraically closed nonarchimedean field $C^\flat := \calO_C^\flat[\frac{1}{\varpi}]$ of characteristic $p$; any nonzero noninvertible element of $\calO_C^\flat$, such as the element $\varpi$, is called a {\em pseudouniformizer}. In the special case $C = \C_p$, one can be more explicit: the map $\F_p[\varpi] \to \calO_C^\flat$ identifies $\calO_C^\flat$ with the $\varpi$-adically completed absolute integral closure $\widehat{\overline{\F_p \llbracket \varpi \rrbracket}}$ of the source. The structure of $\calO_C^\flat$ and $A_\inf(\calO_C)$ is studied further in \S \ref{ss:PerfectoidField}.
\end{example}

\begin{example}[The tilt of a torus]
\label{ex:PerfectoidTorusAinf}
Let $R = \calO_C \langle t_1^{\pm \frac{1}{p^\infty}},..., t_d^{\pm \frac{1}{p^\infty}} \rangle$ be the perfectoid $\calO_C$-algebra from Example~\ref{ex:PerfectoidBasic}. In this case, one readily checks that $R^\flat \simeq \calO_C^\flat \langle  \underline{t}_1^{\pm \frac{1}{p^\infty}},..., \underline{t}_d^{\pm \frac{1}{p^\infty}} \rangle$ where $\underline{t_i} = (t_i, t_i^{\frac{1}{p}}, t_i^{\frac{1}{p^2}},...) \in R^\flat$ and the completion is $\varpi$-adic (for a pseudouniformizer $\varpi \in \calO_C^\flat$ as in Example~\ref{ex:PerfectoidField}). By Example~\ref{ex:PerfectWitt}, the ring $A_\inf(R)$ is identified with $W(\calO_C^\flat) \langle  v_1^{\pm \frac{1}{p^\infty}},..., v_d^{\pm \frac{1}{p^\infty}} \rangle := \big(W(\calO_C^\flat) [ v_1^{\pm \frac{1}{p^\infty}},..., v_d^{\pm \frac{1}{p^\infty}}]\big)^{\widehat{ }}$ where $v_i$ lifts $\underline{t}_i$ and the completion is $(p,[\varpi])$-adic. Using either description given in Remark~\ref{rmk:Teichmuller}, one checks that $[\underline{t}]_i = v_i$. 
\end{example}

\begin{remark}[The sharp map]
\label{rmk:TeichmullerPerfectoid}
Fix $R \in \Perf_{\calO_C}$. By Remark~\ref{rmk:Teichmuller}, we have a multiplicative map $[-]:R^\flat \to A_\inf(R)$. Using the description $R^\flat \simeq \lim_{x \mapsto x^p} R$ of Lemma~\ref{lem:AinfTheta}, we get a multiplicative map
\[ \sharp:\lim_{x \mapsto x^p} R \xrightarrow{[-]} A_\inf(R) \stackrel{\theta}{\to} R.\]
This map coincides with projection on the first factor, i.e., it sends a sequence $(a_i) \in \lim_{x \mapsto x^p} R$ to $a_0 \in R$. To see this, observe that the claim is certainly true after postcomposition with $R \to R/p$. For the rest, it suffices to argue as in Remark~\ref{rmk:Teichmuller} using the tower $\{R/p^n\}$ instead of the tower $W_n(A)$ in {\em loc. cit.}.
\end{remark}

\begin{remark}[Explicitly describing $\theta$ and $\tilde{\theta}$]
Fix $R \in \Perf_{\calO_C}$. Lemma~\ref{lem:AinfTheta} gives us a map $\theta:W(R^\flat) \to R$. In Remark~\ref{rmk:Teichmuller}, we explained how to elements of $W(R^\flat)$ can be uniquely written as power series $\sum_i [a_i] p^i$ with $a_i \in R^\flat$. As $\theta$ is continuous, to describe it explicitly, it suffices to describe the composition
\[ R^\flat \xrightarrow{[-]} W(R^\flat) \xrightarrow{\theta} R\]
explicitly. As in Remark~\ref{rmk:TeichmullerPerfectoid}, write $b \mapsto b^\sharp$ for this composite map. By examining the proof of Lemma~\ref{lem:FlatPerfectoidAlternate} and using Remark~\ref{rmk:TeichmullerPerfectoid}, this composition is given as follows: given $b = (b_i) \in R^\flat := \lim_{x \mapsto x^p} R/p$, we have $b^\sharp  = \lim_{k \to \infty} \widetilde{b_{i+k}}^{p^k}$, where $\widetilde{b_n} \in R$ denotes some lift of $b_n \in R/p$. This gives us the following formulas:
\[ \theta(\sum_i [a_i] p^i) = \sum_i a_i^\sharp \cdot p^i \quad \text{and} \quad \tilde{\theta}(\sum_i [a_i] p^i) = \sum_i (a_i^{\frac{1}{p}})^\sharp \cdot p^i.\]
Neither of these formulas will be used in the sequel.
\end{remark}

We briefly explain the main consequence of the vanishing of the cotangent complex to the perfectoid theory.

\begin{remark}[Invariance under deformations]
\label{rmk:DefThyAinf}
Let $A$ be one of the four rings appearing in the square \eqref{eq:Ainf} for $R = \calO_C$. The proof of Lemma~\ref{lem:AinfTheta} shows that $A$ is complete for the $(p,\xi)$-adic topology, where $\xi \in A_\inf(\calO_C)$ is a generator for $\ker(\theta)$. Moreover, all maps in the square \eqref{eq:Ainf} are pro-infinitesimal thickenings with respect to this topology. It formally follows from the cotangent complex formalism that the category $\calC_A$ of $(p,\xi)$-adically complete flat $A$-algebras $A'$ with $\widehat{L_{A'/A}} \simeq 0$ (where the completion is $(p,\xi)$-adic) is independent of the ring $A$ that has been chosen; more precisely, for any map $A \to B$ in square \eqref{eq:Ainf} for $R = \calO_C$, the base change functor induces an equivalence $\calC_A \simeq \calC_B$. For $A = \calO_C$, the category $\calC_A$ includes $\Perf_{\calO_C}$ as a full subcategory by Remark~\ref{rmk:PerfectoidCC}, and thus we obtain a description of perfectoid $\calO_C$-algebras in terms of certain $\calO_C^\flat$-algebras, as spelled out in Remark~\ref{rmk:Tilting} below.
\end{remark}

\begin{remark}[The tilting equivalence]
\label{rmk:Tilting}
Let $\calO_C^\flat$ be the valuation ring from Example~\ref{ex:PerfectoidTorusAinf}. One defines the notion of a {\em perfectoid $\calO_C^\flat$-algebra} in exactly the same way as the analogous objects over $\calO_C$, with the element $\varpi \in \calO_C^\flat$ playing the role of $p$; equivalently, these are also just the $\varpi$-adically complete $\varpi$-torsionfree $\calO_C^\flat$-algebras which are perfect rings. If $\Perf_{\calO_C^\flat}$ denotes the resulting category, then the construction $R \mapsto R^\flat$ provides a functor $(-)^\flat:\Perf_{\calO_C} \to \Perf_{\calO_C^\flat}$. Conversely, using Lemma~\ref{lem:AinfTheta}, the construction $S \mapsto W(S) \otimes_{A_\inf(\calO_C),\theta} \calO_C$ yields a functor $(-)^\sharp:\Perf_{\calO_C^\flat} \to \Perf_{\calO_C}$ in the reverse direction; equivalently, one may also define $S^\sharp$ using the equivalence from Remark~\ref{rmk:DefThyAinf}. These functors give mutually inverse equivalences; this is an example of Scholze's {\em tilting equivalence} from \cite[Theorem 5.2]{ScholzeThesis}.
\end{remark}

\subsection{Fontaine's $A_\inf$}
\label{ss:PerfectoidField}

Let $C$ be a complete and algebraically closed nonarchimedean extension of $\Q_p$. Our goal is to discuss the structure of $\calO_C^\flat$ and $A_\inf := A_\inf(\calO_C)$ in this section; using this structure, we will arrive at a better formulation of the main result proven in these notes in \S \ref{ss:BetterMainThm}.

We begin by observing that the valuation on $\calO_C$ pulls back along the multiplicative bijection $\calO_C^\flat \simeq \lim_{x \mapsto x^p} \calO_C$ to yield a valuation on $\calO_C^\flat$ that turns the latter into a valuation ring with the same value group and residue field $k$ as $\calO_C$. Let $\fram^\flat \subset \calO_C^\flat$ denote the maximal ideal, so $\calO_C^\flat/\fram^\flat \simeq k \simeq \calO_C/\fram$. The element $\varpi = \underline{p} \in \calO_C^\flat$ from the proof of Lemma~\ref{lem:AinfTheta} is a pseudouniformizer in $\fram^\flat$. In this situation, the ring $A_\inf := A_\inf(\calO_C)$ comes equipped with a few interesting specializations that are relevant in this paper:
\begin{enumerate}
\item The {\em de Rham} specialization $\theta:A_\inf \to \calO_C$: as explained in Lemma~\ref{lem:AinfTheta}, this is obtained by killing a nonzerodivisor $\xi \in A_\inf$. 
\item The {\em Hodge-Tate} specialization $\tilde{\theta}:A_\inf \to \calO_C$: as $\tilde{\theta} = \theta \circ \phi^{-1}$, this is obtained by killing $\tilde{\xi} := \phi(\xi) \in A_\inf$ for some $\xi$ as in (1).
\item The {\em \'etale} specialization $A_\inf \to W(C^\flat)$: this is obtained by $p$-adically completing $A_\inf[ \frac{1}{[\underline{p}]}]$; equivalently, one applies $W(-)$ to the canonical map $\calO_C^\flat \to C^\flat := \calO_C^\flat[\frac{1}{\underline{p}}]$.
\item The {\em crystalline} specialization $A_\inf \to W(k)$: this is obtained by $p$-adically completing $\colim A_\inf/([\underline{p}]^{\frac{1}{p^n}})$; equivalently, one applies $W(-)$ to the canonical map $\calO_C^\flat \to k$\footnote{For future reference, we let $W(\fram^\flat) \subset A_\inf$ be the kernel of the map $A_\inf \to W(k)$; it is obtained by applying the Witt vector functor $W(-)$ to the nonunital perfect ring $\fram^\flat$, and can thus be defined as the $p$-adic completion of the ideal $\cup_n ([\underline{p}]^{\frac{1}{p^n}}) \subset A_\inf$.}.
\item The {\em modular} specialization\footnote{This notation is non-standard.} $A_\inf \to \calO_C^\flat$: this is obtained by simply setting $p=0$. 
\end{enumerate}
A cartoon of $\Spec(A_\inf)$ depicting the above specializations is given in Figure~\ref{fig:Ainf}.

\begin{remark}[A geometric analogy for $\mathrm{Spec}(A_{\inf})$]
It is instructive to view $\Spec(A_\inf)$ as formally analogous to the two-dimensional regular local ring $\C\llbracket x,y \rrbracket$ of functions on the formal affine plane $\widehat{\A^2}$, with $p$ and $[\underline{p}]$ playing the role of $x$ and $y$; in particular, the modular and crystalline specializations give the two ``axes'' of the plane, while the de Rham specialization cuts out the diagonal (take $\xi = p - [\underline{p}]$). The main philosophical difference is that there is no analog of the inclusion $\C \to \C \llbracket x,y \rrbracket$ of the ground field. More practically, the important difference is that, unlike any noetherian situation, the scheme $\Spec(A_\inf)$ is ``infinitely ramified'' along the closed subset defined by $[\underline{p}] = 0$ as $[\underline{p}] \in A_\inf$ admits $p$-power roots of any order (which is reflected in the fact that the crystalline specialization is not cut out by a single nonzerodivisor). Despite these differences, this analogy can be pushed quite far in some respects; for example, all vector bundles on $\mathrm{Spec}(A_{\inf}) - V(p,[\underline{p}])$ extend uniquely to $\mathrm{Spec}(A_{\inf})$, and are thus free; we refer to \cite[Lemma 4.6]{BMSMainPaper} for a proof, and \cite{KedlayaAinf,ScholzeLectureNotes} for more on this analogy.
\end{remark}

\begin{remark}[$p$-adic period rings]
In addition to the preceding specializations, the following constructions starting with $A_\inf$ give some standard ``period rings'' and thus play an important role in $p$-adic Hodge theory:
\begin{enumerate}
\item The ring $A_\crys$: this is obtained by adjoining divided powers of $\xi$ and $p$-adically completing, i.e., $A_\crys$ is the $p$-adic completion of the subring of $A_\inf[\frac{1}{p}]$ generated by $A_\inf$ and $\frac{\xi^n}{n!}$ for all $n \geq 1$. 
\item The ring $B_{dR}^+$: this discrete valuation ring arises by completing $A_\inf[\frac{1}{p}]$ along the kernel of $\theta[\frac{1}{p}]$.
\end{enumerate}
The $A_\inf \to B_{dR}^+$ and the image of $\Spec(A_\crys) \to \Spec(A_\inf)$ are highlighted in Figure~\ref{fig:Ainf}. In particular, note that the divisor giving the Hodge-Tate specialization meets the latter image, but is not contained in it.
\end{remark}

\begin{warning}
Some of the discussion in this section, including the analogy between $\C \llbracket x,y \rrbracket$ and $A_\inf$, might give the impression that $A_\inf$ has Krull dimension $2$. In fact, the dimension is at least $3$, and we do not know its exact value. To see why it is at least $3$, one uses the chain
\[ (0) \subset \cup_n ([\underline{p}^{\frac{1}{p^n}}]) \subset W(\fram^\flat) \subset (p,W(\fram^\flat))\]
of primes; the issue, again, is one of completions as the third ideal above is the $p$-adic completion of the second one, but not equal to it. 
\end{warning}

\begin{figure}

\includegraphics[width=\textwidth]{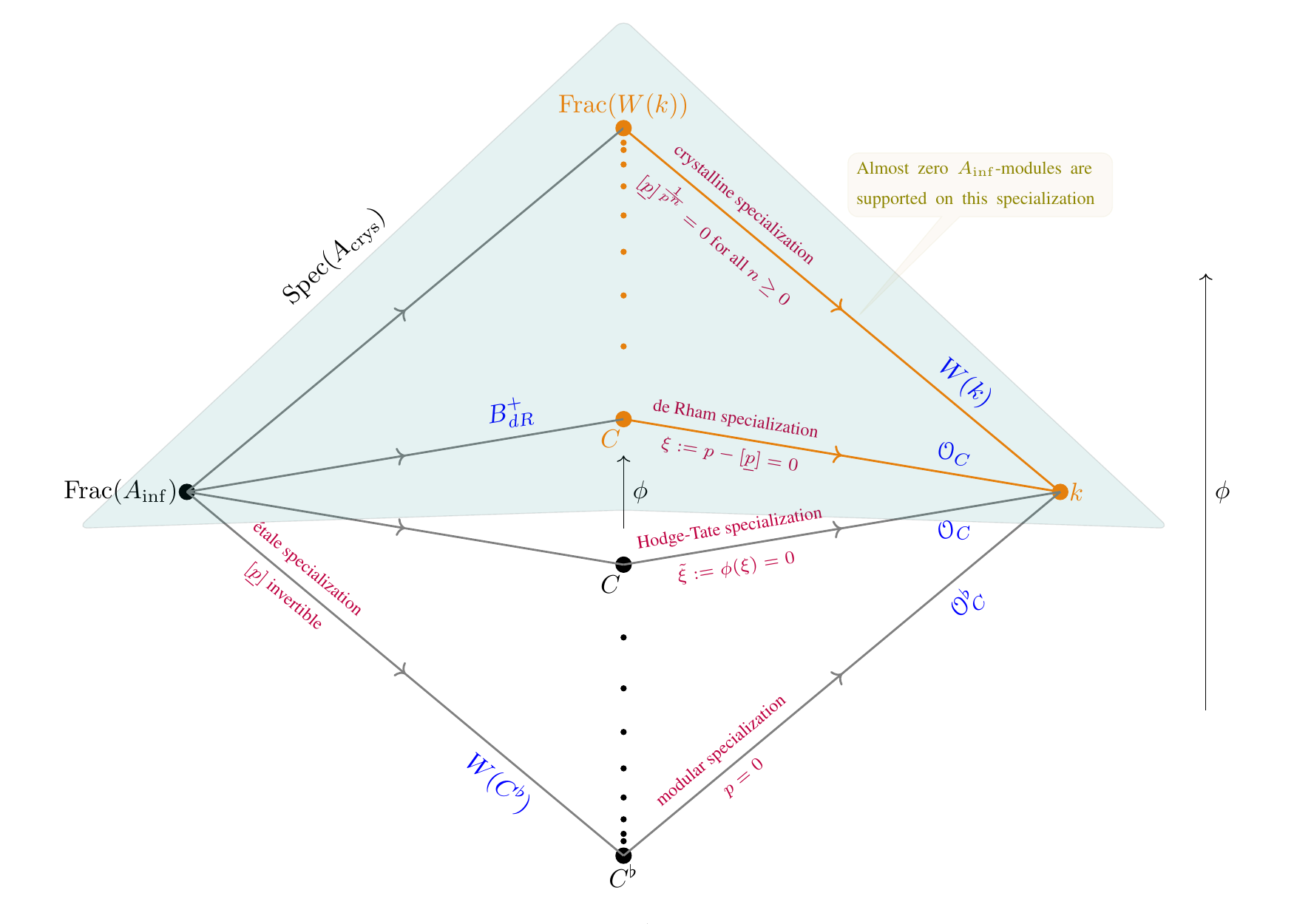}

\captionsetup{singlelinecheck=off}
\caption[A cartoon of $\Spec(A_\inf)$]{A cartoon of $\Spec(A_\inf)$. This depiction of the poset of prime ideals in $A_\inf$ emphasizes certain vertices and edges that are relevant to $p$-adic cohomology theories. 
\begin{itemize}
\item The darkened vertices (labelled `\ding{108}' or `{\color{orange}\ding{108}}') indicate (certain) points of $\Spec(A_\inf)$ and are labelled by the corresponding residue field. 
\item The {\color{gray} gray}/{\color{orange}orange} arrows indicate specializations in the spectrum, while the {\color{blue} blue} label indicates the completed local ring along the specialization. 
\item The locus in $\Spec(A_\inf)$ where almost zero modules live is indicated in an {\color{olive} olive} bubble. 
\item The labels in {\color{purple} purple} match the arrows to one of the previously introduced specializations, and indicate the equations that cut the specializations out in $\mathrm{Spec}(A_\inf)$.
\item The smaller bullets (labelled `{\tiny $\bullet$}' or `{\color{orange}{\tiny $\bullet$}}') down the middle are meant to denote the $\phi^{\Z}$-translates to the two drawn points labelled $C$ (with $\phi^{\Z_{\geq 0}}$ translates of the generic point of the de Rham specialization in {\color{orange} orange}, and the rest in black), and are there to remind the reader that not all points/specialization in $\Spec(A_\inf)$ have been drawn. 
\item The vertices/labels/arrows in {\color{orange} orange} mark the points and specializations that lie in $\Spec(A_\inf/\mu) \subset \Spec(A_\inf)$. 
\item The triangular region covered in {\color{teal} teal} identifies the image of $\Spec(A_\crys) \to \Spec(A_\inf)$. 
\item The arrow labelled $\phi$ on the far right indicates the Frobenius action on $\Spec(A_\inf)$, which fixes the $4$ vertices of the outer diamond in the above picture.
\end{itemize}
} \label{fig:Ainf}
\end{figure}

In the sequel, it will be convenient, especially for calculational purposes, to make a particular choice of a generator $\xi \in \ker(\theta)$ that interacts well with Frobenius. Thus, we fix the following notation for the rest of the paper; all constructions are canonically independent of the choices.

\begin{notation}
\label{not:epsilon}
Fix a compatible sequence $(1,\epsilon_p,\epsilon_{p^2},...)$ of $p$-power roots of $1$ in $\calO_C$ with $\epsilon_{p^n} \in \mu_{p^n}$ and $\epsilon_p \neq 1$; this can also be viewed as a trivialization $\underline{\epsilon}:\Z_p \simeq \Z_p(1)$. Write $\underline{\epsilon} \in \calO_C^\flat$ for the corresponding element in $\calO_C^\flat$, which gives $[\underline{\epsilon}] \in A_\inf$. Note that $[\underline{\epsilon}^a] \in A_\inf$ makes sense for any $a \in \Z[\frac{1}{p}]$ as $\calO_C^\flat$ is perfect and $[-]$ is multiplicative; we will write $\epsilon^a \in \calO_C$ for its image under $\theta$, so $\epsilon^{\frac{m}{p^k}} := (\epsilon_{p^k})^m$ for $m \in \Z$. Set 
\[ \mu = [\underline{\epsilon}] - 1 \in A_\inf\] 
and define 
\[ \xi := \frac{\mu}{\phi^{-1}(\mu)} = \frac{[\underline{\epsilon}] - 1}{[\underline{\epsilon}^{\frac{1}{p}}] - 1} = \sum_{i=0}^{p-1} [\underline{\epsilon}^{\frac{1}{p}}]^i \quad \quad \mathrm{and} \quad \quad \tilde{\xi} := \phi(\xi) = \frac{\phi(\mu)}{\mu} = \frac{[\underline{\epsilon}^p] - 1}{[\underline{\epsilon}] - 1} = \sum_{i=0}^{p-1} [\underline{\epsilon}]^i. \]
The following properties are easily verified, and will be useful in the sequel:
\begin{enumerate}
\item The element $\xi$ (resp. $\tilde{\xi}$) generates the kernel of $\theta$ (resp. $\tilde{\theta}$). 
\item For $a,b \in \Z[\frac{1}{p}]$, if $v_p(a) \leq v_p(b)$, then $[\underline{\epsilon}^a] - 1$ divides $[\underline{\epsilon}^b] - 1$. 
\item For $a \in \Z$, the image of $\frac{[\underline{\epsilon}^a] - 1}{[\underline{\epsilon}] -1}$ in $A_\inf/\mu$ coincides with $a$. In particular, $\tilde{\xi} = p \mod \mu$.
\item For any $n \geq 1$, one has the formula
\[ \mu = \Big(\prod_{i=0}^{n-1} \phi^{-i}(\xi)\Big) \cdot \phi^{-n}(\mu).\]
\item $p$ is a nonzero divisor modulo $\mu$, and vice versa; ditto for $\xi$ and $\tilde{\xi}$ replacing $\mu$.
\item $\tilde{\xi}$ is a nonzero divisor modulo $\mu$, and vice versa.
\item The ideals $(p,\xi)$, $(p,\tilde{\xi})$, $(p,\mu)$ and $(\tilde{\xi},\mu)$ cut out the closed point of $\Spec(A_\inf)$ (set-theoretically), and hence define the same topology on $A_\inf$.
\end{enumerate} 
In terms of Figure~\ref{fig:Ainf}, the formula in (4) above implies that the zero locus $\Spec(A_\inf/\mu) \subset \Spec(A_\inf)$ of $\mu \in A_\inf$ includes the divisors $\Spec(A_\inf/\phi^{-n}(\xi))$ for all $n \geq 0$. 
\end{notation}

\begin{remark}
One may describe the ideal $(\mu) \subset A_\inf$ canonically via the Witt vector functor as follows: as the projection map $R:W(\calO_C) \to \calO_C$ is a pro-infinitesimal thickening modulo all powers of $p$, there is a canonical map $\theta_\infty:A_\inf \to W(\calO_C)$ factoring $\theta$ through $R$ since $\widehat{L_{A_\inf/\Z_p}} \simeq 0$. Then one can show that the kernel of $\theta_\infty$ is exactly $(\mu) \subset A_\inf$ (see \cite[Lemma 3.23]{BMSMainPaper}), giving the promised description. We do not elaborate on this further as we won't need this description (or the Witt vector functor for non-perfect rings) in these notes. 
\end{remark}

\subsection{A better formulation of the main theorem}
\label{ss:BetterMainThm}

Using the structure $A_\inf$ explained in \S \ref{ss:PerfectoidField}, one can give a better formulation of the main theorem proven in these notes:

\begin{theorem}
\label{thm:MainThmPrecise}
Let $C$ be spherically complete\footnote{Recall that a nonarchimedean field is spherically complete if all descending sequences of discs have a non-empty intersection. Every nonarchimedean field has a unique spherical completion. We refer to \cite[\S 1.5]{KedlayaPDE} for the basic theory of such fields. The only consequence of the spherical completeness of $C$ relevant to these notes is a vanishing theorem, captured in Lemma~\ref{lem:SphCompleteVanishing}. As explained in Remark~\ref{rmk:Deficiencies}, this hypothesis is not an essential one.}, and fix a proper smooth formal scheme $\frakX$ over $\calO_C$. Then one can functorially attach to $\frakX$ a perfect complex $R\Gamma_A(\frakX) \in D_\perf(A_\inf)$. This complex has the following specializations:
\begin{enumerate}
\item de Rham: there is a canonical isomorphism
\[ R\Gamma_A(\frakX) \otimes^L_{A_\inf,\theta} \calO_C \simeq R\Gamma_{dR}(\frakX/\calO_C).\]
\item \'Etale: there is a canonical isomorphism 
\[ R\Gamma_A(\frakX) \otimes^L_{A_\inf} W(C^\flat) \simeq R\Gamma_{\et}(\frakX_C, \Z_p) \otimes^L_{\mathbf{Z}_p} W(C^\flat).\]
\end{enumerate}
\end{theorem}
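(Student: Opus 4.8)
The plan is to realize $R\Gamma_A(\frakX)$ as the hypercohomology of a complex $A\Omega_\frakX$ of sheaves of $A_\inf$-modules on $\frakX$, built from a period sheaf on the pro-\'etale site of the generic fibre by means of the functor $L\eta_\mu$ of \S\ref{sec:Leta}. Let $\nu\colon\frakX_{C,\proet}\to\frakX_{\Zar}$ be the projection, let $\A_{\inf}$ denote the period sheaf $W(\widehat{\calO}^{\flat,+})$ of $A_\inf$-algebras on $\frakX_{C,\proet}$, and set
\[ A\Omega_\frakX:=L\eta_\mu\big(R\nu_*\A_{\inf}\big),\qquad R\Gamma_A(\frakX):=R\Gamma(\frakX_{\Zar},A\Omega_\frakX); \]
functoriality in $\frakX$ is immediate from this description. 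Spherical completeness of $C$ enters only through the vanishing Lemma~\ref{lem:SphCompleteVanishing}: Zariski-locally $\frakX=\Spf(R)$ for $R$ admitting an \'etale framing $\square\colon\calO_C\langle T_1^{\pm1},\dots,T_d^{\pm1}\rangle\to R$ as in \S\ref{sec:FramedAlgebras}, and the perfectoid theory there shows that $R\nu_*\A_{\inf}|_{\Spf(R)}$ is \emph{almost} computed by the continuous group cohomology $R\Gamma_{\mathrm{cont}}\big(\Z_p(1)^d,A_\inf(R_\infty)\big)$ of the standard perfectoid tower $R_\infty/R$; the spherical-completeness vanishing lemma is precisely what lets one upgrade this to a genuine (not merely almost) identification, so that $A\Omega_\frakX$ and $R\Gamma_A(\frakX)$ are well-defined honest objects --- as is needed since Theorem~\ref{thm:MainThmPrecise} is not an almost statement.

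The de Rham comparison (1) is local on each framed chart. Write the group cohomology above as the Koszul complex $K_{A_\inf(R_\infty)}(\gamma_1-1,\dots,\gamma_d-1)$ for topological generators $\gamma_i$ of the $i$-th factor of $\Z_p(1)^d$, and decompose $A_\inf(R_\infty)$ over $A_\inf(R)$ along the Teichm\"uller monomials $[\underline{T}^{\,a}]$, on which $\gamma_i-1$ acts by $[\underline{\epsilon}^{a_i}]-1$. The functor $L\eta_\mu$, analysed at the Koszul level via Lemma~\ref{lem:LetaRegularSequence}, annihilates the ``junk'' coming from summands whose operators are proper divisors of $\mu$ (these have bounded $\mu$-power torsion cokernels, which $L\eta_\mu$ kills), and on the surviving part it recovers the $q$-de Rham (i.e.\ $A_\inf$-)de Rham complex of the framed algebra. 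It is convenient to first extract the Hodge--Tate specialization $\widetilde{\Omega}_\frakX$ of \S\ref{sec:TildeOmega}, whose cohomology sheaves are the $\Omega^i_{\frakX/\calO_C}$ up to a twist, and then deduce the full $\theta$-specialization using the Frobenius on $A\Omega_\frakX$ together with the d\'ecalage formalism (in particular Lemma~\ref{lem:LetaRegularSequence}). The outcome is $A\Omega_\frakX\otimes^L_{A_\inf,\theta}\calO_C\simeq\Omega^\bullet_{\frakX/\calO_C}$ locally; checking compatibility under changes of framing and gluing then yields the global isomorphism $R\Gamma_A(\frakX)\otimes^L_{A_\inf,\theta}\calO_C\simeq R\Gamma_{dR}(\frakX/\calO_C)$. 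Perfectness of $R\Gamma_A(\frakX)$ over $A_\inf$ is then formal: $A_\inf$ is $(p,\xi)$-adically complete, $R\Gamma_A(\frakX)$ is derived $(p,\xi)$-complete, and $R\Gamma_A(\frakX)\otimes^L_{A_\inf}A_\inf/(p,\xi)\simeq R\Gamma_{dR}(\frakX/\calO_C)\otimes^L_{\calO_C}\calO_C/p$ is a perfect complex over $A_\inf/(p,\xi)=\calO_C/p$ (the de Rham cohomology of the proper smooth formal scheme $\frakX/\calO_C$ being a perfect $\calO_C$-complex), so the derived Nakayama criterion over the complete ring $A_\inf$ forces $R\Gamma_A(\frakX)\in D_\perf(A_\inf)$.

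Finally, for the \'etale comparison (2): since $L\eta_\mu$ is invisible after inverting $\mu$ (a basic property of $L\eta$, \S\ref{sec:Leta}), one has $A\Omega_\frakX[\tfrac1\mu]\simeq(R\nu_*\A_{\inf})[\tfrac1\mu]$ and hence $R\Gamma_A(\frakX)[\tfrac1\mu]\simeq R\Gamma(\frakX_{C,\proet},\A_{\inf})[\tfrac1\mu]$. Invoking the primitive comparison theorem of \cite{ScholzePAdicHodge} (recalled in these notes), which identifies $R\Gamma_{\et}(\frakX_C,\Z_p)\otimes^L_{\Z_p}A_\inf$ with $R\Gamma(\frakX_{C,\proet},\A_{\inf})$ up to a cone killed by $W(\fram^\flat)$, and noting that $\mu$ becomes a unit in $W(C^\flat)$ (its image in the residue field $C^\flat$ is $\underline{\epsilon}-1\neq0$), so that $-\otimes^L_{A_\inf}W(C^\flat)$ factors through $-[\tfrac1\mu]$ and also annihilates that cone (as $W(\fram^\flat)$ already contains a unit of $W(C^\flat)$), we may chain the displayed isomorphisms after $-\otimes^L_{A_\inf}W(C^\flat)$ to obtain
\[ R\Gamma_A(\frakX)\otimes^L_{A_\inf}W(C^\flat)\;\simeq\;R\Gamma_{\et}(\frakX_C,\Z_p)\otimes^L_{\Z_p}W(C^\flat). \]
Alternatively, following Remark~\ref{rmk:NoPrimitive}, one can sidestep the primitive comparison altogether: the de Rham comparison modulo $p$, together with the Frobenius $\phi$ on $A\Omega_\frakX$ --- which becomes an isomorphism over the \'etale specialization $W(C^\flat)$ --- and the classification of $\phi$-modules over $W(C^\flat)$, recover (2) by standard $p$-adic linear algebra. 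The main obstacle is the local de Rham comparison of the second paragraph: making the $L\eta_\mu$-of-Koszul computation precise, controlling the ``almost'' ambiguities through the spherical-completeness trick, and verifying that the de Rham differential emerges correctly, is where essentially all the work --- and the new input of Lemma~\ref{lem:LetaRegularSequence} --- resides.
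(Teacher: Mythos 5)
Your overall architecture is the paper's: set $A\Omega_\frakX = L\eta_\mu R\nu_* A_{\inf,X}$, analyse it on framed charts via the Koszul presentation of $R\Gamma_{conts}(\Delta^d, R^\square_{A_\inf,\infty})$, extract the Hodge--Tate specialization first and then the de Rham one via Lemma~\ref{lem:LetaRegularSequence}, get perfectness from derived $(p,\xi)$-completeness plus Nakayama, and obtain the \'etale comparison by inverting $\mu$ and invoking the primitive comparison theorem (or the Artin--Schreier alternative of Remark~\ref{rmk:NoPrimitive}). The de Rham, perfectness, and \'etale steps are essentially as in Proposition~\ref{prop:RGammaAOmega}.

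There is, however, a genuine gap in how you pass from almost statements to honest ones, and it sits exactly where you locate ``essentially all the work.'' You define $R\Gamma_A(\frakX) := R\Gamma(\frakX_\Zar, A\Omega_\frakX)$ with no correction, and assert that the spherical-completeness vanishing lemma ``upgrades'' the local almost identification $R\nu_* A_{\inf,X}|_{\Spf(R)} \stackrel{a}{\simeq} R\Gamma_{conts}(\Delta^d, R^\square_{A_\inf,\infty})$ to a genuine one. It does not: Lemmas~\ref{lem:SphCompleteVanishing} and \ref{lem:SphCompleteVanishingAinf} say only that $K \simeq (K^a)_*$ for $K$ \emph{perfect}, and the local complexes in question are enormous (as noted after Theorem~\ref{thm:APTCohomologySmall}, already $H^1$ for the torus has countably many nonzero summands), so spherical completeness gives nothing at the chart level. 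The paper resolves this by building the correction into the definition: $R\Gamma_A(\frakX) := R\Gamma(\frakX, A\Omega_\frakX)_*$ (Definition~\ref{def:RGammaA}), working in $D_{comp}(A_\inf)^a$ throughout the local analysis, and invoking spherical completeness only at the end, applied to the global, perfect objects (e.g.\ to identify $(R\Gamma_{dR}(\frakX/\calO_C)^a)_*$ with $R\Gamma_{dR}(\frakX/\calO_C)$). If you insist on your uncorrected definition, the honest local identification you need is true, but for a different reason: one checks $\Hom_{A_\inf}(W(k), H^*(K/\mu)) = 0$ by a direct torsion-freeness computation on the Koszul summands (Remark~\ref{rmk:AOmegaNoAlmost}, via Lemma~\ref{lem:LetaExact}); this has nothing to do with spherical completeness, and once you use it the spherical completeness hypothesis can be dropped altogether (Remark~\ref{rmk:RGammaANoAlmost}). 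As written, your proposal uses neither mechanism correctly, so the bridge from the almost purity input to the literal isomorphisms claimed in the theorem is missing.
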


Using the output of this construction, one can formally prove Theorem~\ref{thm:MainThm} as follows:

\begin{proof}[Proof of Theorem~\ref{thm:MainThm}]
Fix a proper smooth formal scheme $\frakX/\calO_C$. Assume first that $C$ is spherically complete. Theorem~\ref{thm:MainThmPrecise} then gives us a complex $R\Gamma_A(\frakX)$ whose modular specialization $R\Gamma_{\calO_C^\flat}(\frakX) := R\Gamma_A(\frakX) \otimes^L_{A_\inf} \calO_C^\flat$ is a perfect $\calO_C^\flat$-complex whose fibres are identified as follows:
\[R\Gamma_{\calO_C^\flat}(\frakX) \otimes^L_{\calO_C^\flat} C^\flat \simeq R\Gamma_{\et}(\frakX_C, \F_p) \otimes^L_{\F_p} C^\flat \quad \mathrm{and} \quad R\Gamma_{\calO_C^\flat}(\frakX) \otimes^L_{\calO_C^\flat} k \simeq R\Gamma_{dR}(\frakX_k/k).\]
In particular, by semicontinuity for ranks of the cohomology groups of the fibres of a perfect complex, we get an inequality
\[ \dim_{\F_p} H^i_\et(\frakX_C, \F_p) \leq \dim_k H^i_{dR}(\frakX_k/k),\]
which proves Theorem~\ref{thm:MainThm} when $C$ is spherically complete. The general case of Theorem~\ref{thm:MainThm} then follows immediately as we may replace $C$ by a larger spherically closed algebraically closed extension (see \cite[Chapter 3]{Robertpadicbook} for a construction of the desired field extension, and \cite[0.3.2]{Huber} for insensitivity of \'etale cohomology to such extensions in the rigid-analytic setting).
\end{proof}

\begin{remark}[Deficiencies of Theorem~\ref{thm:MainThmPrecise}]
\label{rmk:Deficiencies}
We briefly comment on some of some defects of Theorem~\ref{thm:MainThmPrecise}, when compared to \cite[Theorem 1.8]{BMSMainPaper}. First, the assumption that $C$ is spherically complete is not important for the theorem, cf. Remark~\ref{rmk:RGammaANoAlmost}. More seriously, there is also a good description of the crystalline specialization: by \cite[Theorem 1.8 (iii)]{BMSMainPaper}, one has a canonical isomorphism
\[ R\Gamma_A(\frakX) \otimes^L_{A_\inf} W(k) \simeq R\Gamma_{\crys}(\frakX_k/W(k)),\]
compatible with Frobenius. The method used in these notes relies on almost mathematics over $A_\inf$, which necessitates systematically ignoring $A_\inf$-modules that come from $W(k)$-modules via restriction of scalars. Thus, it does not seem to be easy to deduce the crystalline comparison using the argument given here (at least without developing some relative de Rham-Witt theory).
\end{remark}

\begin{remark}[The absolute crystalline comparison and some open questions]
In the context of the cohomology theory $R\Gamma_A(\frakX)$ from Theorem~\ref{thm:MainThmPrecise}, the region covered by $\Spec(A_\crys)$ in Figure~\ref{fig:Ainf} is the ``largest'' region over which the theory $R\Gamma_A(\frakX)$ can be constructed using techniques of crystalline cohomology. More precisely, the absolute crystalline cohomology of $\frakX \otimes_{\calO_C} \calO_C/p$ enters the mix via a natural isomorphism
 \begin{equation}
 \label{eq:CrysComp}
 R\Gamma_A(\frakX) \otimes^L_{A_\inf} A_\crys \simeq R\Gamma_\crys( (\frakX \otimes_{\calO_C} \calO_C/p)/\Z_p)
 \end{equation}
 by \cite[Theorem 1.8 (iv)]{BMSMainPaper}. Moreover, the construction of $R\Gamma_A(\frakX)$ shows that 
 \begin{equation}
 \label{eq:EtaleComp}
R\Gamma_A(\frakX)[\frac{1}{\mu}] \simeq R\Gamma_\et(\frakX_C, \Z_p) \otimes_{\Z_p} A_\inf[\frac{1}{\mu}],
\end{equation}
where $\mu \in A_\inf$, as in Notation~\ref{not:epsilon}, is an element whose zero locus in $\Spec(A_\inf)$ can be viewed heuristically as the union of all $\Spec(A_\inf/\phi^{-n}(\xi))$ for $n \geq 0$ with $\Spec(W(k))$ (see Figure~\ref{fig:Ainf} for a better picture of $\Spec(A_\inf/\mu)$, and Proposition~\ref{prop:RGammaAOmega} (iii) for a proof of \eqref{eq:EtaleComp}). Combining these gives a large subset of $\Spec(A_\inf)$ where the fibres of $R\Gamma_A(\frakX)$ can be identified classically. However, this glueing process does not cover $\Spec(A_\inf)$. In particular, it does not help describe the Hodge-Tate or the modular specialization in classical terms,  leading to the following questions:
 \begin{enumerate}
 \item Is there a crystalline construction of the Hodge-Tate specialization $R\Gamma_A(\frakX) \otimes^L_{A_\inf,\tilde{\theta}} \calO_C$? For example, does it arise as $R\Gamma(\frakX,-)$ applied to a natural sheaf of commutative dg algebras in $D_{qc}(\frakX)$? The construction (via the complex $\widetilde{\Omega}_\frakX$ from \S \ref{sec:TildeOmega}) shows that the answer is ``yes'' if one works with $E_\infty$-algebras instead, so this question is really asking if the $E_\infty$-algebra $\widetilde{\Omega}_{\frakX} \in D_{qc}(\frakX)$ admits a ``strict'' model. Note that \cite[Remark 7.8]{BMSMainPaper} provides a non-trivial obstruction to representing $A\Omega_\frakX/p$ by a strict object; however, this vanishes on $\widetilde{\Omega}_{\frakX}/p$ as $(\epsilon_p - 1)^p = 0 \in \calO_C/p$ for $\epsilon_p \in \mu_p$.
 
  \item Can the modular specialization $R\Gamma_A(\frakX) \otimes^L_{A_\inf} \calO_C^\flat$ be constructed directly using the generic fibre? More concretely, is there an example of two proper smooth (formal) schemes $\frakX/\calO_C$ and $\frakY/\calO_C$ such that $\frakX_C \simeq \frakY_C$, and yet $\frakX_k$ and $\frakY_k$ have different de Rham cohomologies? At the level of individual cohomology groups and under some torsionfreeness assumptions on $H^*_\crys(\frakX_k)$, such examples can not exist by \cite[Theorem 1.4]{BMSMainPaper}. On the other hand, at the level of Artin stacks, such examples do exist.\footnote{Let $G = \Z/p^2$, viewed as a constant finite flat group scheme over $\calO_C$, let $E/\calO_C$ be an elliptic curve with supersingular reduction, and let $H \subset E$ be the finite $\calO_C$-flat subgroup generated generated by a point of exact order $p^2$ in $E(C)$. Then $H_C \simeq \Z/p^2$, and $H_k \simeq E[p]$. Then the stacks $BG$ and $BH$  have the same generic fibre, but different de Rham cohomologies for the special fibre \cite[\S 2.2]{BMSMainPaper}.} However, it is not clear to the author how to approximate these examples using smooth proper (formal) schemes.
 \end{enumerate}
\end{remark}

\begin{remark}[The Hodge-Tate spectral sequence]
\label{rmk:HodgeTate}
The de Rham, \'etale, and crystalline specializations acquire their names thanks to the comparison isomorphisms explained above. Likewise, the Hodge-Tate specialization $\tilde{\theta}:A_\inf \to \calO_C$ is closely connected to the Hodge-Tate filtration from \cite[\S 3.3]{ScholzeSurvey}: for any smooth formal scheme $\frakX/\calO_C$, thanks to Proposition~\ref{prop:RGammaAOmega} (4), there is an $E_2$ spectral sequence
\[ E_2^{i,j}: H^i(\frakX, \Omega^j_{\frakX/\calO_C}\{-j\}) \Rightarrow H^{i+j}(R\Gamma_A(\frakX) \otimes^L_{A_\inf,\tilde{\theta}} \calO_C)\]
describing the Hodge-Tate specialization of $R\Gamma_A(\frakX)$. (Here the twist $\{-j\}$ is an integral modification of the Tate twist, see Definition~\ref{def:BKTwist}.) When $\frakX/\calO_C$ is also proper, the target is identified with $H^{i+j}_{\et}(\frakX_C, \Z_p) \otimes C$ after inverting $p$ by formula \eqref{eq:EtaleComp} above: the generic point of the Hodge-Tate specialization does not lie in $\Spec(A_\inf/\mu)$ in Figure~\ref{fig:Ainf}. Thus, after inverting $p$, the preceding spectral sequence recovers the Hodge-Tate one from \cite[Theorem 3.20]{ScholzeSurvey}; we refer the interested reader to \cite{AbbesGrosHodgeTate} for a detailed discussion of this spectral sequence.
\end{remark}

\section{Some almost mathematics}
\label{sec:Almost}

Fix a complete and algebraically closed nonarchimedean field $C$. The basic philosophy informing Faltings' approach \cite{FaltingsJAMS,FaltingsRamified,FaltingsAlmostEtale} to $p$-adic Hodge theory is to do commutative algebra over $\calO_C$ up to a ``very small error''. The latter phrase is codified in the following definition:

\begin{definition}
\label{def:AlmostIsomOC}
An $\calO_C$-module $M$ is {\em almost zero} if $\fram  \cdot M = 0$. A map $f:K \to L$ in $D(\calO_C)$ is an {\em almost isomorphism} if the cohomology groups of the cone of $f$ are almost zero.
\end{definition}

\begin{remark}[The category of almost modules]
\label{rmk:AlmostMathDiscrete}
The preceding definition is meaningful because $\fram^2 = \fram$ (since the value group of $C$ is divisible). Indeed, this formula implies\footnote{If $C$ were discretely valued, then the closure of $\fram$-torsion modules under extensions and colimits would be all torsion $\calO_C$-modules, and the corresponding quotient category would be just that of $C$-vector spaces.} that almost zero modules form an abelian subcategory $\Sigma \subset \Mod_{\calO_C}$ of all $\calO_C$-modules that is closed under subquotients and extensions. Thus, there is a good way to ignore almost zero modules: one simply works in the Serre quotient $\Mod_{\calO_C}^a := \Mod_{\calO_C}/\Sigma$. This category is abelian, and inherits a $\otimes$-structure from $\Mod_{\calO_C}$; thus, one can make sense of basic notions of commutative algebra and algebraic geometry in $\Mod_{\calO_C}^a$ (see Gabber-Ramero). There are obvious (exact $\otimes$-) functors $\Mod_{\calO_C} \to \Mod_{\calO_C}^a \to \Mod_C$ which sandwich the almost integral objects (i.e., $\Mod_{\calO_C}^a$) in between integral objects (i.e., $\Mod_{\calO_C}$) and rational ones (i.e., $\Mod_C$);  in practice, the almost integral theory turns out to be much closer to the integral one than the rational one. In fact, one goal of these notes is to highlight an approach for passing from almost integral statements to integral ones, building on Remark~\ref{rmk:SphCompletePerfectComplex} below. 
\end{remark}

%
%
%

The book \cite{GabberRamero} develops almost analogs of various constructions in commutative algebra and algebraic geometry. We do not need or use these notions in a serious way in these notes. However, it will be convenient to develop some language to discuss the derived category version of almost mathematics, summarized in the following construction:

\begin{construction}[The almost derived category over $\calO_C$]
\label{cons:AlmostOC}
Since $\fram \otimes^L_{\calO_C} \fram \simeq \fram^2 = \fram$, the restriction of scalars functor $D(k) \to D(\calO_C)$ is fully faithful, and its essential image if $D_\Sigma(\calO_C)$, i.e., those $K \in D(\calO_C)$ with $H^i(K)$ being almost zero; let $D(\calO_C)^a := D(\calO_C)/D(k)$ for the Verdier quotient.\footnote{By \cite[2.4.9]{GabberRamero}, one has $D(\calO_C)^a \simeq D(\Mod_{\calO_C}^a)$.} Thus, the quotient functor $D(\calO_C) \to D(\calO_C)^a$ is the localization of $D(\calO_C)$ along almost isomorphisms; write $K^a \in D(\calO_C)^a$ for the image of $K \in D(\calO_C)$. This functor has a left-adjoint $K^a \mapsto K^a_! := \fram \otimes_{\calO_C} K$ and a right adjoint $K^a \mapsto K^a_*  := R\Hom_{\calO_C}(\fram, K)$; here we are implicitly asserting that the given formula only depends on $K$ up to almost isomorphisms. One checks that both adjoints are fully faithful. Note that quotient functor as well as the two adjoints preserve $p$-adically complete objects as any $K \in D_{\Sigma}(\calO_C)$ is  $p$-adically complete. We sometimes abuse notation and write $K_*$ instead of $(K^a)_*$ for $K \in D(\calO_C)$, and similarly for $K_!$.
\end{construction}

Recall that $C$ is said to be {\em spherically complete} if any nested sequence of closed discs in $C$ has a non-empty intersection. The main algebraic fact about almost mathematics relative to $\calO_C$ that we will need is:

\begin{lemma}
\label{lem:SphCompleteVanishing}
Let $C$ be spherically complete. For any perfect complex $K \in D(\calO_C)$, we have $K \simeq (K^a)_\ast$.
\end{lemma}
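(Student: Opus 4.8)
The plan is to reduce to the case $K = \calO_C$ in degree $0$, and then to compute $(\calO_C^a)_\ast = R\Hom_{\calO_C}(\fram, \calO_C)$ by hand, using spherical completeness to conclude that this equals $\calO_C$. First I would observe that the functor $K \mapsto (K^a)_\ast = R\Hom_{\calO_C}(\fram, K)$ is exact (it is a composite of the exact quotient $D(\calO_C) \to D(\calO_C)^a$ and its right adjoint), so the class of $K$ for which the natural map $K \to (K^a)_\ast$ is an isomorphism is a triangulated subcategory of $D(\calO_C)$ closed under retracts. Since every perfect complex is built from finitely many copies of $\calO_C[n]$ by cones and retracts, it suffices to treat $K = \calO_C$; equivalently, to show $R\Hom_{\calO_C}(\fram, \calO_C) \simeq \calO_C$, i.e.\ that $\Ext^0_{\calO_C}(\fram,\calO_C) = \calO_C$ via the inclusion $\fram \hookrightarrow \calO_C$ and that $\Ext^i_{\calO_C}(\fram, \calO_C) = 0$ for $i > 0$.

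For the higher Ext vanishing: from the short exact sequence $0 \to \fram \to \calO_C \to k \to 0$ one gets $\Ext^i_{\calO_C}(\fram, \calO_C) \simeq \Ext^{i+1}_{\calO_C}(k, \calO_C)$ for $i \geq 1$, so I need $\Ext^j_{\calO_C}(k, \calO_C) = 0$ for $j \geq 2$; this follows because $\calO_C$ is a valuation ring (hence every ideal, in particular any ideal in a resolution of $k$, is flat — valuation rings have global dimension $\leq 2$, or more directly one resolves $k$ by $0 \to \fram \to \calO_C \to k \to 0$ and $\fram$ is flat, so $k$ has projective dimension... well, $\fram$ is flat but not projective, so one uses instead that $\fram = \colim_{\pi} \pi\calO_C$ is a filtered colimit of free rank-one modules, giving $\Ext^{\geq 1}(\fram, -) $ computed as a derived limit, which I'll handle together with the degree-$1$ piece below). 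So the real content is concentrated in $\Ext^0$ and $\Ext^1$ of $\fram$, which is where spherical completeness enters.

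For this, I would use the presentation $\fram = \bigcup_{\pi} \pi \calO_C = \colim_{\pi} \pi\calO_C$ where $\pi$ ranges over nonzero elements of $\fram$ ordered by divisibility, each transition map $\pi\calO_C \to \pi'\calO_C$ (for $\pi' \mid \pi$) being an isomorphism of $\calO_C$-modules onto a submodule. Then $R\Hom_{\calO_C}(\fram, \calO_C) = R\lim_{\pi} R\Hom_{\calO_C}(\pi\calO_C, \calO_C) = R\lim_{\pi} \calO_C$, where the transition maps in the tower are multiplication by the ratios $\pi/\pi'$, i.e.\ we are computing the derived limit of a tower of copies of $\calO_C$ whose maps are multiplication by a cofinal system of elements of $\fram$ (cofinal in the sense of $v_p$). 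Concretely $R\lim_\pi \calO_C$ has $H^0 = \lim_\pi \calO_C$ and $H^1 = {\lim_\pi}^1 \calO_C$. The inverse limit $\lim$ of this tower of discs (each "copy of $\calO_C$" with a multiplication map is exactly the inclusion of a closed disc of shrinking radius in the next) is precisely the intersection of a nested family of closed discs in $C$; spherical completeness says this is non-empty, and a small argument identifies it with $\calO_C$ mapping in via $\fram \hookrightarrow \calO_C$. The $\lim^1$ term vanishes by a Mittag–Leffler-type argument: spherical completeness upgrades to surjectivity of $\lim_\pi \calO_C \to \calO_C/(\pi_0)$ for each $\pi_0$, which is exactly the statement that the tower of discs has the surjective-system property needed for $\lim^1 = 0$. (This is the substance of what is cited as the consequence of spherical completeness in the footnote to Theorem~\ref{thm:MainThmPrecise}.)

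The main obstacle is the last step: correctly setting up the cofinal tower indexed by elements of $\fram$ (since $\fram$ is not countably generated in general, "$\lim^1$" must be interpreted via a genuinely cofinal countable or well-ordered subfamily, or one works with the derived limit over the whole poset and invokes its cohomological dimension) and extracting from spherical completeness both the non-emptiness giving $H^0 = \calO_C$ and the vanishing of the obstruction giving $H^1 = 0$. Everything else — the dévissage to $K = \calO_C$, the exactness of $(-)_\ast$, the reduction of higher Ext to a $\lim^1$ computation — is formal.
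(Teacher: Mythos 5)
Your proposal is correct and follows essentially the same route as the paper: reduce to $K=\calO_C$, present $\fram$ as a filtered colimit of principal ideals (the paper simply fixes a countable chain $\fram=\cup_n\fram_n$, which exists because the rank-one value group is dense), identify $H^0$ of $R\Hom_{\calO_C}(\fram,\calO_C)$ with $\calO_C$ by the valuation inequality (no spherical completeness needed there), and kill ${\lim}^1$ using spherical completeness. The one step you describe loosely is the last: the tower $\{\fram_n^\vee\}$ is \emph{not} Mittag--Leffler, and the paper instead applies the $R\lim$ long exact sequence to $0\to\{\fram_n^\vee\}\to\{C\}\to\{C/\fram_n^\vee\}\to 0$ and uses spherical completeness to see that $C\to\lim_n C/\fram_n^\vee$ is surjective (a compatible system of cosets is a nested family of closed discs), which is the precise form of the "nested discs" argument you are gesturing at.
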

\begin{proof}
By choosing a resolution of $K$, we may assume $K = \calO_C$. Thus, we must check that $R\Hom(\fram, \calO_C) =: (\calO_C^a)_\ast$ is simply $\calO_C[0]$. We may write $\fram = \cup_n \fram_n$ with each $\fram_n$ being principal. We have to show that every map $\fram \to \calO_C$ extends to a map $\calO_C \to \calO_C$, and that $R^1 \lim (\fram_n^\vee) = 0$, where $\fram_n^\vee := \Hom(\fram_n, \calO_C)$. For the first, note that a map $\fram \to \calO_C$ is given by an element $x \in C$ such that $\epsilon \cdot x \in \calO_C$ for every $\epsilon \in \fram$. But then $v(x) + v(\epsilon) \geq 0$ for each $\epsilon \in \fram$, and thus $v(x) \geq 0$, so $x \in \calO_C$. For the second, we first note that the canonical map $\alpha:C \to \lim C/\fram_n^\vee$ is surjective by spherical completeness of $C$\footnote{Indeed, an element $\{\overline{a_n} \} \in \lim C/\fram_n^\vee$ determines a descending sequence $\{a_n + \fram_n^\vee\}$ of open discs in $C$, where $a_n \in C$ is any lift of $\overline{a_n} \in C/\fram_n^\vee$. By spherical completeness, there is some $a \in C$ such that $a \in a_n + \fram_n^\vee$ for all $n$; but then $a \in C$ maps to $\{\overline{a_n} \} \in \lim C/\fram_n^\vee$ under the above map, giving surjectivity.}. The long exact sequence for $R^i \lim$ applied to the short exact sequence 
\[ 0 \to \{\fram_n^\vee\} \to \{C\} \to \{C/\fram_n^\vee\} \to 0\] 
of projective systems then gives the desired $\lim^1$ vanishing. 
\end{proof}

\begin{remark}[Spectral sequences]
\label{rmk:SphCompletePerfectComplex}
Let $C$ be spherically complete. Lemma~\ref{lem:SphCompleteVanishing} can be phrased as follows: the functor $K \mapsto K^a$ gives an equivalence between $D_{\perf}(\calO_C)$ and the full subcategory $D_{\perf}(\calO_C)^a \subset D(\calO_C)^a$ spanned by those bounded complexes $K \in D(\calO_C)^a$ such that each $H^i(K)_*$ is finitely presented (or, equivalently, perfect). From henceforth, a perfect complex in $D(\calO_C)^a$ refers an object of $D_{\perf}(\calO_C)^a$. A formal consequence of this equivalence is the following abstract statement: the functor $(-)_*$ gives an equivalence between the category of first quadrant $E_2$-spectral sequences in $\Mod_{\calO_C}^a$ whose terms are perfect in the preceding sense and the analogous category defined using $\Mod_{\calO_C}$.
\end{remark}

\begin{remark}
Some assumption on $C$ beyond completeness is necessary for Lemma~\ref{lem:SphCompleteVanishing}. Indeed, for $C = \widehat{\overline{\Q_p}}$, we have $\Ext^2_{\calO_C}(k, \calO_C) \neq 0$, and hence $\calO_C \to (\calO_C^a)_*$ is not an isomorphism.
\end{remark}

Earlier, we introduced the ideal $W(\fram^\flat) = \ker(A_\inf \to W(k))$ satisfying $W(\fram^\flat) \cdot \calO_C = \fram$. For applications, it is convenient to extend the basic notions of almost mathematics from the pair $(\calO_C,\fram)$ to the pair $(A_\inf,W(\fram^\flat))$:

\begin{definition}
An $A_\inf$-module $M$ is {\em almost zero} if $W(\fram^\flat) \cdot M = 0$. A map $f:K \to L$ in $D(A_\inf)$ is an {\em almost isomorphism} if the cohomology groups of the cone of $f$ are almost zero.
\end{definition}

The preceding definition comes with a warning: we do not know if $W(\fram^\flat)^2 \neq W(\fram^\flat)$ as ideals in $A_\inf$ (due to issues of completion, see \cite[Remark 1.4]{KedlayaAinf}), so almost zero modules need not form a Serre subcategory of all $A_\inf$-modules, and thus the usual formalism of almost mathematics (as discussed in Remark~\ref{rmk:AlmostMathDiscrete}) does not automatically apply. Nevertheless, in these notes, we will work only with complexes of $A_\inf$-modules which are $p$-adically complete; in fact, we will typically only encounter $(p,\mu)$-adically complete objects. In this context, the preceding subtlety can be skirted, and the formalism of almost mathematics can be salvaged as follows. 

\begin{construction}[The almost derived category over $A_\inf$]
\label{cons:AlmostAinf}
Let $D_{comp}(A_\inf) \subset D(A_\inf)$ be the full subcategory of all $p$-adically complete complexes. Since $W(\fram^\flat) \widehat{\otimes}^L_{A_\inf} W(\fram^\flat) \simeq W(\fram^\flat)$, the restriction of scalars functor $D_{comp}(W(k)) \to D_{comp}(A_\inf)$ is fully faithful, and its essential image comprises those $K \in D_{comp}(A_\inf)$ with each $H^i(K)$ being almost zero. Let $D_{comp}(A_\inf)^a := D_{comp}(A_\inf)/D_{comp}(W(k))$ be the Verdier quotient, and write $K \mapsto K^a$ for the quotient map $D_{comp}(A_\inf) \to D_{comp}(A_\inf)^a$. This functor has a left-adjoint defined by $K^a \mapsto K^a_! := K \widehat{\otimes}_{A_\inf} W(\fram^\flat)$, and a right-adjoint defined by $K^a \mapsto K^a_* := R\Hom_{A_\inf}(W(\fram^\flat), K)$. As in Construction~\ref{cons:AlmostOC}, we are implicitly asserting that the given formulas only depend on $K$ up to almost isomorphism, and one checks that both these adjoints are fully faithful. We sometimes abuse notation and write $K_*$ instead of $(K^a)_*$ for $K \in D_{comp}(A_\inf)$, and similarly for $K_!$.
\end{construction}

\begin{remark}
Constructions~\ref{cons:AlmostOC} and \ref{cons:AlmostAinf} are compatible in an evident sense: if $D_{comp}(\calO_C) \subset D(\calO_C)$ denotes the full subcategory of $p$-adically complete complexes, then, for any $K \in D_{comp}(\calO_C)$, the formation of $(K^a)_*$ commutes with the forgetful functor $D_{comp}(\calO_C) \to D_{comp}(A_\inf)$, and similarly for $(K^a)_!$. 
\end{remark}

The $A_\inf$-analogue of Lemma~\ref{lem:SphCompleteVanishing} is the next lemma, which will be quite useful in \S \ref{sec:globalAOmega} for converting certain almost isomorphisms into true isomorphisms:

\begin{lemma}
\label{lem:SphCompleteVanishingAinf}
Let $C$ be spherically complete. If $K \in D(A_\inf)$ is perfect, then $K \simeq (K^a)_*$.
\end{lemma}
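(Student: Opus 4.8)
The plan is to reduce this to the already-established Lemma~\ref{lem:SphCompleteVanishing} over $\calO_C$ by a dévissage argument along the two ``coordinate directions'' of $A_\inf$, namely $p$ and (a generator of) $\ker(\theta)$. First, since $K$ is perfect over $A_\inf$, so is the cone of $K \to (K^a)_*$; hence it suffices to show that the natural map $A_\inf \to (A_\inf^a)_* = R\Hom_{A_\inf}(W(\fram^\flat), A_\inf)$ is an isomorphism, i.e.\ that $R\Hom_{A_\inf}(A_\inf/W(\fram^\flat), A_\inf)$, which is $R\Hom_{A_\inf}(W(k), A_\inf)$, vanishes. Equivalently, picking a resolution, we must show $\Ext^i_{A_\inf}(W(k), A_\inf) = 0$ for all $i$. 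Note $A_\inf$ is $(p,\xi)$-adically complete, and $W(k) = A_\inf/(p, W(\fram^\flat))$ in the appropriate completed sense; the point is that $W(\fram^\flat)$ is the $p$-adic completion of the increasing union $\cup_n ([\underline{p}]^{1/p^n}) \subset A_\inf$.

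The key step is to filter $W(\fram^\flat)$ and reduce modulo $p$. Concretely, I would first show that $K \simeq (K^a)_*$ can be checked after applying $-\otimes^L_{A_\inf} A_\inf/p$: both $K$ and $(K^a)_*$ are $p$-adically complete (the latter because $R\Hom$ out of anything into a $p$-complete complex is $p$-complete, and $(K^a)_*$ is perfect hence $p$-complete), so by derived Nakayama it suffices to prove the statement after $-\otimes^L_{A_\inf}A_\inf/p$. After reduction mod $p$ we are working over $A_\inf/p \simeq \calO_C^\flat$, and $W(\fram^\flat) \otimes^L_{A_\inf} A_\inf/p$ should be identified (using that $W(\fram^\flat)$ and $A_\inf$ are $p$-torsionfree and the explicit description $W(\fram^\flat) = \widehat{\cup_n([\underline{p}]^{1/p^n})}$) with $\fram^\flat \subset \calO_C^\flat$ — or at least with something whose $R\Hom$ into $\calO_C^\flat$ we can control. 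The needed vanishing $R\Hom_{\calO_C^\flat}(\calO_C^\flat/\fram^\flat, \calO_C^\flat) = R\Hom_{\calO_C^\flat}(k, \calO_C^\flat) = 0$ is then exactly the characteristic-$p$ analogue of Lemma~\ref{lem:SphCompleteVanishing}: since $\calO_C^\flat$ is itself a spherically complete (rank $1$) valuation ring with maximal ideal $\fram^\flat = \cup_n \fram_n^\flat$ a union of principal ideals, the same two computations as in the proof of Lemma~\ref{lem:SphCompleteVanishing} — every map $\fram^\flat \to \calO_C^\flat$ extends to $\calO_C^\flat$ by a valuation estimate, and $R^1\lim (\fram_n^{\flat,\vee}) = 0$ by spherical completeness of $C^\flat$ — give the result. (One uses here that $C^\flat$ is spherically complete because $C$ is, as the value groups and the ``disc'' structures match under tilting.)

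The main obstacle I anticipate is the bookkeeping around completions: $W(\fram^\flat)$ is a $p$-adically completed union rather than a genuine union of principal ideals, so identifying $W(\fram^\flat)\otimes^L_{A_\inf}A_\inf/p$ with $\fram^\flat$ (and not merely up to some completion error) requires care — one must check $\mathrm{Tor}_1^{A_\inf}(W(\fram^\flat), A_\inf/p)$ vanishes, i.e.\ that $p$ acts injectively on $W(\fram^\flat)$, which follows since $W(\fram^\flat) \subset A_\inf$ and $A_\inf$ is $p$-torsionfree, and that the mod-$p$ reduction is $\varpi$-adically complete and equals $\fram^\flat$. An alternative, cleaner route that sidesteps some of this is to use the computation already packaged in Lemma~\ref{lem:SphCompleteVanishing} together with the base-change compatibility noted in the Remark after Construction~\ref{cons:AlmostAinf}: the forgetful functor $D_{comp}(\calO_C) \to D_{comp}(A_\inf)$ commutes with $(-)_*$, and $A_\inf$ is built from $\calO_C^\flat$-data via $W(-)$, so one reduces directly to the spherically complete valuation ring $\calO_C^\flat$ in characteristic $p$ and reruns the proof of Lemma~\ref{lem:SphCompleteVanishing} verbatim there, then lifts the conclusion along $p$-adic completeness. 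Either way, the genuinely new content beyond Lemma~\ref{lem:SphCompleteVanishing} is only the mod-$p$ identification, and once that is in place the argument is formal.
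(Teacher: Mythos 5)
Your dévissage has the same skeleton as the paper's proof: reduce to $K = A_\inf$, use completeness plus derived Nakayama to pass to a quotient of $A_\inf$, identify $W(\fram^\flat) \otimes^L_{A_\inf} (\text{quotient})$ with the maximal ideal of a valuation ring, and invoke the computation $R\Hom(\fram,\calO)\simeq\calO$. The one substantive difference is the choice of element you reduce by: you take $p$, landing in $A_\inf/p \simeq \calO_C^\flat$, whereas the paper reduces modulo $\xi$, landing in $A_\inf/\xi \simeq \calO_C$, where $W(\fram^\flat)\widehat{\otimes}_{A_\inf}\calO_C \simeq \fram$ and Lemma~\ref{lem:SphCompleteVanishing} applies verbatim with no further input. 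Your intermediate steps on the chosen route are fine: $p$ acts injectively on $W(\fram^\flat)\subset A_\inf$, and the short exact sequence $0 \to W(\fram^\flat) \to A_\inf \to W(k) \to 0$ reduced mod $p$ gives $W(\fram^\flat)/p \simeq \fram^\flat$ cleanly, since $W(k)$ is also $p$-torsionfree.

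The gap is the final input: you need $R\Hom_{\calO_C^\flat}(\fram^\flat,\calO_C^\flat)\simeq\calO_C^\flat$, i.e.\ the characteristic-$p$ analogue of Lemma~\ref{lem:SphCompleteVanishing}, and for the $R^1\lim$ part of that argument you need $C^\flat$ to be spherically complete. Your parenthetical justification --- that ``the disc structures match under tilting'' --- is not a proof: the identification $\calO_C^\flat \simeq \lim_{x\mapsto x^p}\calO_C$ is only multiplicative, so a disc $a+\fram^\flat_c$ in $\calO_C^\flat$ does not correspond to a disc in $\calO_C$ in any obvious way, and the relevant discs here have radii bounded away from $0$, which is exactly the regime where completeness alone fails and spherical completeness is genuinely needed. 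The statement is in fact true, but proving it requires an actual argument --- e.g.\ observing that nonemptiness of a nested intersection of discs of radius $\geq \varepsilon$ is a statement about lifting compatible cosets in the truncation $\calO_C^\flat/\varpi^{p^k}$, which is identified with $\calO_C^\flat/\varpi \simeq \calO_C/p$ via a power of Frobenius, whence the claim transfers from $C$. Either supply such an argument, or (simpler) switch the reduction from $p$ to $\xi$, which is precisely what the paper does to avoid introducing any new input beyond Lemma~\ref{lem:SphCompleteVanishing}.
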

\begin{proof}
We may assume $K = A_\inf$. Thus, we must check that $R\Hom_{A_\inf}(W(\fram^\flat), A_\inf) \simeq A_\inf$. As both sides are $\xi$-adically complete, it suffices to check the same after applying $(-) \otimes^L_{A_\inf} A_\inf/\xi$. Now adjunction gives
\[ R\Hom_{A_\inf}(W(\fram^\flat), A_\inf) \otimes^L_{A_\inf} A_{\inf}/\xi \simeq R\Hom_{A_\inf}(W(\fram^\flat), \calO_C) \simeq R\Hom_{\calO_C}(W(\fram^\flat) \otimes^L_{A_\inf} \calO_C, \calO_C).\]
Moreover, $W(\fram^\flat) \otimes^L_{A_\inf} \calO_C$ is $p$-adically complete (as $\calO_C$ is a perfect $A_\inf$-complex), and thus identifies with $W(\fram^\flat) \widehat{\otimes}_{A_\inf} \calO_C \simeq \fram$. It is thus enough to check $R\Hom_{\calO_C}(\fram, \calO_C) \simeq \calO_C$, which follows from Lemma~\ref{lem:SphCompleteVanishing}.
\end{proof}

\section{Framed algebras and input from perfectoid geometry}
\label{sec:FramedAlgebras}

In this section, we introduce the main calculational tools in the perfectoid approach to $p$-adic Hodge theory: $p$-adic formal tori over $\calO_C$ and their \'etale covers. The general strategy of proving results\footnote{What follows is an oversimplified overview of the steps involved in proving $p$-adic comparison theorems in the good reduction case via this approach; we refer to the original papers of Faltings and Scholze for precise statements, and \S \ref{sec:TildeOmega} for an instance of this strategy in action.} is: 
\begin{enumerate}[(a)]
\item Formulate an appropriate global statement.
\item Check the statement for tori. 
\item Check the statement for \'etale covers of tori by devissage.
\item Pass to the global case by glueing the output of (c) for a suitable Zariski cover.
\end{enumerate}
In practice, a key technical hurdle is often step (b), and this is typically circumvented (or, rather, reduced to an explicit calculation) using the almost purity theorem \cite[\S 2b]{FaltingsAlmostEtale}, \cite[Theorem 7.9]{ScholzeThesis}. 

Thus, we begin in \S \ref{ss:FramedAlgebras} by introducing some notation surrounding \'etale covers of tori in the $p$-adic setting; thanks to a result of Kedlaya \cite{KedlayaAffine} in characteristic $p$ geometry, we can actually get by with {\em finite} \'etale covers, which will simplify some arguments later (but is not essential to the method). Having introduced this notation, we explain in \S \ref{ss:APT} how the almost purity theorem gives an explicit description of certain ``nearby cycles'' complexes of interest in $p$-adic geometry; we adopt here the framework of adic spaces as advocated by Scholze \cite{ScholzePAdicHodge} in lieu of the older approach via the Faltings topos (see \cite{FaltingsAlmostEtale,AbbesGrosTopos}), and also recall the fundamental theorem (see Theorem~\ref{thm:PrimitiveCompThm}) identifying \'etale cohomology in terms of certain nearby cycles complexes in the global case.

\subsection{Framed algebras}
\label{ss:FramedAlgebras}

Our goal is to study formal schemes over $\calO_C$ in relation to their geometric fibres. The relevant class of formal schemes is defined as follows:

\begin{definition}
Let $R$ be a $p$-adically complete and flat $\calO_C$-algebra, viewed as a topological $\mathcal{O}_C$-algebra via the $p$-adic topology. We say that $R$ is {\em formally smooth} over $\calO_C$ if $R/p$ is smooth over $\calO_C/p$. A formal scheme over $\Spf(\calO_C)$ is {\em smooth} if it is Zariski locally of the form $\Spf(R)$ for a formally smooth $\calO_C$-algebra $R$. 
\end{definition}

By a theorem of Elkik \cite{Elkik}, any formally smooth $\mathcal{O}_C$-algebra $R$ as defined above arises as the $p$-adic completion of a smooth $\mathcal{O}_C$-algebra. The most important example of a smooth formal $\calO_C$-scheme for our purposes is:

\begin{example}
The formal torus $\mathbb{T}^d_{\calO_C}$ is given by $\Spf(\calO_C \langle t_1^{\pm 1},...,t_d^{\pm 1} \rangle)$, where the ring is given the $p$-adic topology.
\end{example}

Our strategy is to study smooth formal $\calO_C$-schemes by relating them to formal tori via \'etale co-ordinates, and also studying their deformations along the pro-infinitesimal thickening $A_\inf \stackrel{\theta}{\to} \calO_C$. Thus, it is quite convenient to introduce some notation concerning formal tori and their deformations to such thickenings:

\begin{notation}
\label{not:Torus}
Let $A \in \{\calO_C, A_\inf, A_\inf/\mu\}$; each such an $A$ is viewed as an $A_\inf$-algebra (in the obvious way for $A_\inf$ and $A_\inf/\mu$, and via $\theta$ for $\calO_C$), and is equipped with the $(p,\mu)$-adic topology (and note that $\mu = 0$ for $A \in \{\calO_C,A_\inf/\mu\}$). Write $P^d_A = A \langle t_1^{\pm 1},..., t_d^{\pm 1} \rangle$ for the co-ordinate ring of the formal torus $\mathbb{T}^d_A$ of dimension $d$ over $A$; we also write $P^d = P^d_{\calO_C}$. Taking inverse limits along the multiplication by $p$ map on $\mathbb{T}^d_A$ gives a formal scheme $\mathbb{T}^d_{A,\infty}$ whose co-ordinate ring is denoted $P^d_{A,\infty}$; again, we drop the subscript ``$A$'' when $A = \calO_C$. As an $A$-algebra, $P^d_{A,\infty}$ is described by
\begin{equation}
\label{eq:AlgStructureStdPerfectoid}
P^d_{A,\infty} = A \langle t_1^{\pm \frac{1}{p^\infty}},..., t_d^{\pm \frac{1}{p^\infty}} \rangle.
\end{equation}
In particular, the structure of $P^d_{A,\infty}$ as an $A$-module is given by
\begin{equation}
\label{eq:ModuleStructureStdPerfectoid} 
P^d_{A,\infty} \simeq \widehat{\bigoplus_{(a_1,...,a_d) \in \Z[\frac{1}{p}]^d}} A \cdot \prod_{i=1}^d t_i^{a_i},
\end{equation}
where the completion is $(p,\mu)$-adic. The submodule $P^d_A \subset P^d_{A,\infty}$ is spanned by the (completed) summands indexed by $\Z^d \subset \Z[\frac{1}{p}]^d$. Note that the ring $P^d_\infty$ is a perfectoid $\calO_C$-algebra (see Example~\ref{ex:PerfectoidBasic}).
\end{notation}

\begin{warning}
\label{warning:tiltedcoordinates}
In the notation above, by Example~\ref{ex:PerfectoidTorusAinf}, one has the following formula: 
\begin{equation}
\label{eq:StructureStdPerfectoidLift}
P^d_{A_\inf,\infty} \simeq A_\inf(P^d_\infty).
\end{equation}
 However, this is a bit misleading in terms of functoriality with respect to the choice of co-ordinates. Indeed, the canonical identification proven in Example~\ref{ex:PerfectoidTorusAinf} is $A_\inf(P^d_\infty) = A_\inf \langle v_1^{\pm \frac{1}{p^\infty}},...,v_d^{\pm \frac{1}{p^\infty}} \rangle$ with $v_i = [\underline{t_i}]$ being the Teichmuller lifts of the tilted co-ordinates $\underline{t_i}$ on $(P^d_\infty)^\flat$. In particular, one must heed this warning when transporting structure from $P^d_\infty$ to $P^d_{A_\inf,\infty}$ via \eqref{eq:StructureStdPerfectoidLift} and the functoriality of the $A_\inf(-)$ operation. For example, if a group $G$ acts $P^d$-linearly on $P^d_\infty$, then one obtains by functoriality a $G$-action on $P^d_{A_\inf,\infty}$, but this action need not be trivial on the subring $P^d_{A_\inf} \subset P^d_{A_\inf,\infty}$; an explicit example of this phenomenon arises in Construction~\ref{cons:DeltaActionTorus} below. 
\end{warning}

Recall that in Notation~\ref{not:epsilon} we have a fixed a compatible sequence of $p$-power roots of $1$, viewed as an element $\underline{\epsilon} \in \calO_C^\flat$. This choice enables us to explicitly describe the canonical Galois action present on the perfectoid covers of tori introduced above:

\begin{construction}[The covering group action on a perfectoid torus]
\label{cons:DeltaActionTorus}
The multiplication by $p^k$ map $\mathbb{T}^d_{\calO_C} \to \mathbb{T}^d_{\calO_C}$ is an fppf $(\mu_{p^k})^{\oplus d}$-torsor. Passing to the inverse limit over all $k$, the canonical map $\mathbb{T}^d_{\calO_C,\infty} \to \mathbb{T}^d_{\calO_C}$ is a torsor for the pro-(finite group scheme) $\Z_p(1)^{\oplus d}$. In particular, there is a continuous action of the profinite group $\Z_p(1)^{\oplus d}(\calO_C)$ on $P^d_{\infty}$. Using the identification $\Z_p(1)^{\oplus d}(\calO_C) \stackrel{[\underline{\epsilon}]}{\simeq} \Z_p^{\oplus d} =: \Delta^d$ coming from the choice of $[\underline{\epsilon}]$, this induces a $P^d$-linear $\Delta^d$-action on $P^d_\infty$. Explicitly, this action respects the decomposition in \eqref{eq:ModuleStructureStdPerfectoid} for $A = \calO_C$, and the $j$-th basis vector $e_j \in \Z_p^{\oplus d}$ acts on the monomial $\prod_{i=1}^d t_i^{a_i}$ by multiplying it by $\epsilon^{a_i}$. Formula \eqref{eq:StructureStdPerfectoidLift} then induces a $\Delta^d$-action on $P^d_{A_\inf,\infty}$ (after taking Warning \ref{warning:tiltedcoordinates} into account), which can be described explicitly as follows: the decomposition in \eqref{eq:ModuleStructureStdPerfectoid} (for $A = A_\inf$) is $\Delta^d$-equivariant, and the induced action of the $j$-th basis vector $e_j \in \Z_p^{\oplus d}$ on the summand $A_\inf \cdot \prod_{i=1}^d t_i^{a_i}$ is given by multiplication by $[\underline{\epsilon}^{a_j}]$. This action preserves the subring  $P^d_{A_\inf} \subset P^d_{A_\inf,\infty}$, and the induced $\Delta^d$-action on $P^d_{A_\inf}$ is trivial modulo $\mu$ since $[\underline{\epsilon}^{a_i}] = 1$ modulo $\mu$ provided $a_i \in \Z$. Thus, by passage to the quotient, this induces a $P^d_{A_\inf/\mu}$-linear action on $P^d_{A_\inf/\mu}$ lifting the $P^d$-linear action on $P^d_\infty$. 
\end{construction}

The group cohomology of the Galois group $\Delta^d$ acting on co-ordinate rings of the perfectoid tori can be calculated in terms of explicit Koszul complexes, and forms the basic ingredient of many calculations to follow:

\begin{lemma}
\label{lem:GroupCohKoszulCohPerfectoidTorus}
Fix some $A$ as in Notation~\ref{not:Torus}. 
\begin{enumerate}[(1)]
\item We have a canonical isomorphism
\[ R\Gamma_{conts}(\Delta^d, P^d_{A,\infty}) \simeq  \widehat{\bigoplus_{(a_1,...,a_d) \in \Z[\frac{1}{p}]^d}} K(A; [\underline{\epsilon}^{a_1}] - 1,...., [\underline{\epsilon}^{a_d}] - 1),\]
where the completion is $(p,\mu)$-adic. 
\item When $A = A_\inf/\mu$ or $A = \calO_C$, this decomposes further as
\begin{equation}
\label{eq:RGammaPerfectoidModmu}
R\Gamma_{conts}(\Delta^d, P^d_{A,\infty}) \simeq  \Big(R\Gamma_{conts}(\Delta^d,\Z_p) \otimes_{\Z_p} P^d_A\Big) \oplus \widehat{\bigoplus_{(a_1,...,a_d) \in \Z[\frac{1}{p}]^d-\Z^d}} K(A; [\underline{\epsilon}^{a_1}] - 1,...., [\underline{\epsilon}^{a_d}] - 1).
\end{equation}
\item In the decomposition in equation \eqref{eq:RGammaPerfectoidModmu}, each homology group of the summand $K(A; [\underline{\epsilon}^{a_1}] - 1,...., [\underline{\epsilon}^{a_d}] - 1)$ appearing on the right is a finite direct sum of  copies of $A/([\underline{\epsilon}^a]-1)$ for some $a \in \Z[\frac{1}{p}] - \Z$.
\item The second summand on the right in equation \eqref{eq:RGammaPerfectoidModmu} is annihilated by $[\underline{\epsilon}^{\frac{1}{p}}] - 1$.
\end{enumerate}
\end{lemma}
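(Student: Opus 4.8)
The starting point is the standard fact that the continuous cohomology of $\Z_p^{\oplus d}$ acting on a $p$-adically complete module is computed by a Koszul complex: if a single copy of $\Z_p$ acts on a complete module $N$ via an operator $\gamma$, then $R\Gamma_{conts}(\Z_p, N) = [N \xrightarrow{\gamma - 1} N]$, and taking the $d$-fold external tensor product (using that the actions of the generators $e_1,\dots,e_d$ commute) gives $R\Gamma_{conts}(\Delta^d, N) = K(N; \gamma_1 - 1,\dots,\gamma_d - 1)$. The plan for (1) is to apply this to $N = P^d_{A,\infty}$, using the explicit $\Delta^d$-equivariant decomposition of $P^d_{A,\infty}$ from \eqref{eq:ModuleStructureStdPerfectoid} together with the description of the action in Construction~\ref{cons:DeltaActionTorus}: the $j$-th generator $e_j$ acts on the summand $A \cdot \prod_i t_i^{a_i}$ by multiplication by $[\underline{\epsilon}^{a_j}]$. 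Since cohomology of a Koszul complex commutes with the relevant completed direct sum (each Koszul differential preserves the grading by $(a_1,\dots,a_d)$), one gets termwise $K(A; [\underline{\epsilon}^{a_1}]-1,\dots,[\underline{\epsilon}^{a_d}]-1)$, completed $(p,\mu)$-adically. One should be slightly careful that the external tensor product of Koszul complexes over the grading is what produces the claimed formula, and that derived completion is harmless here because each graded piece is already a shifted finite free $A$-complex; I'd note this but not belabor it.

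For (2): when $A = \calO_C$ or $A = A_\inf/\mu$ we have $\mu = 0$, hence $[\underline{\epsilon}^{a_i}] - 1 = 0$ in $A$ whenever $a_i \in \Z$ (property (3) of Notation~\ref{not:epsilon}: the image of $\frac{[\underline{\epsilon}^a]-1}{[\underline{\epsilon}]-1}$ in $A_\inf/\mu$ is $a$, so for $a\in\Z$ the numerator lies in $(\mu)$). Thus for the multi-indices with all $a_i \in \Z^d$ the Koszul complex has all differentials zero and equals $K(A;0,\dots,0) = R\Gamma_{conts}(\Delta^d,\Z_p)\otimes_{\Z_p} A$; summing these over $\Z^d$ and using $\widehat{\bigoplus_{\Z^d}} A \cdot \prod t_i^{a_i} = P^d_A$ gives the first summand. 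The remaining multi-indices (those in $\Z[\tfrac1p]^d \setminus \Z^d$) give the second summand verbatim, splitting the completed direct sum into these two pieces.

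For (3) and (4), fix a multi-index $(a_1,\dots,a_d)$ not in $\Z^d$, so some $a_j \notin \Z$. The homology of $K(A; [\underline{\epsilon}^{a_1}]-1,\dots,[\underline{\epsilon}^{a_d}]-1)$ is built, via the usual "partition the generators into the zero ones and the nonzero ones" argument, out of the ideals $([\underline{\epsilon}^{a_i}]-1)$; concretely each homology group is a finite direct sum of modules of the form $A/([\underline{\epsilon}^{a_i}]-1)$ for the indices $i$ with $a_i \notin \Z$ (reorder so these come first, tensor the trivial-differential part for the $a_i \in \Z$ slots, which just multiplies the rank), giving (3). For (4): choose $i$ with $a_i = \tfrac{m}{p^k}$, $k\ge 1$, $p\nmid m$; then $v_p(a_i) = -k \le -1 = v_p(\tfrac1p)$, so by property (2) of Notation~\ref{not:epsilon}, $[\underline{\epsilon}^{1/p}] - 1$ divides $[\underline{\epsilon}^{a_i}] - 1$, i.e. $A/([\underline{\epsilon}^{a_i}]-1)$ is killed by $[\underline{\epsilon}^{1/p}]-1$. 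Hence every homology group of every summand in the second part of \eqref{eq:RGammaPerfectoidModmu} is annihilated by $[\underline{\epsilon}^{1/p}]-1$, and since this is a single fixed element, the whole (completed) second summand is annihilated by it; this proves (4).

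The only genuinely delicate point is the interaction of the (completed) direct sum decomposition with derived completion and with the formation of Koszul homology — making sure that "cohomology commutes with the completed sum" is legitimate. This is fine because the decomposition is by a grading that every differential respects and each graded summand is a bounded complex of finite free $A$-modules, so after reducing mod a power of the ideal $(p,\mu)$ everything is an honest direct sum and one passes to the limit; I would state this as the one point requiring care and otherwise treat the computation as routine.
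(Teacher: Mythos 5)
Parts (1), (2), and the deduction of (4) from (3) follow essentially the paper's route and are fine, modulo one slip in (4): you assert that $[\underline{\epsilon}^{1/p}]-1$ divides $[\underline{\epsilon}^{a_i}]-1$, whereas property (2) of Notation~\ref{not:epsilon} together with $v_p(a_i)\le v_p(1/p)$ gives the divisibility the other way, $[\underline{\epsilon}^{a_i}]-1 \mid [\underline{\epsilon}^{1/p}]-1$ --- which is in fact the direction you need in order for $A/([\underline{\epsilon}^{a_i}]-1)$ to be killed by $[\underline{\epsilon}^{1/p}]-1$.

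The genuine gap is in (3). Partitioning the Koszul generators into the zero ones (the slots with $a_i\in\Z$) and the nonzero ones only reduces you to computing the homology of $K(A; f_{i_1},\dots,f_{i_s})$, where the $f_{i_j}=[\underline{\epsilon}^{a_{i_j}}]-1$ correspond to the nonintegral exponents; there is no ``usual'' argument identifying the homology of a Koszul complex on several nonzero elements with a direct sum of the cyclic quotients $A/(f_{i_j})$ (for a regular sequence the answer would instead be concentrated in top degree), and note moreover that the lemma asserts a direct sum of copies of $A/([\underline{\epsilon}^{a}]-1)$ for a \emph{single} $a$. The missing input is the divisibility structure among these elements: choosing $a_{i_1}$ of minimal $p$-adic valuation, the element $[\underline{\epsilon}^{a_{i_1}}]-1$ divides every $[\underline{\epsilon}^{a_{i_j}}]-1$. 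The paper exploits this by observing that the Koszul complex naturally carries the structure of a complex over $A_\inf/([\underline{\epsilon}^{a_{i_1}}]-1)$ (convention (c) on Koszul complexes in \S\ref{ss:Conventions}), over which all the remaining operators become zero, so the whole complex splits as a finite direct sum of shifts of the two-term complex $M=\big(A\xrightarrow{[\underline{\epsilon}^{a_{i_1}}]-1}A\big)$. One must then identify \emph{both} homology groups of $M$: the cokernel is $A/([\underline{\epsilon}^{a_{i_1}}]-1)$ on the nose, but the kernel requires a separate check (it is $0$ for $A=\calO_C$, and isomorphic to $A/([\underline{\epsilon}^{a_{i_1}}]-1)$ for $A=A_\inf/\mu$, using that $A_\inf$ is a domain and $\mu$ is a multiple of $[\underline{\epsilon}^{a_{i_1}}]-1$). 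Your proposal addresses neither the splitting nor the kernels.
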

\begin{proof}
(1) follows from equation~\eqref{eq:ModuleStructureStdPerfectoid}, the compatibility of $R\Gamma_{conts}(\Delta^d,-)$ with completed direct sums, and the standard fact that the group cohomology of topologically free groups is computed by a Koszul complex. For (2), breaking the index set in (1) into its integral and nonintegral pieces shows
\[ R\Gamma_{conts}(\Delta^d, P^d_{A,\infty}) \simeq R\Gamma_{conts}(\Delta^d,P^d_A)  \oplus \widehat{\bigoplus_{(a_1,...,a_d) \in \Z[\frac{1}{p}]^d-\Z^d}} K(A; [\underline{\epsilon}^{a_1}] - 1,...., [\underline{\epsilon}^{a_d}] - 1).\]
Now note that the $\Delta^d$-action is trivial on $P^d_A \subset P^d_{A,\infty}$ for $A = A_\inf/\mu$ or $A = \calO_C$. Thus, the first summand above is identified $R\Gamma_{conts}(\Delta^d,\Z_p) \widehat{\otimes}_{\Z_p} P^d_A$ by base change. As $H^i_{conts}(\Delta^d,\Z_p)$ is a finite free $\Z_p$-module, we can replace the completed tensor product with an ordinary tensor product; this gives (2). For (3), choose $a_i \in \{a_1,...,a_d\}$ with minimal $p$-adic valuation, so $a_i \in \Z[\frac{1}{p}] - \Z$; after relabelling, we may assume $i=1$. Define a $2$-term complex $M$ as follows:
\[ M = \Big(A \xrightarrow{[\underline{\epsilon}^{a_1}] - 1} A\Big).\]
Then $M$ admits the structure of an $A_\inf/([\underline{\epsilon}^{a_1}] - 1)$-complex. By the choice of $a_1$, it follows that $[\underline{\epsilon}^{a_i}] - 1$ is $0$ in $A_\inf/([\underline{\epsilon}^{a_1}] - 1)$ for all $i$, and thus acts trivially on $M$. This means that $K(A; [\underline{\epsilon}^{a_1}] - 1,...., [\underline{\epsilon}^{a_d}] - 1)$ is a finite direct sum of shifts of $M$, so we are reduced to showing that the homology groups of $M$ are isomorphic to $A/([\underline{\epsilon}^{a_1}] - 1)$. This is clear for $A = \calO_C$ as $[\underline{\epsilon}^{a_1}] - 1$ is a nonzero on $\calO_C$ (since $a_1$ is nonintegral). For $A = A_\inf/\mu$, this follows from the fact that $A_\inf$ is a domain, and $\mu = ([\underline{\epsilon}^{a_1}] - 1) \cdot g$ for some $g \in A_\inf$. Finally, (4) follows instantly from (3) as $[\underline{\epsilon}^a] - 1$ divides $[\underline{\epsilon}^{\frac{1}{p}}] - 1$ for $a \in \Z[\frac{1}{p}] - \Z$.
\end{proof}

As promised earlier, our strategy is to study smooth formal $\calO_C$-schemes in terms of \'etale toric co-ordinates. Thus, the following class of smooth formal schemes plays a key role:

\begin{definition}
Fix a formally smooth $\calO_C$-algebra $R$. A {\em framing} of $R$ is a finite \'etale map $\square:P^d \to R$. If such a framing exists, then $\Spf(R)$ (or $R$) is called {\em small}. 
\end{definition}

\begin{remark}
In \cite[Definition 8.5]{BMSMainPaper}, the framings are only required to be \'etale, and not necessarily finite. The stronger hypothesis above is equally ubiquitous in practice (see Lemma~\ref{lem:FramingsExist}), and makes certain arguments involving completed tensor products flow more easily.
\end{remark}

We next check that there exist enough small affine opens on any smooth formal scheme:

\begin{lemma}[Kedlaya]
\label{lem:FramingsExist}
If $\frakX/\calO_C$ is a smooth formal scheme, then there exists a basis of small affine opens $\calU \subset \frakX$. 
\end{lemma}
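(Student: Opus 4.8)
The plan is to reduce the assertion to a purely local algebraic statement and then import Kedlaya's theorem on finite \'etale covers in characteristic $p$. First I would observe that, since basic affine opens form a basis of $\frakX$, it suffices to show that each point $x$ admits a small basic affine open neighbourhood inside any given affine open; replacing $\frakX$ by such a neighbourhood, we reduce to $\frakX = \Spf(R)$ with $R$ formally smooth over $\calO_C$, and must produce a finite \'etale map $\square : P^d \to \widehat{R_0}$ for a suitable basic localization. By Elkik's theorem (cited above), we may write $R = \widehat{R_0}$ with $R_0$ a smooth finite-type $\calO_C$-algebra, and here $d = \dim_x \frakX$. Since the basic opens $\Spf(\widehat{R_0[1/f]})$ for $f \in R_0$ form a basis of $\Spf(R)$ and the underlying space of $\Spf(R)$ coincides with $\Spec(R_0/p)$, it is enough to construct, after replacing $R_0$ by a basic localization around $x$, a finite \'etale map $P^d \to \widehat{R_0}$.

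Next I would pass to the special fibre. The $k$-algebra $\bar R_0 := R_0 \otimes_{\calO_C} k$ is smooth of dimension $d$, so by Kedlaya's theorem \cite{KedlayaAffine} there is, after Zariski-localizing $\bar R_0$ around the image $\bar x$ of $x$, a finite \'etale map $\Spec(\bar R_0) \to \mathbb{A}^d_k$. Since $k$ is algebraically closed, hence infinite, I can postcompose with a translation of $\mathbb{A}^d_k$ so that $\bar x$ maps into the open torus, and then restrict to the preimage of that torus — still a Zariski neighbourhood of $\bar x$ — to obtain a finite \'etale map $\bar\square : k[t_1^{\pm 1},\dots,t_d^{\pm 1}] \to \bar R_0$ after one further shrinking. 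Because $\Spec(\bar R_0)$ is closed in $\Spec(R_0)$ and $\bar x$ lifts to $x$, this shrinking of $\bar R_0$ is induced by a basic localization of $R_0$ around $x$, so we may assume $\bar\square$ is defined on the full special fibre of our chosen affine.

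Then I would lift this framing from $k$ to $\calO_C$. The finite \'etale cover $\Spec(\bar R_0) \to \mathbb{T}^d_k$ corresponds, under the equivalence of categories of finite \'etale covers of $\mathbb{T}^d_{\calO_C}$, of $\mathbb{T}^d$ over $\calO_C/p$, and of $\mathbb{T}^d_k$ — the first from $p$-adic completeness of $\calO_C$, the second from topological invariance of the finite \'etale site along $\calO_C/p \to k$ — to a finite \'etale $P^d$-algebra $S$ with $S \otimes_{\calO_C} k \cong \bar R_0$. This $S$ is $\calO_C$-flat and $p$-adically complete (being finite \'etale over $P^d$), and $S/p$ is smooth over $\calO_C/p$ (being finite \'etale over $(\calO_C/p)[t_i^{\pm 1}]$), so $S$ is formally smooth over $\calO_C$. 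It then remains to identify $\Spf(S)$ with $\Spf(\widehat{R_0})$ as formal $\calO_C$-schemes, forgetting the maps to $\mathbb{T}^d_{\calO_C}$: both are smooth formal $\calO_C$-schemes with isomorphic special fibre $\Spec(\bar R_0)$, so by formal smoothness the given isomorphism on special fibres lifts to a morphism $\Spf(\widehat{R_0}) \to \Spf(S)$, which is an isomorphism by the fibrewise criterion for isomorphisms applied modulo each $p^n$ (both sides are flat and finitely presented over $\calO_C/p^n$, with a single fibre, over the closed point of $\Spec(\calO_C/p^n)$, on which the map is our isomorphism). Composing, $\widehat{R_0} \cong S$ is finite \'etale over $P^d$, so $\Spf(R)$ is small, completing the argument.

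I expect the main obstacle to be the non-Noetherian nature of $\calO_C$: the maximal ideal $\fram$ is not nilpotent, and is only \emph{locally} nilpotent modulo $p$, so the thickening $\calO_C/p \to k$ (and $R_0/p \to \bar R_0$) is not a classical nilpotent thickening. One therefore has to verify with some care that topological invariance of the finite \'etale site, the formal-smoothness lifting property, and the fibrewise isomorphism criterion all remain valid here — typically by writing the relevant locally nilpotent ideals as filtered unions of nilpotent ones and controlling the attendant colimits and inverse limits. The other essential ingredient, Kedlaya's theorem \cite{KedlayaAffine}, is a genuinely characteristic-$p$ input and is used as a black box; it is needed only to upgrade \emph{\'etale} toric coordinates (which exist over $\calO_C$ directly from smoothness) to \emph{finite} \'etale ones, and a weaker version of this Lemma with merely \'etale framings — which would already suffice for the rest of these notes — would not require it.
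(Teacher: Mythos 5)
Your argument is correct and follows essentially the same route as the paper's proof: reduce to the special fibre over $k$, invoke Kedlaya's theorem to obtain finite \'etale covers of $\A^d_k$, translate so as to land in the torus and restrict, and then lift the framing back to $\calO_C$ via topological invariance of the finite \'etale site together with deformation theory along the (pro-)infinitesimal thickenings $\calO_C \to \calO_C/p \to k$. The extra detour through Elkik's theorem and the explicit identification of the lifted cover $S$ with $\widehat{R_0}$ are just more verbose renderings of what the paper compresses into ``liftability of \'etale maps across pro-infinitesimal thickenings'' and ``standard finite presentation arguments,'' and the non-Noetherian caveats you flag at the end are exactly the ones being elided there.
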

\begin{proof}
Fix some non-zero non-unit $a \in \calO_C$. By the liftability of \'etale maps across pro-infinitesimal thickenings, it is enough to show that the smooth $\calO_C/a$-scheme $\calX_a := \frakX \otimes_{\calO_C} \calO_C/a$  admits a basis of affine opens that are finite \'etale over $\mathbb{T}^d_{\calO_C/a}$. By standard finite presentation arguments, it suffices to show the same for the smooth $k$-scheme $\frakX_a \otimes_{\calO_C/a} k \simeq \frakX \otimes_{\calO_C} k$. Now \cite[Theorem 2]{KedlayaAffine} (setting $m=0$) shows that an analogous statement is true if one replaces the torus $\mathbb{T}^d_k$ by the affine space $\A^d_k$, so it is enough to show that each $x \in \A^d(k)$ has a Zariski open neighbourhood $U_x$ such that $U_x$ is isomorphic to $\mathbb{T}^d_k$. Such a $U_x$ can be chosen, for example, by translating $x$ to $(1,1,...,1) \in \A^d(k)$ and then removing the co-ordinate hyperplanes.
\end{proof}

Given a framed algebra, one can lift the constructions in Notation~\ref{not:Torus} along the framing as follows:

\begin{construction}[Deforming a framed algebra to $A_\inf$]
Given a framed algebra $(R,\square)$, we construct the following auxiliary rings and group actions: 
\begin{enumerate}
\item The perfectoid algebra $R_\infty^\square := P^d_\infty \otimes_{P^d} R$; here the tensor product is automatically completed as $P^d \to R$ is finite \'etale. The $P^d$-linear $\Delta^d$-action on $P^d_\infty$ induces an $R$-linear $\Delta^d$-action on $R^\square_\infty$.
\item For any $A \in \{\calO_C, A_\inf, A_\inf/\mu\}$, the framing $\square$ deforms uniquely\footnote{In this construction, we implicitly use the following fact repeatedly: if $S$ is a ring that $I$-adically complete for a finitely generated ideal $I$, then the category of finite \'etale $S$-algebras and the category of finite \'etale $S/I$-algebras are equivalent (see \cite[\S 1]{GabberAffineAnalog}). In particular, finite \'etale covers of a formal scheme are the same as those of the underlying reduced scheme, and automorphism groups on both sides match up.} to give a finite \'etale $P^d_A$-algebra $R^\square_A$.  
\item For any $A \in \{\calO_C, A_\inf, A_\inf/\mu\}$, there is a ``perfectoid'' version $R^\square_{A,\infty} := P^d_{A,\infty} {\otimes}_{P^d_A} R_A^\square$ relative to $A$. For $A = A_\inf$, we have the formula $R^\square_{A_\inf,\infty} \simeq A_\inf(R_\infty^\square)$.
\item The formula $R^\square_{A_\inf,\infty} \simeq A_\inf(R_\infty^\square)$ implies that the $\Delta^d$-action on $R_\infty^\square$ lifts uniquely to an $A_\inf$-linear $\Delta^d$-action on $R_{A_\inf,\infty}^\square$; here we may use Remark~\ref{rmk:DefThyAinf} to see the uniqueness and existence of the lift.

\item The formula $R^\square_{A_{\inf}/\mu,\infty} = P^d_{A_\inf/\mu,\infty} \otimes_{P^d_{A_\inf/\mu}} R^\square_{A_{\inf/\mu}}$ implies that we have natural $R^\square_{A_\inf/\mu}$-linear $\Delta^d$-action on $R^\square_{A_\inf/\mu,\infty}$ lifting the one from (1).

\item The $\Delta^d$-action in (4) reduces modulo $\mu$ to the $\Delta^d$-action in (5); this follows from Remark~\ref{rmk:DefThyAinf}.

\end{enumerate}
\end{construction}

\begin{remark}[The almost purity theorem]
\label{rmk:APTCohomology}
Let $(R,\square)$ be a framed algebra, and let $R \to S$ be a finite extension that is \'etale after inverting $p$. Write $S^\square_\infty := R^\square_\infty \otimes_R S$ for the base change to $R_\infty$; note that this base change is already $p$-adically complete. Then the classical formulation of Faltings' almost purity theorem (see \cite[Theorem 3.1]{FaltingsJAMS}, \cite[\S 2b]{FaltingsAlmostEtale}) states that the map $R^\square_\infty \to S^\square_\infty$ is almost finite \'etale\footnote{This result is best viewed as a $p$-adic analog of Abhyankar's lemma. To explain this, recall that one geometric consequence of the latter is the following: if $R_0$ is smooth $\mathbf{C}$-algebra, and $f \in R_0$ is a nonzerodivisor with $R_0/f$ also smooth, then any finite \'etale cover of $R_0[\frac{1}{f}]$ extends uniquely to a finite \'etale cover of $R_0$, at least after adjoining an $n$-th root of $f$. In particular, if we set $R = \colim_n R_0[f^{\frac{1}{n}}]$, then $R_{f\et} \simeq R[\frac{1}{f}]_{f\et}$. The almost purity theorem is the mixed characteristic analog of this result when $f = p$ and $R$ is a perfectoid $\mathcal{O}_C$-algebra: one cannot extend finite \'etale covers to finite \'etale covers in this setting, but can do so in the almost sense.}. In particular, if $R[\frac{1}{p}] \to S[\frac{1}{p}]$ was Galois with group $G$, then the canonical map
\[ R^\square_\infty \to R\Gamma_{conts}(G, S^\square_\infty)\] 
is an almost isomorphism in the sense of Definition~\ref{def:AlmostIsomOC}. This particular consequence of the almost purity theorem is the essential one for the calculations that follow.
\end{remark}

\subsection{Almost purity and \'etale cohomology}
\label{ss:APT}

We now relate the smooth formal $\calO_C$-schemes introduced \S \ref{ss:FramedAlgebras} to their generic fibres. All results presented here are contained \cite{ScholzePAdicHodge}, and many of them are originally due to to Faltings \cite{FaltingsAlmostEtale} in different language and under some stronger hypotheses on integral models; we stick to the former for compatibility with \cite{BMSMainPaper} and wider applicability. 

Let $\frakX/\calO_C$ be a smooth formal scheme, and write $X$ for the generic fibre (viewed as an adic space in the sense of Huber \cite{HuberContVal,HuberDefAdic}); we write $X_{an}$ for the category of open subsets of $X$, viewed as a site in the usual way. To this data, Scholze has attached in \cite[\S 3]{ScholzePAdicHodge} the pro-\'etale site $X_{\proet}$, whose objects are (roughly) towers of finite \'etale covers of open subsets of $X_{an}$. This site comes equipped with a morphism $\mu:X_{\proet} \to X_{an}$ defined by observing an open subset of $X$ is an object of $X_{\proet}$. Huber's sheaf $\mathcal{O}_{X_{an}}^+$ on $X_{an}$ from \cite{HuberDefAdic} then defines a sheaf $\calO_X^+ := \mu^* \mathcal{O}_{X_{an}}^+$ of $\Z_p$-algebras on $X_{\proet}$, and we write $\widehat{\calO_X^+}$ for its $p$-adic completion. One of the key results of the theory is the following \cite[\S 3]{FaltingsAlmostEtale}, \cite[\S 4]{ScholzePAdicHodge}, stated somewhat imprecisely:

\begin{theorem}[Locally perfectoid nature of $X_{\proet}$]
\label{thm:ProetaleLocallyPerfectoid}
For any affinoid $U \in X_\proet$, there exists a cover $V \to U$ in $X_{\proet}$ with $V$ affinoid such that $\widehat{\calO_X^+}(V)$ is a perfectoid $\calO_C$-algebra, and $H^i(V_{\proet}, \widehat{\calO_X^+}) \stackrel{a}{\simeq} 0$ for $i > 0$.
\end{theorem}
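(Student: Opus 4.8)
The plan is to prove this by reduction to the calculations with perfectoid tori from \S\ref{ss:FramedAlgebras}. First I would fix an affinoid $U \in X_\proet$. Working locally on $\frakX$, I may assume (by Lemma~\ref{lem:FramingsExist}) that $U$ lies over a small affine open $\Spf(R) \subset \frakX$, i.e., $R$ admits a finite \'etale framing $\square:P^d \to R$. The natural candidate for the cover $V$ is the one obtained by base-changing along the perfectoid tower $P^d \to P^d_\infty$, i.e., $V$ should be the affinoid with $\widehat{\calO_X^+}(V)$ given (almost) by $R^\square_\infty = P^d_\infty \otimes_{P^d} R$; more precisely, since $U$ is itself already some finite \'etale cover of (an open in) the generic fibre of $\Spf(R)$, I would take $V \to U$ to be the pullback of the pro-(finite \'etale) cover $\mathbb{T}^d_{\calO_C,\infty} \to \mathbb{T}^d_{\calO_C}$ along $U \to \mathbb{T}^d_{\calO_C}$ (on generic fibres). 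Then $V$ is affinoid and pro-\'etale over $U$, and $\widehat{\calO_X^+}(V)$ is, at least up to the $p$-adic completion and the almost subtleties, the perfectoid ring $R^\square_\infty \otimes_R S$ attached to the finite cover $R \to S$ presenting $U$.

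The two things to check are then: (i) $\widehat{\calO_X^+}(V)$ is a perfectoid $\calO_C$-algebra, and (ii) the higher cohomology $H^i(V_\proet, \widehat{\calO_X^+})$ is almost zero for $i > 0$. For (i), I would invoke Example~\ref{ex:PerfectoidBasic}: the $p$-adic completion of any \'etale $P^d_\infty$-algebra is perfectoid, and $\widehat{\calO_X^+}(V)$ is identified with (the almost version of) such a ring via the almost purity theorem (Remark~\ref{rmk:APTCohomology}), which says $R^\square_\infty \to S^\square_\infty$ is almost finite \'etale. For (ii), the point is that on a perfectoid affinoid, higher pro-\'etale cohomology of $\widehat{\calO_X^+}$ is computed (almost) by the cohomology of the pro-(finite \'etale) tower over $V$; concretely, for the basic tower $\mathbb{T}^d_{\calO_C,\infty} \to \mathbb{T}^d_{\calO_C}$ with Galois group $\Delta^d$, Lemma~\ref{lem:GroupCohKoszulCohPerfectoidTorus} computes $R\Gamma_\conts(\Delta^d, P^d_\infty)$ explicitly as a completed direct sum of Koszul complexes, and the non-integral summands — which are exactly the higher-degree contributions beyond $R\Gamma_\conts(\Delta^d,\Z_p) \otimes P^d$ — are annihilated by $[\underline{\epsilon}^{1/p}] - 1$, hence almost zero after reducing to $\calO_C$. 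Combined with the almost purity theorem to pass from the torus to $V$, this gives the almost vanishing of $H^i(V_\proet,\widehat{\calO_X^+})$ for $i>0$ once one also absorbs the finite \'etale cover $S/R$, whose (continuous group) cohomology contributes only in degree $0$ in the almost category by almost \'etaleness.

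The main obstacle I expect is bookkeeping rather than a deep difficulty: correctly setting up the pro-\'etale site and the comparison between $H^i(V_\proet, \widehat{\calO_X^+})$ and continuous group cohomology $R\Gamma_\conts(\Delta^d \times G, -)$ of the relevant profinite covering groups, and checking that the $p$-adic completion in the definition of $\widehat{\calO_X^+}$ is harmless (i.e., commutes with the relevant colimits and cohomology, so that one really does land on the perfectoid ring $R^\square_\infty \otimes_R S$ rather than some uncompleted version). Since the statement is quoted as "somewhat imprecise" and attributed to \cite{FaltingsAlmostEtale, ScholzePAdicHodge}, I would in the write-up simply cite \cite[\S 4]{ScholzePAdicHodge} for the precise formulation and the site-theoretic input, and isolate the only genuinely computational ingredient — the almost vanishing for perfectoid tori — as the content of Lemma~\ref{lem:GroupCohKoszulCohPerfectoidTorus} together with the almost purity theorem.
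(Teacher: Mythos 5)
The paper does not actually prove this theorem: it is stated ``somewhat imprecisely'' and attributed to \cite[\S 3]{FaltingsAlmostEtale} and \cite[\S 4]{ScholzePAdicHodge}, with only the basic example (the perfectoid cover of a small affine) recorded afterwards. So your decision to defer to \cite[\S 4]{ScholzePAdicHodge} for the precise site-theoretic formulation is consistent with what the paper does, and your choice of $V$ (pull back the tower $\mathbb{T}^d_{\calO_C,\infty} \to \mathbb{T}^d_{\calO_C}$ along a framing) together with the almost purity theorem for part (i) is the right idea and matches the example following the theorem.

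Your argument for (ii), however, contains a genuine error. First, $R\Gamma_{conts}(\Delta^d, P^d_\infty)$ computes (almost) the cohomology of $\widehat{\calO_X^+}$ over the \emph{base} $U$, not over the perfectoid cover $V$: it is the Cartan--Leray/group-cohomology description of $R\Gamma(U_{\proet}, \widehat{\calO_X^+})$ for the $\Delta^d$-cover $V \to U$, and the vanishing $H^{i}(V_{\proet}, \widehat{\calO_X^+}) \stackrel{a}{\simeq} 0$ for $i>0$ is precisely the \emph{input} needed to justify that description (this is how Theorem~\ref{thm:APTCohomologySmall} is deduced); using the Koszul computation to prove the vanishing over $V$ is circular. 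Second, the claim that the non-integral Koszul summands are almost zero because they are killed by $[\underline{\epsilon}^{1/p}]-1$ is false: a module killed by a single fixed element of positive valuation need not be killed by all of $\fram$ (e.g.\ $\calO_C/p$ is not almost zero), and the paper explicitly stresses that $H^1_{conts}(\Delta^d, P^d_\infty)$ has countably many nonzero direct summands --- killing them is the job of the $L\eta_{\epsilon_p-1}$ functor in \S\ref{sec:TildeOmega}, not of almost mathematics. Were your claim true, one would get $H^i(X_{\proet},\widehat{\calO_X^+}) \stackrel{a}{\simeq} H^i_{conts}(\Delta^d,\Z_p)\otimes P^d$, contradicting the discussion surrounding Theorem~\ref{thm:PrimitiveCompThm}. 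The actual proof of the acyclicity over $V$ in \cite[\S 4]{ScholzePAdicHodge} proceeds quite differently: one first treats $\calO_X^+/p$ by tilting to characteristic $p$, where the statement reduces (via a colimit over Frobenius) to the vanishing of higher cohomology of the structure sheaf on affinoids, and then passes to the inverse limit to handle $\widehat{\calO_X^+}$; no covering-group cohomology enters at that stage.
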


The basic example of this theorem is captured in the following example:

\begin{example}
Say $\frakX = \Spf(R)$ is small, and $\square:P^d \to R$ is a framing. As $R \to R^\square_\infty$ is the $p$-adic completion of a direct limit of $R$-algebras which are \'etale after inverting $p$, the map $\Spf(R^\square_\infty) \to \Spf(R)$ induces a pro-\'etale cover $U \to X$ on generic fibres, the value of $\widehat{\calO_X^+}(U) \simeq R^\square_\infty$ is perfectoid.
\end{example}

Since $\widehat{\calO_X^+}$ is locally perfectoid, one can apply Fontaine's functor $A_\inf(-)$ (as in Definition~\ref{def:Ainf}) to the values of $\widehat{\calO_X^+}$ to obtain a sheaf $A_{\inf,X}$ of $A_\inf$-algebras on $X_{\proet}$ (see \cite[\S 6]{ScholzePAdicHodge}) The almost purity theorem (or, rather, the consequence recalled in Remark~\ref{rmk:APTCohomology}) tells us what the cohomology of these sheaves on certain objects looks like (in the sense of almost mathematics):

\begin{theorem}[The pro-\'etale cohomology of framed algebras]
\label{thm:APTCohomologySmall}
Assume $\frakX = \Spf(R)$ is small, and $\square:P^d \to R$ is a framing. Then there is a canonical almost isomorphism
\[ R\Gamma_{conts}(\Delta^d, R^\square_{\infty})  \stackrel{a}{\simeq}  R\Gamma(X_\proet, \widehat{\calO_X^+}) \]
and
\[  R\Gamma_{conts}(\Delta^d, R^\square_{A_\inf,\infty})  \stackrel{a}{\simeq} R\Gamma(X_\proet, A_{\inf,X}).\]
\end{theorem}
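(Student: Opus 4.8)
The plan is to realize both pro-\'etale cohomologies as the continuous $\Delta^d$-cohomology of the structure sheaves evaluated on the perfectoid cover $R^\square_\infty$, and to compute that input via the almost purity theorem. Concretely, for each $k$ let $X_k$ be the generic fibre of $\Spf\big(\calO_C\langle t_1^{\pm 1/p^k},\dots,t_d^{\pm 1/p^k}\rangle \otimes_{P^d} R\big)$; the base-changed multiplication-by-$p^k$ maps on the torus make $X_k\to X$ a finite \'etale Galois cover with group $\mu_{p^k}(\calO_C)^{\oplus d}$, and $X_\infty:=\varprojlim_k X_k$ is an affinoid perfectoid object of $X_\proet$ with $\widehat{\calO_X^+}(X_\infty)\simeq R^\square_\infty$ and $A_{\inf,X}(X_\infty)\simeq A_\inf(R^\square_\infty)\simeq R^\square_{A_\inf,\infty}$, carrying a continuous action of $\Delta^d=\varprojlim_k \mu_{p^k}(\calO_C)^{\oplus d}\simeq \Z_p^{\oplus d}$ which, under the trivialization coming from $[\underline{\epsilon}]$, is exactly the action of Construction~\ref{cons:DeltaActionTorus}. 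Since $X_\infty\to X$ is a pro-(finite \'etale) $\Delta^d$-torsor, the standard formalism of the pro-\'etale site (\v{C}ech descent) gives, for any abelian sheaf $\calF$, a canonical isomorphism $R\Gamma(X_\proet,\calF)\simeq R\Gamma_{conts}(\Delta^d, R\Gamma(X_{\infty,\proet},\calF))$ with $\Delta^d$ acting by deck transformations.

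The crux is to compute the cohomology over $X_\infty$, and this is where almost purity enters. Because $X_\infty$ is affinoid perfectoid, $H^i(X_{\infty,\proet},\widehat{\calO_X^+})$ is almost zero for $i>0$ and $\stackrel{a}{\simeq} R^\square_\infty$ for $i=0$: this is Theorem~\ref{thm:ProetaleLocallyPerfectoid} (see \cite[\S 4]{ScholzePAdicHodge}), proved by expressing $R\Gamma(X_{\infty,\proet},\widehat{\calO_X^+})$ as a filtered colimit of \v{C}ech complexes along finite \'etale covers of $X_\infty$, each of which --- by the consequence of the almost purity theorem recorded in Remark~\ref{rmk:APTCohomology} --- is almost the \v{C}ech complex of an almost finite \'etale, hence almost faithfully flat, $R^\square_\infty$-algebra, and is therefore almost acyclic in positive degrees. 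All of this is compatible with the $\Delta^d$-action, so $R\Gamma(X_{\infty,\proet},\widehat{\calO_X^+})\stackrel{a}{\simeq} R^\square_\infty$ as objects equipped with a $\Delta^d$-action. For $A_{\inf,X}$ one may run the same argument, since applying $A_\inf(-)$ to an almost finite \'etale extension of perfectoid $\calO_C$-algebras again produces an almost finite \'etale extension of the corresponding $A_\inf$-algebras, by the deformation-theoretic equivalence of Remark~\ref{rmk:DefThyAinf}; alternatively one reduces to the $\widehat{\calO_X^+}$-case using the identification $A_{\inf,X}/\xi\simeq \widehat{\calO_X^+}$ (Lemma~\ref{lem:AinfTheta}) together with the $(p,\xi)$-adic completeness of $A_{\inf,X}$. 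Either way, $R\Gamma(X_{\infty,\proet},A_{\inf,X})\stackrel{a}{\simeq} R^\square_{A_\inf,\infty}$ equivariantly.

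To conclude, observe that since $\Delta^d\simeq \Z_p^{\oplus d}$, the functor $R\Gamma_{conts}(\Delta^d,-)$ on $p$-complete complexes with continuous action is computed by the Koszul complex $K(-;\gamma_1-1,\dots,\gamma_d-1)$ for topological generators $\gamma_i$; being a finite complex, it is exact and preserves almost isomorphisms. Substituting the equivariant almost isomorphisms of the previous paragraph into the descent isomorphism of the first paragraph yields
\[ R\Gamma(X_\proet,\widehat{\calO_X^+})\stackrel{a}{\simeq} R\Gamma_{conts}(\Delta^d, R^\square_\infty)\quad\text{and}\quad R\Gamma(X_\proet, A_{\inf,X})\stackrel{a}{\simeq} R\Gamma_{conts}(\Delta^d, R^\square_{A_\inf,\infty}),\]
which is the assertion of the theorem.

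The one genuinely non-formal ingredient, the almost purity theorem (Remark~\ref{rmk:APTCohomology}), is being cited; accordingly the main work is organizational. In particular, one must set up the pro-\'etale descent for $X_\infty\to X$ carefully enough that the resulting $\Delta^d$-action is manifestly the explicit one of Construction~\ref{cons:DeltaActionTorus}, and one must check throughout that the formation of the relevant filtered colimits, \v{C}ech complexes, $\xi$-adic limits, and continuous group cohomology all commute suitably with the passage to almost isomorphisms over $(\calO_C,\fram)$.
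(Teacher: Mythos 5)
Your argument is correct and is precisely the standard proof of this result, which the paper itself does not reprove but quotes from Faltings and Scholze via the almost purity theorem (Remark~\ref{rmk:APTCohomology}) and the affinoid perfectoid vanishing (Theorem~\ref{thm:ProetaleLocallyPerfectoid}): Cartan--Leray descent along the pro-(finite \'etale) $\Delta^d$-torsor $X_\infty \to X$, combined with the almost acyclicity of $\widehat{\calO_X^+}$ (and of $A_{\inf,X}$, by $\xi$-adic devissage) on affinoid perfectoids. The only step deserving extra care is the descent isomorphism itself, which for a general abelian sheaf is merely a spectral sequence; here it upgrades to the stated equivalence because the fibre products $X_\infty \times_X \cdots \times_X X_\infty \simeq X_\infty \times \Delta^d \times \cdots \times \Delta^d$ are again affinoid perfectoid, so the \v{C}ech complex is almost identified with the continuous cochain complex --- exactly the point you flag at the end.
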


By Lemma~\ref{lem:GroupCohKoszulCohPerfectoidTorus}, both complexes above are quite ``large''; for instance, the group $H^1(R\Gamma_{conts}(\Delta^d, P^d_\infty))$ has at least countably many nonzero distinct direct summands. Nevertheless, in the global case, there is enough cancellation in the relevant local-to-global spectral sequence (see \cite[Tag 01ES]{StacksProject}) to give a finite dimensional answer \cite[Corollary 1 to Theorem 8]{FaltingsAlmostEtale}, \cite[Theorem 5.1]{ScholzePAdicHodge}:

\begin{theorem}[The primitive comparison theorem]
\label{thm:PrimitiveCompThm}
Assume $\frakX$ is proper. Then there is a natural almost isomorphism
\[ R\Gamma(X_{\proet}, \Z_p) \otimes^L_{\Z_p} \calO_C \stackrel{a}{\simeq} R\Gamma(X_{\proet}, \widehat{\calO_X^+})\]
and
\[ R\Gamma(X_{\proet}, \Z_p) \otimes^L_{\Z_p} A_\inf \stackrel{a}{\simeq} R\Gamma(X_{\proet}, A_{\inf,X}).\]
\end{theorem}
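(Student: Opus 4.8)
The plan is to funnel everything into a single mod-$p$ almost comparison of complexes of $\calO_C/p$-modules and then exploit the Frobenius via an Artin--Schreier sequence built from almost purity. All four complexes in the two displayed equations are derived $p$-complete: the étale ones obviously, while $R\Gamma(X_\proet,\widehat{\calO_X^+})$ and $R\Gamma(X_\proet,A_{\inf,X})$ are $p$-complete because $\widehat{\calO_X^+}$, $A_{\inf,X}$ are sheaves of $p$-complete rings and $X$ is qcqs. By derived Nakayama (see the Conventions) it therefore suffices to compare them modulo $p$. Since $\widehat{\calO_X^+}$ is locally $p$-torsionfree and $A_{\inf,X}/p\simeq \calO_{X}^{\flat+}$ (resp. $A_\inf/p=\calO_C^\flat$, $\calO_C^\flat/\varpi=\calO_C/p$, $\calO_{X}^{\flat+}/\varpi=\calO_X^+/p$), both statements reduce, via a further harmless $\varpi$-completeness step for the $A_\inf$-version (for which one works over $\calO_C^\flat$, where the value group is divisible so that almost mathematics is honest and the difficulties of \cite{KedlayaAinf} do not intervene), to the single claim
\[ R\Gamma(X_\proet,\F_p)\otimes^L_{\F_p}\calO_C/p \;\stackrel{a}{\simeq}\; R\Gamma(X_\proet,\calO_X^+/p). \]
The $\Z/p^n$-coefficient versions then follow by dévissage along Bocksteins and the $\Z_p$-version by passing to the limit; to upgrade the mod-$p$ almost isomorphisms to integral ones one uses that the cohomology of all complexes in sight is (almost) finitely presented over $\calO_C$ (resp. $A_\inf$), so classical $p$-adic separatedness is available.

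\textbf{Almost coherence.} Next I would establish that $H^i(X_\proet,\calO_X^+/p)$ is an almost finitely presented $\calO_C/p$-module, vanishing almost for $i\notin[0,2\dim X]$; in particular $R\Gamma(X_\proet,\calO_X^+/p)^a$ is a perfect complex of $\calO_C^a/p$-modules. Choose a finite cover of $\frakX$ by small affine formal opens (Lemma~\ref{lem:FramingsExist}), refine all finite intersections to be small as well, pass to generic fibres, and run the associated Čech-to-derived-functor spectral sequence on $X_\proet$. On each small piece $\Spf(R)$ with framing $\square$, Theorem~\ref{thm:APTCohomologySmall} identifies $R\Gamma(\cdot_\proet,\calO_X^+/p)$ up to almost isomorphism with $R\Gamma_{conts}(\Delta^d,R^\square_\infty/p)$, which by Lemma~\ref{lem:GroupCohKoszulCohPerfectoidTorus} is a completed direct sum of Koszul complexes whose homology is a completed sum of cyclic modules $\calO_C/(\epsilon^a-1)$ — each of which is genuinely finitely presented over $\calO_C/p$, so the completed sum is almost finitely presented. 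Combined with the finiteness of the cover and the two degree bounds (Čech degree $\le\dim X$, group-cohomology degree $\le\dim X$) this yields the claim. I would also invoke here the finiteness of $H^i(X_\proet,\F_p)$ (from Scholze's finiteness theorem for proper rigid spaces, or via algebraization and classical finiteness for proper varieties).

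\textbf{The Artin--Schreier sequence.} The key geometric input is the almost exact sequence of sheaves on $X_\proet$
\[ 0 \to \F_p \to \calO_X^{+a}/p \xrightarrow{\ x\mapsto x^p-x\ } \calO_X^{+a}/p \to 0. \]
That $x\mapsto x^p$ is a well-defined additive (Frobenius-semilinear) endomorphism of $\calO_X^+/p$ is elementary; that the kernel of $x\mapsto x^p-x$ is the constant sheaf $\F_p$ is a short computation on perfectoid sections (where $\calO^+/p\simeq \calO^{\flat+}/\varpi$ and the operator is the honest Frobenius, so the kernel condition forces the valuations to degenerate to those of $\F_p$); and almost surjectivity is exactly where the almost purity theorem enters: the equation $x^p-x=y$ cuts out, on perfectoid sections, a finite algebra over $\calO_X^+$ that is étale after inverting $p$ (the relevant derivative is the unit $-1$ modulo $p$), hence becomes almost split after passing to a further pro-étale perfectoid cover by Remark~\ref{rmk:APTCohomology}. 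Applying $R\Gamma(X_\proet,-)^a$ gives an exact triangle
\[ R\Gamma(X_\proet,\F_p)^a \to R\Gamma(X_\proet,\calO_X^+/p)^a \xrightarrow{\ \varphi-1\ } R\Gamma(X_\proet,\calO_X^+/p)^a \xrightarrow{+1}. \]

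\textbf{Conclusion and the main obstacle.} It remains to deduce from this triangle, together with the perfectness over $\calO_C^a/p$ from the coherence step, that the natural map $R\Gamma(X_\proet,\F_p)\otimes^L_{\F_p}\calO_C/p\to R\Gamma(X_\proet,\calO_X^+/p)$ is an almost isomorphism — equivalently, that passing to ``$\varphi$-invariants'' of the perfect complex $R\Gamma(X_\proet,\calO_X^+/p)^a$ commutes with base change along $\F_p\hookrightarrow\calO_C^a/p$. This is a form of Artin--Schreier (Frobenius) descent, and I expect it to be the main technical obstacle: one must control the semilinear operator $\varphi-1$ on a perfect complex over the non-Noetherian almost ring $\calO_C^a/p$ and check that its fibre is a perfect $\F_p$-complex of the expected rank. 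I would carry this out following Scholze \cite{ScholzePAdicHodge} (and Faltings \cite{FaltingsAlmostEtale}) — reducing to bounded complexes of finite-free-like pieces using the degree bounds from the coherence step, and invoking the triviality of Artin--Schreier torsors over $\calO_C/p$ (a Lang-type statement: the polynomial $t^p-t-y$ has all its roots in $\calO_C$ for $y\in\calO_C$, by the Newton polygon) to see that $\varphi-1$ is surjective with the correct kernel in each degree; the ``almost'' finiteness then promotes the resulting identification of cohomology into the stated almost isomorphism of complexes.
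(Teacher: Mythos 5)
First, a point of bookkeeping: the paper does not prove Theorem~\ref{thm:PrimitiveCompThm} at all --- it is quoted as an input from Faltings and from \cite[Theorem 5.1]{ScholzePAdicHodge}, so there is no internal proof to compare yours against. Your sketch follows the standard strategy of Scholze's proof in outline (reduce to mod $p$ by completeness, establish almost coherence of $R\Gamma(X_{\proet},\calO_X^+/p)$, then descend along Frobenius), and the reduction steps and the final Frobenius-descent step are in the right spirit, even if the Artin--Schreier sequence is cleaner when run on the perfect sheaf $\calO_{X^\flat}^+ = \lim_\phi \calO_X^+/p$ (as in Remark~\ref{rmk:NoPrimitive}) rather than on $\calO_X^+/p$ itself, where identifying the kernel of $x\mapsto x^p-x$ with $\F_p$ is not immediate.

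The genuine gap is in your ``Almost coherence'' step, which is in fact the heart of the theorem. You assert that because each local cohomology group is a completed direct sum of cyclic modules $\calO_C/(\epsilon^a-1)$, each finitely presented, ``the completed sum is almost finitely presented.'' This inference is false: a completed direct sum of infinitely many nonzero cyclic modules, each killed by $\epsilon_p-1$ (an element of valuation $\tfrac{1}{p-1}>0$, not of the maximal ideal), is not almost finitely generated --- for any $\pi$ with $0<v(\pi)<v(\epsilon_p-1)$, the submodule $\pi M$ still has infinitely many independent generators and cannot sit inside a finitely generated submodule. The paper flags exactly this immediately before stating the theorem: the local complexes are ``quite large'' ($H^1_{conts}(\Delta^d,P^d_\infty)$ has countably many nonzero direct summands), and the global finiteness comes from \emph{cancellation} in the local-to-global spectral sequence, not from finiteness of its $E_1$-terms. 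The actual argument (Scholze's Lemma 5.8, or Faltings's version) requires two nested finite covers $\{V_i\Subset U_i\}$ and a delicate approximation argument with the Gabber--Ramero formalism of almost finitely generated modules, controlling the images of the restriction maps between the two Čech spectral sequences. As written, your Čech argument only exhibits $H^i(X_{\proet},\calO_X^+/p)$ as a subquotient of huge modules, so the perfectness input needed for your concluding Frobenius descent is not established; relatedly, you locate the ``main technical obstacle'' at the descent step, whereas it really lies here.
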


We do not know if Theorem~\ref{thm:PrimitiveCompThm} is true in the real (i.e., non-almost) world.

\begin{remark}
Consider the nearby cycles map $\nu:(X_{\proet}, \widehat{\calO_X^+}) \to (\frakX_\Zar, \calO_\frakX)$: this is simply the map\footnote{The nomenclature is explained as follows: if $\frakX$ is the $p$-adic completion of a smooth $\calO_C$-scheme $\calX$, then the complex $R\nu_* \Z_\ell$ calculates the nearby cycles of the constant sheaf $\Z_\ell$ for the map $\calX \to \Spec(\calO_C)$ by \cite[Theorem 0.7.7]{Huber}.} of ringed topoi obtained by taking generic fibres of the formal schemes. Using this map, one can reformulate Theorem~\ref{thm:APTCohomologySmall} (resp. Theorem~\ref{thm:PrimitiveCompThm}) as describing the values (i.e., hypercohomology), in the almost sense, of the complexes $R\nu_* \widehat{\calO_X^+}$ and $R\nu_* A_{\inf,X}$ when $\frakX$ is small (resp. $\frakX$ is proper). This perspective will be relevant for the sequel as we will modify the complex $R\nu_* A_{\inf,X}$ in a suitable way to prove our main theorem.
\end{remark}

Theorem~\ref{thm:PrimitiveCompThm} is quite surprising at first glance: the sheaf $\widehat{\calO_X^+}$ is the completion of $\calO_X^+$, and the latter is a pro-\'etale sheafified version of the structure sheaf $\calO_X^+$ on $X$. Thus, one might naively expect that $H^*(X_\proet, \widehat{\calO_X^+})[\frac{1}{p}]$ resembles the coherent cohomology $H^*(X,\calO_X)$, so it must have cohomological dimension $\leq$ $\dim(X)$. On the other hand, Theorem~\ref{thm:PrimitiveCompThm} tells us that we instead obtain $p$-adic \'etale cohomology, which has cohomological dimension $2 \dim(X)$. This apparent confusion is resolved by noticing that $H^*(X_\et,\calO_X^+)$ has {\em a lot} of torsion; this torsion builds up to copies of $\calO_C$ under $p$-adic completion, thereby accounting for the missing cohomological degrees.


\begin{remark}[Algebro-geometric reconstruction of $R\Gamma(X_\proet, \calO_X^+)$]
\label{rmk:ProetaleBeilinson}
Assume that $C := \C_p := \widehat{\overline{\Q_p}}$, and that $\frak{X}$ is the $p$-adic completion of a proper smooth $\overline{\Z_p}$-scheme $\calX$. In this case, the complex $K = R\Gamma(X_\et, \calO_X^+)$ has the following features: $K[\frac{1}{p}]$ calculates $H^*(\mathcal{X},\mathcal{O}_{\mathcal{X}}) \otimes_{\overline{\Z_p}} C$, while $\widehat{K}[\frac{1}{p}]$ calculates $H^*(\mathcal{X}_C, \mathbf{Z}_p) \otimes_{\Z_p} C$. The algebro-geometrically minded reader might wonder if it is possible to construct such a complex $K$ directly, i.e., without recourse to any nonarchimedean geometry or perfectoid spaces. It turns out that it is indeed possible to do so. In fact, one can show (but this is not relevant to the sequel) that $K \simeq M \otimes_{\overline{\Z_p}} \calO_C$, where $M$ is the value on $(\calX[\frac{1}{p}],\calX)$ of the $h$-sheafification of the presheaf $(U,\overline{U}) \mapsto R\Gamma(\overline{U},\calO_{\overline{U}})$ for the $h$-topology on Beilinson's category of arithmetic pairs over $\overline{\Q}_p$ (see \cite[\S 2.2]{Beilinsonpadic} for the category of pairs, \cite[\S 3.3]{Beilinsonpadic} for an identification of the $p$-adic completion of $M$ with the $\calO_C$-\'etale cohomology of $X$, and \cite[\S 3.4]{Beilinsonpadic} for an identification of $M[\frac{1}{p}]$ with the coherent cohomology of the structure sheaf $\calX[\frac{1}{p}]$.)
%
\end{remark}

\begin{remark}[Explicitly constructing $R\Gamma(X_{\proet}, \mathcal{O}_X^+)$ for abelian varieties with good reduction]
Continuing the theme and notation of Remark~\ref{rmk:ProetaleBeilinson}, we will explain a direct construction of the complex $M$ appearing above in the special case where $\calX/\overline{\Z_p}$ is an abelian scheme. Let $[p^n]:\calX_n \to \calX$ be the multiplication by $p^n$ map on the abelian scheme $\calX$, so the finite group $T_n := \calX(C)[p^n]$ acts by translation on $\calX_n$ compatibly with $[p^n]$ for the trivial action on the target. As $n$ varies, we get a tower
\[ ... \to \calX_{n+1} \xrightarrow{[p]} \calX_n \xrightarrow{[p]} .... \xrightarrow{[p]} \calX_0 = \calX.\]
Passing to cohomology, this defines a complex
\[ M := \colim_n R\Gamma_{conts}(T_n, R\Gamma(\calX_n, \calO_{\calX_n})) \simeq R\Gamma_{conts}(T, R\Gamma(\calX_\infty, \calO_{\calX_\infty})),\]
where $\calX_\infty := \lim \calX_n$ (as schemes), and $T = \lim T_n$ is the $p$-adic Tate module of $\calX$. As $[p^n]$ is finite \'etale in characteristic $0$, there is a canonical map $M \to R\Gamma(X_\et, \calO_X^+)$, and we will check that the induced map $\alpha:M \otimes_{\overline{\Z_p}} \calO_C \to R\Gamma(X_\et, \calO_X^+)$ is an isomorphism. Let $X_n = \calX_n[\frac{1}{p}]$ for simplicity, so $X$ is the analytification of $X_0 \otimes_{\overline{\Q_p}} \C_p$. As $[p^n]$ is \'etale after inverting $p$, we have
\[ R\Gamma(X_0, \calO_{X_0}) \simeq R\Gamma_{conts}(T_n, R\Gamma(\calX_n, \calO_{X_n}))\]
for all $n$ by Galois descent. Thus, all the transition maps in the colimit defining $M$ are isomorphisms after inverting $p$, so $\alpha[\frac{1}{p}]$ is an isomorphism. Also, note that $[p]^*$ induces $p^i$ on $H^i(\calX,\calO_{\calX}) \simeq \wedge^i H^1(\calX,\calO_\calX)$. As $p$-adic completion kills $\Z[\frac{1}{p}]$-modules, the $p$-adic completion of $R\Gamma(\calX_\infty,\calO_{\calX_\infty}) \simeq \colim R\Gamma(\calX_n, \calO_{\calX_n})$ is identified with $\calO_C[0]$, and thus
\[ \widehat{M} \simeq R\Gamma_{conts}(T, \calO_C).\]
Now abelian varieties are $K(\pi,1)$ for the pro-\'etale topology with fundamental groups given by the Tate modules, so $\widehat{\alpha}$ is also an isomorphism, and hence $\alpha$ is an isomorphism.
\end{remark}

\section{The decalage functor}
\label{sec:Leta}

The main homological ingredient that goes into Theorem~\ref{thm:MainThm} as well as \cite{BMSMainPaper} is the ``d\'ecalage'' functor of Ogus \cite[\S 8]{BerthelotOgus}. This functor turns out to give a systematic way for killing some torsion in the derived category, and will be leveraged in the sequel to get rid of the ``largeness'' alluded to following Theorem~\ref{thm:APTCohomologySmall}. For the rest of this section, fix a ring $A$ and a regular element $f \in A$. The key (underived) functor is:

\begin{definition}
For any chain complex $K^\bullet$ of $f$-torsionfree $A$-modules, define a subcomplex $\eta_f(K^\bullet) \subset K^\bullet[\frac{1}{f}]$ by setting
\[ \eta_f(K^\bullet)^i := \{\alpha \in f^i K^i \mid d(\alpha) \in f^{i+1} K^{i+1}\}.\]
\end{definition}

The association $K^\bullet \to \eta_f(K^\bullet)$ is an endo-functor of the category of $f$-torsionfree $A$-complexes. This functor behaves understandably on cohomology:

\begin{lemma}
\label{lem:HomologyLeta1}
For a chain complex $K^\bullet$ of $f$-torsionfree $A$-modules, there is a functorial identification $H^i(\eta_f(K^\bullet)) \simeq H^i(K^\bullet)/H^i(K^\bullet)[f]$; equivalently, this is also identified with the image $f H^i(K^\bullet)$ of multiplication by $f$ on $H^i(K^\bullet)$.
\end{lemma}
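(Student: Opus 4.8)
The plan is to compute $H^i(\eta_f(K^\bullet))$ directly from the definition of the subcomplex $\eta_f(K^\bullet)$, working inside the localization $K^\bullet[\tfrac1f]$ where everything is $f$-torsionfree, so the usual formulas for cycles and boundaries behave well. Write $Z^i = \ker(d\colon K^i \to K^{i+1})$ and $B^i = \mathrm{im}(d\colon K^{i-1} \to K^i)$, so $H^i(K^\bullet) = Z^i/B^i$. By definition $\eta_f(K^\bullet)^i = \{\alpha \in f^i K^i : d\alpha \in f^{i+1}K^{i+1}\}$, so the degree-$i$ cocycles of $\eta_f(K^\bullet)$ are exactly $f^i Z^i$ (the condition $d\alpha \in f^{i+1}K^{i+1}$ is automatic once $d\alpha = 0$, and conversely $\alpha \in f^iK^i$ with $d\alpha=0$ means $\alpha = f^i z$ with $z \in Z^i$ by $f$-torsionfreeness).

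The heart of the matter is identifying the degree-$i$ coboundaries of $\eta_f(K^\bullet)$, i.e. $d(\eta_f(K^\bullet)^{i-1})$. An element of $\eta_f(K^\bullet)^{i-1}$ has the form $f^{i-1}\beta$ with $\beta \in K^{i-1}$ and $d(f^{i-1}\beta) = f^{i-1}d\beta \in f^i K^i$, i.e. $d\beta \in f K^i$ (again using $f$-torsionfreeness). So $d(\eta_f(K^\bullet)^{i-1}) = \{f^{i-1} d\beta : \beta \in K^{i-1},\ d\beta \in fK^i\} = f^{i-1}\cdot (B^i \cap fK^i)$. Therefore
\[ H^i(\eta_f(K^\bullet)) = \frac{f^i Z^i}{f^{i-1}(B^i \cap fK^i)}. \]
Dividing through by $f^{i-1}$ (an isomorphism, by $f$-torsionfreeness of $K^\bullet[\tfrac1f] \supset \eta_f(K^\bullet)$ in the ambient localization, so all these submodules are $f$-torsionfree), this becomes $f Z^i/(B^i \cap fK^i)$.

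Now I would simplify $B^i \cap fK^i$. Since $B^i \subset Z^i$ and $Z^i$ is a direct summand issue one avoids: note $B^i \cap fK^i = B^i \cap fZ^i$ is false in general, so instead I claim $fZ^i \cap B^i$ is the relevant object after one more manipulation — more precisely, I want to show $fZ^i / (B^i \cap fZ^i) \cong \mathrm{im}(f \colon H^i(K^\bullet) \to H^i(K^\bullet))$, and that $fZ^i/(B^i\cap fK^i)$ agrees with it. The cleanest route: the map $Z^i \twoheadrightarrow H^i(K^\bullet) \xrightarrow{\ \cdot f\ } H^i(K^\bullet)$ sends $z \mapsto fz \bmod B^i$; its image is $fH^i(K^\bullet)$ and its kernel is $\{z \in Z^i : fz \in B^i\}$. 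Restricting to the surjection $fZ^i \to fH^i(K^\bullet)$, $z' = fz \mapsto fz \bmod B^i$ has kernel $fZ^i \cap B^i$. So $fZ^i/(fZ^i \cap B^i) \cong fH^i(K^\bullet)$. It remains to check $fZ^i \cap B^i = B^i \cap fK^i$: the inclusion $\subseteq$ is clear; for $\supseteq$, if $b \in B^i$ and $b = fk$ with $k \in K^i$, then $fdk = db = 0$ so $dk = 0$ by $f$-torsionfreeness, hence $k \in Z^i$ and $b \in fZ^i$. This gives $H^i(\eta_f(K^\bullet)) \cong fH^i(K^\bullet) = H^i(K^\bullet)/H^i(K^\bullet)[f]$, and functoriality is immediate since every construction above is functorial in $K^\bullet$. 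I do not expect a serious obstacle here; the only point requiring care is the bookkeeping with the powers $f^{i-1}, f^i$ and consistently using $f$-torsionfreeness to cancel them, together with the elementary lemma $B^i \cap fK^i = fZ^i \cap B^i$.
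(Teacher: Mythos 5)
Your proof is correct and is essentially the detailed version of the paper's argument, which simply asserts that the association (cycle $\alpha$) $\mapsto f^i\alpha$ induces the isomorphism and leaves the verification of cocycles and coboundaries to the reader. The only blemish is expository: you claim mid-proof that $B^i \cap fK^i = B^i \cap fZ^i$ ``is false in general,'' but your own final argument (using $db=0$ and $f$-torsionfreeness to deduce $k \in Z^i$) correctly establishes exactly that equality.
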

\begin{proof}
If $\alpha \in K^i$ is a cycle, then $f^i \alpha \in \eta_f(K^\bullet)^i$ is also a cycle. One easily checks that this association gives the desired isomorphism
\[ H^i(K^\bullet)/H^i(K^\bullet)[f] \simeq H^i(\eta_f K^\bullet).\]
The second part is formal as multiplication by $f$ identifies $H^i(K^\bullet)/H^i(K^\bullet)[f]$ with $f H^i(K^\bullet)$.
\end{proof}


\begin{remark}[Independence of choice of generator $f \in (f)$]
\label{rmk:IgnoreTwistsLeta}
The definition of $\eta_f$ only depends on the ideal $(f) \subset A$, and not its chosen generator. Moreover, the identification in Lemma \ref{lem:HomologyLeta1} is functorial in $K^\bullet$; however, this identification depends on choice of $f$. More canonically, we can write
\[ H^i(\eta_f(K^\bullet)) \simeq (H^i(K^\bullet)/H^i(K^\bullet)[f]) \otimes_A (f^i),\]
where $(f^i) \subset A[\frac{1}{f}]$ denotes the fractional $A$-ideal generated by $f^i$. When formulated this way, the identification is completely canonical, i.e., dependent only on the ideal $(f)$, and not its chosen generator $f$. In fact, all constructions of this section can be defined in the generality of locally principal ideal sheaves on a ringed topos that might not even posses a global generator (see \cite[\S 6]{BMSMainPaper}); we do not do so here in the interest of simplicity (both visual and calculational) and notational ease. 
\end{remark}

Using Lemma~\ref{lem:HomologyLeta1}, one checks that for any $M \in D(A)$ with a representative $K^\bullet$ with $f$-torsionfree terms, the object $\eta_f(K^\bullet) \in D(A)$ depends only on $M$, and is independent of the choice $K^\bullet$ of representative. Thus, we get:

\begin{proposition}[Definition of $L\eta_f$]
\label{lem:HomologyLeta}
Applying the functor $\eta_f(-)$ to a representative with $f$-torsionfree terms, we obtain a functor $L\eta_f:D(A) \to D(A)$. This functor satisfies $H^i(L\eta_f(K)) \simeq H^i(K)/H^i(K)[f]$ functorially in $K$, and hence commutes with truncations in the derived category.
\end{proposition}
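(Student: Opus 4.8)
The plan is to verify that the underived construction $\eta_f(-)$, which is defined on the level of chain complexes of $f$-torsionfree $A$-modules, descends to a well-defined endofunctor of $D(A)$. The key point is \emph{homotopy invariance}: if $\phi,\psi:K^\bullet\to L^\bullet$ are two maps of $f$-torsionfree complexes that are chain homotopic, then $\eta_f(\phi)$ and $\eta_f(\psi)$ should be chain homotopic as well; and, more importantly, any quasi-isomorphism $K^\bullet\to L^\bullet$ of $f$-torsionfree complexes should induce a quasi-isomorphism $\eta_f(K^\bullet)\to\eta_f(L^\bullet)$. Granting homotopy invariance, one recalls that every object of $D(A)$ admits a representative by a complex of $f$-torsionfree (e.g.\ free) $A$-modules, and that any two such representatives are linked by a zig-zag of quasi-isomorphisms through $f$-torsionfree complexes (one can, for instance, pass through a common free resolution); so $L\eta_f(M):=\eta_f(K^\bullet)$ is independent of the choice of representative up to canonical isomorphism, and is functorial. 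This is the standard pattern for defining a non-additive operation on the derived category.

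First I would prove that $\eta_f(-)$ preserves chain homotopies. If $h$ is a homotopy between $\phi$ and $\psi$ at the level of $K^\bullet[\tfrac1f]$, one checks that $f\cdot h$ (or $f^{\pm1}$-rescaled appropriately per degree, i.e.\ $\alpha\mapsto f^{i-1}h(f^{-i}\alpha)$ in degree $i$) carries $\eta_f(K^\bullet)$ into $\eta_f(L^\bullet)$ and witnesses a homotopy between $\eta_f(\phi)$ and $\eta_f(\psi)$; this is a short diagram-chase using the definition $\eta_f(K^\bullet)^i=\{\alpha\in f^iK^i \mid d\alpha\in f^{i+1}K^{i+1}\}$. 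Next, for quasi-isomorphisms: a quasi-isomorphism of $f$-torsionfree complexes $K^\bullet\to L^\bullet$ induces isomorphisms $H^i(K^\bullet)\xrightarrow{\sim}H^i(L^\bullet)$, hence isomorphisms $H^i(K^\bullet)/H^i(K^\bullet)[f]\xrightarrow{\sim}H^i(L^\bullet)/H^i(L^\bullet)[f]$, and by Lemma~\ref{lem:HomologyLeta1} (whose identification is functorial in the complex) this is exactly the map induced on cohomology by $\eta_f(K^\bullet)\to\eta_f(L^\bullet)$; so the latter is a quasi-isomorphism. Combining the two, $\eta_f$ sends the acyclic complexes (equivalently, the morphisms inverted in $D(A)$) to isomorphisms, so by the universal property of the localization it factors through $D(A)$, giving $L\eta_f:D(A)\to D(A)$.

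The formula $H^i(L\eta_f(K))\simeq H^i(K)/H^i(K)[f]$ is then immediate from Lemma~\ref{lem:HomologyLeta1} applied to a chosen $f$-torsionfree representative. For commutation with truncations: since $\tau_{\leq n}$ and $\tau_{\geq n}$ on $D(A)$ are characterized by what they do to cohomology in each degree, and $L\eta_f$ acts on $H^i$ by the functor $N\mapsto N/N[f]$ (equivalently $N\mapsto fN$) \emph{degree by degree independently}, one sees that $L\eta_f(\tau_{\leq n}K)$ and $\tau_{\leq n}(L\eta_f K)$ have the same cohomology, compatibly with the natural map; one should check the natural comparison map is an isomorphism, which again reduces to the degreewise statement on $H^i$. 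The main obstacle here is a bookkeeping one rather than a conceptual one: making the homotopy-invariance argument clean requires being careful with the per-degree powers of $f$ appearing in the definition of $\eta_f$ (the twist $(f^i)$ of Remark~\ref{rmk:IgnoreTwistsLeta}), so that the rescaled homotopy genuinely lands in the subcomplex $\eta_f(L^\bullet)$; once the correct normalization is fixed, everything is a routine verification.
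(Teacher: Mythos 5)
Your proposal is correct and follows essentially the same route as the paper, whose entire ``proof'' is the sentence preceding the proposition: Lemma~\ref{lem:HomologyLeta1} shows $\eta_f$ carries quasi-isomorphisms of $f$-torsionfree complexes to quasi-isomorphisms, and one then descends to $D(A)$ by localizing the category of $f$-torsionfree complexes at quasi-isomorphisms (cf.\ \cite[\S 6]{BMSMainPaper}). One small correction to your homotopy-invariance step: both of your proposed normalizations are off by a factor of $f^{\mp 1}$ (e.g.\ $\alpha\mapsto f^{i-1}h(f^{-i}\alpha)$ witnesses $f^{-1}(\phi-\psi)$, not $\phi-\psi$); in fact the \emph{unrescaled} $h$ already restricts to a map $\eta_f(K^\bullet)^i\to\eta_f(L^\bullet)^{i-1}$, since $h(f^iK^i)\subset f^iL^{i-1}$ and $dh(\alpha)=(\phi-\psi)(\alpha)-h(d\alpha)\in f^iL^i$, and it is the correct homotopy -- though this step is dispensable if one localizes the plain (rather than homotopy) category of torsionfree complexes, which is what makes the quasi-isomorphism statement alone suffice.
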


In particular, we obtain a endofunctor of $D(A)$ that kills torsion in homology. However, this pleasant feature comes with a warning: one must be careful when directly applying standard derived categorical intuition to $L\eta$:

\begin{warning}
The functor $L\eta_f$ is not exact. Indeed, with $A = \Z$ and $f = p$, one readily computes that $L\eta_f(\Z/p) = 0$, and yet $L\eta_f(\Z/p^2) = \Z/p \neq 0$.
\end{warning}

To understand the behaviour of $L\eta_f(-)$ better, fix some $K \in D(A)$. It is clear that $L\eta_f(K)[\frac{1}{f}] = K[\frac{1}{f}]$. Thus, to understand $L\eta_f(K)$, we must understand $L\eta_f(K) \otimes_A A/f$. For this, recall that the following construction:

\begin{construction}[The Bockstein construction]
\label{cons:BocksteinComplex}
For any $K \in D(A)$, we have the following associated Bockstein complex:
\begin{equation}
\label{eqn:BocksteinComplexDef}
(H^*(K/f), \beta_f) := \Big(... \to H^i(K \otimes^L_A f^iA/f^{i+1}A) \xrightarrow{\beta_f^i} H^{i+1}(K \otimes^L_A f^{i+1}A/f^{i+2} A) \to ... \Big), 
\end{equation}
where the differential $\beta_f^i$ is the boundary map associated to the exact triangle obtained by tensoring the canonical triangle
\[ f^{i+1}A/f^{i+2}A \to f^iA/f^{i+2}A \to f^iA/f^{i+1}A\]
with $K$. (The necessary identity $\beta^{i+1}_f \circ \beta^i_f = 0$ can be verified directly, or by observing that $\Ext^2_A(A/f,-) = 0$.) As the terms $f^i A$ only depend on the ideal $(f)$, this construction evidently only depends on the ideal $(f) \subset A$, and not its chosen generator $f \in A$. 
\end{construction}

\begin{remark}[The Bockstein construction, non-canonically]
\label{rmk:IgnoreTwistsBockstein}
As in Remark~\ref{rmk:IgnoreTwistsLeta}, we can ignore the twists in Construction~\ref{cons:BocksteinComplex} to simply write
\[ (H^*(K/f), \beta_f) := \Big(... \to H^i(K \otimes^L_A A/f)  \xrightarrow{\beta_f^i} H^{i+1}(K \otimes^L_A  A/f) \to ... \Big),\]
with the differential induced by the boundary map in the triangle
\[ A/f \xrightarrow{f} A/f^2 \xrightarrow{\can} A/f\]
as before; such an identification depends on the choice of $f$ (and not merely on the ideal $(f)$), and will be used without further comment in the sequel when performing calculations.
\end{remark}

An elementary example of this construction is the following:

\begin{example}
\label{ex:BocksteinKillsTorsion}
Fix some $K \in D(A)$ such that $f \cdot H^i(K) = 0$ for all $i$. Then we claim that $(H^*(K/f),\beta_f)$ is acyclic. To see this, note that the assumption on $K$ and the exact triangle
\[ K \stackrel{f}{\to} K \to K \otimes^L_A A/f\]
give a short exact sequence
\[ 0 \to H^i(K) \stackrel{a_i}{\to} H^i(K \otimes^L_A A/f) \stackrel{b_i}{\to} H^{i+1}(K) \to 0\]
for all $i$. Under this identification, the map $\beta_f$ is identified as $a_{i+1} \circ b_i$. Thus, the cycles and boundaries in degree $i$ in the complex $(H^*(K/f), \beta_f)$ coincide the image of $a_i$, which proves acyclicity. 
\end{example}

Example~\ref{ex:BocksteinKillsTorsion} implies that the Bockstein construction kills $f$-torsion, while Proposition~\ref{lem:HomologyLeta} implies that $L\eta_f(-)$ kills $f$-torsion as well. These two phenomenon turn out to be closely related:

\begin{lemma}[$L\eta_f$ lifts the Bockstein]
\label{lem:LetaBockstein}
For $K \in D(A)$, there is a canonical isomorphism
\[ L\eta_f(K) \otimes^L_A A/f \simeq (H^*(K/f),\beta_f)\]
\end{lemma}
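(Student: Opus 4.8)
The plan is to work with an explicit representative of $K$ having $f$-torsionfree terms, write down $\eta_f(K^\bullet)$ concretely, and then compute its mod $f$ reduction by hand, matching the answer term-by-term with the Bockstein complex of Construction~\ref{cons:BocksteinComplex}. Since both sides are canonically attached to $(f)$ rather than its generator (by Remarks~\ref{rmk:IgnoreTwistsLeta} and~\ref{rmk:IgnoreTwistsBockstein}), I will fix the generator $f$ and use the non-canonical descriptions throughout.

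First I would pick a complex $K^\bullet$ of $f$-torsionfree $A$-modules representing $K$. Then $\eta_f(K^\bullet)^i = \{\alpha \in f^iK^i : d\alpha \in f^{i+1}K^{i+1}\}$ by definition, and since everything is $f$-torsionfree this is again a complex of $f$-torsionfree modules, so $L\eta_f(K)\otimes^L_A A/f$ is computed by the honest complex $\eta_f(K^\bullet)/f\,\eta_f(K^\bullet)$. The key computation is to identify the $i$-th term and differential of this quotient complex. The natural map $\eta_f(K^\bullet)^i \to K^i \otimes_A f^iA/f^{i+1}A$, sending $\alpha = f^i\beta$ (with $\beta \in K^i$ well-defined mod $fK^i$, using $f$-torsionfreeness) to the class of $\beta$, is the candidate. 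I would check: (a) this map is surjective onto the subgroup of $K^i\otimes f^iA/f^{i+1}A$ consisting of classes killed by the Bockstein differential — wait, more precisely I should check it is surjective onto the cycles and its kernel is $f\,\eta_f(K^\bullet)^i$, so that the induced map on the quotient complex lands in the Bockstein complex's term $H^i(K\otimes^L_A f^iA/f^{i+1}A)$ rather than the naive tensor — and (b) under this identification the differential of $\eta_f(K^\bullet)/f$ goes over to $\beta_f^i$. Step (b) is where the definition of $\beta_f$ via the boundary map of the triangle $f^{i+1}A/f^{i+2}A \to f^iA/f^{i+2}A \to f^iA/f^{i+1}A$ tensored with $K$ must be matched against the genuine differential $d$ on $\eta_f(K^\bullet)$: given $\alpha = f^i\beta \in \eta_f(K^\bullet)^i$, we have $d\alpha = f^{i+1}\gamma$ for some $\gamma \in K^{i+1}$, and the assertion is that the class of $\gamma$ mod $fK^{i+1}$ equals $\beta_f^i$ applied to the class of $\beta$; this is a direct diagram-chase through the definition of the connecting homomorphism.

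I expect the main obstacle to be bookkeeping with the twists and verifying that the map $\eta_f(K^\bullet)/f \to (H^*(K/f),\beta_f)$ is not merely a map of complexes but an isomorphism of complexes (not just a quasi-isomorphism), which requires pinning down exactly which elements of $K^i \otimes f^iA/f^{i+1}A$ arise as images: precisely the $\beta$ with $d(f^i\beta) \in f^{i+1}K^{i+1}$, i.e.\ $d\beta \in fK^{i+1}$ (using $f$-torsionfreeness to divide), which is exactly the condition that the class of $\beta$ in $H^i(K\otimes^L_A A/f) = H^i(K^\bullet/fK^\bullet)$ lifts to a cycle — equivalently lies in the image of $H^i(K^\bullet \otimes f^iA/f^{i+2}A) \to H^i(K^\bullet\otimes f^iA/f^{i+1}A)$, i.e.\ is a cycle for $\beta_f$. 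One clean way to organize this: the short exact sequence of complexes $0 \to fK^\bullet/f^2K^\bullet \to K^\bullet/f^2K^\bullet \to K^\bullet/fK^\bullet \to 0$ realizes $\beta_f$ as a connecting map on cohomology, and $\eta_f(K^\bullet)^i/f$ is visibly the subgroup of $(K^\bullet/fK^\bullet)^i$ (twisted by $f^i$) of elements whose differential lifts, i.e.\ the cycles of the Bockstein complex sitting inside degree $i$; the compatibility of differentials then follows by naturality of connecting maps. Once these identifications are in place the lemma follows, and I would remark that the isomorphism is canonical in $K$ because every choice made (representative $K^\bullet$, generator $f$) affects both sides the same way, so the canonicity of $L\eta_f$ from Proposition~\ref{lem:HomologyLeta} and of the Bockstein from Construction~\ref{cons:BocksteinComplex} transports across.
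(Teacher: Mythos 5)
Your overall route is the same as the paper's (which in turn defers the details to \cite[Proposition 6.12]{BMSMainPaper}): represent $K$ by an $f$-torsionfree complex $K^\bullet$, send $f^i\beta\in\eta_f(K^\bullet)^i$ to the class of $\beta$ in $H^i(K^\bullet/f)$, check this is a map of complexes into the Bockstein complex, and reduce mod $f$. The construction of the map, the verification that $[\beta]$ is a cycle, the identification of the induced differential with $\beta_f$ via the connecting map for $0\to fK^\bullet/f^2\to K^\bullet/f^2\to K^\bullet/f\to 0$, and the bookkeeping of twists are all fine.

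The gap is in your kernel computation. You assert that the kernel of $\eta_f(K^\bullet)^i\to H^i(K^\bullet/f)$ is exactly $f\,\eta_f(K^\bullet)^i$, equivalently that $\eta_f(K^\bullet)^i/f$ is ``visibly'' the group of cycles of $K^\bullet/f$ in degree $i$; this is false, and consequently the induced map $\eta_f(K^\bullet)/f\to(H^*(K/f),\beta_f)$ is in general \emph{not} an isomorphism of complexes, only a quasi-isomorphism. Take $K^\bullet=(A\xrightarrow{\;\mathrm{id}\;}A)$ in degrees $0,1$. Then $\eta_f(K^\bullet)^0=fA$ while $H^0(K^\bullet/f)=0$, so the kernel in degree $0$ is all of $fA$, which strictly contains $f\cdot\eta_f(K^\bullet)^0=f^2A$; here $\eta_f(K^\bullet)/f$ is the nonzero acyclic complex $(A/f\xrightarrow{\;\sim\;}A/f)$ while the Bockstein complex is zero. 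In general the kernel of the surjection $\eta_f(K^\bullet)^i\to H^i(K^\bullet/f)$ is $\eta_f(K^\bullet)^i\cap\bigl(f^{i+1}K^i+f^i\,dK^{i-1}\bigr)$, which contains $f\,\eta_f(K^\bullet)^i$ but is usually larger (elements of $\eta_f(K^\bullet)^i$ whose ``$\beta$'' is a boundary mod $f$ need not be divisible by $f$ inside $\eta_f(K^\bullet)^i$). The correct completion of the argument --- and the actual content of \cite[Proposition 6.12]{BMSMainPaper} --- is to show that this termwise-surjective map of complexes $\eta_f(K^\bullet)/f\to(H^*(K/f),\beta_f)$ has \emph{acyclic} kernel, hence is a quasi-isomorphism; since the lemma is a statement in $D(A/f)$, that suffices. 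So either carry out that acyclicity diagram chase, or drop the claim of an isomorphism of literal complexes.
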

\begin{proof}
Choose a representative $K^\bullet$ of $K$ with $f$-torsionfree terms. Given $f^i \alpha \in \eta_f(K^\bullet)^i \subset f^i K^i$, the image of $\alpha$ in $K^i/f$ is a cycle in $K^\bullet/f$, thus yielding a map $\eta_f(K^\bullet)^i \to H^i(K^\bullet/f)$. One can then show that construction defines a map $\eta_f(K^\bullet) \to (H^*(K/f),\beta_f)$ of complexes inducing an isomorphism $\eta_f(K^\bullet)/f \simeq (H^*(K/f),\beta_f)$ of complexes, see \cite[Proposition 6.12]{BMSMainPaper}. 
\end{proof}

\begin{remark}
\label{rmk:LetaAbelian}
The functor $K \mapsto (H^*(K/f), \beta_f)$ on $D(A)$ takes values in the {\em abelian} category of chain complexes of $A$-modules. Using this observation and Lemma~\ref{lem:LetaBockstein}, one can show that the functor $K \mapsto L\eta_f(K) \otimes_A A/f$ on $D(A)$ lifts naturally to  the abelian category of chain complexes; this observation will be useful in the sequel. In contrast, the functor $L\eta_f(-)$ does not naturally lift to chain complexes.
\end{remark}

\begin{remark}[Preservation of commutative algebras]
The functor $L\eta_f(-):D(A) \to D(A)$ is lax symmetric monoidal: this amounts to the assertion that if $M^\bullet$ and $N^\bullet$ are two $K$-flat complexes $A$-modules, then there is a natural map $\eta_f(M^\bullet) \otimes \eta_f(N^\bullet) \to \eta_f(M^\bullet \otimes N^\bullet)$; see \cite[Proposition 6.7]{BMSMainPaper}. In particular, $L\eta_f(-)$ carries commutative algebras to commutative algebras, so $L\eta_f(K) \otimes^L_A  A/f$ acquires the structure of a commutative algebra in $D(A/f)$. Moreover, if $K$ is a commutative algebra in $D(A)$, then $(H^*(K/f),\beta_f)$ acquires the structure of a commutative differential graded algebra of $A/f$-modules via cup products. One can show that the isomorphism of Lemma~\ref{lem:LetaBockstein} intertwines these commutative algebra structures. In fact, one can show that $L\eta$ naturally lifts to a lax-symmetric monoidal functor at the level of derived $\infty$-categories, and thus preserves commutative algebras in the $\infty$-categorical sense (see \cite[\S 2]{LurieHA}), i.e., it carries $E_\infty$-algebras to $E_\infty$-algebras.
\end{remark}

We now prove a series of lemmas describing the behaviour of $L\eta_f(-)$; all of these, except Lemma~\ref{lem:LetaExact} and Lemma~\ref{lem:LetaRegularSequence}, are directly extracted from \cite[\S 6]{BMSMainPaper}. First, observe that the formation of $L\eta_f(-)$ commutes with restriction of scalars:

\begin{lemma}
\label{lem:LetaRestScalars}
Let $\alpha:A \to B$ be a map of rings such that $\alpha(f)$ is a regular element. For any $M \in D(B)$, there is a natural identification
\[ \alpha_* \big(L\eta_{\alpha(f)}(M)\big) \simeq L\eta_f(\alpha_* M).\]
\end{lemma}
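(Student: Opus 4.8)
The statement is essentially a bookkeeping fact: $L\eta_f$ is defined by picking a representative with $f$-torsionfree terms, carving out a subcomplex of the localization, and the formation of this subcomplex is insensitive to whether we regard the modules over $A$ or over $B$. The plan is to make this precise. First I would observe that restriction of scalars $\alpha_*$ is exact and preserves $f$-torsionfreeness: if $M^\bullet$ is a complex of $B$-modules whose terms have no $\alpha(f)$-torsion, then $\alpha_* M^\bullet$ is a complex of $A$-modules whose terms have no $f$-torsion (since $f$ acts on $\alpha_* M^i$ through $\alpha(f)$). Moreover $\alpha_*$ commutes with inverting $f$, i.e.\ $\alpha_*(M^\bullet[\tfrac{1}{\alpha(f)}]) = (\alpha_* M^\bullet)[\tfrac{1}{f}]$, because localization is a filtered colimit and $\alpha_*$ (being a forgetful functor) preserves all colimits. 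Hence one may compute both sides of the claimed identity using the same representative.

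The core step is then purely at the level of underived functors: for a fixed complex $K^\bullet$ of $\alpha(f)$-torsionfree $B$-modules, I claim the equality of subcomplexes
\[ \alpha_*\big(\eta_{\alpha(f)}(K^\bullet)\big) = \eta_f\big(\alpha_* K^\bullet\big) \subset (\alpha_* K^\bullet)\big[\tfrac{1}{f}\big] = \alpha_*\big(K^\bullet[\tfrac{1}{\alpha(f)}]\big). \]
This is immediate from the defining formula: in degree $i$ both sides consist of those $\beta$ lying in $\alpha(f)^i K^i = f^i(\alpha_* K^i)$ whose differential lies in $\alpha(f)^{i+1}K^{i+1} = f^{i+1}(\alpha_* K^{i+1})$ — the two descriptions of the submodule coincide because the powers of the principal ideals $(f)$ and $(\alpha(f))$ match up under $\alpha_*$, and the differential is the same map of abelian groups in both pictures. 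Taking this identification to the derived category (legitimate because, as recalled just before Proposition~\ref{lem:HomologyLeta}, $\eta_f$ applied to an $f$-torsionfree representative is independent of the choice of representative, and the same for $\eta_{\alpha(f)}$) yields the asserted natural isomorphism $\alpha_*(L\eta_{\alpha(f)}(M)) \simeq L\eta_f(\alpha_* M)$.

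Finally I would check naturality in $M$: a map $M \to N$ in $D(B)$ can be represented by an honest map of $\alpha(f)$-torsionfree complexes, $\eta_{\alpha(f)}$ is functorial for such maps, and $\alpha_*$ is functorial, so the identification above is compatible with morphisms. The only mild subtlety worth a sentence is that one should note $\eta_{\alpha(f)}$ depends only on the ideal $(\alpha(f)) \subseteq B$ and $\eta_f$ only on $(f) \subseteq A$ (Remark~\ref{rmk:IgnoreTwistsLeta}), and $\alpha$ carries the chosen generator of the former to that of the latter, so there is no ambiguity of twists to reconcile. I do not expect any real obstacle here; the "hard part," such as it is, is simply being careful that the subcomplex $\eta$ is literally the same set of cochains on both sides, which the matching of $(f^i) \subseteq A[\tfrac1f]$ with $(\alpha(f)^i) \subseteq B[\tfrac1{\alpha(f)}]$ under $\alpha_*$ makes transparent.
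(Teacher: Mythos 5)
Your proposal is correct and is exactly the paper's argument, merely written out in full: the paper's proof is the one-line observation that a $B$-module is $\alpha(f)$-torsionfree iff it is $f$-torsionfree as an $A$-module, so the defining subcomplexes $\eta_{\alpha(f)}(K^\bullet)$ and $\eta_f(\alpha_* K^\bullet)$ literally coincide. Your careful verification of this (including the compatibility with localization and the independence of representative) is a faithful expansion of the same route.
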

\begin{proof}
This immediate from the definition of $L\eta_f$ and the fact that a $B$-module is $\alpha(f)$-torsionfree if and only if it is $f$-torsionfree when regarded as an $A$-module.
\end{proof}

Likewise, it composes well with other operations of a similar nature:

\begin{lemma}
\label{lem:LetaComposition}
Let $g \in A$ be a regular element. Then there is a natural identification $L\eta_f(L\eta_g(M)) \simeq L\eta_{fg}(M)$ for any $M \in D(A)$.
\end{lemma}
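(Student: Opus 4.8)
The plan is to reduce to a computation at the level of representatives with torsion-free terms, exactly as in the definition of $\eta_f$. First I would pick $M \in D(A)$ and choose a complex $K^\bullet$ of $f$-torsionfree (equivalently, $fg$-torsionfree, since $f$ and $g$ are both regular) $A$-modules representing $M$. The subtle point is that $\eta_g(K^\bullet) \subset K^\bullet[\tfrac1g]$ need not a priori have $f$-torsionfree terms, but in fact it does: each $\eta_g(K^\bullet)^i$ is a submodule of $K^i[\tfrac1g]$, which is $f$-torsionfree because localizing a torsion-free module stays torsion-free (any $A$-module that is $f$-torsionfree is $fg$-torsionfree hence $f$-torsionfree after inverting $g$). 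So $\eta_g(K^\bullet)$ is a legitimate representative on which to apply $\eta_f$, and $L\eta_f(L\eta_g(M))$ is computed by $\eta_f(\eta_g(K^\bullet))$.

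Then the heart of the argument is the purely combinatorial identity $\eta_f(\eta_g(K^\bullet)) = \eta_{fg}(K^\bullet)$ as subcomplexes of $K^\bullet[\tfrac{1}{fg}]$. I would unwind the definitions: $\eta_g(K^\bullet)^i = \{\alpha \in g^i K^i : d\alpha \in g^{i+1}K^{i+1}\}$, and then $\eta_f$ of that complex in degree $i$ is $\{\beta \in f^i \cdot \eta_g(K^\bullet)^i : d\beta \in f^{i+1}\cdot \eta_g(K^\bullet)^{i+1}\}$. Writing $\beta = f^i \alpha$ with $\alpha \in \eta_g(K^\bullet)^i$, one gets $\beta \in f^i g^i K^i = (fg)^i K^i$, and the condition $d\beta = f^i d\alpha \in f^{i+1}\eta_g(K^\bullet)^{i+1}$ unwinds to: $d\alpha \in f \cdot \eta_g(K^\bullet)^{i+1}$, i.e. $d\alpha \in f\cdot g^{i+1} K^{i+1} = f g^{i+1} K^{i+1}$, together with the automatically-satisfied condition that $d\alpha$ (being a boundary) has vanishing differential. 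Combining $\beta \in (fg)^i K^i$ with $d\beta \in f g^{i+1} K^{i+1}$ — and checking the exponent bookkeeping: $d\beta = f^i d\alpha$, so $d\beta \in f^{i+1} g^{i+1} K^{i+1} = (fg)^{i+1} K^{i+1}$ — gives precisely the defining condition for $\eta_{fg}(K^\bullet)^i$. Conversely any $\beta \in \eta_{fg}(K^\bullet)^i$ satisfies $\beta = f^i \alpha$ with $\alpha = \beta/f^i \in g^i K^i$, and one checks $\alpha \in \eta_g(K^\bullet)^i$ and $\beta \in f^i \eta_g(K^\bullet)^i$ with $d\beta$ in the right subgroup. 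The two subcomplexes thus coincide.

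Finally I would note this identification is functorial in $K^\bullet$ and compatible with the (quasi-)isomorphisms used to pass between different torsion-free representatives, by the same argument that makes $L\eta_f$ well-defined (Proposition~\ref{lem:HomologyLeta}): one can always find a common refinement of two representatives, and the identity of honest subcomplexes is preserved under the relevant maps. Hence $L\eta_f(L\eta_g(M)) \simeq L\eta_{fg}(M)$ in $D(A)$, naturally in $M$.

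The main obstacle — really the only place one has to be slightly careful — is the verification that $\eta_g(K^\bullet)$ has $f$-torsionfree terms so that $\eta_f$ may legitimately be applied to it, and then tracking the powers of $f$ and $g$ correctly through the nested definitions; everything else is formal. There is no genuine difficulty, just bookkeeping, so I would present it compactly rather than belabor the exponent arithmetic.
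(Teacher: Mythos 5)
Your proposal is correct and is exactly the paper's argument: the paper's proof of Lemma~\ref{lem:LetaComposition} reads in its entirety ``This follows immediately by writing down the definition of either side,'' and you have simply written out the definition-unwinding (including the useful observations that $\eta_g(K^\bullet)$ has $f$-torsionfree terms and that the inner condition $d\alpha\in g^{i+1}K^{i+1}$ is subsumed by $d\beta\in(fg)^{i+1}K^{i+1}$) that the paper leaves implicit.
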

\begin{proof}
This follows immediately by writing down the definition of either side.
\end{proof}

The next lemma gives a criterion for the $L\eta_f(-)$ to preserve exact triangles, and is crucial to the sequel:

\begin{lemma}
\label{lem:LetaExact}
Fix an exact triangle 
\[ K \to L \to M\]
in $D(A)$. Assume that the boundary map $H^i(M/f) \to H^{i+1}(K/f)$ is the $0$ map for all $i$. Then the induced sequence 
\[ L\eta_f(K) \to L\eta_f(L) \to L\eta_f(M)\] is also an exact triangle.
\end{lemma}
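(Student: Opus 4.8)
Here is the plan. The idea is to pass to an explicit chain-level model, extract a genuine short exact sequence of complexes out of the $\eta_f$-construction, and then reduce everything to the behaviour of $L\eta_f$ modulo $f$ — where the hypothesis on the connecting map becomes exactly the input needed.

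\emph{Setup.} First I would represent the given triangle by a termwise-split short exact sequence of complexes $0 \to K^\bullet \to L^\bullet \to M^\bullet \to 0$ with $f$-torsionfree terms: take $L^\bullet$ to be the mapping cone of a chain-level lift of the shifted connecting map $M[-1]\to K$ between degreewise-projective (hence $f$-torsionfree) resolutions, so that $L^i\cong K^i\oplus M^i$. Applying $\eta_f$ termwise gives maps $\eta_f(K^\bullet)\to\eta_f(L^\bullet)\to\eta_f(M^\bullet)$. Using the splitting $L^i=K^i\oplus M^i$ and that $f$ is a nonzerodivisor, a direct inspection of the definition of $\eta_f$ shows the first map is injective with image exactly the kernel of the second. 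Hence, writing $Q^\bullet:=\mathrm{im}\bigl(\eta_f(L^\bullet)\to\eta_f(M^\bullet)\bigr)\subseteq\eta_f(M^\bullet)$, we obtain a genuine short exact sequence of complexes $0\to\eta_f(K^\bullet)\to\eta_f(L^\bullet)\to Q^\bullet\to 0$, hence a distinguished triangle $L\eta_f(K)\to L\eta_f(L)\to Q^\bullet\to L\eta_f(K)[1]$ whose first map is the natural one, together with the inclusion $j\colon Q^\bullet\hookrightarrow\eta_f(M^\bullet)=L\eta_f(M)$ through which $L\eta_f(L)\to Q^\bullet$ composes to $L\eta_f$ of $L\to M$. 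So it suffices to prove $j$ is a quasi-isomorphism; transporting the triangle along $j$ then gives the lemma. (Note $Q^\bullet$ is in general a \emph{proper} subcomplex of $\eta_f(M^\bullet)$ — that is why one cannot simply apply $\eta_f$ to the short exact sequence — and this is also why the hypothesis is essential, cf. the Warning after Proposition~\ref{lem:HomologyLeta} with $K=\Z/p^2$, $L=\Z/p$.)

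\emph{Reduction modulo $f$.} By the criterion recalled in the conventions, $\mathrm{Cone}(j)=0$ as soon as it vanishes after inverting $f$ and after $\otimes^L_A A/f$. After inverting $f$, the map $j$ becomes an isomorphism onto $M^\bullet[\tfrac1f]$, since $\eta_f$ is the identity there and $L^\bullet\to M^\bullet$ is surjective. Since every complex in sight is $f$-torsionfree, $\mathrm{Cone}(j)\otimes^L_A A/f=\mathrm{Cone}(j/f)$, so I must show $j/f\colon Q^\bullet/f\to\eta_f(M^\bullet)/f$ is a quasi-isomorphism. Reducing the short exact sequence above modulo $f$ (still exact, as $Q^\bullet$ is $f$-torsionfree) identifies $Q^\bullet/f$ with the cokernel of $\eta_f(K^\bullet)/f\to\eta_f(L^\bullet)/f$; now Lemma~\ref{lem:LetaBockstein}, in the sharp form that $\eta_f(-)/f$ is naturally isomorphic \emph{as a complex} to the Bockstein complex $(H^*(-/f),\beta_f)$, turns the goal into the statement that the induced map $\mathrm{coker}\bigl((H^*(K/f),\beta_f)\to(H^*(L/f),\beta_f)\bigr)\to(H^*(M/f),\beta_f)$ is a quasi-isomorphism.

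\emph{Using the hypothesis, and the main obstacle.} Applying $H^i(-\otimes^L_A A/f)$ to the given triangle produces the long exact sequence $\cdots\to H^i(K/f)\to H^i(L/f)\to H^i(M/f)\xrightarrow{\partial}H^{i+1}(K/f)\to\cdots$, whose connecting map $\partial$ is precisely the boundary map assumed to vanish. So it breaks into short exact sequences $0\to H^i(K/f)\to H^i(L/f)\to H^i(M/f)\to 0$ for all $i$; by naturality of the Bockstein operator with respect to morphisms in $D(A)$ these are compatible with the differentials $\beta_f$, hence assemble into a short exact sequence of complexes $0\to(H^*(K/f),\beta_f)\to(H^*(L/f),\beta_f)\to(H^*(M/f),\beta_f)\to 0$. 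Thus the cokernel of its first map is $(H^*(M/f),\beta_f)$ and the comparison map above is an isomorphism of complexes, so $j/f$ is a quasi-isomorphism (in fact an isomorphism); hence $\mathrm{Cone}(j)\otimes^L_A A/f=0$, $\mathrm{Cone}(j)=0$, and the lemma follows. The part that needs real care — the ``hard part'' — is threading the naturality through: one must use the honest chain-level model so that the maps $L\eta_f(K)\to L\eta_f(L)\to L\eta_f(M)$ are realized by genuine chain maps (no non-canonical cone-filler), and one must know the isomorphisms of Lemma~\ref{lem:LetaBockstein} are natural for such chain maps, so that the induced map on cokernels really is the Bockstein-complex map. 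Granting this, the rest is formal bookkeeping.
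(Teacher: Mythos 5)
Your proof is correct and follows essentially the same route as the paper: reduce to checking the statement after inverting $f$ (where it is clear) and after reduction mod $f$, then use Lemma~\ref{lem:LetaBockstein} together with the vanishing of the boundary maps to produce a short exact sequence of Bockstein complexes. The chain-level bookkeeping you supply (the termwise-split model, the subcomplex $Q^\bullet$, and the naturality of the Bockstein identification) is exactly the detail the paper's terse proof leaves implicit.
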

\begin{proof}
Since the exactness is clearly true after inverting $f$, it suffices to check exactness after applying $(-)/f$. Using Lemma~\ref{lem:LetaBockstein}, we must check that the sequence of chain complexes
\[ (H^*(K/f),\beta_f) \to (H^*(L/f),\beta_f) \to (H^*(M/f),\beta_f) \]
gives an exact triangle in $D(A)$. But an even stronger statement is true: the preceding sequence is exact in the abelian category of chain complexes (and thus also in $D(A)$) by the assumption on the boundary maps.
\end{proof}

We give one example of the preceding lemma being used to pass from the almost world to the real world; this mirrors some computations from the sequel.

\begin{example}[$L\eta$ takes some almost isomorphisms to isomorphisms]
Let $A = \mathcal{O}_C$, and $f = p$. Let $K \in D(\calO_C)$ be perfect, and $K \to L$ be a map in $D(\calO_C)$ whose cone $M$ is almost zero. Then $L\eta_p(K) \simeq L\eta_p(L)$. To see this using Lemma~\ref{lem:LetaExact}, we must show that the boundary maps $H^i(M/p) \to H^{i+1}(K/p)$ are $0$ for all $i$. Now each $H^i(M/p)$ is an almost zero module (as $M$ is almost zero). On the other hand, as $K$ is perfect, so is $K/p$, so each $H^i(K/p)$ is a finitely presented torsion $\mathcal{O}_C$-module. Any such module is isomorphic to a finite direct sum of $\mathcal{O}_C$-modules of the form $\mathcal{O}_C/g$ for suitable $g \in \mathcal{O}_C$. One then checks easily (using the valuation on $\mathcal{O}_C$) that $H^i(K/p)$ has no almost zero elements, so the boundary maps have to be $0$.
\end{example}

Using the previous criterion, we obtain one for $L\eta_f(-)$ to commute with reduction modulo a different element.

\begin{lemma}
\label{lem:LetaRegularSequence}
Fix some nonzerodivisor $g \in A$. For any $K \in D(A)$, there is a natural map
\[ \alpha:L\eta_f(K) \otimes^L_A A/g \to L\eta_f(K \otimes^L_A A/g).\]
This map is an isomorphism if $H^*(K \otimes^L_A A/f)$ has no $g$-torsion.
\end{lemma}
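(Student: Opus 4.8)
The plan is to write down $\alpha$ explicitly at the level of chain complexes, and then to verify that it is an isomorphism by reducing modulo $f$, where $L\eta_f$ turns into the Bockstein functor of Construction~\ref{cons:BocksteinComplex}.

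First I would construct $\alpha$. Choose a complex $P^\bullet$ of free $A$-modules representing $K$; since $f$ is a nonzerodivisor each $P^n$ is $f$-torsionfree, so $L\eta_f(K) = \eta_f(P^\bullet)$ by Proposition~\ref{lem:HomologyLeta}, and since $A/g \simeq \mathrm{Cone}(A \xrightarrow{g} A)$ the object $L\eta_f(K) \otimes^L_A A/g$ is computed by the honest mapping cone $\mathrm{Cone}\big(\eta_f(P^\bullet) \xrightarrow{g} \eta_f(P^\bullet)\big)$. On the other side, $C^\bullet := \mathrm{Cone}(P^\bullet \xrightarrow{g} P^\bullet)$ is a complex of free (hence $f$-torsionfree) $A$-modules representing $K \otimes^L_A A/g$, so $L\eta_f(K \otimes^L_A A/g) = \eta_f(C^\bullet)$. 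Comparing the two defining conditions --- that is, bookkeeping the powers of $f$ against the cone differential $(x,y) \mapsto (-d_P x,\, gx + d_P y)$ --- one sees that the graded submodule $\eta_f(P^\bullet)^{n+1} \oplus \eta_f(P^\bullet)^n$ of $C^n[\frac{1}{f}]$ lies inside $\eta_f(C^\bullet)^n$ and forms a subcomplex there. This inclusion of complexes is $\alpha$; it is manifestly functorial in $P^\bullet$, hence in $K$. One moreover checks that it is the factorization of $L\eta_f(q)\colon L\eta_f(K) \to L\eta_f(K \otimes^L_A A/g)$ through the canonical map $L\eta_f(K) \to L\eta_f(K) \otimes^L_A A/g$, where $q \colon K \to K \otimes^L_A A/g$ is the projection; such a factorization exists because $L\eta_f(q)$ followed by multiplication by $g$ is nullhomotopic on $\eta_f(P^\bullet)$, via $\beta \mapsto (\beta,0)$.

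Next I would prove that $\alpha$ is an isomorphism when $H^\ast(K \otimes^L_A A/f)$ has no $g$-torsion. After inverting $f$ the inclusion above becomes the identity, so $\alpha[\frac{1}{f}]$ is an isomorphism; by the elementary fact recorded in the conventions it therefore suffices to show that $\alpha \otimes^L_A A/f$ is an isomorphism. Writing $\mathcal{B}(M) := (H^\ast(M \otimes^L_A A/f), \beta_f)$ for the Bockstein complex, which is functorial in $M \in D(A)$, Lemma~\ref{lem:LetaBockstein} supplies natural identifications
\[ \big(L\eta_f(K) \otimes^L_A A/g\big) \otimes^L_A A/f \simeq \mathcal{B}(K) \otimes^L_A A/g \qquad \text{and} \qquad L\eta_f(K \otimes^L_A A/g) \otimes^L_A A/f \simeq \mathcal{B}(K \otimes^L_A A/g), \]
under which $\alpha \otimes^L_A A/f$ corresponds to the natural map $\mathcal{B}(K) \otimes^L_A A/g \to \mathcal{B}(K \otimes^L_A A/g)$ induced by $q$. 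Now the terms of $\mathcal{B}(K)$ are precisely the groups $H^\ast(K \otimes^L_A A/f)$, which by hypothesis are $g$-torsionfree, so $\mathcal{B}(K) \otimes^L_A A/g$ is just the honest quotient complex $\mathcal{B}(K)/g$; likewise the terms of $\mathcal{B}(K \otimes^L_A A/g)$ are the groups $H^\ast\!\big((K \otimes^L_A A/f)\otimes^L_A A/g\big)$, and the absence of $g$-torsion makes the universal coefficient sequence collapse, identifying them canonically with $H^\ast(K \otimes^L_A A/f)/g$. Under these identifications the map $\mathcal{B}(K)/g \to \mathcal{B}(K \otimes^L_A A/g)$ is the identity on each term, hence an isomorphism of complexes; therefore $\alpha \otimes^L_A A/f$ is an isomorphism, which finishes the proof.

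The step I expect to be the main obstacle is the compatibility assertion in the previous paragraph: namely that, after the identifications of Lemma~\ref{lem:LetaBockstein}, the map $\alpha \otimes^L_A A/f$ genuinely is the canonical Bockstein-complex map induced by $q$, rather than differing from it by a stray map out of $\mathcal{B}(K)[1]$. I would settle this by unwinding the chain-level description of $\alpha$ above, rather than arguing purely in the derived category. An alternative route that avoids this bookkeeping: since $\alpha[\frac{1}{f}]$ is already known to be an isomorphism, it is enough to re-derive the mod-$f$ statement differently, and the hypothesis says exactly that every connecting map $H^i\!\big((K \otimes^L_A A/g)\otimes^L_A A/f\big) \to H^{i+1}(K \otimes^L_A A/f)$ attached to the triangle $K \xrightarrow{g} K \to K \otimes^L_A A/g$ vanishes, so Lemma~\ref{lem:LetaExact} applies to this triangle and exhibits $L\eta_f(K \otimes^L_A A/g)$ as the cone of multiplication by $g$ on $L\eta_f(K)$, i.e.\ as $L\eta_f(K) \otimes^L_A A/g$; one then identifies this isomorphism with $\alpha$ using that both are compatible with the maps out of $L\eta_f(K)$.
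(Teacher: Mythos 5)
Your proposal is correct, and the ``alternative route'' you sketch at the end --- applying Lemma~\ref{lem:LetaExact} to the triangle $K \xrightarrow{g} K \to K \otimes^L_A A/g$, whose mod-$f$ boundary maps vanish precisely because $H^*(K \otimes^L_A A/f)$ has no $g$-torsion --- is exactly the paper's proof. Your primary argument simply inlines the Bockstein computation that underlies the proof of Lemma~\ref{lem:LetaExact}, so the two are essentially the same; the only extra care your version demands is the chain-level compatibility you already flag, which the citation route sidesteps.
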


The first version of \cite{BMSAnnouncement} had an erroneous comment to the effect that the conclusion of this lemma holds without any hypothesis on $H^*(K/f)$.

\begin{proof}
The canonical map $L\eta_f(K) \to L\eta_f(K \otimes^L_A A/g)$ induces $\alpha$ by base change (as the target is an $A/g$-complex). For the second part, consider the triangle
\[ K \xrightarrow{g} K \to K \otimes^L_A A/g.\]
The boundary map $H^i(K \otimes^L_A A/g \otimes^L_A A/f) \to H^{i+1}(K \otimes^L_A A/f)$ is $0$ for all $i$ as $H^*(K \otimes^L_A A/f)$ has no $g$-torsion. Lemma~\ref{lem:LetaExact} applied this triangle then proves the desired claim.
\end{proof}

%
%

In the sequel, we will repeatedly apply $L\eta_f$ to certain Koszul complexes built from endomorphisms of modules that enjoy good divisibility properties with respect to $f$. Thus, for later use, we record the following fundamental example:

\begin{example}[$L\eta_f(-)$ simplifies Koszul complexes]
\label{ex:LetaKoszul}
Fix elements $g_1,...,g_d \in A$, and let $M$ be an $f$-torsionfree $A$-module. Let $K : =K(M; g_1,...,g_d)$. The complex $L\eta_f(K)$ can be understood directly in the following two situations:
\begin{enumerate}
\item If $f$ divides each $g_i$, then $L\eta_f(K)$ is identified with $K(M;\frac{g_1}{f},...,\frac{g_d}{f})$. When $d=1$, this follows immediately from the definition of $\eta_f$. The general case is deduced by induction; see \cite[Lemma 7.9]{BMSMainPaper} for details. 
\item If some $g_i$ divides $f$, then $L\eta_f(K) \simeq 0$. Indeed, we may assume (after relabelling) that $g_1 \mid f$. In this case, each $H^i(K)$ is killed by $g_1$, and thus by $f$. Proposition~\ref{lem:HomologyLeta} then implies that $L\eta_f(K) \simeq 0$.
\end{enumerate}
\end{example}

The next lemma provides a handy criterion for mapping into $L\eta_f(M)$ for a suitable $M$:

\begin{lemma}
\label{lem:LetaFactorize}
Let $K \in D^{\leq 1}(A)$ and $M \in D^{\geq 0}(A)$ with $H^0(M)$ being $f$-torsionfree. Fix a map $\alpha:K \to M$.
\begin{enumerate}
\item The canonical map $L\eta_f(M) \to M$ induces an isomorphism $H^1(L\eta_f M) \simeq f H^1(M)$.
\item The map $\alpha$ factors as $K \stackrel{\alpha'}{\to} L\eta_f(M) \stackrel{\can}{\to} M$ if and only the map $H^1(\alpha):H^1(K) \to H^1(M)$ has image contained in $fH^1(M)$. Moreover, such a factorization is unique if it exists.
\end{enumerate}
\end{lemma}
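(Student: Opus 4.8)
\textbf{Proof plan for Lemma~\ref{lem:LetaFactorize}.}

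The plan is to reduce everything to Proposition~\ref{lem:HomologyLeta} (which computes $H^i(L\eta_f K)$) together with the standard long exact sequence / truncation yoga in a derived category. First I would prove (1): by Proposition~\ref{lem:HomologyLeta} we have a functorial identification $H^i(L\eta_f M) \simeq H^i(M)/H^i(M)[f]$, and the second formulation in Lemma~\ref{lem:HomologyLeta1} says this is canonically identified with the image $f H^i(M)$ of multiplication by $f$. Moreover one must check that under this identification the map induced on $H^1$ by the canonical arrow $L\eta_f(M) \to M$ (which comes from the inclusion $\eta_f(K^\bullet) \subset K^\bullet[\tfrac1f] $ combined with the fact that $M$ is $f$-torsionfree at the relevant spot, or more precisely from the natural transformation $L\eta_f \Rightarrow \mathrm{id}$ on the $f$-torsionfree-complex level after inverting $f$) is exactly the inclusion $fH^1(M) \hookrightarrow H^1(M)$. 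This is essentially the content of the explicit description in the proof of Lemma~\ref{lem:HomologyLeta1}: a cycle $\alpha \in K^1$ is sent to $f\alpha \in \eta_f(K^\bullet)^1$, and the canonical map sends $f\alpha$ back to $f\alpha \in K^1[\tfrac1f]$, whose class in $H^1(M)$ is $f[\alpha]$. So (1) is immediate once the maps are unwound.

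Next I would prove (2). The "only if" direction is formal: if $\alpha$ factors through $L\eta_f(M)$, then $H^1(\alpha)$ factors through $H^1(L\eta_f M) \to H^1(M)$, whose image is $fH^1(M)$ by part (1), so $\mathrm{im}(H^1(\alpha)) \subseteq fH^1(M)$. For the "if" direction, the key observation is that $K \in D^{\leq 1}(A)$ and $L\eta_f(M) \in D^{\geq 0}(A)$ (since $L\eta_f$ commutes with truncations by Proposition~\ref{lem:HomologyLeta}, and $M \in D^{\geq 0}$; note $H^i(L\eta_f M) = H^i(M)/H^i(M)[f]$ vanishes for $i<0$). Since both $K$ and $L\eta_f(M)$ are concentrated in degrees where only degrees $0$ and $1$ overlap, I would use the exact sequence (for $L := L\eta_f(M)$, a complex in $D^{[0,\infty)}$, and $K\in D^{\leq 1}$)
\[
0 \to \mathrm{Ext}^1_A\!\big(H^1(K),\, H^0(L)\big) \to \Hom_{D(A)}(K,L) \to \Hom_A\!\big(H^1(K), H^1(L)\big)\oplus \Hom_{D(A)}(\tau^{\leq 0}K, \tau^{\leq 0}L) \to \cdots
\]
— or, more cleanly, I would argue as follows. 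Let $\tau^{\geq 1}$ denote the stupid/canonical truncation. The fiber of the canonical map $L\eta_f(M)\to M$ has cohomology only in degree $1$ (by (1), its $H^1$ is $H^1(M)[f]$, and in all other degrees the map $L\eta_f(M)\to M$ is surjective on $H^i$ with the needed kernel analysis — in fact $L\eta_f(M)\to M$ induces $H^i(M)/H^i(M)[f]\to H^i(M)$ in every degree, so the fiber $F$ satisfies $H^i(F) = H^i(M)[f]$ for all $i$, hence $F\in D^{\geq 1}$ since $H^0(M)$ is $f$-torsionfree). Now apply $\Hom_{D(A)}(K,-)$ to the triangle $F \to L\eta_f(M)\to M$: since $K\in D^{\leq 1}$ and $F\in D^{\geq 1}$, the group $\Hom_{D(A)}(K,F[1])$ vanishes, so $\alpha\in\Hom(K,M)$ lifts to $\Hom(K,L\eta_f M)$ iff its image in $\Hom(K, F[1])$... wait, that's automatically zero — so one only needs that $\alpha$ comes from the image of $\Hom(K,L\eta_f M)\to\Hom(K,M)$, whose cokernel injects into $\Hom(K,F[1])=0$. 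Hence \emph{every} $\alpha$ lifts — but that contradicts "only if". The resolution: $F\in D^{\geq 1}$ is too strong; in fact $H^0(F)=H^0(M)[f]=0$ but one must be careful that $F\in D^{\geq 1}$ genuinely holds, and then $\Hom(K,F)$ with $K\in D^{\leq 1}$ need not vanish. So the correct statement is: $\alpha$ lifts iff its image in $\Hom_{D(A)}(K,F[1])=\Hom_{D(A)}(\tau^{\leq 1}K, F[1])$ vanishes; since $F\in D^{\geq 1}$ and $K\in D^{\leq 1}$, $\Hom(K,F[1]) = \Hom(H^1(K), H^0(F)[0]) $... this is getting delicate. \textbf{I expect this truncation bookkeeping to be the main obstacle}, and the cleanest fix is to apply $\Hom_{D(A)}(K,-)$ to the triangle $F\to L\eta_f(M)\xrightarrow{c} M \xrightarrow{+1}$ and identify the obstruction to lifting $\alpha$ as an element of $\Hom_{D(A)}(K, F[1])$; then compute this $\Hom$ group using $K\in D^{\leq 1}$, $F[1]\in D^{\geq 0}$ with $H^0(F[1]) = H^1(F) = H^1(M)[f]$, giving $\Hom_{D(A)}(K,F[1]) \cong \Hom_A(H^1(K), H^1(M)[f])$ (the higher Ext terms vanish because $K$ has no cohomology below... actually $K\in D^{\leq 1}$ allows $H^i(K)$ for all $i\le 1$, so I'd restrict to $\tau^{\leq 1} = K$ and note $\Hom_{D(A)}(K,F[1])$ receives contributions only from $H^1(K)$ paired against $H^0(F[1])$ and from lower $H^i(K)$ against $H^{i-1}(F[1])=H^i(F)$ — the latter contributions need $F$ to have cohomology in negative degrees, which it doesn't, so indeed $\Hom_{D(A)}(K,F[1])\cong\Hom_A(H^1(K),H^1(M)[f])$).

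\textbf{Conclusion of (2) and uniqueness.} Under the identification $\Hom_{D(A)}(K,F[1])\cong\Hom_A(H^1(K),H^1(M)[f])$, the obstruction class of $\alpha$ is precisely the composite $H^1(K)\xrightarrow{H^1(\alpha)} H^1(M)\to H^1(M)/fH^1(M)$ (this is a diagram chase, using that $c$ induces $H^1(M)/H^1(M)[f]\hookrightarrow H^1(M)$ with cokernel... hmm, rather one checks that the connecting map $H^1(M)\to H^2(F) = H^2(M)[f]$ is zero so the relevant sequence is $0\to H^1(M)[f]\to H^1(L\eta_f M)\to H^1(M)$, i.e. $0\to H^1(M)[f]\to H^1(M)/H^1(M)[f]\to H^1(M)$ — that can't be right either since the first map can't be injective into a quotient it's been killed in). The honest computation is: the triangle gives $H^1(F)\to H^1(L\eta_f M)\xrightarrow{H^1(c)} H^1(M)\to H^2(F)$, i.e. $H^1(M)[f]\to H^1(M)/H^1(M)[f]\to H^1(M)\to H^2(M)[f]$; for this to be exact the first map must be zero and $H^1(c)$ injective with image $fH^1(M)$ (matching (1)), and then $H^1(M)\to H^2(F)$ has kernel $fH^1(M)$, i.e. $H^2(F)\supseteq H^1(M)/fH^1(M)$. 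Tracing the octahedron, the obstruction to lifting $\alpha:K\to M$ along $c$ is the image of $\alpha$ under $\Hom(K,M)\to\Hom(K,F[1])$, and this image corresponds to $H^1(K)\xrightarrow{H^1(\alpha)}H^1(M)\to H^1(M)/fH^1(M)$. Hence the obstruction vanishes iff $\mathrm{im}(H^1(\alpha))\subseteq fH^1(M)$, which is exactly the claim. Uniqueness: two lifts $\alpha',\alpha''$ differ by a map $K\to F$; but $F\in D^{\geq 1}$ and $K\in D^{\leq 1}$, so $\Hom_{D(A)}(K,F)\cong\Hom_A(H^1(K),H^1(F)/\ldots)$... rather $\Hom_{D(A)}(K,F) = \Hom_A(H^1(K), H^1(F))$ only if $F\in D^{=1}$; since $F\in D^{\geq 1}$ with possibly nonzero $H^i(F)$ for $i\geq 1$, and $K\in D^{\leq 1}$, we get $\Hom_{D(A)}(K,F)=\Hom_A(H^1(K),H^1(F))=\Hom_A(H^1(K),H^1(M)[f])$, which need not vanish — so uniqueness of the \emph{map} fails in general, but I'd note the lemma only needs the conclusion up to the stated context; re-reading, the lemma asserts the factorization is unique if it exists, so I must actually prove $\Hom_{D(A)}(K,F)=0$. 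This holds provided $H^1(K)=0$ is NOT assumed... so the real input must be $K\in D^{\leq 1}$ together with $F\in D^{\geq 2}$? Let me recheck: $H^1(F)=H^1(M)[f]$, which is generally nonzero. So uniqueness as literally stated would require extra care — \textbf{this potential gap in uniqueness is where I'd focus remaining attention}, and I suspect the intended argument uses that the lift $\alpha'$ is required to be compatible with more structure, or that $H^1(K)$ maps to $H^1(F)$ in a way forced to be zero; alternatively the paper may intend "unique" up to the ambiguity that doesn't affect downstream use. In writing the final proof I would either (a) add the hypothesis implicitly available in context ($H^1(K)$ $f$-torsionfree, making $\Hom_A(H^1(K), H^1(M)[f])=0$), or (b) phrase uniqueness as uniqueness of the induced map on $H^1$ and $H^{\leq 0}$, which suffices for all applications in Sections~\ref{sec:TildeOmega}--\ref{sec:globalAOmega}.
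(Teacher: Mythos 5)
Your part (1) is fine, and your overall strategy for (2) --- identify the fiber $F$ of the canonical map $c\colon L\eta_f(M) \to M$, read off the obstruction to lifting $\alpha$ as its image in $\Hom_{D(A)}(K,F[1])$, and the ambiguity of a lift as $\Hom_{D(A)}(K,F)$ --- is essentially the paper's strategy (the paper transports the question to a parallel triangle built from $M \otimes^L_A A/f$, but the fiber and the ensuing $\Hom$-computation are the same). The problem is that you miscompute $F$, and this miscomputation is precisely what leads you to doubt the uniqueness assertion and to propose weakening the lemma.

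The error: you assert (several times, with wavering confidence) that $H^i(F) = H^i(M)[f]$. This would be correct if $H^i(c)$ were the projection $H^i(M) \twoheadrightarrow H^i(M)/H^i(M)[f]$, but the canonical map goes the \emph{other} way. Unwinding the identification of Lemma~\ref{lem:HomologyLeta1} --- which you yourself do correctly in part (1) --- the class of $f^i\alpha \in \eta_f(K^\bullet)^i$ maps to $f^i[\alpha] \in H^i(M)$, so $H^i(c)$ is the map $H^i(M)/H^i(M)[f] \to H^i(M)$ induced by multiplication by $f^i$. In degree $0$ this is an isomorphism (as $H^0(M)$ is $f$-torsionfree) and in degree $1$ it is \emph{injective} with image $fH^1(M)$. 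After first replacing $M$ by $\tau^{\leq 1}M$ --- a reduction you omit, but which is harmless since $K \in D^{\leq 1}(A)$ and $L\eta_f$ commutes with truncations, and which is needed to control the higher-degree contributions --- one gets $H^0(F) = H^1(F) = 0$ and $H^2(F) = H^1(M)/fH^1(M)$, i.e.\ $F \simeq \bigl(H^1(M)/fH^1(M)\bigr)[-2]$, concentrated in degree $2$.

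With the correct $F$, everything closes up with no need to modify the statement: $\Hom_{D(A)}(K,F[1]) = \Hom_A\bigl(H^1(K), H^1(M)/fH^1(M)\bigr)$ (since $K \in D^{\leq 1}$ and $F[1]$ sits in degree $1$), the obstruction class of $\alpha$ is the composite $H^1(K) \xrightarrow{H^1(\alpha)} H^1(M) \to H^1(M)/fH^1(M)$, which vanishes iff $H^1(\alpha)$ lands in $fH^1(M)$; and $\Hom_{D(A)}(K,F) = 0$ because $K \in D^{\leq 1}$ while $F$ is concentrated in degree $2$, so the lift is unique exactly as stated. Your proposed fixes (adding an $f$-torsionfreeness hypothesis on $H^1(K)$, or weakening uniqueness) are artifacts of the wrong fiber and are not needed.
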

\begin{proof}
Any map $K \to M$ factors uniquely as a map $K \to \tau^{\leq 1} M$. As $L\eta$ commutes with truncations, we may assume $M \in D^{[0,1]}(A)$ with $H^0(M)$ being $f$-torsionfree. In this case, one readily constructs a commutative diagram
\[ \xymatrix{H^1(M \otimes_A A/f)[-2] \simeq H^1(M)/fH^1(M)[-2] \ar@{=}[d] \ar[r] & L\eta_f(M) \ar[r]^-{a} \ar[d] & M \ar[d] \\
H^1(M \otimes_A A/f)[-2] \simeq H^1(M)/fH^1(M)[-2] \ar[r] & H^0(M \otimes^L_A A/f)[0] \ar[r]^-{b} & M \otimes_A A/f }\]
with exact rows, where the map $L\eta_f(M) \to H^0(M \otimes_A A/f)[0]$ comes from Lemma~\ref{lem:LetaBockstein}, and the other maps are the obvious ones. The long exact sequence for the top row then gives (1). Now fix a map $\alpha:K \to M$. Such a map factors through $a$ if and only if the induced map $K \to M \otimes_A A/f$ factors through $b$, and the same holds true for the set of choices of such a factorization. The long exact sequence obtained by applying $\Hom(K,-)$ to the bottom row then shows that $\alpha$ factors through $a$ if and only the induced map $K \stackrel{\alpha}{\to} M \to H^1(M)/fH^1(M)[-1]$ is the $0$ map, and that the set of choices for such a factorization is controlled by the set of maps $K \to H^1(M)/fH^1(M)[-2]$; as $K \in D^{\leq 1}$, the latter is trivial, and the former means exactly that the map $H^1(\alpha)$ has image in $fH^1(M)$, so we are done. 
\end{proof}

The $L\eta_f(-)$ functor behaves well with respect to complete objects:

\begin{lemma}
\label{lem:LetaCompletion}
Fix some finitely generated ideal $I \subset A$, and some $K \in D(A)$ such that $K$ is $I$-adically complete. Then $L\eta_f(K)$ is also $I$-adically complete.
\end{lemma}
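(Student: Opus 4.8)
I would prove $I$-adic completeness of $L\eta_f(K)$ by exploiting Proposition~\ref{lem:HomologyLeta}, which tells us that $H^i(L\eta_f(K)) \simeq H^i(K)/H^i(K)[f]$, and the fact (recorded in the Conventions) that a complex is $I$-adically complete if and only if each of its cohomology groups is. So the problem reduces to a purely module-theoretic statement: if $H$ is an $A$-module that is $I$-adically complete (in the derived sense), is the quotient $H/H[f]$ — equivalently the submodule $fH \subseteq H$ — also $I$-adically complete?

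**Key steps.** First I would reduce, as above, to checking that each $H^i(L\eta_f(K))$ is $I$-adically complete, using feature (1) of the list of properties of complete complexes in the Conventions section (a complex is complete iff each cohomology group is). Next, using Proposition~\ref{lem:HomologyLeta}, replace $H^i(L\eta_f(K))$ by $fH := fH^i(K)$, the image of multiplication by $f$ on the complete module $H := H^i(K)$. Now I would run the argument through two short exact sequences: $0 \to H[f] \to H \xrightarrow{f} fH \to 0$ and $0 \to fH \to H \to H/fH \to 0$. From the first, $fH$ is a quotient of $H$; from the second it is a submodule of $H$. Completeness does \emph{not} pass to arbitrary submodules or quotients in general, so I need to be more careful — the cleanest route is to observe that $H/H[f] \simeq fH$ sits in the exact sequence $0 \to H[f] \to H \to fH \to 0$ and invoke the fact (feature (2) in the Conventions, closure of discrete complete complexes under extensions, kernels, cokernels in the abelian subcategory of complete modules) that complete $A$-modules form an abelian subcategory of $\mathrm{Mod}_A$ closed under kernels and cokernels; thus it suffices to know $H[f]$ and $H$ are complete, and then $fH = \mathrm{coker}(H[f] \hookrightarrow H)$ is complete. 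So the one genuinely new thing to check is that the $f$-torsion submodule $H[f]$ of a complete module $H$ is itself complete.

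**The main obstacle.** The crux is showing $H[f]$ is $I$-adically complete when $H$ is. I would prove this by identifying $H[f] = \ker(H \xrightarrow{f} H)$, a morphism in the abelian category of complete $A$-modules (both source and target being $H$, which is complete by hypothesis), and invoking once more that this abelian subcategory is closed under kernels; alternatively, one can argue directly that $R\Gamma$ of the $I$-adic completion functor applied to $H[f]$ agrees with $H[f]$ by comparing the derived $I$-completions of $H[f]$, $H$, and $fH$ via the long exact sequence, using that derived completion is exact on the (derived-complete) objects $H$ and $fH$. Either way the content is the same and is essentially formal given the machinery already set up in the Conventions. I expect this to be the only place requiring genuine (if routine) care; everything else is bookkeeping with the identification of cohomology from Proposition~\ref{lem:HomologyLeta}. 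One subtlety worth flagging: $f$ need not lie in $I$, so I cannot simply say "$L\eta_f$ only modifies things $f$-torsion-ically and $f$-adic phenomena are invisible to $I$-completion" — the argument really does need to go through the cohomology-group description rather than any naive manipulation of the complex $\eta_f(K^\bullet)$ itself, since that complex is built using $K^\bullet[\tfrac1f]$, which is typically very far from $I$-adically complete.
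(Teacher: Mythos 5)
Your proposal is correct and follows essentially the same route as the paper: reduce to the cohomology groups via the fact that a complex is $I$-adically complete iff each $H^i$ is, identify $H^i(L\eta_f(K))$ with $H^i(K)/H^i(K)[f] \simeq fH^i(K)$ using Proposition~\ref{lem:HomologyLeta}, and conclude because $I$-adically complete $A$-modules form an abelian subcategory closed under kernels, cokernels, and images. The paper's proof is just a more compressed version of exactly this argument (it invokes closure under images directly rather than factoring through the kernel and cokernel separately as you do).
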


Recall that $I$-adic completeness for complexes is always meant in the derived sense (see \S \ref{ss:Conventions} for a summary).

\begin{proof}
We must check that $H^i(L\eta_f(K))$ is $I$-adically complete for all $i$. But $H^i(L\eta_f(K)) = H^i(K)/H^i(K)[f]$. Now $I$-adically complete $A$-modules form an abelian category inside all $A$-modules and are closed under kernels, cokernels, and images; the claim follows immediately since a complex $I$-adically complete if and only if its homology groups are so.
\end{proof}

On bounded complexes, the $L\eta_f(-)$ is a bounded distance away from the identity:

\begin{lemma}
\label{lem:LetaInverse}
Assume $K \in D^{[0,d]}(A)$ with $H^0(K)$ being $f$-torsionfree. Then there are natural maps $L\eta_f(K) \to K$ and $K \to L\eta_f(K)$ whose composition in either direction is $f^d$. 
\end{lemma}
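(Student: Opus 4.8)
The plan is to sidestep any triangulated-category manipulation and instead produce a particularly convenient representative of $K$, after which both maps can be written down by hand.

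First I would choose a bounded-above complex $P^\bullet$ of free $A$-modules together with a quasi-isomorphism $P^\bullet\to K$, and then massage it into a complex of $f$-torsionfree modules concentrated in degrees $[0,d]$. To trim it at the top: as long as $P^\bullet$ has a nonzero term in some degree $m>d$, the map $P^{m-1}\to P^m$ is surjective (its cokernel is $H^m(K)=0$), so I may delete $P^m$ and replace $P^{m-1}$ by $\ker(P^{m-1}\to P^m)$, which is a submodule of a free module and hence $f$-torsionfree since $f$ is a nonzerodivisor; iterating lands us in degrees $(-\infty,d]$. To trim it at the bottom I would fold everything below degree $0$ into degree $0$, i.e.\ set $Q^0:=\coker(P^{-1}\to P^0)$ and keep the (possibly already modified) terms $Q^i$ for $1\le i\le d$. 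This $Q^\bullet$ still represents $K$, its terms in degrees $\ge 1$ are $f$-torsionfree by construction, and $Q^0$ is $f$-torsionfree as well: if $fx=0$ in $Q^0$ then $f\,d(x)=0$ in the torsionfree module $Q^1$, forcing $d(x)=0$, so $x\in\ker(Q^0\to Q^1)=H^0(K)$, which is $f$-torsionfree by hypothesis, whence $x=0$. This is the only place the assumption on $H^0(K)$ is used.

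Now $L\eta_f(K)=\eta_f(Q^\bullet)$ by definition, and since $Q^\bullet$ sits in nonnegative degrees one has $\eta_f(Q^\bullet)^i\subseteq f^iQ^i\subseteq Q^i$, so $\eta_f(Q^\bullet)$ is an honest subcomplex of $Q^\bullet$ whose inclusion is the canonical map $\phi\colon L\eta_f(K)\to K$. For the reverse map I would take $\psi\colon Q^\bullet\to\eta_f(Q^\bullet)$ to be multiplication by $f^d$ in every degree; this is well defined, since for $x\in Q^i$ with $0\le i\le d$ one has $f^dx\in f^iQ^i$ and $d(f^dx)=f^d d(x)\in f^{i+1}Q^{i+1}$ (using $d\ge i+1$ when $i<d$, and $Q^{d+1}=0$ when $i=d$), and it commutes with differentials trivially. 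Then $\phi\circ\psi$ is multiplication by $f^d$ on $Q^\bullet$ and $\psi\circ\phi$ is multiplication by $f^d$ on the submodule $\eta_f(Q^\bullet)$, which is precisely the assertion.

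I do not expect a genuine obstacle here: once the representative $Q^\bullet$ is in hand the rest is a one-line check against the definition of $\eta_f$. The only mildly delicate points are the torsionfreeness of the folded term $Q^0$ (which is exactly where the hypothesis enters) and the bookkeeping needed to trim the free resolution so that it terminates at degree $d$. One could instead try to induct on $d$ via the triangle $\tau^{\le d-1}K\to K\to H^d(K)[-d]$, which $L\eta_f$ preserves because the relevant Bockstein boundary maps vanish for degree reasons (Lemma~\ref{lem:LetaExact}); but arranging that both composites equal $f^d$ on the nose, rather than up to a correction map, is more cumbersome in the triangulated setting than the direct argument above, so I would avoid it.
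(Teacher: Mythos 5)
Your proof is correct and follows the same route as the paper: represent $K$ by a complex of $f$-torsionfree modules concentrated in degrees $[0,d]$, note that $\eta_f$ of such a complex is a subcomplex, and observe that multiplication by $f^d$ factors through the inclusion in both directions. The only difference is that you spell out the construction of the representative $Q^\bullet$ (including the torsionfreeness of the folded degree-zero term, which is where the hypothesis on $H^0(K)$ enters), a step the paper's proof takes for granted.
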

\begin{proof}
We may represent $K$ by a chain complex $M^\bullet$ with $M^i$ being $f$-torsionfree, and $M^i = 0$ for $i \notin [0,d]$. There is then an obvious inclusion $\eta_f(M^\bullet) \subset M^\bullet$. Moreover, it is also clear that multiplication of $f^d$ on either complex factors over this inclusion, proving the claim. 
\end{proof}

\section{The complex $\widetilde{\Omega}_{\frakX}$}
\label{sec:TildeOmega}

The goal of this section is to attach a perfect complex $\widetilde{\Omega}_\frakX \in D(\frakX, \mathcal{O}_{\mathfrak{X}})$ to a smooth formal scheme $\frakX/\calO_C$ such that the cohomology sheaves of $\widetilde{\Omega}_{\frakX}$ are given by differential forms; this complex will be responsible for the Hodge-Tate specialization of the cohomology theory $R\Gamma_A(\frakX)$ of Theorem~\ref{thm:MainThmPrecise}, as alluded to in Remark~\ref{rmk:HodgeTate}.

We begin in \S \ref{ss:FramedCalculationTildeOmega} with some calculations of the complex $\widetilde{\Omega}_\frakX$ for small $\frakX$. These calculations are then leveraged to define and prove basic properties of $\widetilde{\Omega}_\frakX$ in \S \ref{ss:TildeOmegaNoFrame}; notably, the calculation of the cohomology sheaves (in the almost sense) is given in Proposition~\ref{prop:TildeOmegaDescription}.

\subsection{Some calculations with framings} 
\label{ss:FramedCalculationTildeOmega}

We begin with a description of the structure of $\widetilde{\Omega}_\frakX$ for a formal torus:

\begin{lemma}
\label{lem:TorusTildeOmega}
Fix an integer $d \geq 0$, and let $\widetilde{\Omega}_{P^d} := L\eta_{\epsilon_p - 1} R\Gamma_{conts}(\Delta^d, P^d_\infty)$. The canonical map induces an isomorphism
\[ P^d \otimes^L_{\calO_C} L\eta_{\epsilon_p - 1} R\Gamma_{conts}(\Delta^d, \calO_C) \to L\eta_{\epsilon_p - 1} R\Gamma_{conts}(\Delta^d, R^\square_\infty) =: \widetilde{\Omega}^{P^d}.\]
In particular, $H^1(\widetilde{\Omega}_{P^d})$ is free of rank $d$, and cup products induce isomorphisms $\wedge^i H^1(\widetilde{\Omega}_{P^d}) \simeq H^i(\widetilde{\Omega}_{P^d})$ for all $i$. Thus, the graded $P^d$-algebra $H^*(\widetilde{\Omega}_{P^d})$ is an exterior algebra on the free $P^d$-module $H^1(\widetilde{\Omega}_{P^d})$.
\end{lemma}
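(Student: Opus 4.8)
The plan is to reduce everything to an explicit computation with the Koszul description of $R\Gamma_{conts}(\Delta^d,P^d_\infty)$, using that $L\eta_{\epsilon_p-1}$ annihilates any complex whose cohomology is killed by $\epsilon_p-1$. Put $f := \epsilon_p - 1 \in \calO_C$, and note $v(f)>0$. I would compute $R\Gamma_{conts}(\Delta^d,P^d_\infty)$ by the Koszul complex $K(P^d_\infty;\gamma_1-1,\dots,\gamma_d-1)$ on topological generators $\gamma_i$ of $\Delta^d$; its terms are free $P^d_\infty$-modules, hence $\calO_C$-flat, hence $f$-torsionfree, so $\eta_f$ applied to this representative computes $\widetilde\Omega_{P^d} = L\eta_f R\Gamma_{conts}(\Delta^d,P^d_\infty)$ (here $R^\square_\infty$ is simply $P^d_\infty$). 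Since each $\gamma_i-1$ acts diagonally on the monomial basis, multiplying $t^{\vec a}$ by $\epsilon^{a_i}-1$, this Koszul complex is the completed direct sum over $\vec a\in\Z[\tfrac{1}{p}]^d$ of the complexes $K(\calO_C;\epsilon^{a_1}-1,\dots,\epsilon^{a_d}-1)$, exactly as in Lemma~\ref{lem:GroupCohKoszulCohPerfectoidTorus}. I would first check that $\eta_f$ commutes with this completed direct sum --- a convergent sum $\sum_{\vec a}\alpha_{\vec a}$ lies in the relevant submodule iff each $\alpha_{\vec a}$ does --- which is immediate once one notes that, since the summands are $\calO_C$-flat and $v(f)>0$, divisibility by $f^i$ and $p$-adic convergence can both be tested graded piece by graded piece.

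Granting this, $\widetilde\Omega_{P^d}$ becomes the completed direct sum of $L\eta_f K(\calO_C;\epsilon^{a_1}-1,\dots,\epsilon^{a_d}-1)$ over all $\vec a$. For $\vec a\notin\Z^d$, parts (3)--(4) of Lemma~\ref{lem:GroupCohKoszulCohPerfectoidTorus} say the cohomology of this Koszul complex is annihilated by $\epsilon^{1/p}-1=f$, so Proposition~\ref{lem:HomologyLeta} makes $L\eta_f$ of it vanish. For $\vec a\in\Z^d$ we have $\epsilon^{a_i}=1$, so these summands assemble to $R\Gamma_{conts}(\Delta^d,\Z_p)\otimes_{\Z_p}P^d$ with trivial $\Delta^d$-action; this is the Koszul complex on $d$ zeros, i.e. $\bigwedge^\bullet_{P^d}(P^d)^{\oplus d}$ with $P^d$-free terms and vanishing differential, and $\eta_f$ of it is $\bigoplus_i f^i\bigwedge^i_{P^d}(P^d)^{\oplus d}[-i]$, still with zero differential. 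Hence $H^i(\widetilde\Omega_{P^d}) = f^i\bigwedge^i_{P^d}(P^d)^{\oplus d}$ is $P^d$-free of rank $\binom{d}{i}$; in particular $H^1(\widetilde\Omega_{P^d})$ is free of rank $d$. Running the identical computation over $\calO_C$ gives $L\eta_f R\Gamma_{conts}(\Delta^d,\calO_C)\simeq\bigoplus_i f^i\bigwedge^i_{\calO_C}\calO_C^{\oplus d}[-i]$, and since all differentials vanish, flat base change along $\calO_C\to P^d$ identifies $P^d\otimes^L_{\calO_C}$ of this with $\widetilde\Omega_{P^d}$; the canonical map of the lemma is exactly this identification, giving the displayed isomorphism.

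For the exterior-algebra statement I would invoke that $L\eta_f$ is lax symmetric monoidal (indeed carries $E_\infty$-algebras to $E_\infty$-algebras), so that $H^*(\widetilde\Omega_{P^d})$ inherits a graded-commutative $P^d$-algebra structure from the cup product on $R\Gamma_{conts}(\Delta^d,\Z_p)\otimes_{\Z_p}P^d$. Unwinding the lax structure map $\eta_f(M^\bullet)\otimes\eta_f(N^\bullet)\to\eta_f(M^\bullet\otimes N^\bullet)$, which sends $f^i\alpha\otimes f^j\beta$ to $f^{i+j}(\alpha\otimes\beta)$, the product on $H^*(\widetilde\Omega_{P^d})$ corresponds --- after cancelling the factors $f^i$ --- to the cup product on $H^*_{conts}(\Delta^d,\Z_p)\otimes_{\Z_p}P^d$. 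Since $H^*_{conts}(\Delta^d,\Z_p)\simeq\bigwedge^*\Z_p^{\oplus d}$ is the exterior algebra on $H^1$ (by the Künneth formula, reducing to $\Delta=\Z_p$), the maps $\bigwedge^i_{P^d}H^1(\widetilde\Omega_{P^d})\to H^i(\widetilde\Omega_{P^d})$ are isomorphisms, as claimed. The only step demanding genuine care is the verification in the first paragraph that $\eta_f$ commutes with the completed direct sum, but it presents no real difficulty; the rest is direct manipulation of Koszul complexes with vanishing differential.
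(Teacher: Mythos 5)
Your proof is correct and follows essentially the same route as the paper: decompose $R\Gamma_{conts}(\Delta^d,P^d_\infty)$ via the Koszul/monomial description of Lemma~\ref{lem:GroupCohKoszulCohPerfectoidTorus}, observe that the non-integral part has cohomology killed by $\epsilon_p-1$ and hence dies under $L\eta_{\epsilon_p-1}$ by Proposition~\ref{lem:HomologyLeta}, and compute $\eta_{\epsilon_p-1}$ of the zero-differential exterior algebra explicitly. The one place you work harder than necessary is in commuting $\eta_{\epsilon_p-1}$ with the full completed direct sum over all of $\Z[\frac{1}{p}]^d$ (your verification is fine, since divisibility by a fixed-valuation element in a completed sum of flat $\calO_C$-modules is checked and divided out componentwise, compatibly with null sequences); the paper sidesteps this entirely by using the coarser two-summand splitting of Lemma~\ref{lem:GroupCohKoszulCohPerfectoidTorus}(2), so that $L\eta$ need only be distributed over a finite direct sum before its vanishing on the $(\epsilon_p-1)$-torsion summand is invoked.
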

\begin{proof}
Lemma~\ref{lem:GroupCohKoszulCohPerfectoidTorus} (2) and (4) give
\[ R\Gamma_{conts}(\Delta^d, P^d_\infty) \simeq \Big(R\Gamma_{conts}(\Delta^d,\Z_p) \otimes_{\Z_p} P^d\Big) \oplus E\]
where $E$ is some $\calO_C$-complex with homology killed by $\epsilon_p - 1$. Applying $L\eta_{\epsilon_p - 1}$ and using Proposition~\ref{lem:HomologyLeta},  we get
\begin{equation}
\label{eq:TorusTildeOmega}
 L\eta_{\epsilon_p - 1} R\Gamma_{conts}(\Delta^d, P^d_\infty) \simeq P^d {\otimes}^L_{\calO_C} L\eta_{\epsilon_p - 1} R\Gamma_{conts}(\Delta^d, \calO_C).
 \end{equation}
By the Koszul presentation for the group cohomology of $\Delta^d$, the object $R\Gamma_{conts}(\Delta^d,\calO_C)$ is calculated by the complex $M := \oplus_i \wedge^i (\calO_C^{\oplus d})[-i]$. Applying $L\eta_{\epsilon_p - 1}$ to $M$ produces the subcomplex
\[ L\eta_{\epsilon_p -1}(M) := \big(\oplus_i (\epsilon_p -1)^i \wedge^i(\calO_C^{\oplus d})[-i]\big) \hookrightarrow \big(\oplus_i \wedge^i (\calO_C^{\oplus d})[-i]\big) =: M.\]
The left side is easily seen to be $\oplus_i \wedge^i ((\epsilon_p - 1) \calO_C^{\oplus d})[-i]$ by the definition of $L\eta$; this proves all claims by flat base change along $\calO_C \to P^d$.
\end{proof}

The previous calculation passes up along a framing as well:

\begin{corollary}
\label{cor:FramedAlgebraTildeOmega}
Let $\square:P^d \to R$ be a framing, and let $\widetilde{\Omega}_R := L\eta_{\epsilon_p -1} R\Gamma_{conts}(\Delta^d, R^\square_\infty)$. The canonical map gives an isomorphism
\[ R \otimes^L_{\calO_C} L\eta_{\epsilon_p - 1} R\Gamma_{conts}(\Delta^d, \calO_C) \to L\eta_{\epsilon_p - 1} R\Gamma_{conts}(\Delta^d, R^\square_\infty) =: \widetilde{\Omega}_R\]
In particular, $H^1(\widetilde{\Omega}_R)$ is a free of rank $d$, and cup products induce isomorphisms $\wedge^i H^1(\widetilde{\Omega}_R) \simeq H^i(\widetilde{\Omega}_R)$ for all $i$. Thus, the graded $R$-algebra $H^*(\widetilde{\Omega}_{R})$ is an exterior algebra on the free $R$-module $H^1(\widetilde{\Omega}_{R})$.
\end{corollary}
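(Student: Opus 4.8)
The plan is to deduce the statement from the torus case, Lemma~\ref{lem:TorusTildeOmega}, by flat base change along the framing $\square:P^d \to R$. The starting point is the observation, built into the construction of $R^\square_\infty$, that $R^\square_\infty \simeq P^d_\infty \otimes_{P^d} R$ with $\Delta^d$ acting $R$-linearly through the first factor, and that $P^d \to R$ is finite \'etale, hence finite flat (so this tensor product is underived, already complete, and exact in the $P^d_\infty$-variable). Computing $\Delta^d$-cohomology by the usual Koszul complex $K(-\,;\,[\underline{\epsilon}^{e_1}]-1,\dots,[\underline{\epsilon}^{e_d}]-1)$ --- a finite complex of finite free modules --- and using flatness of $R$ over $P^d$, one obtains a natural isomorphism $R\Gamma_{conts}(\Delta^d, R^\square_\infty) \simeq R\Gamma_{conts}(\Delta^d, P^d_\infty) \otimes_{P^d} R$.

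Next I would check that $L\eta_{\epsilon_p-1}$ commutes with the flat base change $(-)\otimes_{P^d} R$. Because $L\eta$ is not well-behaved on the derived category in the abstract (cf.\ the hypotheses in Lemma~\ref{lem:LetaRegularSequence}), this must be verified on the explicit Koszul representative above, whose terms are finite free and hence $\epsilon_p-1$-torsionfree ($\epsilon_p - 1 \in \calO_C$ being a nonzerodivisor on every $\calO_C$-flat ring in sight): for a complex $M^\bullet$ of $\epsilon_p-1$-torsionfree $P^d$-modules one has $\eta_{\epsilon_p-1}(M^\bullet)^i = \ker\bigl((\epsilon_p-1)^i M^i \to M^{i+1}/(\epsilon_p-1)^{i+1}M^{i+1}\bigr)$, and flatness of $R$ over $P^d$ makes this formula commute with $(-)\otimes_{P^d}R$. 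Combined with the previous paragraph, this gives $\widetilde{\Omega}_R \simeq \widetilde{\Omega}_{P^d}\otimes_{P^d}R$.

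Plugging in the torus computation $\widetilde{\Omega}_{P^d} \simeq P^d \otimes^L_{\calO_C} L\eta_{\epsilon_p-1}R\Gamma_{conts}(\Delta^d,\calO_C)$ from Lemma~\ref{lem:TorusTildeOmega} and composing the base changes $\calO_C \to P^d \to R$ (each flat) then yields $\widetilde{\Omega}_R \simeq R\otimes^L_{\calO_C}L\eta_{\epsilon_p-1}R\Gamma_{conts}(\Delta^d,\calO_C)$, and one checks this is induced by the canonical map in the statement. For the remaining assertions, recall from the proof of Lemma~\ref{lem:TorusTildeOmega} that $L\eta_{\epsilon_p-1}R\Gamma_{conts}(\Delta^d,\calO_C) \simeq \bigoplus_i \wedge^i\bigl((\epsilon_p-1)\calO_C^{\oplus d}\bigr)[-i]$, a complex with zero differentials; tensoring with the flat $\calO_C$-algebra $R$ keeps it formal, with $H^1(\widetilde{\Omega}_R) \cong (\epsilon_p-1)R^{\oplus d} \cong R^{\oplus d}$ free of rank $d$ and $H^i(\widetilde{\Omega}_R) \cong \wedge^i_R H^1(\widetilde{\Omega}_R)$. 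That the exterior-algebra structure on $H^*(\widetilde{\Omega}_R)$ is the one coming from cup products follows from the corresponding statement over $P^d$ together with compatibility of cup products on group cohomology with base change.

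The argument is essentially bookkeeping once the torus case is in hand; the one step requiring genuine care --- and the point I would flag as the main obstacle --- is the commutation of $L\eta_{\epsilon_p-1}$ with $(-)\otimes_{P^d}R$, which has to be carried out at the level of the explicit $\epsilon_p-1$-torsionfree Koszul representative rather than by formal manipulation in $D(P^d)$, since $L\eta$ does not commute with arbitrary base change.
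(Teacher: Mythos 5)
Your proof is correct and takes essentially the same route as the paper's: flat base change along the finite \'etale framing $\square$ for both $R\Gamma_{conts}(\Delta^d,-)$ and $L\eta_{\epsilon_p-1}$, followed by the torus computation of Lemma~\ref{lem:TorusTildeOmega}. The paper compresses the base-change step into one line; your verification that $\eta_{\epsilon_p-1}$ commutes with $(-)\otimes_{P^d}R$ on an $(\epsilon_p-1)$-torsionfree Koszul representative is precisely the content implicit in that line.
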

\begin{proof}
Applying flat base change for $R\Gamma_{conts}(\Delta^d,-)$ and $L\eta_{\epsilon_p - 1}$ along the flat map $\square$, we see that
\[ M \simeq R \otimes^L_{P^d} L\eta_{\epsilon_p - 1} R\Gamma_{conts}(\Delta^d, P^d_\infty).\]
Lemma~\ref{lem:TorusTildeOmega} then finishes the proof.
\end{proof}

\begin{remark}
The proof above identifies  $H^1(\widetilde{\Omega}_R)$  with $(\epsilon_p - 1) H^1_{conts}(\Delta^d, R^\square_\infty)$.
\end{remark}

\subsection{Almost local results without framings}
\label{ss:TildeOmegaNoFrame}

We now give the definition of $\widetilde{\Omega}_\frakX$ in general, and prove the comparison with differential forms; all calculations in this section, especially Proposition~\ref{prop:TildeOmegaDescription} and the following discussion, should be interpreted in the almost sense.

Let $\frakX/\calO_C$ be a smooth formal scheme with generic fibre $X$, and let $\nu:\Shv(X_{\proet}) \to \Shv(\frakX_\Zar)$ be the nearby cycles map from \S \ref{ss:APT}. Then we define:

\begin{definition}
Let $\widetilde{\Omega}_{\frakX} := L\eta_{\epsilon_p - 1}(R \nu_* \widehat{\calO_X^+}) \in D(\frakX, \calO_{\frakX})$.
\end{definition}

\begin{remark}
As we see below, $\widetilde{\Omega}_\frakX$ is a perfect complex on $\frakX$ whose cohomology groups are given by differential forms on $X$ (up to twists). This complex is not always a direct sum of its cohomology sheaves: a concrete obstruction is the failure of $\frakX$ to be liftable along the square-zero thickening $A_\inf/\ker(\theta)^2 \to \calO_C$ (see \cite[Remark 8.4]{BMSMainPaper}). It would be interesting to find a direct construction of $\widetilde{\Omega}_{\frakX}$ that does not pass through the generic fibre and $p$-adic Hodge theory. In particular, it is not clear if a variant of $\widetilde{\Omega}_{\frakX}$ exists when the base field is discretely valued.
\end{remark}

Recall that the $p$-adic Tate module of an abelian group $A$ is defined as $T_p(A) = \lim A[p^n]$, where the transition maps are given by multiplication by $p$; when $M$ is $p^\infty$-torsion and $p$-divisible, one  checks that $T_p(M)[1]$ identifies with the $p$-adic completion of $M$. To identify the complex $\widetilde{\Omega}_{\frakX}$ introduced above, recall the following result of Fontaine \cite[Theorem 1]{FontaineDiffForm}, \cite[\S 1.3]{Beilinsonpadic}:

\begin{theorem}[Fontaine]
The $\calO_C$-module $T_p(\Omega^1_{\calO_C/\Z_p})$ is free of rank $1$. Moreover, the map $d\log:\mu_{p^\infty}(\calO_C) \to \Omega^1_{\calO_C/\Z_p}$ induces, on passage to Tate modules, an injective map
\[ \calO_C(1) := \Z_p(1) \otimes_{\Z_p} \calO_C \to T_p(\Omega^1_{\calO_C/\Z_p})\]
whose cokernel is killed by $(\epsilon_p - 1)$. 
\end{theorem}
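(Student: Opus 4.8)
\textbf{Proof plan for Fontaine's theorem on $T_p(\Omega^1_{\calO_C/\Z_p})$.}
The plan is to compute $\Omega^1_{\calO_C/\Z_p}$ directly, or rather enough of its $p$-power torsion structure to pin down the Tate module. First I would reduce to a trace/ramification computation over a tower of finite extensions: write $\calO_C$ as the completion of a filtered colimit $\colim_i \calO_{K_i}$ over finite extensions $K_i/\Q_p$, so that $\Omega^1_{\calO_C/\Z_p}$ is the $p$-adic completion of $\colim_i \Omega^1_{\calO_{K_i}/\Z_p}$. For each $K_i$, the module $\Omega^1_{\calO_{K_i}/\Z_p}$ is cyclic, isomorphic to $\calO_{K_i}/\mathfrak{d}_{K_i}$ where $\mathfrak{d}_{K_i}$ is the different; this is the standard local description of Kähler differentials of a DVR. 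So $\Omega^1_{\calO_C/\Z_p}$ is, up to completion, a colimit of cyclic torsion modules $\calO_{K_i}/\mathfrak{d}_{K_i}$, with transition maps governed by the behaviour of differents in towers. Since $C$ is algebraically closed, the valuations $v_p(\mathfrak{d}_{K_i})$ are unbounded (one can ramify arbitrarily much), so $\Omega^1_{\calO_C/\Z_p}$ is a $p$-divisible, $p^\infty$-torsion $\calO_C$-module. By the fact recalled just before the theorem, its $p$-adic completion is $T_p(\Omega^1_{\calO_C/\Z_p})[1]$; what must be shown is that $T_p$ of it is free of rank $1$ over $\calO_C$.

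The key input is a precise estimate on differents of cyclotomic-type extensions. The element $\epsilon_p - 1$ is a uniformizer-scale quantity: $v_p(\epsilon_p - 1) = \frac{1}{p-1}$, and more generally the extension $\Q_p(\mu_{p^n})/\Q_p$ has different of valuation $n - \frac{1}{p-1}$ (up to the standard correction), so $d\log$ of $\epsilon_{p^n}$ is an element of $\Omega^1_{\calO_C/\Z_p}$ of precise ``depth''. Concretely, $d\log(\epsilon_{p^n}) = \frac{d\epsilon_{p^n}}{\epsilon_{p^n}}$ and since $\epsilon_{p^n}$ satisfies $x^{p^n} = 1$, differentiating gives $p^n \epsilon_{p^n}^{p^n - 1} d\epsilon_{p^n} = 0$, i.e. $p^n d\log(\epsilon_{p^n}) = 0$ in $\Omega^1$; so the $d\log(\epsilon_{p^n})$ assemble into an element of $T_p(\Omega^1_{\calO_C/\Z_p})$, which is the image of a canonical generator of $\Z_p(1)$. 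The content of the theorem is then: (a) this map $\calO_C(1) \to T_p(\Omega^1_{\calO_C/\Z_p})$ is injective, and (b) its cokernel is killed by $\epsilon_p - 1$. For (a) and (b) I would argue via the structure of $\Omega^1_{\calO_{K}/\Z_p} = \calO_K/\mathfrak{d}_K$: the element $d\log(\epsilon_{p^n})$ generates a submodule of $\Omega^1_{\calO_{\Q_p(\mu_{p^n})}/\Z_p}$ whose annihilator is $p^n$ times (a bounded correction), while the whole module $\calO_K/\mathfrak{d}_K$ for larger ramified $K$ containing $\mu_{p^n}$ is only ``slightly bigger'' --- the discrepancy in different valuations between the cyclotomic piece and the full extension is bounded by $v_p(\epsilon_p - 1) = \frac{1}{p-1}$, because adjoining arbitrary roots beyond the cyclotomic ones contributes tame-plus-bounded-wild ramification. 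Passing to the colimit and then to $T_p$, the cyclotomic classes $d\log(\epsilon_{p^n})$ become a basis after scaling by $\epsilon_p - 1$, which is exactly the assertion.

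The main obstacle, I expect, is assertion (b): controlling the cokernel uniformly, i.e. showing that for \emph{every} finite extension $K/\Q_p(\mu_{p^\infty})$-approximant the class $(\epsilon_p - 1) \cdot d\log(\epsilon_{p^n})$ generates all of $\Omega^1_{\calO_K/\Z_p}$ up to higher $p$-divisibility, with a bound independent of $K$. This is a statement about differents in infinitely ramified towers and requires either the Fontaine--Wintenberger style analysis or a direct manipulation of the conductor-discriminant formula; one clean route is to first handle the case $K = \Q_p(\mu_{p^n}, \pi^{1/m})$ for a uniformizer $\pi$ and $(m,p)=1$ (tame on top of cyclotomic, where the different computation is explicit), then reduce the general case to this by noting that over $\calO_C$ we may extract roots freely and the colimit only sees the ``limit'' behaviour. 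Alternatively --- and this is probably the cleanest for an expository account --- I would simply cite Fontaine's original computation in \cite{FontaineDiffForm} and the exposition in \cite[\S 1.3]{Beilinsonpadic}, reproducing only the cyclotomic differential calculation $p^n d\log(\epsilon_{p^n}) = 0$ and the valuation bookkeeping $v_p(\epsilon_p - 1) = \frac{1}{p-1}$ that makes the ``killed by $\epsilon_p - 1$'' statement transparent, since the detailed ramification estimates are orthogonal to the rest of these notes.
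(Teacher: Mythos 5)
The paper does not actually prove this statement: it is quoted from the literature, with the proof delegated to \cite[Theorem 1]{FontaineDiffForm} and \cite[\S 1.3]{Beilinsonpadic}. So your final fallback --- cite Fontaine and record only the identity $p^n\, d\log(\epsilon_{p^n}) = 0$ and the valuation $v_p(\epsilon_p - 1) = \tfrac{1}{p-1}$ --- is exactly what the paper does, and is the right call for an expository account.

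Your sketch of the underlying argument is essentially Fontaine's original proof and is sound in outline, but two points deserve comment. First, the step you single out as the main obstacle (uniform control of the cokernel over all finite extensions) is more formal than you suggest, and does not require Fontaine--Wintenberger analysis or a case division into tame and wild towers. For any finite $L \supseteq K := \Q_p(\mu_{p^n})$, the exact sequence $\Omega^1_{\calO_K/\Z_p} \otimes_{\calO_K} \calO_L \to \Omega^1_{\calO_L/\Z_p} \to \Omega^1_{\calO_L/\calO_K} \to 0$ together with multiplicativity of the different, $\mathfrak{d}_{L/\Q_p} = \mathfrak{d}_{L/K}\,\mathfrak{d}_{K/\Q_p}$, identifies the submodule of $\calO_L/\mathfrak{d}_{L/\Q_p}$ generated by $d\log(\epsilon_{p^n})$ with $\calO_L/\mathfrak{d}_{K/\Q_p}$. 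Hence the annihilator of $d\log(\epsilon_{p^n})$ in the colimit over all finite extensions is exactly $\{a : v_p(a) \geq n - \tfrac{1}{p-1}\}$, independently of $L$. From this one reads off that $T_p$ of the colimit is the fractional ideal $\{a : v_p(a) \geq -\tfrac{1}{p-1}\}\cdot(1) = (\epsilon_p-1)^{-1}\calO_C(1)$, which gives freeness of rank $1$, injectivity of the map from $\calO_C(1)$, and the cokernel bound all at once. Second, there is a genuine gap in your opening reduction: the identification of $\Omega^1_{\calO_C/\Z_p}$ (or its Tate module) with the completion of $\colim_i \Omega^1_{\calO_{K_i}/\Z_p}$ is not literally true --- $\Omega^1$ does not commute with completion of the ring, and the colimit is $p$-divisible torsion so its naive $p$-adic completion vanishes --- and moreover the theorem is stated for an arbitrary complete algebraically closed $C$, not just $\C_p$. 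Both issues are repaired by the observation that $\calO_{\overline{\Q_p}} \to \calO_{\C_p} \to \calO_C$ has vanishing $p$-adically completed cotangent complex, which is precisely the dévissage the paper carries out in Remark~\ref{rmk:PerfectoidAbsoluteCC} immediately after the theorem; you should make that step explicit before passing to Tate modules.
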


Fontaine's calculation permits the introduction of a slight modification of the Tate twist as follows:

\begin{definition}[Breuil-Kisin twists]
\label{def:BKTwist}
For any $\calO_C$-module $M$ and $n \in \Z$, write $M\{n\} := M \otimes_{\calO_C} \calO_C\{n\}$, where $\calO_C\{1\} := T_p(\Omega^1_{\calO_C/\Z_p})$, and $\calO_C\{n\} = \calO_C\{1\}^{\otimes n}$.
\end{definition}

\begin{remark}[Explicitly describing $\mathcal{O}_C\{1\}$]
\label{rmk:GeneratorBKTwist}
Let us view $\underline{\epsilon} \in \Z_p(1)$ as a generator. The map $d\log:\Z_p(1) \to \calO_C\{1\}$ then defines an element $d\log([\underline{\epsilon}]) \in \calO_C\{1\}$. This element is (uniquely) divisible by $\epsilon_p - 1$. In fact, the compatible system
\[ \frac{1}{\epsilon_p - 1} \cdot d\log(\epsilon_{p^n}) \in \Omega^1_{\calO_C/\Z_p}[p^n]\] 
of differential forms provides a generator $\omega \in \calO_C\{1\}$ such that $(\epsilon_p - 1) \cdot \omega = d\log([\underline{\epsilon}]$; we will denote the dual generator of $\calO_C\{-1\}$ by $\omega^\vee$ in the sequel.
\end{remark}

\begin{remark}[The cotangent complex of a perfectoid algebra and Breuil-Kisin twists]
\label{rmk:PerfectoidAbsoluteCC}
Let $\C_p = \widehat{\overline{\Q_p}}$ be the completed algebraic closure of $\Q_p$, so $\C_p \subset C$. Fix a perfectoid $\calO_C$-algebra $R$. This gives us maps
\[ \Z_p \stackrel{a}{\to} \overline{\Z_p} \stackrel{b}{\to} \calO_{\C_p} \stackrel{c}{\to} \calO_C \stackrel{d}{\to} R,\]
where $\overline{\Z_p}$ is the integral closure over $\Z_p$ in $\C_p$ (or, equivalently, in $\overline{\Q_p}$), the map $b$ is the $p$-adic completion map, and the rest are the obvious ones. The maps $b$, $c$, and $d$ are flat and relatively perfect modulo $p$, and thus must have a vanishing $p$-adically completed cotangent complex (see Remark~\ref{rmk:PerfectoidCC}). The map $a$ is an inductive limit of lci and generically \'etale maps, so $L_a \simeq \Omega^1_a$. Moreover, as $C$ is algebraically closed, each element of $\overline{\Z_p}$ has a $p$-th root, so the $\overline{\Z_p}$-module $\Omega^1_a$ is $p$-torsion and $p$-divisible, giving
\[ \widehat{L_a} \simeq \calO_{\C_p}\{1\}[1].\]
Combining this with the cotangent complex vanishing shown earlier, we learn that
\[ \widehat{L_{R/\Z_p}} \simeq R\{1\}[1].\]
This perspective also provides another description of $R\{1\}$ as follows. One has $\widehat{L_{A_\inf(R)/\Z_p}} \simeq 0$ since $A_\inf(R)/p \simeq R^\flat$ is perfect, so the transitivity triangle for $\Z_p \to A_\inf(R) \stackrel{\theta}{\to} R$ collapses to give an isomorphism
\[ \widehat{L_{R/A_\inf(R)}} \simeq \widehat{L_{R/\Z_p}}.\]
The right side is $R\{1\}[1]$, while the left side is $\ker(\theta)/\ker(\theta)^2$ by Lemma~\ref{lem:AinfTheta}. Thus, we get
\[ R\{1\} \simeq \ker(\theta)/\ker(\theta)^2.\] 
When $R = \calO_C$, the resulting map $\ker(\theta) \to T_p(\Omega^1_{\calO_C/\Z_p})$ carries $\mu \in \ker(\theta)$ to $d\log([\underline{\epsilon}])$, so $\xi \in \ker(\theta)$ maps to $\omega$.
\end{remark}

We arrive at our promised result identifying the cohomology sheaves of $\widetilde{\Omega}_{\frakX}$:

\begin{proposition}
\label{prop:TildeOmegaDescription}
There is a canonical almost isomorphism $\Omega^1_{\frakX/\calO_C}\{-1\} \stackrel{a}{\simeq} \calH^1(\widetilde{\Omega}_{\frakX})$. Via cup products, this induces an almost isomorphism
\[ \Omega^i_{\frakX/\calO_C}\{-i\} \stackrel{a}{\simeq} \calH^i(\widetilde{\Omega}_{\frakX}).\]
\end{proposition}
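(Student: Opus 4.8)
The plan is to establish the statement Zariski-locally on $\frakX$, to compute everything explicitly with a framing, and to glue the local isomorphisms along a comparison map that is manifestly independent of the framing. \emph{First, the local structure.} Since the assertion is local on $\frakX_{\Zar}$, by Lemma~\ref{lem:FramingsExist} I may assume $\frakX=\Spf(R)$ is small with a framing $\square:P^d\to R$. Because $\widetilde{\Omega}_{\frakX}=L\eta_{\epsilon_p-1}(R\nu_*\widehat{\calO_X^+})$ is obtained by applying the sheaf-level $L\eta$-functor to $R\nu_*\widehat{\calO_X^+}$, the sheafified form of Proposition~\ref{lem:HomologyLeta} gives $\calH^i(\widetilde{\Omega}_{\frakX})\simeq \calH^i(R\nu_*\widehat{\calO_X^+})/\calH^i(R\nu_*\widehat{\calO_X^+})[\epsilon_p-1]$. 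By Theorem~\ref{thm:APTCohomologySmall} in its nearby-cycles formulation, $R\nu_*\widehat{\calO_X^+}$ is, in the almost sense and on $\Spf(R)$, computed by the Koszul-type complex $R\Gamma_{conts}(\Delta^d,R^\square_\infty)$ as an $E_\infty$-$\calO_{\frakX}$-algebra; combined with the quasi-coherence of $R^i\nu_*$ and its compatibility with passage to affine opens, this identifies $\calH^i(R\nu_*\widehat{\calO_X^+})|_{\Spf(R)}$, almost, with $H^i_{conts}(\Delta^d,R^\square_\infty)$. Hence $\widetilde{\Omega}_{\frakX}|_{\Spf(R)}\stackrel{a}{\simeq}\widetilde{\Omega}_R$ in the sense of Corollary~\ref{cor:FramedAlgebraTildeOmega}, so that corollary shows $\calH^1(\widetilde{\Omega}_{\frakX})$ is almost a free $R$-module of rank $d$ and $\calH^*(\widetilde{\Omega}_{\frakX})$ is almost the exterior algebra on it. Moreover, Lemma~\ref{lem:GroupCohKoszulCohPerfectoidTorus}(2),(4) shows the nonintegral Koszul summands are killed by $[\underline{\epsilon}^{1/p}]-1$, hence by $\epsilon_p-1$ after applying $\theta$, so they die under $L\eta_{\epsilon_p-1}$, and one gets a canonical almost identification $\calH^1(\widetilde{\Omega}_{\frakX})\stackrel{a}{\simeq}(\epsilon_p-1)\bigl(H^1_{conts}(\Delta^d,\Z_p)\otimes_{\Z_p}R\bigr)$, as recorded after Corollary~\ref{cor:FramedAlgebraTildeOmega}.

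\emph{Next, the canonical comparison map.} I will produce a map of $\calO_{\frakX}$-modules $\Omega^1_{\frakX/\calO_C}\{-1\}\to\calH^1(\widetilde{\Omega}_{\frakX})$ not referring to any framing. The key is a $d\log$ map: for a local unit $u$, extract a compatible system of $p$-power roots pro-\'etale-locally (possible since $\widehat{\calO_X^+}$ is locally perfectoid), giving the class of a $\Z_p(1)$-torsor; pushing forward along Fontaine's map $\Z_p(1)\to\calO_C\{1\}$ (tensored with $\widehat{\calO_X^+}$) lands it in $\calH^1(R\nu_*\widehat{\calO_X^+})\{1\}$. By Fontaine's theorem the generator of $\calO_C(1)$ maps to $(\epsilon_p-1)\,\omega$ (notation of Remark~\ref{rmk:GeneratorBKTwist}), so this class is divisible by $\epsilon_p-1$; under the identification $\calH^1(\widetilde{\Omega}_{\frakX})\simeq(\epsilon_p-1)\calH^1(R\nu_*\widehat{\calO_X^+})$ of Lemma~\ref{lem:HomologyLeta1} it therefore lifts canonically to an element $d\log u\in\calH^1(\widetilde{\Omega}_{\frakX})\{1\}$ (the lift being well-defined once one checks, via the local Koszul description, that $\calH^1(R\nu_*\widehat{\calO_X^+})$ has no almost $\epsilon_p-1$-torsion). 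Equivalently, $d\log$ is induced by a canonical derivation $\calO_{\frakX}\to\calH^1(\widetilde{\Omega}_{\frakX})\{1\}$, and hence factors through the universal derivation to give the desired map $\Omega^1_{\frakX/\calO_C}\{-1\}\to\calH^1(\widetilde{\Omega}_{\frakX})$; on a small $\Spf(R)$ this sends $d\log t_i$ to the explicit generator appearing in the proof of Corollary~\ref{cor:FramedAlgebraTildeOmega}.

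\emph{Then, the isomorphism and the cup-product promotion.} That the map of the previous step is an almost isomorphism is local, so work on a small $\Spf(R)$: the class of the $\Z_p(1)^{\oplus d}$-torsor $\Spf(R^\square_\infty)\to\Spf(R)$ is, by construction, the tuple of classes of the towers of $p$-power-root covers of the $t_i$, so it corresponds (up to the Tate twist) to the dual basis of $H^1_{conts}(\Delta^d,\Z_p)$; thus the map carries the $R$-basis $\{d\log t_i\otimes\omega^\vee\}$ of $\Omega^1_{R/\calO_C}\{-1\}$ to a basis of $(\epsilon_p-1)(H^1_{conts}(\Delta^d,\Z_p)\otimes_{\Z_p}R)\stackrel{a}{\simeq}\calH^1(\widetilde{\Omega}_{\frakX})$ up to units, hence is an almost isomorphism. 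Finally, $L\eta_{\epsilon_p-1}$ is lax symmetric monoidal, so $\widetilde{\Omega}_{\frakX}$ is an $E_\infty$-$\calO_{\frakX}$-algebra with $\calH^0(\widetilde{\Omega}_{\frakX})\stackrel{a}{\simeq}\nu_*\widehat{\calO_X^+}\stackrel{a}{\simeq}\calO_{\frakX}$ (the latter by the same local computation, since $\calO_{\frakX}$ is $\epsilon_p-1$-torsionfree), so cup products give maps $\wedge^i\calH^1(\widetilde{\Omega}_{\frakX})\to\calH^i(\widetilde{\Omega}_{\frakX})$, which are almost isomorphisms because they are so locally by Corollary~\ref{cor:FramedAlgebraTildeOmega}. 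Composing with $\wedge^i$ of the degree-one isomorphism and using $\wedge^i(\Omega^1_{\frakX/\calO_C}\{-1\})=\Omega^i_{\frakX/\calO_C}\{-i\}$ yields the asserted almost isomorphisms in all degrees.

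\emph{Main obstacle.} The delicate point is the construction in the second step: identifying precisely which consequence of Fontaine's computation of $T_p(\Omega^1_{\calO_C/\Z_p})$ forces the $d\log$ class to be divisible by $\epsilon_p-1$ and makes its lift along $L\eta_{\epsilon_p-1}$ canonical (equivalently, producing the canonical derivation $\calO_{\frakX}\to\calH^1(\widetilde{\Omega}_{\frakX})\{1\}$ and checking it recovers the local generators), together with the largely bookkeeping but genuinely necessary verification that $R\nu_*\widehat{\calO_X^+}$ is computed locally by the Koszul complex in a way compatible with the $L\eta$-functor, so that Corollary~\ref{cor:FramedAlgebraTildeOmega} truly describes $\widetilde{\Omega}_{\frakX}$ itself rather than an auxiliary complex. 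Everything else is a matter of tracking almost isomorphisms and exterior-algebra structures through the framing.
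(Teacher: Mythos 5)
Your overall skeleton matches the paper's: reduce to a framed $\Spf(R)$, use Theorem~\ref{thm:APTCohomologySmall} and Lemma~\ref{lem:GroupCohKoszulCohPerfectoidTorus} to see that $L\eta_{\epsilon_p-1}$ kills the nonintegral Koszul summands and leaves an exterior algebra on a free rank-$d$ module in degree $1$ (Corollary~\ref{cor:FramedAlgebraTildeOmega}), then promote the degree-one identification by cup products. That part is fine, as is your reduction of the isomorphism check to the explicit generators $d\log t_i$.

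Where you genuinely diverge from the paper — and where the gap sits — is the construction of the framing-independent comparison map. You build it from a $d\log$ of units: the class of the $\Z_p(1)$-torsor of $p$-power roots of $u\in\calO_\frakX^\times$, pushed into $(\epsilon_p-1)\,R^1\nu_*\widehat{\calO_X^+}\{1\}=\calH^1(\widetilde{\Omega}_\frakX)\{1\}$ via Fontaine's theorem. This gives a \emph{multiplicative} map on units, but the statement ``equivalently, $d\log$ is induced by a canonical derivation $\calO_\frakX\to\calH^1(\widetilde{\Omega}_\frakX)\{1\}$'' is exactly the assertion that needs proof: passing from a homomorphism $\calO_\frakX^\times\to\calH^1(\widetilde{\Omega}_\frakX)\{1\}$ to an additive Leibniz-rule map on all of $\calO_\frakX$ is not formal, and any attempt to do it by writing $\Omega^1_{\frakX/\calO_C}$ in terms of $d\log t_i$ reintroduces the framing whose independence you were trying to establish. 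The paper avoids this entirely by linearizing at the outset: functoriality of the cotangent complex gives $\nu^*:L_{\frakX/\Z_p}\to R\nu_*\widehat{L_{\calO_X^+/\Z_p}}$, and the perfectoid computation $\widehat{L_{\calO_X^+/\Z_p}}\simeq\widehat{\calO_X^+}\{1\}[1]$ (Remark~\ref{rmk:PerfectoidAbsoluteCC}, itself a repackaging of Fontaine's theorem) converts this into a canonical map $\widehat{L_{\frakX/\Z_p}}\{-1\}[-1]\to R\nu_*\widehat{\calO_X^+}$, which is then factored through $\widetilde{\Omega}_\frakX$ by the criterion of Lemma~\ref{lem:LetaFactorize}; the agreement with your $d\log$ on units only appears at the very end, in the local verification $(g-1)(d\log t)=d\log([\underline{\epsilon}])$. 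So to complete your argument you should either (a) replace your second step by the cotangent-complex construction, or (b) define the map locally by $d\log t_i\mapsto$ (torsor class of the $t_i$-tower) and supply an explicit proof that two framings yield the same map on overlaps — the content of which is again essentially the functoriality that the cotangent complex packages for free. The remaining steps of your proposal (the divisibility by $\epsilon_p-1$, the identification $\calH^1(L\eta_{\epsilon_p-1}K)\simeq(\epsilon_p-1)\calH^1(K)$, and the cup-product promotion) are correct and agree with the paper.
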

\begin{proof}
We first summarize the strategy informally: by a local calculation, we reduce to showing the claim about $\calH^1$. In this case, using the vanishing of the cotangent complex of perfectoids, we construct a natural map $L_{\frakX/\Z_p}\{-1\}[-1] \to R\nu_* \widehat{\calO_X^+}$. Finally, we check that this map induces the desired isomorphism on $\calH^1$ using the criterion in Lemma~\ref{lem:LetaFactorize}.

In more detail, thanks to Corollary~\ref{cor:FramedAlgebraTildeOmega}, it suffices to construct the canonical isomorphism $\Omega^1_{\frakX/\calO_C}\{-1\} \stackrel{a}{\simeq} \calH^1(\widetilde{\Omega}_{\frakX})$. For this, observe that the map $\nu:(X_{\proet},\calO_X^+) \to (\frakX_\Zar,\calO_\frakX)$ of ringed topoi induces a pullback map
\[ \nu^*:L_{\frakX/\Z_p} \to R\nu_* \widehat{L_{{\calO_X^+}/\Z_p}},\]
where the completion on the right is $p$-adic. As $\widehat{\calO_X^+}$ is locally perfectoid on $X_{\proet}$ (see Theorem~\ref{thm:ProetaleLocallyPerfectoid}), Remark~\ref{rmk:PerfectoidAbsoluteCC} gives
\[ \widehat{L_{\calO_X^+/\Z_p}} \simeq \widehat{\calO_X^+}\{1\}[1].\]
Thus, after twisting, the map $\nu^*$ above induces a map
\[ \alpha: \widehat{L_{\frakX/\Z_p}}\{-1\}[-1] \to R\nu_* \widehat{\calO_X^+}.\]
We will check that this map factors uniquely through $\widetilde{\Omega}_{\frakX} := L\eta_{\epsilon_p - 1}R\nu_* \widehat{\calO_X^+} \to R\nu_* \widehat{\calO_X^+}$, and that the induced map 
\[ \alpha': \widehat{L_{\frakX/\Z_p}}\{-1\}[-1] \to \widetilde{\Omega}_{\frakX}\] 
induces the desired almost isomorphism after $p$-adic completion on $\calH^1$. 

We begin by calculate the source of $\calH^1(\alpha)$. Observe that the canonical map 
\[ \widehat{L_{\mathfrak{X}/\mathbf{Z}_p}}\{-1\}[-1] \to \widehat{L_{\mathfrak{X}/\mathcal{O}_C}}\{-1\}[-1]\]
induces an isomorphism on $\mathcal{H}^1$: the fiber is $\widehat{L_{\mathcal{O}_C/\mathbf{Z}_p}} \otimes_{\mathcal{O}_C} \mathcal{O}_{\mathfrak{X}}\{-1\}[-1]$, which lives in $D^{\leq 0}$ by Remark~\ref{rmk:PerfectoidAbsoluteCC}. Moreover, $L_{\mathfrak{X}/\calO_C} \simeq \Omega^1_{\frakX/\calO_C}$ as $\frakX/\calO_C$ smooth. In particular, this is a free $\calO_\frakX$-module, and hence is already $p$-adically complete. Thus, we learn that $\calH^1(\widehat{L_{\mathfrak{X}/\mathbf{Z}_p}}\{-1\}[-1]) \simeq \Omega^1_{\frakX/\calO_C}\{-1\}$.

We now return to checking the above assertion for the map $\alpha$. As $\alpha$ satisfies the hypotheses of Lemma~\ref{lem:LetaFactorize}, we are reduced to checking $\calH^1(\alpha)$ induces an almost isomorphism between $\calH^1(L_{\frakX/\calO_C}\{-1\}[-1]) = \Omega^1_{\frakX/\calO_C}\{-1\}$ and $(\epsilon_p - 1) R^1 \nu_* \widehat{\calO_X^+}$. This is a local assertion, so we may assume $\frakX = \Spf(R)$ is small. Using base change for continuous group cohomology as in Corollary~\ref{cor:FramedAlgebraTildeOmega}, we may even assume $R = P^d$. Thus, it suffices to check that $\Omega^1_{P^d/\calO_C} \otimes \calO_C\{-1\}$ maps isomorphically to $(\epsilon_p - 1) H^1_{conts}(\Delta^d, P^d_\infty)$ under the map $\alpha$ above (using Theorem~\ref{thm:APTCohomologySmall} to almost calculate $R^i \nu_* \widehat{\calO_X^+}$); note that both are free $P^d$-modules of rank $d$ (by Lemma~\ref{lem:TorusTildeOmega} for the latter). Moreover, both sides are graded by $\Z[\frac{1}{p}]^d$, and the map respects the grading by functoriality. Thus, it suffices to show the claim for each graded component. Using this observation, one reduces to the case $d=1$, i.e., we must check that the generator $d\log(t) \otimes \omega^\vee  \in \Omega^1_{P^1/\calO_C}\{-1\}$ (with notation as in Remark~\ref{rmk:GeneratorBKTwist}) is carried isomorphically by $\alpha$ onto the generator $(\epsilon_p - 1) \cdot t^0 \in (\epsilon_p - 1) H^1_{conts}(\Delta, P^1_\infty)$ coming from Lemma~\ref{lem:TorusTildeOmega}. Untwisting, we must check that $\alpha\{1\}$ carries $d\log(t) \in \Omega^1_{P^1/\calO_C}$ to $(\epsilon_p - 1) \cdot t^0 \otimes \omega \in H^1_{conts}(\Delta, P^1_\infty) \otimes \calO_C \{1\}$. This simplifies to 
\[ (\epsilon_p - 1) \cdot t^0 \otimes \omega = t^0 \otimes (\epsilon_p - 1) \cdot \omega = t^0 \otimes d\log([\underline{\epsilon}]).\]
In particular, this element lies in the subspace
\[ H^1_{conts}(\Delta, \Z_p \cdot t^0) \otimes \Z_p(1) \subset H^1_{conts}(\Delta, P^1_\infty) \otimes \calO_C\{1\}.\]
Thus, we must check that $\alpha\{1\}$ carries $d\log(t)$ to $t^0 \otimes d\log(\underline{\epsilon}) \in H^1_{conts}(\Delta, \Z_p) \otimes \Z_p(1)$. This follows from unwinding the definition of $\alpha$, and using that the generator $g \in \Delta$ (coming from the choice of $\underline{\epsilon}$) satisfies the following transformation law when acting on $L_{P^1_\infty/\Z_p}$:
\[ (g-1)(d\log(t)) = g(d\log(t)) - d\log(t) = d\log([\underline{\epsilon}] \cdot t) - d\log(t) = d\log([\underline{\epsilon}]),\]
where the last equality uses that $d\log$ carries multiplication to addition; see \cite[Proposition 8.17]{BMSMainPaper} for more details.
\end{proof}

\begin{remark}
Set $R = P^1$. Proposition~\ref{prop:TildeOmegaDescription} gives a canonical isomorphism 
\[ \Omega^1_{R/\calO_C}\{-1\} \simeq R \otimes^L_{\calO_C} (\epsilon_p - 1) H^1_{conts}(\Delta, \calO_C).\]
Untwisting, this may be viewed as a canonical isomorphism
\[ \Omega^1_{R/\calO_C} \simeq R \otimes^L_{\calO_C} (\epsilon_p - 1) H^1_{conts}(\Delta, \calO_C\{1\}).\]
Recalling that $(\epsilon_p - 1) \calO_C\{1\}$ is is naturally identified with $\calO_C(1)$, we can rewrite this as
\[ \Omega^1_{R/\calO_C} \simeq R\otimes_{\Z_p} H^1_{conts}(\Delta, \Z_p(1)).\]
Unwinding the proof above shows that this isomorphism carries $d\log(t) \in \Omega^1_{R/\calO_C}$ to the element $1 \otimes \can$, where $\can:\Delta \simeq \Z_p(1)$ is the natural identification, viewed as a $1$-cocycle on $\Delta$ valued in $\Z_p(1)$.
\end{remark}

\begin{remark}[Explicitly describing $\widetilde{\Omega}_{\frakX}$]
\label{rmk:TildeOmegaValues}
The proof of Proposition~\ref{prop:TildeOmegaDescription} gives a slightly finer statement. To explain this, define an object $\widetilde{\Omega}^{pre}_\frakX$ of the derived category of presheaves of $\calO_\frakX$-modules on the site $\frakX_\Zar^{sm}$ of small affine opens $\frakU \subset \frakX$ defined by attaching to such an  $\frakU \subset \frakX$ the complex $L\eta_{\epsilon_p - 1} R\Gamma_{\proet}(U, \widehat{\calO_U^+})$, where $U = \frakU_\eta$ is the generic fibre. Thus, $\widetilde{\Omega}_{\frakX}$ is (by definition) the sheafification of $\widetilde{\Omega}^{pre}_{\frakX}$. In particular, for each $\frakU \in \frakX_\Zar^{sm}$, there is a canonical map
\[ \widetilde{\Omega}^{pre}_\frakX(\frakU) := L\eta_{\epsilon_p - 1}R\Gamma_{\proet}(U, \widehat{\calO_U^+}) \to \widetilde{\Omega}_\frakX(\frakU) := R\Gamma(\frakU, \widetilde{\Omega}_{\frakX}).\]
The proof above shows that this map is an almost isomorphism: the cohomology presheaves of $\widetilde{\Omega}^{pre}_\frakX$ were almost identified as locally free sheaves on $\frakX$ when restricted to small affines. In other words, the sheafification process (after applying $L\eta$) is not necessary if one restricts attention to small opens in $\frakX$ and works in the almost world. For notational convenience, given a small open $\frakU := \Spf(R) \subset \frakX$, we will write $\widetilde{\Omega}_R$ for the common value of either complex displayed above in the almost world. Note that this value is $p$-adically complete by Lemma~\ref{lem:LetaCompletion}. 
\end{remark}

\begin{remark}[Excising almost mathematics]
\label{rmk:TildeOmegaNoAlmost}
The almost isomorphism constructed in Proposition~\ref{prop:TildeOmegaDescription} is actually an isomorphism on the nose when $\frakX = \Spf(R)$ is small, see \cite[\S 8]{BMSMainPaper}. In particular, the complex $\widetilde{\Omega}_R$ described in Remark~\ref{rmk:TildeOmegaValues} is a perfect complex of $R$-modules on the nose. We provide a slightly different explanation from \cite{BMSMainPaper} for why this is so, but this is not relevant for Theorem~\ref{thm:MainThm}. 
%
%
%
%
Fix a framed algebra $(R,\square)$ (with $\frakX$ and $X$ as above). Let $K = R\Gamma_{conts}(\Delta^d, R^\square_\infty)$, let $L = R\Gamma(X_\proet, \widehat{\calO_X^+})$, and let $K \to L$ be the natural map with cofiber $Q$. Then $Q$ is almost zero by Theorem~\ref{thm:APTCohomologySmall}. Moreover, it is easy to see that $H^*(K/(\epsilon_p - 1))$ has no nonzero almost zero elements, i.e., elements killed by $\fram \subset \calO_C$: as $P^d \to R^\square$ is finite \'etale, this reduces to the analogous question for $P^d$ itself, which can be checked explicitly\footnote{In more detail, Lemma~\ref{lem:GroupCohKoszulCohPerfectoidTorus} reduces us to checking that all $\calO_C$-linear maps $k \to F \oplus \widehat{\oplus}_i \calO_C/(t_i)$ are $0$; here $F$ is a topologically free $\calO_C$-module, the completed sum is indexed by some set $I$,  $t_i \in \calO_C$ is some element dividing $\epsilon_p - 1$, and the completion is $p$-adic. As $F$ is $p$-torsionfree, there are clearly no nonzero maps $k \to F$. Moreover, as each $t_i$ divides $\epsilon_p - 1$, the completed direct sum coincides with the ordinary direct sum (as the latter is killed by $\epsilon_p - 1$). Thus, we are reduced to showing that all $\calO_C$-linear maps $k \to \oplus_i \calO_C/(t_i)$ are $0$. The $i$-th component of such a map gives an element $a_i \in \calO_C/(t_i)$ such that $f \cdot a = 0$ for each $f \in \fram$; considerations of valuations then show that $a_i = 0$, as wanted.} using Lemma~\ref{lem:GroupCohKoszulCohPerfectoidTorus} (3). Lemma~\ref{lem:LetaExact} then shows that $L\eta_{\epsilon_p - 1}(K) \simeq L\eta_{\epsilon_p - 1}(L)$ (as $L\eta_{\epsilon_p - 1}(Q) = 0$ since $Q$ is almost zero). Moreover, from the proof of Proposition~\ref{prop:TildeOmegaDescription}, it is immediate that $\Omega^i_{R/\calO_C}\{-i\} \simeq H^i(L\eta_{\epsilon_p - 1}(K))$, which gives the promised description.
\end{remark}

\section{The complex $A\Omega_{\frakX}$}
\label{sec:AOmega}

The goal of this section is to define the complex $A\Omega_\frakX$, and identify its Hodge-Tate and de Rham specializations in terms of $\widetilde{\Omega}_{\frakX}$ and the de Rham complex $\Omega^\bullet_{\frakX/\calO_C}$ respectively. We begin in \S \ref{ss:AOmegaFramed} by recording a calculation that will be extremely useful in the identification of the Hodge-Tate specialization. Armed with these calculations, we begin \S \ref{ss:AOmegaNonFramed} by defining $A\Omega_\frakX$ and quickly identifying the Hodge-Tate specialization in Proposition~\ref{prop:AOmegaTildeXi}; the latter then permits us to also identify the de Rham specialization in Proposition~\ref{prop:AOmegadR}, finishing this section.

\subsection{Some calculations with framings}
\label{ss:AOmegaFramed}

The main calculation we need is:

\begin{lemma}
\label{lem:NoTorsionWitt}
Let $\square: P^d \to R$ be a framing. Then all homology groups of the complex 
\[ K := R\Gamma_{conts}(\Delta^d,R^\square_{A_\inf,\infty})\otimes_{A_\inf} A_\inf/\mu\]
 are $p$-torsionfree. 
\end{lemma}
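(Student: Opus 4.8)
The strategy is to reduce to the torus case $R = P^d$ by flat base change, and then to analyze the $\Delta^d$-equivariant decomposition of $P^d_{A_\inf,\infty}$ from Construction~\ref{cons:DeltaActionTorus} modulo $\mu$. First, since $\square:P^d \to R$ is finite étale, the ring $R^\square_{A_\inf}$ is finite étale over $P^d_{A_\inf}$, hence flat; passing to perfectoid covers, $R^\square_{A_\inf,\infty} \simeq P^d_{A_\inf,\infty}\otimes_{P^d_{A_\inf}} R^\square_{A_\inf}$, and the $\Delta^d$-action on the former is the base change of the one on the latter. Continuous group cohomology commutes with this flat (indeed finite étale) base change, so $K \simeq \big(R\Gamma_{conts}(\Delta^d, P^d_{A_\inf,\infty})\otimes_{A_\inf}A_\inf/\mu\big)\otimes^L_{P^d_{A_\inf/\mu}} R^\square_{A_\inf/\mu}$. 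As $R^\square_{A_\inf/\mu}$ is flat over $P^d_{A_\inf/\mu}$, which is flat over $A_\inf/\mu$, it suffices to prove that the homology groups of $K_0 := R\Gamma_{conts}(\Delta^d, P^d_{A_\inf,\infty})\otimes_{A_\inf}A_\inf/\mu$ are $p$-torsionfree $A_\inf/\mu$-modules, since flat base change preserves $p$-torsionfreeness (flatness is exact and preserves injectivity of multiplication by $p$).

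Next I would invoke Lemma~\ref{lem:GroupCohKoszulCohPerfectoidTorus}(2) with $A = A_\inf/\mu$, which gives a direct sum decomposition
\[ R\Gamma_{conts}(\Delta^d, P^d_{A_\inf/\mu,\infty}) \simeq \Big(R\Gamma_{conts}(\Delta^d,\Z_p)\otimes_{\Z_p} P^d_{A_\inf/\mu}\Big) \oplus \widehat{\bigoplus_{\underline{a}\in\Z[\frac1p]^d - \Z^d}} K(A_\inf/\mu;[\underline{\epsilon}^{a_1}]-1,\dots,[\underline{\epsilon}^{a_d}]-1). \]
(Here one uses $R\Gamma_{conts}(\Delta^d,P^d_{A_\inf,\infty})\otimes_{A_\inf}A_\inf/\mu \simeq R\Gamma_{conts}(\Delta^d, P^d_{A_\inf/\mu,\infty})$ via Construction~\ref{cons:DeltaActionTorus}, part (5)/(6), since the $\Delta^d$-action reduces compatibly mod $\mu$.) For the first summand: $R\Gamma_{conts}(\Delta^d,\Z_p)$ has finite free $\Z_p$-cohomology, so tensoring with $P^d_{A_\inf/\mu}$ gives homology groups that are finite free $P^d_{A_\inf/\mu}$-modules; these are $p$-torsionfree because $P^d_{A_\inf/\mu} = (A_\inf/\mu)\langle t_i^{\pm1}\rangle$ is $p$-torsionfree, which follows from the fact that $A_\inf/\mu$ is $p$-torsionfree (Notation~\ref{not:epsilon}(5): $p$ is a nonzerodivisor modulo $\mu$) together with flatness of $P^d_{A_\inf/\mu}$ over $A_\inf/\mu$. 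For the second summand: by Lemma~\ref{lem:GroupCohKoszulCohPerfectoidTorus}(3), each homology group of each Koszul term is a finite direct sum of copies of $(A_\inf/\mu)/([\underline{\epsilon}^a]-1)$ for some $a\in\Z[\frac1p]-\Z$, so it suffices to show that $(A_\inf/\mu)/([\underline{\epsilon}^a]-1)$ is $p$-torsionfree, i.e., that $p$ is a nonzerodivisor on $A_\inf/(\mu,[\underline{\epsilon}^a]-1)$.

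The heart of the matter, then, is the ring-theoretic claim that $p$ is a nonzerodivisor on $A_\inf/(\mu, [\underline{\epsilon}^a]-1)$ for $a\in\Z[\frac1p]-\Z$. Since $A_\inf$ is a domain and, for such $a$, $[\underline{\epsilon}^a]-1$ divides $\mu = [\underline{\epsilon}]-1$ (Notation~\ref{not:epsilon}(2), as $v_p(a)\le 0 = v_p(1)$), writing $\mu = ([\underline{\epsilon}^a]-1)\cdot g$ we get $A_\inf/(\mu,[\underline{\epsilon}^a]-1) = A_\inf/([\underline{\epsilon}^a]-1)$; moreover $[\underline{\epsilon}^a]-1$ generates the kernel of $\tilde\theta\circ\phi^{j}$ for a suitable power of $\phi$ — more precisely $[\underline{\epsilon}^{1/p^k}]-1 = \phi^{-k}(\mu)$ and $\phi^{-k}(\mu)$ generates $\ker(\theta\circ\phi^{-k}\circ\phi) = \ker(\theta\circ\phi^{-(k-1)})$ when $a = 1/p^k$, and the general case $a = m/p^k$ reduces to this since $[\underline{\epsilon}^{m/p^k}]-1$ and $[\underline{\epsilon}^{1/p^k}]-1$ generate the same ideal up to a unit when... — actually the cleanest route is: $[\underline{\epsilon}^a]-1$ is a generator of the kernel of $\theta\circ\phi^{v_p(a)}: A_\inf \to \calO_C$ up to multiplication by the explicit unit $\sum_{i=0}^{p-1}[\underline{\epsilon}^{\cdots}]^i$-type factors appearing in Notation~\ref{not:epsilon}, so $A_\inf/([\underline{\epsilon}^a]-1) \cong \calO_C$, which is $p$-torsionfree by $\calO_C$-flatness. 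I expect this last identification — pinning down $A_\inf/([\underline{\epsilon}^a]-1) \cong \calO_C$ precisely via Frobenius twists of $\theta$, handling the numerator $m$ of $a$ correctly — to be the main technical obstacle, though it is entirely elementary given Lemma~\ref{lem:AinfTheta} and the explicit formulas of Notation~\ref{not:epsilon}; alternatively one can bypass it by directly citing that $[\underline{\epsilon}^a]-1$ is a nonzerodivisor mod $p$, which is Notation~\ref{not:epsilon}(5)-type content after noting it divides $\mu$ and $p$ is a nonzerodivisor mod $\mu$, so $[\underline{\epsilon}^a]-1$ is a nonzerodivisor mod $p$, i.e., $p$ is a nonzerodivisor mod $[\underline{\epsilon}^a]-1$ — giving the claim at once.
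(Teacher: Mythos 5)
Your proposal follows the same route as the paper's proof: reduce to $R = P^d$ by finite \'etale (hence flat) base change, invoke the decomposition of Lemma~\ref{lem:GroupCohKoszulCohPerfectoidTorus} (2) for $A = A_\inf/\mu$, dispose of the free summand, and reduce the Koszul summands to the assertion that $p$ is a nonzerodivisor on $A_\inf/([\underline{\epsilon}^a]-1)$ for $a \in \Z[\frac{1}{p}]-\Z$. For that last assertion the paper argues a bit more cleanly: by Notation~\ref{not:epsilon} (2) applied in both directions, $([\underline{\epsilon}^a]-1) = ([\underline{\epsilon}^{1/p^k}]-1) = (\phi^{-k}(\mu))$ for $k = -v_p(a)$, so the quotient is abstractly isomorphic to $A_\inf/\mu$ via $\phi^k$ and one quotes Notation~\ref{not:epsilon} (5). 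Your alternative route is salvageable but the logic as written has two slips: from $([\underline{\epsilon}^a]-1) \mid \mu$ you need the \emph{vice versa} half of Notation~\ref{not:epsilon} (5) (that $\mu$ is a nonzerodivisor mod $p$, not that $p$ is a nonzerodivisor mod $\mu$) to conclude $[\underline{\epsilon}^a]-1$ is a nonzerodivisor mod $p$; and the step from ``$g$ is a nonzerodivisor mod $p$'' to ``$p$ is a nonzerodivisor mod $g$,'' which you pass off with an ``i.e.,'' is not a tautology --- it is permutability of a length-two regular sequence, which does hold here (if $py = gx$, then $gx \in (p)$ forces $x = px'$ since $g$ is a nonzerodivisor mod $p$, whence $p(y-gx')=0$ and $y = gx'$ because $p$ is a nonzerodivisor), but must be said.

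The one genuine gap is the passage from the individual Koszul summands to the $p$-adically \emph{completed} direct sum $\widehat{\bigoplus}$. Knowing that each $H^*(K_i)$ is $p$-torsionfree does not formally give the same for $H^*(\widehat{\bigoplus}_i K_i)$: the completed direct sum is a derived limit $R\lim_n(\bigoplus_i K_i \otimes^L \Z/p^n)$, and a priori its homology could acquire torsion from $R^1\lim$ contributions and boundary maps. This is exactly what the paper isolates as Lemma~\ref{lem:ptorsionDirectSum}, proved by splitting each $K_i$ into its cohomology groups (legitimate in $D(\Z_p)$) and checking the discrete case directly. Your ``so it suffices to show that $(A_\inf/\mu)/([\underline{\epsilon}^a]-1)$ is $p$-torsionfree'' silently assumes this, and the same point recurs when you treat $P^d_{A_\inf/\mu}$ itself, which is a completed direct sum of copies of $A_\inf/\mu$.
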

\begin{proof}
The complex $K$ is identified with $R\Gamma_{conts}(\Delta^d, R^\square_{A_\inf/\mu, \infty})$. We have a co-cartesian square
\[ \xymatrix{ P^d_{A_\inf/\mu} \ar[r] \ar[d] & P^d_{A_\inf/\mu,\infty} \ar[d] \\
			R^\square_{A_\inf/\mu} \ar[r] & R^\square_{A_\inf/\mu,\infty}}\]
of commutative rings with the vertical maps being finite \'etale. Recall that there is $\Delta^d$-action on $P^d_{A_\inf/\mu,\infty}$ compatible with the trivial action on $P^d_{A_\inf/\mu}$ along the top horizontal map. This induces a similar action on the bottom arrow. As the continuous group cohomology of $\Delta^d$ can be computed by a Koszul complex, we obtain
\[ K \simeq R\Gamma_{conts}(\Delta^d, P^d_{A_\inf/\mu,\infty}) \otimes^L_{P^d_{A_\inf/\mu}} R^\square_{A_\inf/\mu}.\]
As the map $P^d_{A_\inf/\mu} \to R^\square_{A_\inf/\mu}$ is finite \'etale, to show the previous complex has $p$-torsionfree homology groups, we may reduce to the case where $R = P^d$.  Using the presentation given in Lemma~\ref{lem:GroupCohKoszulCohPerfectoidTorus}, we get
\[ R\Gamma_{conts}(\Delta^d, P^d_{A,\infty}) \simeq  \Big(R\Gamma_{conts}(\Delta^d,\Z_p) \otimes^L_{\Z_p} P^d_A\Big) \oplus \widehat{\bigoplus_{(a_1,...,a_d) \in \Z[\frac{1}{p}]^d-\Z^d}} K(A; [\underline{\epsilon}^{a_1}] - 1,...., [\underline{\epsilon}^{a_d}] - 1).\]
Now each homology group of the first summand is a finite free $P^d_{A_\inf/\mu}$-module, and thus evidently has no $p$-torsion. The homology groups of the Koszul complexes appearing in the second summand above are finite direct sums of copies of $A_\inf/([\underline{\epsilon}^{a_i}] - 1)$ for $a_i \in \Z[\frac{1}{p}]$ by Lemma~\ref{lem:GroupCohKoszulCohPerfectoidTorus} (3). Each of these is abstractly isomorphic to $A_\inf/\mu$ via a suitable power of Frobenius, and hence has no $p$-torsion. We now get the desired conclusion using Lemma~\ref{lem:ptorsionDirectSum} below.
\end{proof}

The following was used above:

\begin{lemma}
\label{lem:ptorsionDirectSum}
Let $K_i \in D(\Z_p)$ be a collection of complexes indexed by some set $I$. If each $H^*(K_i)$ is $p$-torsionfree, then the same is true for $H^*$ of $\widehat{\oplus_i} K_i$.
\end{lemma}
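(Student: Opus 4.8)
The statement concerns $p$-torsionfreeness of cohomology of a completed direct sum $\widehat{\bigoplus_i} K_i$, where the completion is $p$-adic. The natural approach is to reduce the $p$-adic completion of the direct sum to a short exact sequence (or exact triangle) and analyze the resulting long exact sequence in cohomology. First I would recall that for $M \in D(\Z_p)$, there is a canonical exact triangle computing the $p$-adic completion: $\widehat{M} \simeq R\lim_n (M \otimes^L_{\Z_p} \Z/p^n)$, and that this can be repackaged (using that $\Z_p$ is a PID, so $\Z/p^n$-coefficients behave nicely) as a short exact sequence of the form $0 \to {\lim_n}^1 (M \otimes^L \Z/p^n)[-1] \to \widehat{M} \to \lim_n (M \otimes^L \Z/p^n) \to 0$ on cohomology — or more cleanly, I would use the ``completed Koszul'' description: $\widehat{M}$ fits in a triangle $M \xrightarrow{?} \prod_n M \to \prod_n M$ built from the telescope, giving the classical $\lim^1$ sequence $0 \to {R^1\lim_n}\, H^{i-1}(M/p^n) \to H^i(\widehat M) \to \lim_n H^i(M/p^n) \to 0$.

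The key observation is that $p$-adic completion commutes with arbitrary direct sums in the following sense: $\widehat{\bigoplus_i K_i} = R\lim_n \big( (\bigoplus_i K_i) \otimes^L_{\Z_p} \Z/p^n \big) = R\lim_n \bigoplus_i (K_i \otimes^L_{\Z_p} \Z/p^n)$, since $\otimes^L$ commutes with direct sums. So I need to control $H^i$ of $R\lim_n \bigoplus_i (K_i/p^n)$. Now here is where the $p$-torsionfreeness of each $H^*(K_i)$ enters: it implies $H^i(K_i \otimes^L_{\Z_p} \Z/p^n) = H^i(K_i)/p^n$ with no $\mathrm{Tor}$ contribution, i.e. the universal coefficient sequence degenerates. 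Hence the tower $\{H^i(\bigoplus_i K_i / p^n)\}_n = \{\bigoplus_i H^i(K_i)/p^n\}_n$ is a tower of surjections (the transition maps $H^i(K_i)/p^{n+1} \to H^i(K_i)/p^n$ are surjective). A direct sum of towers of surjections is again a tower of surjections, so the Mittag-Leffler condition holds and ${R^1\lim} = 0$. Therefore $H^i(\widehat{\bigoplus_i K_i}) \simeq \lim_n \bigoplus_i H^i(K_i)/p^n$, which is a submodule of $\prod_i \big(\lim_n H^i(K_i)/p^n\big) = \prod_i \widehat{H^i(K_i)}$ (the last equality since each $H^i(K_i)$ is $p$-torsionfree, so its derived and classical $p$-completions agree, cf. the conventions in \S\ref{ss:Conventions}, item (6)). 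A product of $p$-torsionfree modules is $p$-torsionfree, and a submodule of a $p$-torsionfree module is $p$-torsionfree, giving the conclusion.

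The one technical point deserving care — and the main obstacle — is the interchange $R\lim_n \bigoplus_i (-) $ versus the cohomology computation: I must be sure that the $\lim^1$ term vanishes \emph{before} concluding the cohomology of $\widehat{\bigoplus K_i}$ is just the honest limit, and for this I genuinely need the Mittag-Leffler property of the tower $\{\bigoplus_i H^i(K_i)/p^n\}_n$, which relies on the degeneration of universal coefficients, which in turn is exactly where the hypothesis is used. An alternative, slicker route avoiding $\lim^1$ entirely: choose for each $i$ a representative of $K_i$ by a complex of free $\Z_p$-modules (possible over the PID $\Z_p$), with $p$-torsionfree cohomology forcing the cocycles and coboundaries to be free and the inclusions split at each degree up to the cohomology being free; then $\widehat{\bigoplus_i K_i}$ is represented by the $p$-adic completion of a complex of free modules, and one computes $H^i$ directly, noting that $p$-completion of a split injection of free modules with free cokernel stays injective with $p$-torsionfree cokernel. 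I would present whichever is shorter; I expect the $\lim^1$ argument to be cleanest to write given the derived-completion conventions already set up in the paper.
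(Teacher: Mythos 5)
Your argument is correct. It does, however, take a genuinely different route from the paper. The paper first treats the case where each $K_i$ is a flat module placed in degree $0$ (there the derived $p$-completion of $\bigoplus_i M_i$ is the classical one, is discrete, and its mod-$p$ reduction $\bigoplus_i M_i/p$ is discrete, forcing $H^0$ to be $p$-torsionfree), and then reduces the general case to this one by using that $\Z_p$ is hereditary, so each $K_i$ splits in $D(\Z_p)$ as the direct sum of its shifted cohomology groups; the intermediate ``exactness'' of the completed-direct-sum functor on such families is asserted rather than spelled out. You instead work directly with the whole complex: the degeneration of the universal coefficient sequence (using $p$-torsionfreeness of $H^{*+1}(K_i)$ to kill the $\mathrm{Tor}$ term) identifies the tower $\{H^i((\bigoplus_i K_i)/p^n)\}_n$ with the tower of surjections $\{\bigoplus_i H^i(K_i)/p^n\}_n$, Mittag--Leffler kills the $\lim^1$ term in the Milnor sequence for $R\lim$, and the resulting inverse limit embeds into the product $\prod_i \widehat{H^i(K_i)}$ of classical completions of $p$-torsionfree modules, which is $p$-torsionfree. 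What your approach buys is that it avoids invoking the splitting of complexes over a hereditary ring and it effectively supplies the justification for the exactness claim the paper leaves implicit; what the paper's approach buys is brevity once that splitting is granted, since the whole statement collapses to the transparent degree-$0$ computation. Your closing alternative via termwise-free representatives is only sketched, but you correctly present it as optional; the $\lim^1$ argument stands on its own.
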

\begin{proof}
We first assume that each $K_i$ is concentrated in degree $0$. Thus, $K_i = M_i[0]$ for a flat $\Z_p$-module $M_i$. In this case,  we have
\[ K := \widehat{\oplus_i} K_i \simeq R\lim( (\oplus_i M_i) \otimes^L_{\Z_p} \Z/p^n) \simeq R\lim( (\oplus_i M_i)/p^n) \simeq \lim ((\oplus_i M_i)/p^n), \]
so $K$ is discrete (i.e., concentrated in degree $0$). Moreover, similar reasoning shows that $K \otimes^L_{\Z} \Z/p \simeq \oplus_i M_i/p$ is discrete, so $H^0(K)$ has no $p$-torsion, as wanted. In general, this reasoning shows that the functor $D(\Z_p)^I \to D(\Z_p)$ of taking completed direct sums is exact on the subcategory of $D(\Z_p)^{I}$ that is spanned by complexes $(K_i) \in D(\Z_p)^I$ such that $H^*(K_i)$ is $p$-torsionfree for all $i$. Thus, one reduces the general case to the previous case by writing each $\Z_p$-complex as a direct sum of its cohomology groups.
\end{proof}

The previous calculation gives us the following identification:

\begin{corollary}
Let $\square:P^d \to R$ be a framing. Then we have
\[ L\eta_\mu(R\Gamma_{conts}(\Delta^d, R^\square_{A_\inf,\infty})) / \tilde{\xi} \simeq \tilde{\theta}_* \big(L\eta_{\epsilon_p -1}(R\Gamma_{conts}(\Delta^d, R^\square_\infty))\big).\]
\end{corollary}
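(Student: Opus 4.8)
The plan is to combine the previous corollary (which identifies $L\eta_\mu(R\Gamma_{conts}(\Delta^d, R^\square_{A_\inf,\infty}))$ in terms of $\widetilde{\Omega}_R$ up to the base change issue) with Lemma~\ref{lem:LetaRegularSequence} applied to the regular element $g = \tilde{\xi}$. The key point is that Lemma~\ref{lem:LetaRegularSequence} says the natural map
\[ L\eta_\mu(K) \otimes^L_{A_\inf} A_\inf/\tilde{\xi} \to L\eta_\mu(K \otimes^L_{A_\inf} A_\inf/\tilde{\xi}) \]
is an isomorphism provided $H^*(K \otimes^L_{A_\inf} A_\inf/\mu)$ has no $\tilde{\xi}$-torsion, where $K = R\Gamma_{conts}(\Delta^d, R^\square_{A_\inf,\infty})$. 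So the first step is to verify this no-$\tilde\xi$-torsion hypothesis: by Lemma~\ref{lem:NoTorsionWitt} we already know $H^*(K \otimes^L_{A_\inf} A_\inf/\mu)$ is $p$-torsionfree, and since $\tilde\xi = p \bmod \mu$ by Notation~\ref{not:epsilon}(3), $\tilde\xi$-torsion over $A_\inf/\mu$ is the same as $p$-torsion. Hence the hypothesis of Lemma~\ref{lem:LetaRegularSequence} holds and the displayed map is an isomorphism.

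**Identifying the two sides.** With that isomorphism in hand, I would unwind both sides. On the left: $A_\inf/\tilde\xi = \calO_C$ via $\tilde\theta$, so $L\eta_\mu(K) \otimes^L_{A_\inf} A_\inf/\tilde\xi$ is exactly the restriction-of-scalars $\tilde\theta_*$ applied to $L\eta_\mu(K) \otimes^L_{A_\inf,\tilde\theta} \calO_C$, i.e.\ the left-hand side of the claimed formula $L\eta_\mu(R\Gamma_{conts}(\Delta^d, R^\square_{A_\inf,\infty}))/\tilde\xi$. On the right: $K \otimes^L_{A_\inf} A_\inf/\tilde\xi = R\Gamma_{conts}(\Delta^d, R^\square_{A_\inf,\infty} \otimes^L_{A_\inf} A_\inf/\tilde\xi) = R\Gamma_{conts}(\Delta^d, R^\square_\infty)$, where I use that $R^\square_{A_\inf,\infty} \otimes_{A_\inf,\tilde\theta} \calO_C \simeq R^\square_\infty$ (both are $\tilde\theta$-base changes of the étale tower over $P^d_{A_\inf,\infty}$, and $R^\square_{A_\inf,\infty}$ is $\tilde\xi$-torsionfree by flatness, so the derived and underived base changes agree). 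Then I need that $L\eta_\mu$ applied to a complex of $\calO_C$-modules (via $\tilde\theta$), regarded as an $A_\inf$-complex, agrees with $\tilde\theta_*$ of $L\eta_{\tilde\theta(\mu)}$ of the same complex; this is precisely Lemma~\ref{lem:LetaRestScalars}. Finally, $\tilde\theta(\mu) = \epsilon_p - 1$: indeed $\tilde\theta = \theta\circ\phi^{-1}$ and $\phi^{-1}(\mu) = [\underline\epsilon^{1/p}] - 1$, whose image under $\theta$ is $\epsilon_p^{1/p\cdot p}\cdots$; more directly, $\theta(\phi^{-1}([\underline\epsilon]-1)) = \theta([\underline{\epsilon}^{1/p}]) - 1 = \epsilon_{p}^{(1/p)\cdot} $ — I should just cite that $\tilde\theta(\mu)$ is a unit multiple of, in fact equals, $\epsilon_p - 1$ up to the chosen normalization (this is essentially the reason $\epsilon_p - 1$ is the element used to define $\widetilde\Omega$). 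So the right-hand side becomes $\tilde\theta_*(L\eta_{\epsilon_p-1}R\Gamma_{conts}(\Delta^d, R^\square_\infty)) = \tilde\theta_*(\widetilde\Omega_R)$, as desired.

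**Main obstacle.** The only genuinely delicate point is checking the no-$\tilde\xi$-torsion hypothesis, which is exactly what Lemma~\ref{lem:NoTorsionWitt} supplies once one translates $\tilde\xi$-torsion mod $\mu$ into $p$-torsion mod $\mu$ via $\tilde\xi \equiv p \pmod\mu$; everything else is formal bookkeeping with the identities $A_\inf/\tilde\xi \simeq_{\tilde\theta} \calO_C$, $\tilde\theta = \theta\circ\phi^{-1}$, $\tilde\theta(\mu) = \epsilon_p - 1$, and the compatibility of $L\eta$ with restriction of scalars (Lemma~\ref{lem:LetaRestScalars}). I would also be slightly careful that $A_\inf/\tilde\xi \otimes^L$ versus underived $\otimes$ agree throughout, which holds because $\tilde\xi$ is a nonzerodivisor on all the rings/modules in sight (being a lift of the nonzerodivisor $p$, using $\tilde\xi$ is a nonzerodivisor mod $\mu$ from Notation~\ref{not:epsilon}(5)-(6) and $A_\inf$-flatness of the relevant modules). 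No step beyond these is expected to require real work.
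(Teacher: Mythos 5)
Your proposal is correct and follows exactly the paper's route: the paper's proof is the one-line observation that the claim ``follows immediately from Lemma~\ref{lem:NoTorsionWitt} and Lemma~\ref{lem:LetaRegularSequence},'' i.e.\ one applies Lemma~\ref{lem:LetaRegularSequence} with $f=\mu$, $g=\tilde\xi$, verifies the torsion-freeness hypothesis via Lemma~\ref{lem:NoTorsionWitt} and the congruence $\tilde\xi\equiv p\pmod\mu$, and then identifies the two sides using $K/\tilde\xi\simeq R\Gamma_{conts}(\Delta^d,R^\square_\infty)$, Lemma~\ref{lem:LetaRestScalars}, and $\tilde\theta(\mu)=\theta([\underline{\epsilon}^{1/p}])-1=\epsilon_p-1$. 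Your bookkeeping (including the slightly hesitant but ultimately correct computation of $\tilde\theta(\mu)$) fills in precisely the details the paper leaves implicit.
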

\begin{proof}
This follows immediately from Lemma~\ref{lem:NoTorsionWitt} and Lemma~\ref{lem:LetaRegularSequence}.
\end{proof}

\subsection{Almost local results without framings}
\label{ss:AOmegaNonFramed}

We have all the tools necessary to define and study $A\Omega_{\frakX}$. Thus, let $\frakX/\calO_C$ be a smooth formal scheme.

\begin{definition} Set
\[ A\Omega_\frakX := L\eta_\mu R\nu_* A_{\inf,X} \in D(\frakX, A_\inf),\]
viewed as a commutative algebra object.
\end{definition}

We begin by identifying the Hodge-Tate specialization of $A\Omega_\frakX$:

\begin{proposition}[The Hodge-Tate specialization]
\label{prop:AOmegaTildeXi}
There is a canonical almost isomorphism
\[ A\Omega_{\mathfrak{X}}/\tilde{\xi} \stackrel{a}{\simeq}  \widetilde{\Omega}_{\mathfrak{X}},\]
where $\widetilde{\Omega}_{\frak{X}} \in D(\frakX, \calO_\frakX)$ is viewed as an $A_\inf$-complex via the Hodge-Tate specialization $A_\inf \stackrel{\tilde{\theta}}{\to} \calO_C \to \calO_\frakX$.
\end{proposition}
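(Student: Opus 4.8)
The plan is to reduce to the small affine case and then apply the framed calculations already assembled. First, since both sides are objects in $D(\frakX,\calO_\frakX)$ (with $A\Omega_\frakX/\tilde\xi$ regarded as an $\calO_C$-complex via $\tilde\theta$, which is legitimate because $\tilde\xi$ generates $\ker(\tilde\theta)$), and the assertion is local on $\frakX_\Zar$, I would choose a basis of small affine opens $\frakU=\Spf(R)\subset\frakX$ furnished by Lemma~\ref{lem:FramingsExist}, fix a framing $\square:P^d\to R$ on each, and check the almost isomorphism on sections over such $\frakU$. By Theorem~\ref{thm:APTCohomologySmall}, there is a canonical almost isomorphism $R\Gamma_{conts}(\Delta^d,R^\square_{A_\inf,\infty})\stackrel{a}{\simeq}R\Gamma(X_\proet,A_{\inf,X})=R\nu_*A_{\inf,X}(\frakU)$, and similarly $R\Gamma_{conts}(\Delta^d,R^\square_\infty)\stackrel{a}{\simeq}R\nu_*\widehat{\calO_X^+}(\frakU)$. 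Since $L\eta_\mu$ sends almost isomorphisms with perfect (or at least suitably torsion-free) target to almost isomorphisms — more precisely, one uses Lemma~\ref{lem:LetaExact} together with the fact that the relevant Bockstein boundary maps vanish, exactly as in the argument of Remark~\ref{rmk:TildeOmegaNoAlmost} — it follows that $A\Omega_\frakX(\frakU)\stackrel{a}{\simeq}L\eta_\mu R\Gamma_{conts}(\Delta^d,R^\square_{A_\inf,\infty})$, and likewise for $\widetilde\Omega_R$ on the Hodge-Tate side. So it suffices to produce a canonical almost isomorphism
\[
L\eta_\mu\big(R\Gamma_{conts}(\Delta^d,R^\square_{A_\inf,\infty})\big)\big/\tilde\xi \;\stackrel{a}{\simeq}\; L\eta_{\epsilon_p-1}\big(R\Gamma_{conts}(\Delta^d,R^\square_\infty)\big),
\]
compatibly with the various transition and restriction maps so that the local identifications glue.

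The core of this is precisely the Corollary at the end of \S\ref{ss:AOmegaFramed}: Lemma~\ref{lem:NoTorsionWitt} shows that $R\Gamma_{conts}(\Delta^d,R^\square_{A_\inf,\infty})\otimes_{A_\inf}A_\inf/\mu$ has $p$-torsion-free cohomology, hence $\tilde\xi$-torsion-free cohomology ($\tilde\xi=p\bmod\mu$ by Notation~\ref{not:epsilon}(3)), so Lemma~\ref{lem:LetaRegularSequence} (applied with $f=\mu$, $g=\tilde\xi$, after noting $\mu$ and $\tilde\xi$ form a regular sequence and $\tilde\xi$ is a nonzerodivisor by Notation~\ref{not:epsilon}) gives
\[
L\eta_\mu(K)\otimes^L_{A_\inf}A_\inf/\tilde\xi\;\simeq\;L\eta_\mu(K\otimes^L_{A_\inf}A_\inf/\tilde\xi)\;=\;L\eta_\mu(K\otimes^L_{A_\inf}\calO_C)
\]
where $K=R\Gamma_{conts}(\Delta^d,R^\square_{A_\inf,\infty})$ and the last identification uses $\tilde\theta$. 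Now $K\otimes^L_{A_\inf,\tilde\theta}\calO_C$ is computed by applying $\tilde\theta$ termwise to the Koszul/cochain complex computing $R\Gamma_{conts}(\Delta^d,R^\square_{A_\inf,\infty})$; under $\tilde\theta=\theta\circ\phi^{-1}$ the action of the $j$-th generator of $\Delta^d$ on the graded piece indexed by $(a_1,\dots,a_d)$ is multiplication by $\tilde\theta([\underline\epsilon^{a_j}]-1)=(\epsilon^{a_j/p})^\sharp-1$ (up to a unit), and the decomposition \eqref{eq:ModuleStructureStdPerfectoid} identifies $K\otimes^L_{A_\inf,\tilde\theta}\calO_C$ with $R\Gamma_{conts}(\Delta^d,R^\square_\infty)$ — here one must carefully track Warning~\ref{warning:tiltedcoordinates}, i.e. that $\tilde\theta$ sends $v_i=[\underline{t_i}]$ to $t_i^{1/p}$, producing a Frobenius-twist reparametrization of the grading group $\Z[\tfrac1p]^d$ that is nonetheless an isomorphism. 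Finally, since $\mu=\xi\cdot\phi^{-1}(\mu)$ and, more relevantly, the proof of Proposition~\ref{prop:TildeOmegaDescription} shows that after $\tilde\theta$ only the summands indexed by $\Z[\tfrac1p]^d$ with bounded denominators survive nontrivially under $L\eta$, the element $\mu$ acts on $R\Gamma_{conts}(\Delta^d,R^\square_\infty)$ with the same divisibility behavior as $\epsilon_p-1$ on the pieces that matter (using Notation~\ref{not:epsilon}(2) and the fact that $\theta(\mu)=0$ while $\tilde\theta(\mu)$ generates the same ideal as $\epsilon_p-1$ up to units — concretely $\tilde\theta(\mu)=\epsilon_p-1$ is false but they differ by a unit once one inverts nothing; the precise statement is $\tilde\theta$ of the generator $\xi$ of $\ker\theta$ equals a unit times $\epsilon_p-1$), whence $L\eta_\mu(K\otimes^L_{A_\inf,\tilde\theta}\calO_C)\simeq L\eta_{\epsilon_p-1}R\Gamma_{conts}(\Delta^d,R^\square_\infty)=\widetilde\Omega_R$.

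The main obstacle I anticipate is bookkeeping rather than conceptual: correctly matching the twist $\tilde\xi$ versus $\epsilon_p-1$ and the Frobenius-reparametrization of the grading introduced by passing through $\tilde\theta=\theta\circ\phi^{-1}$ (Warning~\ref{warning:tiltedcoordinates}), and verifying that the resulting local almost isomorphisms are functorial in the small affine $\frakU$ and independent of the framing $\square$, so that they descend to the claimed global almost isomorphism in $D(\frakX,\calO_\frakX)$. The independence-of-framing point is handled as in \S\ref{sec:TildeOmega}: the almost isomorphism is characterized by what it does on $\calH^0$ (where everything is the structure sheaf) and on $\calH^1$ (where Proposition~\ref{prop:TildeOmegaDescription} already pins down the answer canonically in terms of $\Omega^1_{\frakX/\calO_C}\{-1\}$ together with the multiplicative structure), so one checks the two local descriptions agree on these and invokes the exterior-algebra structure of $\calH^*(\widetilde\Omega_R)$ (Corollary~\ref{cor:FramedAlgebraTildeOmega}) to conclude they agree altogether. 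I would also remark, as the excerpt anticipates in Remark~\ref{rmk:TildeOmegaNoAlmost}, that in the small case the "almost" can be removed, but that is not needed here.
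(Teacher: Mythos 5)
You have correctly isolated the computational heart of the statement --- the combination of Lemma~\ref{lem:NoTorsionWitt} with the criterion of Lemma~\ref{lem:LetaRegularSequence} (i.e.\ the corollary closing \S\ref{ss:AOmegaFramed}), together with the fact that $\tilde{\theta}(\mu)$ and $\epsilon_p-1$ generate the same ideal of $\calO_C$ (in fact $\tilde{\theta}(\mu)=\theta([\underline{\epsilon}^{1/p}]-1)=\epsilon_p-1$ on the nose, so your parenthetical claim that this identity is ``false'' is itself mistaken, though harmless since $L\eta_f$ depends only on the ideal $(f)$). However, your architecture --- produce the comparison locally over framed small affines and then glue --- leaves two genuine gaps. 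First, you assume that $A\Omega_\frakX(\frakU)$ is almost computed by $L\eta_\mu R\Gamma_{conts}(\Delta^d,R^\square_{A_\inf,\infty})$; this amounts to commuting $L\eta_\mu$ past sheafification and past the almost isomorphism of Theorem~\ref{thm:APTCohomologySmall}, which is not formal ($L\eta$ is not exact and does not preserve almost quasi-isomorphisms in general) and in the paper is only established \emph{after} this proposition, in Remarks~\ref{rmk:AOmegaValues} and \ref{rmk:AOmegaNoAlmost}. Second, and more seriously, gluing the local identifications into a canonical map in $D(\frakX,A_\inf)$ requires compatibilities that your argument does not supply: a map in the derived category is not determined by its effect on cohomology sheaves, so the ``characterized on $\calH^0$ and $\calH^1$ plus multiplicativity'' argument does not pin down the local isomorphisms, let alone their restriction compatibilities. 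This is precisely the obstacle you flag at the end, and it does not go away.

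The paper's proof removes both problems by reversing the order of operations: the comparison map is constructed \emph{globally and canonically first}, as the natural transformation $L\eta_\mu(K)/\tilde{\xi}\to L\eta_\mu(K/\tilde{\xi})$ of Lemma~\ref{lem:LetaRegularSequence} applied to $K=R\nu_*A_{\inf,X}$, combined with Lemma~\ref{lem:LetaRestScalars} and the canonical sheaf-level identification of $A_{\inf,X}/\tilde{\xi}$ with $\widehat{\calO_X^+}$ (restriction of scalars along $\tilde{\theta}$); the target is then literally $\widetilde{\Omega}_\frakX$ by definition, with no grading bookkeeping and no appeal to Warning~\ref{warning:tiltedcoordinates}. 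Only the verification that this already-constructed map is an almost isomorphism is local, and there the criterion of Lemma~\ref{lem:LetaRegularSequence} reduces exactly to the almost $\tilde{\xi}$-torsionfreeness of $\calH^*((R\nu_*A_{\inf,X})/\mu)$, supplied by Theorem~\ref{thm:APTCohomologySmall} and Lemma~\ref{lem:NoTorsionWitt} (using $\tilde{\xi}=p\bmod\mu$). You should restructure your argument along these lines: the framed calculation is the right ingredient, but it belongs at the end, as the isomorphism check, not at the beginning, as the construction.
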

\begin{proof}
By Lemma~\ref{lem:LetaRegularSequence}, there is a canonical map
\[ A\Omega_\frakX/\tilde{\xi} = L\eta_\mu(R\nu_* A_{\inf,X})/\tilde{\xi} \to L\eta_\mu(R\nu_* A_{\inf,X}/\tilde{\xi}) \simeq \tilde{\theta}_\ast L\eta_{\tilde{\theta}(\mu)}(R \nu_* \widehat{\calO_X^+}) =: \widetilde{\Omega}_\frakX, \]
where we use Lemma~\ref{lem:LetaRestScalars}, applied to the map $A_\inf \stackrel{\tilde{\theta}}{\to} \calO_C \to \calO_\frakX$ with $f = \mu$, for the last identification. To show this map is an almost isomorphism, using the criterion in Lemma~\ref{lem:LetaRegularSequence}, it suffices to check that the cohomology sheaves of the $A_\inf/\mu$-complex $(R\nu_* A_{\inf,X})/\mu$ are almost $\tilde{\xi}$-torsionfree. This is a local statement, so we may assume $\frakX = \Spf(R)$ is small; fix a framing $\square:P^d \to R$. The global sections of $(R\nu_* A_{\inf,X})/\mu$ are then almost identified with $R\Gamma_{conts}(\Delta^d, R^\square_{A_\inf/\mu,\infty})$ by Theorem~\ref{thm:APTCohomologySmall}, so it suffices to check that the cohomology groups of this complex have no $\tilde{\xi}$-torsion. But  $\tilde{\xi} = p$ in $A_\inf/\mu$, so the result follows from Lemma~\ref{lem:NoTorsionWitt}.
\end{proof}

\begin{remark}
\label{rmk:AOmegaTildeXi}
By Frobenius twisting, one can rewrite the identification in Proposition~\ref{prop:AOmegaTildeXi} as
\[ \big(L\eta_{\phi^{-1}(\mu)}(R\nu_* A_{\inf,X})\big)/\xi \stackrel{a}{\simeq} \widetilde{\Omega}_{\frakX},\]
where $\widetilde{\Omega}_{\frak{X}} \in D(\frakX, \calO_\frakX)$ is viewed as an $A_\inf$-complex via $A_\inf \stackrel{\theta}{\to} \calO_C \to \calO_\frakX$.
\end{remark}

\begin{remark}
\label{rmk:AOmegaValues}
Using an analog of the argument (and notation) given in Remark~\ref{rmk:TildeOmegaValues}, together with the observation that a presheaf of $A_\inf$-complexes on the site of small opens $\mathfrak{U} \subset \frakX$ that takes on $\xi$-adically complete values and is a sheaf modulo $\xi$ (i.e., it comes from an object of the derived category of sheaves on $\frakX_{\Zar}^{sm}$ via the forgetful functor) is already a sheaf, one checks the following: for $\frakX = \Spf(R)$ small equipped with a framing $\square:P^d \to R$, the complex $A\Omega_R := R\Gamma(\frakX, A\Omega_\frakX)$ is almost calculated by $L\eta_\mu R\Gamma_{conts}(\Delta^d, R^\square_{A_\inf,\infty})$. In particular, this value is $(p,\mu)$-adically complete by Lemma~\ref{lem:LetaCompletion}.
\end{remark}

\begin{remark}
\label{rmk:AOmegaAinfInverse}
The discussion in Remark~\ref{rmk:AOmegaValues} shows that $A\Omega_{\frakX}$ has non-zero cohomology sheaves only in degrees $0,...,d = \dim(X)$, and that $\calH^0(A\Omega_{\frakX})$ is torsion free (everything in the almost sense). Lemma~\ref{lem:LetaInverse} then shows that there is a canonical map $A\Omega_\frakX \to R\nu_* A_{\inf,X}$ with an inverse up to $\mu^d$ in the almost sense.
\end{remark}

We can now describe the de Rham comparison:

\begin{proposition}[The de Rham specialization]
\label{prop:AOmegadR}
There is a canonical almost isomorphism
\[ A\Omega_{\mathfrak{X}}/\xi \stackrel{a}{\simeq} \Omega^\bullet_{\mathfrak{X}/\calO_C},\]
where the de Rham complex $\Omega^\bullet_{\mathfrak{X}/\calO_C} \in D(\frakX, \calO_C)$ is viewed as an $A_\inf$-complex via $A_\inf \stackrel{\theta}{\to} \calO_C$.
\end{proposition}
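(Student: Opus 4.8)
The plan is to reduce to the local (small) case, just as in the proof of Proposition~\ref{prop:AOmegaTildeXi}, and then extract the de Rham complex from $\widetilde{\Omega}_{\frakX}$ by a second application of the décalage formalism. More precisely, the key structural input is the identification from Remark~\ref{rmk:AOmegaTildeXi}: writing $B := L\eta_{\phi^{-1}(\mu)}(R\nu_\ast A_{\inf,X})$, we have $L\eta_\xi(B) = L\eta_\mu(R\nu_\ast A_{\inf,X}) = A\Omega_{\frakX}$ by Lemma~\ref{lem:LetaComposition} (since $\mu = \xi \cdot \phi^{-1}(\mu)$ by the formula in Notation~\ref{not:epsilon}(4) with $n=1$), and $B/\xi \stackrel{a}{\simeq} \widetilde{\Omega}_{\frakX}$. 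Thus $A\Omega_{\frakX}/\xi = L\eta_\xi(B)/\xi$, and by Lemma~\ref{lem:LetaBockstein} this is the Bockstein complex $(H^\ast(B/\xi), \beta_\xi)$ of $B$ with respect to $\xi$. So the problem becomes: identify the Bockstein complex of $B$ modulo $\xi$ with the de Rham complex of $\frakX/\calO_C$, at least in the almost world.

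The first step is to pass to a small affine $\frakX = \Spf(R)$ with a framing $\square:P^d \to R$; by the sheafification remark (Remark~\ref{rmk:AOmegaValues}), and since both sides of the desired isomorphism are $\xi$-adically complete (indeed $p$-adically complete) and everything is compatible with Zariski glueing, it suffices to produce a canonical almost isomorphism after taking global sections. There $B$ is almost computed by $L\eta_{\phi^{-1}(\mu)} R\Gamma_{conts}(\Delta^d, R^\square_{A_\inf,\infty})$, and by Proposition~\ref{prop:TildeOmegaDescription} (in its sharpened form) together with Corollary~\ref{cor:FramedAlgebraTildeOmega}, the cohomology of $B/\xi \stackrel{a}{\simeq} \widetilde{\Omega}_R$ is the exterior algebra $\wedge^\bullet_R(\Omega^1_{R/\calO_C}\{-1\})$, i.e. $H^i(B/\xi) \stackrel{a}{\simeq} \Omega^i_{R/\calO_C}\{-i\}$. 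Now I must compute the Bockstein differential $\beta_\xi : H^i(B/\xi) \to H^{i+1}(B/\xi)$ and check it agrees, under the Breuil-Kisin untwisting, with the exterior derivative $d:\Omega^i_{R/\calO_C} \to \Omega^{i+1}_{R/\calO_C}$. The cleanest way is the explicit Koszul model: $R\Gamma_{conts}(\Delta^d, R^\square_{A_\inf,\infty})$ is a Koszul complex on the operators $[\underline{\epsilon}^{e_j}]-1$ acting on $R^\square_{A_\inf,\infty}$, and applying $L\eta_{\phi^{-1}(\mu)}$ rescales these (on the integrally-graded part) by $\phi^{-1}(\mu)$, so that $B$ is modelled — on the piece relevant for cohomology — by a Koszul complex whose differentials are, up to units, $\frac{[\underline{\epsilon}^{e_j}]-1}{\phi^{-1}(\mu)}$, which modulo $\mu$ (hence modulo $\xi$, since $\xi \mid \mu$) reduce to genuine "$d\log$-type" operators via Notation~\ref{not:epsilon}(3). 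Tracking the boundary map for the triangle $A_\inf/\xi \xrightarrow{\xi} A_\inf/\xi^2 \to A_\inf/\xi$ through this Koszul presentation then produces exactly the de Rham differential, since $\frac{[\underline{\epsilon}^{e_j}]-1}{\phi^{-1}(\mu)} \equiv p \cdot(\text{unit})$... wait, more carefully: modulo $\mu$ this operator is $\frac{a_j}{1}$-type, and the Bockstein picks out the "derivative in the $\xi$-direction" of the lift, giving $d t_j/t_j = d\log(t_j)$. One should cross-check the normalization against the generator computation $d\log(t) \mapsto (\epsilon_p-1)\cdot t^0 \otimes \omega$ from the proof of Proposition~\ref{prop:TildeOmegaDescription}, and against $\tilde\xi \equiv p \bmod \mu$ from Notation~\ref{not:epsilon}(3).

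The main obstacle I anticipate is precisely this last identification of the Bockstein differential with the de Rham differential: while the cohomology groups match by pure thought (Proposition~\ref{prop:TildeOmegaDescription} plus exterior algebra structure), pinning down that the \emph{differential} is $d$ and not, say, $d$ twisted by some unit or scaled incorrectly requires an honest unwinding of the Koszul model and the connecting maps — this is the content of the argument in \cite[\S 8]{BMSMainPaper} (around the de Rham comparison), and it is where all the explicit choices from Notation~\ref{not:epsilon} and Remark~\ref{rmk:GeneratorBKTwist} get used. A secondary point requiring care is functoriality / independence of the framing, so that the local isomorphisms glue: one invokes that the construction $\frakU \mapsto L\eta_\mu R\Gamma_{conts}(\Delta^d, R^\square_{A_\inf,\infty})$ sheafifies to $A\Omega_{\frakX}$, that $\xi$-adically complete presheaves which are sheaves mod $\xi$ are sheaves (Remark~\ref{rmk:AOmegaValues}), and that the de Rham complex is visibly a sheaf; then the map $A\Omega_{\frakX}/\xi \to \Omega^\bullet_{\frakX/\calO_C}$ constructed locally is canonical and hence globalizes, and being an almost isomorphism can be checked locally, where it follows from the computation above. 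I would also remark (parallel to Remark~\ref{rmk:TildeOmegaNoAlmost}) that on small $\frakX$ one expects this to be a genuine isomorphism, via Lemma~\ref{lem:LetaExact} and the torsion-freeness input of Lemma~\ref{lem:NoTorsionWitt}, though that refinement is not needed for Theorem~\ref{thm:MainThm}.
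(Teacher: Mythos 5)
Your proposal is correct and follows essentially the same route as the paper's proof: factor $L\eta_\mu = L\eta_\xi \circ L\eta_{\phi^{-1}(\mu)}$ via Lemma~\ref{lem:LetaComposition}, identify $A\Omega_{\frakX}/\xi$ with the Bockstein complex of $L\eta_{\phi^{-1}(\mu)}(R\nu_*A_{\inf,X})$ via Lemma~\ref{lem:LetaBockstein} and Remark~\ref{rmk:AOmegaTildeXi}, match the terms using Proposition~\ref{prop:TildeOmegaDescription}, and then verify locally on a framed torus (ultimately in the rank-one Koszul model, where $\beta_\xi$ becomes multiplication by $\frac{[\underline{\epsilon}^a]-1}{[\underline{\epsilon}]-1} \equiv a$) that the Bockstein differential is the de Rham differential. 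The computation you flag as the "main obstacle" is exactly the content of the paper's argument, and your normalization cross-checks are the right ones.
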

\begin{proof}
By Lemma~\ref{lem:LetaComposition}, we have a natural identification
\[ A\Omega_\frakX := L\eta_\mu (R\nu_* A_{\inf,X}) = L\eta_\xi (L\eta_{\phi^{-1}(\mu)}(R \nu_* A_{\inf,X})).\]
Write $M = L\eta_{\phi^{-1}(\mu)}(R\nu_* A_{\inf,X})$ for notational ease. By Lemma~\ref{lem:LetaBockstein}, this gives an identification
\[ A\Omega_\frakX/\xi \simeq (\calH^*(M/\xi), \beta_\xi).\]
Remark~\ref{rmk:AOmegaTildeXi} and Proposition~\ref{prop:TildeOmegaDescription} give an almost isomorphism
\[ \calH^i(M/\xi) \stackrel{a}{\simeq} \Omega^i_{\frakX/\calO_C}\{-i\},\]
where $\calO_\frakX$-modules are viewed as $A_\inf$-modules via $A_\inf \stackrel{\theta}{\to} \calO_C \to \calO_\frakX$. Recall from Construction~\ref{cons:BocksteinComplex}: the $i$-th term of the Bockstein complex $(\calH^*(M/\xi), \beta_\xi)$ is given by 
\[ \calH^i(M \otimes^L_{A_\inf} \xi^iA_\inf/\xi^{i+1} A_\inf) \simeq \calH^i(M/\xi \otimes^L_{A_{\inf}/\xi} \xi^i A_\inf/\xi^{i+1} A_\inf) \simeq \calH^i(M/\xi)\{i\}.\]
Thus, $A\Omega_\frakX/\xi$ is canonically (i.e., as an object of the abelian category of chain complexes, see Remark~\ref{rmk:LetaAbelian}) almost identified with a commutative differential graded algebra (cdga) of the form 
\[ D := \calO_\frakX \to \Omega^1_{\frakX/\calO_C} \to \Omega^2_{\frakX/\calO_C} \to \cdots,\]
with the differentials coming from $\beta_\xi$. To see that the differential coincides with the de Rham differential, we may work locally. Thus, assume $\frakX = \Spf(R)$ for a framed $\calO_C$-algebra $\square:P^d \to R$. As differential forms are local for the \'etale topology, we may assume $R = P^d$. By multiplicativity (i.e., the K\"{u}nneth formula), we may even assume $d = 1$, i.e., $R = \calO_C \langle t^{\pm 1} \rangle$. In this case, the global sections of $M$ are almost given by 
\[ N : =L\eta_{\phi^{-1}(\mu)} R\Gamma_{conts}(\Delta, R^\square_{A_\inf,\infty}).\]
Recall that
\[ R\Gamma_{conts}(\Delta, R^\square_{A_\inf,\infty}) \simeq \widehat{\bigoplus_{a \in \Z[\frac{1}{p}]}} K(A_\inf; [\underline{\epsilon}^a] - 1).  \]
Note that if $a \in \Z[\frac{1}{p}] - \Z$, then $\phi^{-1}(\mu) := [\underline{\epsilon}^{\frac{1}{p}}] - 1$ is divisible by $[\underline{\epsilon}^a] - 1$, and thus the summand above corresponding to $a$ is killed by application of $L\eta_{\phi^{-1}(\mu)}(-)$, thanks to Example~\ref{ex:LetaKoszul}. For $a \in  \Z$, we have $([\underline{\epsilon}^{\frac{1}{p}}] - 1) \mid [\underline{\epsilon}^a] - 1$. By Example~\ref{ex:LetaKoszul}, we can thus write
\[ N = \widehat{\bigoplus_{a \in \Z}}\  K(A_\inf; \frac{[\underline{\epsilon}^a] - 1}{[\underline{\epsilon}^{\frac{1}{p}}] - 1}).\]
For $a \in \Z$, the element $\frac{[\underline{\epsilon}^a] - 1}{[\underline{\epsilon}^{\frac{1}{p}}] - 1}$ is divisible by $\xi := \frac{[\underline{\epsilon}] - 1}{[\underline{\epsilon}^{\frac{1}{p}}] - 1}$, and thus becomes $0$ modulo $\xi$. This yields
\begin{equation}
\label{eq:AOmegamodxiKoszul}
N/\xi \simeq \widehat{\bigoplus_{a \in \Z}} \ K(A_\inf/\xi; 0).
\end{equation}
Proposition~\ref{prop:AOmegaTildeXi} (and also Remark~\ref{rmk:AOmegaValues}) give a canonical identification $N/\xi \simeq \widetilde{\Omega}_R$. Unwrapping the construction, this identification works as follows:
\begin{enumerate}
\item The element $t^a \in R = H^0(\widetilde{\Omega}_R)$ identifies with ``$1$'' in the (cohomological) degree $0$ term of the Koszul complex $K(A_\inf/\xi, 0)$ above in (grading) degree $a$
\item The element $t^a d\log(t) \otimes \omega^\vee \in \Omega^1_{R/\calO_C} \otimes \calO_C\{-1\} = H^1(\widetilde{\Omega}_R)$ identifies with ``$1$'' in the (cohomological) degree $1$ term of the Koszul complex $K(A_\inf/\xi, 0)$ above in (grading) degree $a$ (with $\omega$ as in Remark~\ref{rmk:GeneratorBKTwist}).
\end{enumerate}
As the de Rham differential satisfies $d(t^a) = a \cdot t^a \cdot d\log(t)$, we are reduced to showing the following: if $K = K(A_\inf; \frac{[\underline{\epsilon}^a] - 1}{[\underline{\epsilon}^{\frac{1}{p}}] - 1})$ for some $a \in \Z$, then the Bockstein operator
\[ \beta_\xi:H^0(K/\xi) \to H^1(K/\xi)\]
is identified with multiplication by $a$. The element $1 \in H^0(K/\xi) = A_\inf/\xi$ is carried under $\beta_\xi$ to the element
\[ \Big(\frac{[\underline{\epsilon}^a] - 1}{[\underline{\epsilon}^{\frac{1}{p}}] - 1}\Big) / \xi \cdot 1 =  \Big(\frac{[\underline{\epsilon}^a] - 1}{[\underline{\epsilon}^{\frac{1}{p}}] - 1}\Big) / \Big(\frac{[\underline{\epsilon}] - 1}{[\underline{\epsilon}^{\frac{1}{p}}] - 1}\Big) \cdot 1 = \frac{[\underline{\epsilon}^a] - 1}{[\underline{\epsilon}] - 1} \cdot 1 \in H^1(K/\xi).\]
Now we have
\[ \frac{[\underline{\epsilon}^a] - 1}{[\underline{\epsilon}] - 1} = \sum_{i=0}^{a-1} [\underline{\epsilon}]^i = a \in A_\inf/\xi,\]
so $\beta_\xi$ is identified with multiplication by $a$, as wanted.
\end{proof}

\begin{remark}[$p$-adic Cartier isomorphism]
Fix a smooth formal scheme $\frakX/\calO_C$, and let $X/T$ denote its base change to $T = \calO_C/p$. Consider the two maps $a:A_\inf \stackrel{\theta}{\to} \calO_C \to \calO_C/p$ and $b:A_\inf \stackrel{\tilde{\theta}}{\to} \calO_C \to \calO_C/p$. Since $\tilde{\theta} = \theta \circ \phi^{-1}$, we have the formula $a = \phi \circ b$, and hence we have
\[ \phi_* a^* A\Omega_\frakX \simeq b^* A\Omega_\frakX.\]
Using Proposition~\ref{prop:AOmegadR}, the left side simplifies to $\phi_* \Omega^\bullet_{X/T}$. Using Proposition~\ref{prop:AOmegaTildeXi}, the right side simplifies to $\tilde{\Omega}_{\frakX} \otimes_{\calO_C} \calO_C/p$ in the almost sense. In particular, using Proposition~\ref{prop:TildeOmegaDescription}, we get an almost isomorphism
\[ \calH^i(\phi_* \Omega^\bullet_{X/T}) \simeq \Omega^i_{X/T} \otimes_T T\{-i\},\]
which is a variant of the Cartier isomorphism. We refer the reader to \cite{MorrowNotesIpHT} for a discussion of a lift of this isomorphism to $W_n(\calO_C)$.
\end{remark}

\begin{remark}[Excising almost mathematics]
\label{rmk:AOmegaNoAlmost}
We continue the theme of Remark~\ref{rmk:TildeOmegaNoAlmost}; this is not relevant to the global applications in \S \ref{sec:globalAOmega}. Let $\frakX = \Spf(R)$ be small, and fix a framing $\square:P^d \to R$. Then one can directly show that the canonical map 
\[ \alpha: L\eta_\mu R\Gamma_{conts}(\Delta^d, R^\square_{A_\inf,\infty}) \to L\eta_\mu R\Gamma(X_\proet, A_{\inf,X})\]
is an actual isomorphism, not just an almost isomorphism. In particular, if we write $A\Omega_R$ for the common value of either complex above, then Proposition~\ref{prop:AOmegaTildeXi} (combined with Remark~\ref{rmk:TildeOmegaNoAlmost}) gives an actual isomorphism $A\Omega_R/\tilde{\xi} \simeq \widetilde{\Omega}_R$, while Proposition~\ref{prop:AOmegadR} gives an actual isomorphism $A\Omega_R/\xi \simeq \Omega^\bullet_{R/\calO_C}$. In particular, this argument yields a short (and simpler) proof of all the main results of \cite[\S 9]{BMSMainPaper} (except the variants involving sheaves of Witt vectors). To prove that $\alpha$ is an isomorphism, using an obvious variant of the argument given in Remark~\ref{rmk:TildeOmegaNoAlmost} (with almost mathematics over $A_\inf$ replacing that over $\calO_C$, and the element $\mu$ replacing $\epsilon_p - 1$), we reduce  to checking that $\Hom_{A_\inf}(W(k), H^*(K/\mu)) = 0$ for $K = R\Gamma_{conts}(\Delta^d, R^\square_{A_\inf,\infty})$. As in the proof of Lemma~\ref{lem:NoTorsionWitt}, this reduces to the case of $P^d$ itself, where it can be checked by hand\footnote{We must show that all $A_\inf$-linear maps $W(k) \to F \oplus \widehat{\oplus}_i A_\inf/([\underline{\epsilon}^{a_i}] - 1)$ are $0$; here $F$ is a topologically free $A_\inf/\mu$-module, the $p$-adically completed direct sum is indexed by a set $I$, and $a_i \in \Z[\frac{1}{p}] - \Z$. Note that the target is derived completion of the corresponding non-completed direct sum by the argument in Lemma~\ref{lem:NoTorsionWitt}. By $p$-adic completeness of the target, it suffices to show the same vanishing when the target is replaced by its mod $p^n$ reduction for all $n$. Moreover, by devissage, we may reduce to the case $n=1$. This reduces us to showing that $\Hom_{\calO_C^\flat}(k, \oplus_i \calO_C^\flat/(t_i)) = 0$, where $t_i \in \calO_C^\flat$ are some elements. This follows by the same argument given in Remark~\ref{rmk:TildeOmegaNoAlmost}.} using Lemma~\ref{lem:GroupCohKoszulCohPerfectoidTorus}.
\end{remark}

\begin{remark}[$q$-de Rham cohomology]
Consider the small algebra $R  = P^1 = \calO_C \langle t^{\pm 1} \rangle$. We have seen above (using Remark~\ref{rmk:AOmegaNoAlmost} to get rid of almost zero ambiguities) that the complex $A\Omega_R$ is described explicitly as follows:
\[ A\Omega_R := \widehat{\bigoplus_{i \in \Z}} \Big( A_\inf \cdot t^i \xrightarrow{\frac{[\underline{\epsilon}^i] - 1}{[\underline{\epsilon}]-1}} A_\inf \cdot t^i \Big).\]
Now recall that 
\[ [n]_q := \frac{q^n - 1}{q-1}\]
is the standard {\em $q$-analog} of the integer $n$. Thus, setting $q = [\underline{\epsilon}]$, we can rewrite the preceding description more suggestively as
\[ A\Omega_R := A_\inf \langle t^{\pm 1} \rangle \xrightarrow{\nabla_q} A_\inf \langle t^{\pm 1} \rangle dt.\]
Here $dt$ is a formal variable such that $t^{-1} dt$ corresponds to $t^0$ in the previous presentation, and $\nabla_q$ is the $q$-analogue of the standard derivative, i.e.,
\[ \nabla_q(t^n) = [n]_q t^{n-1} dt = \frac{q^n - 1}{q-1} t^{n-1} \cdot dt.\]
More canonically, one can write
\[ \nabla_q(f(t)) = \frac{f(qt) - f(t)}{qt-t} \cdot dt.\]
In other words, the complex $A\Omega_R$ can be viewed as a $q$-analog of the de Rham complex: setting $q = 1$ recovers the usual de Rham complex. More generally, a similar formula can be given for $A\Omega_R$ for any framed $\calO_C$-algebra $(R,\square)$, see \cite[Lemma 9.6]{BMSMainPaper}. This description has at least two advantages: (a)  it is easy to write this complex directly in terms of $(R,\square)$ without any knowledge of $p$-adic Hodge theory, and (b) it makes sense in more general situations. Unfortunately, we do not know how to prove directly that this description is independent of the framing $\square$, up to quasi-isomorphism. Moreover, the connection to \'etale cohomology is very mysterious from this perspective. Conjectures about the expected properties of such $q$-de Rham complexes were formulated recently by Scholze \cite{ScholzeqdR}.
\end{remark}

\section{Global results}
\label{sec:globalAOmega}

We now reap the global fruit of the  work done so far. Thus, assume that $C$ is a spherically complete, algebraically closed, nonarchimedean extension of $\Q_p$. Let $\frakX/\calO_C$ be a proper smooth formal scheme. The cohomology theory promised in Theorem~\ref{thm:MainThmPrecise} is constructed from the complex $A\Omega_\frakX$ as follows:

\begin{definition}
\label{def:RGammaA}
\[ R\Gamma_A(\frakX) := R\Gamma(\frakX, A\Omega_\frakX)_\ast \in D(A_\inf).\]
\end{definition}

The main properties of this theory are:

\begin{proposition} 
\label{prop:RGammaAOmega}
One has the following:
\begin{enumerate}
\item Finiteness: $R\Gamma_A(\frakX)$ is a perfect $A_\inf$-complex.
\item de Rham comparison: There is a canonical identification $R\Gamma_A(\frakX) \otimes^L_{A_\inf} A_\inf/\xi \simeq R\Gamma_{dR}(\frakX/\calO_C)$ in $D(\calO_C)$.
\item \'Etale comparison: There is a canonical identification $R\Gamma_A(\frakX)[\frac{1}{\mu}] \simeq R\Gamma(X_{\proet}, \Z_p) \otimes_{\Z_p} A_\inf[\frac{1}{\mu}]$ in $D(A_{\inf}[\frac{1}{\mu}])$.
\item Hodge-Tate comparison: There is a canonical identification $R\Gamma_A(\frakX) \otimes^L_{A_\inf,\tilde{\theta}} \calO_C \simeq R\Gamma(\frakX,\widetilde{\Omega}_\frakX)_*$ in $D(\calO_C)$. In particular, there is an $E_2$ spectral sequence
\[ E_2^{i,j}: H^i(\frakX, \Omega^j_{\frakX/\calO_C}\{-j\}) \Rightarrow H^{i+j}(R\Gamma_A(\frakX) \otimes^L_{A_\inf,\tilde{\theta}} \calO_C)\]
of $\calO_C$-modules.
\end{enumerate}
\end{proposition}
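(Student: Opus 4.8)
The plan is to deduce all four statements by gluing the local (almost) computations from \S\ref{sec:TildeOmega} and \S\ref{sec:AOmega} over a finite cover of $\frakX$ by small affine opens, then using the spherical completeness of $C$ to pass from the almost world to the honest world. First, I would choose a finite cover $\{\frakU_\alpha = \Spf(R_\alpha)\}$ of $\frakX$ by small affines (possible by Lemma~\ref{lem:FramingsExist}), together with framings $\square_\alpha$; since $\frakX$ is separated, finite intersections of the $\frakU_\alpha$ are again small affine, so \v{C}ech theory computes $R\Gamma(\frakX, A\Omega_\frakX)$ from the local values $A\Omega_{R_\alpha} := L\eta_\mu R\Gamma_{conts}(\Delta^d,(R_\alpha)^\square_{A_\inf,\infty})$ of Remark~\ref{rmk:AOmegaValues}, in the almost category $D_{comp}(A_\inf)^a$. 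For finiteness (1): each $A\Omega_{R_\alpha}$ is $(p,\mu)$-adically complete (Lemma~\ref{lem:LetaCompletion}), concentrated in degrees $[0,d]$ with torsionfree $\calH^0$ in the almost sense (Remark~\ref{rmk:AOmegaAinfInverse}); modulo $\xi$ it almost agrees with $\Omega^\bullet_{R_\alpha/\calO_C}$ by Proposition~\ref{prop:AOmegadR}, and since $\frakX$ is proper smooth the de Rham cohomology $R\Gamma_{dR}(\frakX/\calO_C)$ is a perfect $\calO_C$-complex. Hence $R\Gamma(\frakX, A\Omega_\frakX)/\xi$ is almost perfect over $\calO_C$; being also $\xi$-adically (equivalently $(p,\mu)$-adically) complete and $\xi$ a nonzerodivisor, a derived Nakayama argument shows $R\Gamma(\frakX,A\Omega_\frakX)$ is almost perfect over $A_\inf$. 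Applying $(-)_*$ and invoking Lemma~\ref{lem:SphCompleteVanishingAinf} (which uses spherical completeness) upgrades this to an honest perfect $A_\inf$-complex $R\Gamma_A(\frakX)$, proving (1).

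For the de Rham comparison (2): apply $(-)\otimes^L_{A_\inf} A_\inf/\xi$ to $R\Gamma_A(\frakX) = R\Gamma(\frakX, A\Omega_\frakX)_*$. Since $R\Gamma_A(\frakX)$ is a perfect $A_\inf$-complex and $\calO_C$ is a perfect $A_\inf$-algebra, the formation of $(-)_*$ commutes with $(-)\otimes^L_{A_\inf}\calO_C$ on perfect complexes (here one uses Lemma~\ref{lem:SphCompleteVanishingAinf} together with the compatibility remark after Construction~\ref{cons:AlmostAinf}, or directly that $(-)_*$ of a perfect complex is honest); so $R\Gamma_A(\frakX)/\xi \simeq (R\Gamma(\frakX, A\Omega_\frakX)/\xi)_* = (R\Gamma(\frakX, A\Omega_\frakX/\xi))_*$. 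By Proposition~\ref{prop:AOmegadR}, $A\Omega_\frakX/\xi \stackrel{a}{\simeq} \Omega^\bullet_{\frakX/\calO_C}$, so $R\Gamma(\frakX, A\Omega_\frakX/\xi)$ is almost isomorphic to $R\Gamma_{dR}(\frakX/\calO_C)$, and applying $(-)_*$ with Lemma~\ref{lem:SphCompleteVanishing} (spherical completeness over $\calO_C$) recovers $R\Gamma_{dR}(\frakX/\calO_C)$ on the nose. The Hodge-Tate comparison (4) is identical with $\xi$ replaced by $\tilde\xi$, using Proposition~\ref{prop:AOmegaTildeXi} in place of Proposition~\ref{prop:AOmegadR}; the $E_2$ spectral sequence is then the hypercohomology (conjugate) spectral sequence of the complex $\widetilde{\Omega}_\frakX$, whose cohomology sheaves are $\Omega^j_{\frakX/\calO_C}\{-j\}$ by Proposition~\ref{prop:TildeOmegaDescription} (again invoking Remark~\ref{rmk:SphCompletePerfectComplex} to transport the spectral sequence from the almost world, where it naturally lives, to $\Mod_{\calO_C}$).

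For the \'etale comparison (3): invert $\mu$. By Remark~\ref{rmk:AOmegaAinfInverse} the canonical map $A\Omega_\frakX \to R\nu_* A_{\inf,X}$ becomes an almost isomorphism after inverting $\mu$ (the error is bounded by $\mu^d$), hence $R\Gamma_A(\frakX)[\tfrac1\mu] \simeq R\Gamma(\frakX, R\nu_* A_{\inf,X})_*[\tfrac1\mu] = R\Gamma(X_\proet, A_{\inf,X})_*[\tfrac1\mu]$. The primitive comparison theorem (Theorem~\ref{thm:PrimitiveCompThm}) gives $R\Gamma(X_\proet, A_{\inf,X}) \stackrel{a}{\simeq} R\Gamma(X_\proet,\Z_p)\otimes^L_{\Z_p} A_\inf$; applying $(-)_*$ and inverting $\mu$, the almost ambiguity (a $W(k)$-complex, hence $\mu$-torsion since $\mu \mapsto 0$ in $W(k)$) disappears, giving the claimed identification over $A_\inf[\tfrac1\mu]$. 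I expect the main obstacle to be the bookkeeping around the functor $(-)_*$: one must consistently check that $(-)_*$ commutes with the various base changes ($\otimes^L_{A_\inf}\calO_C$, $[\tfrac1\mu]$) and with \v{C}ech/hypercohomology, which is where spherical completeness (Lemmas~\ref{lem:SphCompleteVanishing} and~\ref{lem:SphCompleteVanishingAinf}) and the perfectness established in (1) are essential --- without them the almost-to-honest passage, and in particular the appearance of genuine $\calO_C$-modules in (2) and (4), would fail.
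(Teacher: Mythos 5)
Your proposal is correct and follows essentially the same route as the paper: completeness of $R\Gamma(\frakX,A\Omega_\frakX)$ plus perfectness modulo $\xi$ (via the de Rham comparison and properness) yields perfectness over the $\xi$-adically complete ring $A_\inf$, spherical completeness (Lemmas~\ref{lem:SphCompleteVanishing} and \ref{lem:SphCompleteVanishingAinf}) converts the almost statements into honest ones, and (3) follows from Remark~\ref{rmk:AOmegaAinfInverse} together with the primitive comparison theorem. The only (harmless) variations are the explicit \v{C}ech cover, which the paper does not need, and your observation in (3) that almost-zero complexes are $\mu$-torsion because $\mu$ maps to $0$ in $W(k)$ --- a valid shortcut, where the paper instead invokes Lemma~\ref{lem:SphCompleteVanishingAinf} for the perfect complex $R\Gamma(X_\proet,\Z_p)\otimes^L_{\Z_p}A_\inf$ before inverting $\mu$.
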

\begin{proof}
Using Remark~\ref{rmk:AOmegaValues}, the complex $R\Gamma(\frakX, A\Omega_\frakX)$ is derived $(p,\mu)$-adically complete in the almost sense. Moreover, by Proposition~\ref{prop:AOmegadR}, the complex $R\Gamma(\frakX, A\Omega_\frakX)/\xi \simeq R\Gamma(\frakX, A\Omega_\frakX/\xi)$ is identified with $R\Gamma_{dR}(\frakX/\calO_C)^a$ in $D(\calO_C)^a$. As the functor $K \mapsto K_*$ preserves complete objects and commutes with reduction modulo $\xi$, we learn that $R\Gamma_A(\frakX)$ is $(p,\mu)$-adically complete, and $R\Gamma_A(\frakX)/\xi \simeq (R\Gamma_{dR}(\frakX/\calO_C)^a)_*$. Since $\frakX$ is proper, the complex $R\Gamma_{dR}(\frakX/\calO_C)$ is perfect, and thus 
\[ R\Gamma_{dR}(\frakX/\calO_C) \simeq (R\Gamma_{dR}(\frakX/\calO_C)^a)_*\]
by Lemma~\ref{lem:SphCompleteVanishingAinf}; this proves (2). Moreover, it shows that $R\Gamma_A(\frakX)$ is $(p,\mu)$-adically complete, and is perfect modulo $\xi$. As $A_\inf$ is $\xi$-adically complete, this formally implies $R\Gamma_A(\frakX)$ is itself perfect, proving (1).  For (3), by Remark~\ref{rmk:AOmegaAinfInverse}, we have a natural map
\[ R\Gamma(\frakX, A\Omega_\frakX) \to R\Gamma(\frakX, R\nu_* A_{\inf,X})^a\]
in $D_{comp}(A_\inf)^a$, and this map has an inverse up to $\mu^d$. Now we also know
\[ R\Gamma(\frakX, R\nu_* A_{\inf,X}) \simeq R\Gamma(X_{\proet}, A_{\inf, X}) \simeq R\Gamma(X_{\proet}, \Z_p) \otimes^L_{\Z_p} A_\inf,\]
in $D_{comp}(A_\inf)^a$, where the last isomorphism is Theorem~\ref{thm:PrimitiveCompThm}. Applying $(-)_*$ to the map considered above, and using the preceding isomorphism, we get a map
\[ R\Gamma_A(\frakX) \to \Big(\big(R\Gamma(X_{\proet}, \Z_p) \otimes_{\Z_p} A_\inf\big)^a\Big)_* \simeq R\Gamma(X_{\proet},\Z_p) \otimes^L_{\Z_p} A_\inf,\]
which has an inverse up to $\mu^d$; here we used Lemma~\ref{lem:SphCompleteVanishingAinf} for the last isomorphism above. Inverting $\mu$ then gives (3). Finally, Proposition~\ref{prop:AOmegaTildeXi} gives the first part of (4) as reduction modulo $\tilde{\xi}$ commutes with application of $(-)_*$ ; for the spectral sequence, we use Proposition~\ref{prop:TildeOmegaDescription} and Remark~\ref{rmk:SphCompletePerfectComplex}.
\end{proof}

\begin{remark}[Avoiding spherical completeness]
\label{rmk:RGammaANoAlmost}
If one is willing to use Remark~\ref{rmk:AOmegaNoAlmost}, then all results in this section are easily seen to be true (not merely almost so) without the assumption that $C$ is spherically complete, provided one uses the complex $R\Gamma_A(\frakX) := R\Gamma(\frakX, A\Omega_\frakX)$; note, if $C$ is not spherically complete, we are not allowed to use Definition~\ref{def:RGammaA} in lieu of the preceding one, since  Lemma~\ref{lem:SphCompleteVanishingAinf} would not be true anymore.
\end{remark}

\begin{remark}[Avoiding the primitive comparison theorem]
\label{rmk:NoPrimitive}
The proof of Proposition~\ref{prop:RGammaAOmega} (3) given above uses the primitive comparison theorem~\ref{thm:PrimitiveCompThm}. In fact, it is possible to give a direct proof of this result using the methods of this paper (more precisely, by using Proposition~\ref{prop:RGammaAOmega} (1)), as we now explain. 

\begin{proof}[Alternate proof of Proposition~\ref{prop:RGammaAOmega} (3)]
First, by the purely algebraic \cite[Lemma 4.26]{BMSMainPaper}, it suffices to produce a comparison isomorphism
\[ R\Gamma(X_\proet, \mathbf{Z}_p) \otimes_{A_{\inf}} W(C^\flat) \simeq R\Gamma_A(\frakX) \otimes_{A_\inf} W(C^\flat)\]
in $D(W(C^\flat))$. For this, if we topologize $W(C^\flat)$ with its $p$-adic topology and $A_\inf$ with the $(p,\xi)$-adic topology, then we have 
\[ R\Gamma_A(\mathfrak{X}) \otimes_{A_{\inf}} W(C^\flat) \simeq R\Gamma_A(\mathfrak{X}) \widehat{\otimes}_{A_{\inf}} W(C^\flat)\] 
via the natural map: the analogous statement is true for any perfect complex over $A_\inf$ (as $A_\inf$ is $p$-adically complete), so the claim follows from Proposition~\ref{prop:RGammaAOmega} (1). As $W(C^\flat)$ is the $p$-adic completion of $A_{\inf}[\frac{1}{\mu}]$, and because $R\Gamma(\frakX,-)$ commutes with filtered colimits and derived limits, it follows that 
\[ R\Gamma(\mathfrak{X}) \otimes_{A_\inf} W(C^\flat) \simeq R\Gamma(\mathfrak{X}, \widehat{A\Omega_\frakX[\frac{1}{\mu}]}).\]
 As inverting $\mu$ kills $L\eta_\mu$, we have $\widehat{A\Omega_\frakX[\frac{1}{\mu}]} = R\nu_* \widehat{A_{\inf,X}[\frac{1}{\mu}]} = R\nu_* W(\calO_X^\flat)$, which gives 
 \[ R\Gamma_A(\frakX) \otimes_{A_\inf} W(C^\flat) \simeq R\Gamma(\frakX, R\nu_* \calO_X^\flat) \simeq R\Gamma(X_{\proet}, W(\calO_X^\flat)),\]
so the complex on the right is a perfect $W(C^\flat)$-complex.  Now, as $\calO_X^\flat$ is a sheaf of perfect rings on $X_{\proet}$, the perfect $W(C^\flat)$-complex $R\Gamma(X_\proet, W(\calO_X^\flat))$ comes equipped with a $\phi$-linear isomorphism $\phi_X:R\Gamma(X_{\proet}, W(\calO_X^\flat)) \simeq R\Gamma(X_{\proet}, W(\calO_X^\flat))$ induced by the Frobenius automorphism $\phi_X$ of $\calO_X^\flat$. Lemma~\ref{lem:UnitRoot} then gives a canonical isomorphism 
\[ R\Gamma(X_\proet, W(\calO_X^\flat))^{\phi_X=1} \otimes_{\mathbf{Z}_p} W(C^\flat) \simeq R\Gamma(X_\proet, W(\calO_X^\flat)).\]
It is thus enough to identify $R\Gamma(X_\proet, W(\calO_X^\flat))^{\phi_X=1}$ with $R\Gamma(X_\proet, \mathbf{Z}_p)$. But this is immediate from the Artin-Schreier sequence $0 \to \mathbf{Z}_p \to W(\calO_X^\flat) \xrightarrow{\phi_X - 1} W(\calO_X^\flat) \to 0$ on $X_\proet$.
\end{proof}

The following is a variant of a classical fact in $p$-linear algebra, and was used above:

\begin{lemma}
\label{lem:UnitRoot}
Let $K$ be an algebraically closed perfect field of characteristic $p$. Let $M$ be a perfect complex of $W(K)$-modules equipped with an isomorphism $\phi_M:M \simeq M$ that is linear over the Frobenius automorphism $\phi:W(K) \simeq W(K)$. Then the homotopy fibre\footnote{The homotopy fibre $F$ of a map $f:M \to N$ in a triangulated category is defined to be $\mathrm{cone}(f)[-1]$ (for some choice of a cone), so there is an exact triangle $F \to M \xrightarrow{f} N$.} $L := M^{\phi_M=1}$ of $\phi_M - 1:M \to M$ is a perfect complex of $\mathbf{Z}_p$-modules, and the natural map $L \otimes_{\mathbf{Z}_p} W(K) \to M$ is an equivalence.
\end{lemma}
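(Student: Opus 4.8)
The statement is a derived-categorical version of the classical "unit-root" or Artin–Schreier–type fact that a finite-dimensional $F$-isocrystal over an algebraically closed field which is étale (all slopes $0$) descends to $\mathbf{Z}_p$. The plan is to reduce to the classical module-level statement by an induction on the amplitude of the perfect complex $M$, using the compatibility of $L\mapsto L^{\phi_M=1}$ with exact triangles.

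First I would treat the base case where $M$ is concentrated in a single degree, i.e. $M$ is a finite projective $W(K)$-module with a $\phi$-semilinear automorphism $\phi_M$. Here $L = M^{\phi_M=1}$ is literally the homotopy fibre of $\phi_M-1$, so I must check two things: (i) $\phi_M-1\colon M\to M$ is surjective, so that $L$ is discrete (concentrated in degree $0$), equal to the honest invariants $\{m : \phi_M(m)=m\}$; and (ii) $L$ is a finite free $\mathbf{Z}_p$-module with $L\otimes_{\mathbf{Z}_p}W(K)\xrightarrow{\sim} M$. Both are standard: surjectivity of $\phi_M-1$ follows because $K$ is algebraically closed (so every equation $x^p - ax = b$ is solvable, handled first mod $p$ using that $K$ is algebraically closed and perfect, then lifted $p$-adically by successive approximation using $p$-adic completeness of $W(K)$); the descent statement is the Dieudonné–Manin / Katz classification of étale $\phi$-modules over $W(K)$ for $K$ algebraically closed. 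One can also phrase (i)-(ii) together by observing that, since $K$ is algebraically closed and perfect, the pair $(W(K),\phi)$ together with $\mathbf{Z}_p = W(K)^{\phi=1}$ gives, via $L \mapsto L\otimes_{\mathbf{Z}_p}W(K)$, an equivalence between finite free $\mathbf{Z}_p$-modules and finite projective $W(K)$-modules with $\phi$-semilinear automorphism, with quasi-inverse $M \mapsto M^{\phi_M=1}$; I would cite a standard reference (e.g. Katz, or \cite{BMSMainPaper}) rather than reprove it.

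Next I would run the induction on the length of the interval $[a,b]$ such that $M \in D^{[a,b]}(W(K))$. Shifting, assume $M\in D^{[0,b]}$. The truncation triangle $\tau^{\le b-1}M \to M \to H^b(M)[-b]$ is $\phi_M$-equivariant (since $\phi_M$ is an automorphism and truncations are functorial). Applying the exact (triangulated) functor $N \mapsto (\text{hofib of }\phi_N-1)$ — which preserves exact triangles because it is the cofibre-shift of the map of functors $\mathrm{id}\Rightarrow \mathrm{id}$ induced by $\phi$ — yields an exact triangle $L' \to L \to L''$, where $L' = (\tau^{\le b-1}M)^{\phi=1}$ and $L'' = (H^b(M)[-b])^{\phi=1}$. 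By the base case $L''$ is a perfect $\mathbf{Z}_p$-complex with $L''\otimes_{\mathbf{Z}_p}W(K)\simeq H^b(M)[-b]$ (note $H^b(M)$ is finite projective over $W(K)$ since $M$ is perfect and $W(K)$ is a domain, or one uses that perfect complexes over $W(K)$ have projective cohomology in the top degree after suitable bookkeeping — in any case one can reduce to $M$ strictly perfect with projective terms); by induction $L'$ is perfect over $\mathbf{Z}_p$ with $L'\otimes_{\mathbf{Z}_p}W(K)\simeq \tau^{\le b-1}M$. Since perfect $\mathbf{Z}_p$-complexes are closed under cones, $L$ is perfect. Finally, tensoring the triangle $L'\to L\to L''$ with $W(K)$ over $\mathbf{Z}_p$ and comparing with the truncation triangle for $M$ via the natural map, the outer two maps $L'\otimes W(K)\to \tau^{\le b-1}M$ and $L''\otimes W(K)\to H^b(M)[-b]$ are equivalences, so by the five lemma (in the triangulated sense) $L\otimes_{\mathbf{Z}_p}W(K)\to M$ is an equivalence.

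\textbf{Main obstacle.} The one genuinely delicate point is the base case: verifying that $\phi_M - 1$ is surjective on a finite projective $W(K)$-module and that the fixed points descend. Over $K$ itself this is the solvability of $x^p-ax=b$ for $a$ a unit (and the additive Artin–Schreier equation when $a=0$, diagonalizing first), which uses that $K$ is algebraically closed and perfect; lifting along $W(K)\to W_n(K)$ is a routine successive-approximation argument using $p$-adic completeness, but one must be slightly careful to set up the diagonalization (reduce $\phi_M$ to a standard form, e.g. a matrix over $W(K)$ congruent to a permutation matrix mod $p$ after change of basis) so that the mod-$p$ step really is the classical Lang-type surjectivity statement. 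Everything else — the triangulated-functor formalism, the induction, the five lemma — is formal. I would keep the write-up short by invoking the classical equivalence of categories as a black box with a citation.
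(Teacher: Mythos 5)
Your overall strategy (induction on amplitude, reducing to a classical unit-root descent statement in a single degree) is different from the paper's, and it has one genuine gap: the base case you prove does not cover the modules your induction actually produces. You assume in the inductive step that $H^b(M)$ is a finite projective $W(K)$-module, but this is false for perfect complexes over the discrete valuation ring $W(K)$: the two-term complex $W(K) \xrightarrow{p} W(K)$ is perfect and its top cohomology is $K = W(K)/p$, which is torsion. So "perfect and $W(K)$ a domain" does not give projectivity of the top cohomology. Your fallback of representing $M$ by a strictly perfect complex with a termwise lift of $\phi_M$ also does not rescue the argument as stated: the lifted chain map is only a quasi-isomorphism, not a termwise isomorphism, and for a semilinear endomorphism that is not an isomorphism (e.g.\ $p\phi$ on $W(K)$) the fibre of $\psi - 1$ is zero while the module is not, so the base-change isomorphism fails termwise. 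The fix is straightforward but must be made: extend the base case to arbitrary finitely generated $W(K)$-modules equipped with a $\phi$-semilinear automorphism (by dévissage along $p$ to the mod-$p$ Lang/Artin--Schreier statement for finite-dimensional $K$-vector spaces, which is where algebraic closedness of $K$ enters). With that strengthened base case, your truncation induction and the five lemma argument go through; the formal points about exactness of $N \mapsto \mathrm{hofib}(\phi_N - 1)$ and $\phi$-equivariance of truncations are fine.

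For comparison, the paper avoids this issue entirely by reducing modulo $p$ at the derived level first: both $M$ and $L$ are derived $p$-complete, the formation of $L$ commutes with $-\otimes^L \mathbf{Z}/p$, and perfectness over $\mathbf{Z}_p$ (as well as the base-change map being an equivalence) can be checked after reduction mod $p$. One is then left with a perfect complex of $K$-vector spaces, where each cohomology group is finite dimensional over $K$ and the classical statement (surjectivity of $\phi_M - 1$ on $H^i(M)$ and $\ker(\phi_M-1)\otimes_{\mathbf{F}_p} K \simeq H^i(M)$) applies degreewise; the surjectivity splits the long exact sequence of the fibre, identifying $H^i(L)$ with these kernels. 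That route trades your induction on amplitude for a single completeness/Nakayama reduction and is the cleaner way to organize the same classical input.
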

\begin{proof}
Note that both $M$ and $L$ are derived $p$-adically complete. Moreover, the formation of $L$ commutes with reduction modulo $p$. As perfectness of a $p$-adically complete $\mathbf{Z}_p$-complex can be checked after reduction modulo $p$, we may assume $M$ is killed by $p$, i.e., $M$ is a perfect complex of $K$-modules equipped with a $\phi$-linear isomorphism $\phi_M:M \simeq M$. In this case, it is well-known (see \cite[Expose III, Lemma 3.3]{CLCrystalline}) that for each $i$, the map $H^i(\phi_M - 1)$ is surjective on $H^i(M)$, and that the kernel $L_i := \ker(H^i(M) \xrightarrow{\phi_M - 1} H^i(M))$ is a finite dimensional $\mathbf{F}_p$-vector space such that $L_i \otimes_{\mathbf{F}_p} K \simeq H^i(M)$ via the natural map. The claim now follows immediately as $L_i = H^i(L)$.
\end{proof}
\end{remark}

\newpage

\bibliographystyle{alpha}
\bibliography{mybib}

\end{document}